\pgfplotsset{compat=1.11}
\renewcommand{\thefootnote}{\fnsymbol{footnote}}
\newcommand{\definedas}{\mathrel{\raise.095ex\hbox{\rm :}\mkern-5.2mu=}}
\newcommand{\R}{\mathbb{R}}
\newcommand{\N}{\mathbb{N}}
\newcommand{\landau}[1]{O\left(#1\right)}
\renewcommand{\d}{\,\mathrm{d}}
\newcommand{\graph}{\,\mathrm{graph}}
\newcommand{\btr}[1]{\left\vert#1\right\vert}
\newcommand{\norm}[1]{\btr{\btr{#1}}}
\newcommand{\spann}[1]{\left\langle#1\right\rangle}
\newcommand{\divf}{\mathrm{div}}
\newcommand{\Ric}{\mathrm{Ric}}
\newcommand{\tr}{\text{tr}}
\newcommand{\dive}{\text{div}}
\newcommand{\gspann}[1]{g\left(#1\right)}
\newcommand{\gamasp}[1]{\gamma\left(#1\right)}
\newcommand{\newbtr}[1]{\vert#1\vert}
\let\OLDthebibliography\thebibliography
\renewcommand\thebibliography[1]{
	\OLDthebibliography{#1}
	\setlength{\parskip}{0pt}
	\setlength{\itemsep}{1pt plus 0.3ex}
}
\theoremstyle{plain}
\newtheorem{thm}{Theorem}[section]
\newtheorem{prop}[thm]{Proposition}
\newtheorem{lem}[thm]{Lemma}
\theoremstyle{definition}
\newtheorem{defi}[thm]{Definition}
\newtheorem{bem}[thm]{Remark}
\newtheorem{kor}[thm]{Corollary}
\newtheorem{bsp}[thm]{Example}
\begin{document}
\begin{center}
	{\LARGE {On the evolution of hypersurfaces along their inverse spacetime mean curvature}\par}
\end{center}
\vspace{0.5cm}
\begin{center}
	{\large Gerhard Huisken\footnote[1]{gerhard.huisken@uni-tuebingen.de}, Markus Wolff\footnote[2]{wolff@math.uni-tuebingen.de}}\\
	\vspace{0.4cm}
	{\large Department of Mathematics, University of T\"ubingen}\\
	{\large T\"ubingen, Germany}
\end{center}
\vspace{0.4cm}
\begin{abstract}
	We construct weak solutions for the evolution of hypersurfaces along their inverse space-time mean curvature in asymptotically flat maximal initial data set. As the speed of the new flow is given by a space-time invariant, it can detect both future- and past-trapped apparent horizons. The weak solution extends concepts developed by Huisken-Ilmanen for inverse mean curvature flow and by Moore for inverse null mean curvature flow.
\end{abstract}
\renewcommand{\thefootnote}{\arabic{footnote}}
\setcounter{footnote}{0}
	
	\section{Introduction}\label{sec_introduction}
	
	Let $(M^{n+1}, g, K), n\ge 2$, be a triple consisting of a smooth, complete Riemannian manifold $(M^{n+1}, g)$ with one asymptotically Euclidean end and possibly possessing a compact inner boundary, together with a symmetric $(0,2)$-tensor field $K$ satisfying appropriate decay conditions near infinity. Such triples arise naturally as space-like hypersurfaces with induced metric $g$ and second fundamental form $K$ in a Lorentzian manifold $(L^{n+2},h)$ modeling the outer region of an isolated gravitating system in General Relativity. They can serve as an  {\em initial data set} for Einstein's equations if suitable constraints for $g, K$ are satisfied. For a smooth hypersurface immersion $F\colon\Sigma^n \to M^{n+1}$ we denote by $\nu$ the (outer) unit normal, by $H$ the corresponding mean curvature and by $P = tr_{\Sigma} K$ the n-trace of $K$. We then call the quantity $\sqrt{H^2 -P^2}$ the {\em space-time mean curvature} of $F(\Sigma^n, t)$ since in the case  $M^{n+1} \subset (L^{n+2}, h)$ this quantity equals the length of the mean curvature vector of the 2-co-dimension surface $\Sigma^n$ in the ambient manifold $(L^{n+2}, h)$.   

We say that a family of hypersurfaces $F\colon\Sigma^n\times[0,T)\to M^{n+1}$ is a smooth solution to {\em inverse space-time mean curvature flow} (STIMCF)
 if $H^2 -P^2 >0$ and
	\begin{equation}\label{stimcf}
		\frac{\partial F}{\partial t}(x,t) = \frac{\nu}{\sqrt{H^2-P^2}}(x,t), \qquad
	 x\in\Sigma^n, \quad t\ge0.
	\end{equation}

In this paper we develop the theory of smooth and weak solutions to this flow and prove a general existence result for global weak solutions of the flow on exterior regions $M^{n+1} \setminus E_0$ if $n<6$ and $tr_g K \equiv 0$ (Theorem \ref{thm_mainresult}), corresponding to the case of {\em maximal} initial data sets. We also show that the flow can detect outermost surfaces where $H = |P|$ (Corollary \ref{kor_detecthorizon}), including both future and past marginally outer trapped surfaces.

Inverse mean curvature flow was first suggested by Geroch and Jang--Wald as an approach to the Penrose inequality for the mass of an asymptotically flat manifold in \cite{geroch, jangwald} and has then been reformulated as a degenerate elliptic variational problem by Huisken--Ilmanen in \cite{huiskenilmanen} to prove the Riemannian version of the Penrose inequality, which generalises and quantifies the original proof of the Positive Mass Theorem by Schoen--Yau in \cite{schoenyau}. See also \cite{bray} for an alternative proof of the Riemannian Penrose inequality by Bray and see \cite{huiskenilmanen} for additional references to the Riemannian Penrose inequality. All these approaches benefit from the fact that in the Riemannian case (where $K\equiv 0$) an apparent horizon is characterized by the minimal surface equation $H=0$. In \cite{eichetal} Eichmair--Huang--Lee--Schoen used marginally outer trapped surfaces to give a proof of the space-time positive mass theorem in dimension less than eight.

To attack the Penrose inequality in the general case different flows have been proposed by Moore \cite{moore} where a weak formulation of the flow along the null mean curvature $H\pm P$ is developed with the help of a new variational principle addressing the anisotropic nature of this  flow speed, and by Frauendiener \cite{frauendiener} in direction of the inverse mean curvature vector in space-time. Bray--Hayward--Mars--Simon in \cite{brayhaywardmarssimon} considered flows in space-time in null and space-like directions. The flows considered by Frauendiener and Bray--Hayward--Mars--Simon have a monotone Hawking mass but lack a proof of existence as they are partially of parabolic and partially of anti-parabolic type. See also \cite{braykhuri, mars} for additional references. 

The approach of this paper is motivated by the space-time invariance of the speed function and in particular by the recent work of Cederbaum-Sakovich in \cite{cederbaumsakovich}, where they propose a foliation of constant space-time mean curvature surfaces as a notion of center of mass in non time-symmetric initial data sets, which can be understood as a Lorentzian analogue to the definition of center of mass in time-symmetry by constant mean curvature surfaces first proposed by Huisken--Yau \cite{huiskenyau}. We expect (work in progress) that the foliation near spacial infinity resulting from STIMCF will exhibit the same distinguished asymptotic convergence to the center of mass described for the foliation by constant space-time mean curvature surfaces in \cite{cederbaumsakovich}. We also note the interesting analytical similarities of our approach to the ground breaking work of Schoen--Yau on the positive energy theorem employing Jang's equation, which also detects surfaces with $H= \pm P$, \cite{schoenyau2}. 

We begin with basic properties of smooth solutions to STIMCF and prove in Theorem \ref{smooth} that short-time smooth solutions exist for smooth initial data $\Sigma^n_0 = \partial E_0$ and can be extended as long as the speed of the hypersurfaces is bounded.  We also establish an interior upper bound on $\Phi = \sqrt{H^2 - P^2}$ in Theorem \ref{thm_interiorgradientestimate}, which is a crucial prerequisite for the approximation of weak solutions further on. Our approach to weak solutions of equation (\ref{stimcf}) is based on a variational principle for a level-set formulation inspired by the work of Huisken-Ilmanen \cite{huiskenilmanen} on inverse mean curvature flow and the extension by Moore \cite{moore} to the anisotropic inverse null mean curvature flow. To be precise, we reformulate the flow equation (\ref{stimcf}) in a level-set ansatz as a degenerate elliptic PDE

\begin{equation}\label{level-seteqn}
	\divf_g \left(\frac{\nabla u}{\btr{\nabla u}_g}\right)
	=\sqrt{\btr{\nabla u}^2_g+\left(\left(g^{ij}-\frac{\nabla^iu\nabla^ju}{\btr{\nabla u}_g^2}\right)K_{ij}\right)^2}
	\end{equation}
	
on the exterior region $\Omega \definedas M^{n+1} \setminus E_0$ and construct weak solutions for suitable initial data by an elliptic regularisation as used by Ilmanen in \cite{ilmanen}. Approximating solutions $u_{\varepsilon}$ satisfying 

	\[
	\divf_g\left(\frac{\nabla u_\varepsilon}{\sqrt{\btr{\nabla u_\varepsilon}^2_g+\varepsilon^2}}\right)
	=\sqrt{\varepsilon^2+\btr{\nabla u_\varepsilon}^2_g+\left(\left(g^{ij}-\frac{\nabla^iu_\varepsilon\nabla^ju_\varepsilon}{\varepsilon^2+\btr{\nabla u_\varepsilon}^2_g}\right)K_{ij}\right)^2}
	\]

subject to appropriate boundary conditions are constructed in Theorem \ref{thm_ellipticregexistence}, based on precise regularity estimates (Theorem \ref{thm_aprioriestimates}). The lower bound in the interior region requires the assumption $\tr_gK=0$, i.e., the restriction to maximal initial data sets. Rescaling yields translating solutions ${\hat\Sigma^{\varepsilon}_t = \operatorname{graph}(\frac{u_{\varepsilon}}{\varepsilon} - \frac{t}{\varepsilon})}$ of STIMCF one dimension higher in $M^{n+1} \times \R$ and the general interior estimate on the inverse speed $\phi = \sqrt{H^2 -P^2}$ (Theorem \ref{thm_interiorgradientestimate}) can then be applied in $M^{n+1} \times \R$ to prove a uniform gradient estimate and thereby pre-compactness for the $u_{\varepsilon}$. Assuming now $n < 6$ and employing compactness and regularity results from geometric measure theory we show in Section 5 that the surfaces $\hat\Sigma^{\varepsilon}_t$ are uniformly controlled in $C^{1,\alpha}_{loc}$ ensuring sub-convergence as $\varepsilon \to 0$ to a foliation of $\Omega \times \R$ by $C^{1,\alpha}$-surfaces with a corresponding normal vector field $\hat\nu \in C^{0,\alpha}_{loc}(\Omega \times \R)$ - even above jump regions in $\Omega$ where the limiting function $u$ has plateaus with $\nabla u = 0$.

Setting $P_\nu\definedas\left(g^{ij}-\nu^i\nu^j\right)K_{ij}$ for the n-trace of $K$ orthogonal to the vectorfield $\nu$ and freezing $\sqrt{\btr{\nabla u}^2+P_\nu^2}$ as a bulk term energy, we can interpret \eqref{level-seteqn} as the Euler-Lagrange equation to the functional
	\begin{align}\label{comprinfuncintro}
	\mathcal{J}_{u,\nu}^A(v)\definedas\int\limits_{A}\btr{\nabla v}+v\sqrt{\btr{\nabla u}^2+P_\nu^2},
	\end{align}
with $v\in C^{0,1}_{loc}(\Omega)$ and $A \subset \Omega$ compact, as long as the vector field $\nu$ is well defined. The key idea in characterising weak solutions is then to use this functional not on $(u_{\varepsilon}, \nu_{\varepsilon})$ but on
the level-set data $(U_{\varepsilon}(x, z) = u_{\varepsilon}(x) - \varepsilon z, \hat\nu_{\varepsilon})$ describing the flow of the 
$\hat\Sigma^{\varepsilon}_t \subset \Omega \times \R$ to formulate a  variational principle on the product $\Omega \times \R$ that can be passed to the limit with the pair $(U(x,z) = u(x), \hat\nu)$ as $\varepsilon \to 0$ (Theorem \ref{thm_compactness2}). The main result on existence of solutions to this variational principle is then proven in Section 7 (Theorem \ref{thm_mainresult}). In Remark \ref{bem_frauend} we explain how the methods of this paper apply to solve a projected version of the flow proposed by Frauendiener in \cite{frauendiener}. 
Section 8 establishes the outward optimizing property of the level-set solutions (Theorem \ref{thm_outwardoptimziation}) and gives a precise description of jump regions. As a consequence we see that the flow can detect outermost generalized apparent horizons when starting from some initial data (Corollary \ref{kor_detecthorizon}).   
We conclude with a description of the asymptotic behavior of the flow as $t \to \infty$ in the last section. \par
\vskip 0.3cm
\noindent
{\it Acknowledgement.} The authors thank Carla Cederbaum and Sascha Eichmann for inspiring discussions. We would further like to thank Axel Fehrenbach and Olivia Vi\v{c}\'{a}nek Mart\'{i}nez  for providing several illustrations.

	\section{Preliminaries}\label{sec_preliminaries}
		To simplify notation, we will in the following write $M=M^{n+1}, \Sigma=\Sigma^n$ and briefly introduce some notation, definitions and assumptions on the background manifold $(M,g)$. 
		The \emph{Riemann curvature tensor} $\operatorname{Riem}_{\alpha\beta\gamma\delta}$, the \emph{Ricci curvature} $\Ric_{\alpha\beta}$ and the \emph{scalar curvature} $\operatorname{R}$ of $(M,g)$ are given in terms of the Christoffel-symbols $\{\Gamma^{\alpha}_{\beta \gamma}\}$ of the metric $g$ as
		\begin{align*}
		&\operatorname{Riem}_{\alpha\beta\gamma\delta}\definedas
		g_{\rho\gamma}\left(\Gamma_{\beta\delta,\alpha}^\rho-\Gamma_{\alpha\delta,\beta}+\Gamma_{\beta\delta}^\lambda\Gamma_{\alpha\lambda}^\rho-\Gamma_{\alpha\delta}^\lambda\Gamma_{\beta\lambda}^\rho\right),\\
		&\Ric_{\alpha\beta}\definedas g^{\gamma\delta}\operatorname{Riem}_{\alpha\gamma\beta\delta}
		=\Gamma_{\alpha\beta,\rho}^\rho-\Gamma_{\rho\beta,\alpha}^\rho+\Gamma_{\alpha\beta}^\lambda\Gamma_{\rho\lambda}^\rho-\Gamma_{\alpha\lambda}^\rho\Gamma_{\rho\beta}^\lambda,\\
		&\operatorname{R}\definedas g^{\alpha\beta}\Ric_{\alpha\beta}
		\end{align*}
		respectively. If we want to place emphasis on the metric $g$, we write $\operatorname{Riem}_g$, $\Ric_g$ , and $\operatorname{R}_g$.
		
		If the triple $(M, g, K)$ arises as a spacelike hypersurface with second fundamental form $K$ and future-pointing timelike unit normal $N$ inside a Lorentzian manifold governed by Einsteins Equations for a specific stress-energy tensor $T$, the Gauss- and Codazzi equations  yield the \emph{constraint equations},
		\begin{align*}
			\operatorname{R}_g+(\tr_g K)^2-\btr{K}_g^2&=16\pi\mu,\\
			\dive_g(K-\tr_g Kg)&=8\pi J,
		\end{align*}
		where $\mu\definedas T(N,N)$ is called the \emph{energy density}, $J\definedas T(\cdot,N)$ is called the \emph{momentum density}. Motivated by this setting we call a triple $(M,g,K)$ an {\em initial data set} and take the lefthand side of the above equations as the definition of $\mu$ and $J$ respectively. We call  an initial data set $(M,g,K)$
 \emph{time-symmetric} if $K\equiv 0$ and \emph{maximal} if $\tr_g K\equiv 0$.

		We further say that an initial data set $(M,g,K)$ satisfies the \emph{dominant energy condition} (DEC), if
		\begin{align}\label{eq_dec}
			\mu\ge \btr{J}.
		\end{align}
		To establish existence of solutions to our flow, we additionally require the initial data set $(M,g,K)$ to satisfy certain fall-off conditions at spatial infinity. The next definition makes this mathematically rigorous by introducing proper decay conditions, cf. \cite{cederbaumsakovich}.
		\begin{defi}\label{defi_asymflat}
			Let  $(M,g,K)$ be an initial data set and let $\varepsilon\in(0,\frac{1}{2}]$. We call $(M,g,K)$ \emph{asymptotically flat} (with one end), if there exists a compact subset $\overline{\mathcal{B}}\subset M$, such that $M\setminus\overline{\mathcal{B}}$ is diffeomorphic to $\R^{n+1}\setminus \overline{B_{1}(0)}$, and such that under this diffeomorphism, the metric tensor $g$ and second fundamental form $K$\footnote{we identify $g$ and $K$ with their respective pullbacks via the diffeomorphism for simplicity} satisfy
			\begin{align*}
				&\btr{g_{\alpha\beta}-\delta_{\alpha\beta}}+\btr{x}\btr{g_{\alpha\beta,\gamma}}+\btr{x}^2\btr{g_{\alpha\beta, \gamma\delta}}\le C\btr{x}^{-n+\frac{3}{2}-\varepsilon},\\
				&\btr{K_{\alpha\beta}}+\btr{x}\btr{K_{\alpha\beta,\gamma}}\le C\btr{x}^{-n+\frac{1}{2}-\varepsilon},
			\end{align*}
			as $\btr{x}\to\infty$, where $\btr{\cdot}$ is the Euclidean distance with respect to the coordinates, $C>0$ a real constant, and the derivatives are taken with respect to the Euclidean metric.
		\end{defi}
		\begin{bem}\label{bem_asymflat}
			In General Relativity these decay assumptions together with the integrability of the constraints ensure that a notion of total Energy and momentum of the system is well-defined. See \cite{eichetal} for a precise description of energy, momentum and natural decay assumptions in dimensions $n<7$. Since inverse space-time mean curvature flow is highly motivated by General Relativity, we will generally assume these decay conditions. However in view of Proposition \ref{prop_blowdown} and the comments following Lemma \ref{lem_subsolution}, we can establish existence and asymptotic behavior of the flow under much weaker decay (especially in higher dimensions). See Proposition \ref{prop_blowdown}, equation \eqref{asympdecay2} for the minimal decay assumptions.
		\end{bem}
		For a hypersurface $(\Sigma,\gamma)\subset (M,g)$ with induced Riemannian metric $\gamma$ the \emph{second fundamental form} $h=\{h_{ij}\}_{1\le i,j \le n}$ is defined as $h(X,Y)\definedas- g(\nu,\nabla_XY)$, where $\nu$ is a choice of unit normal to $\Sigma$. 
We will write $D$ for the covariant derivative w.r.t. the metric $\gamma$ and $\Delta_{\gamma}$ for the corresponding Laplace-Beltrami operator. We define the mean curvature of $(\Sigma,\gamma)$ as $H\definedas \tr_\gamma h$ and the mean curvature vector  as $\vec{H}\definedas-H\nu$. If $(M,g,K)$ arises again as a spacelike submanifold of a Lorentzian $(L,h)$ manifold with second fundamental form $K$, $(\Sigma,\gamma)$ is a codimension-$2$ submanifold of $(L,h)$ and the information about the additional extrinsic curvature is encoded in $K$. Thus, we define the {\em future and past expansions} of $\Sigma$ as $\theta_\pm\definedas H\pm P$ and call $\mathcal{H}\definedas\sqrt{H^2-P^2}$ the {\em spacetime mean curvature}, where $P\definedas \tr_\gamma \Sigma$. We call $(\Sigma,\gamma)$ a \emph{marginally outer/inner trapped surface (MOTS/MITS)}, if $\theta_\pm\equiv0$, and we call $(\Sigma,\gamma)$ a \emph{generalized apparent horizon}, if $\mathcal{H}\equiv0$. Note that there are examples of surfaces satisfying $\mathcal{H}\equiv0$ that are neither MOTS or MITS, compare \cite{carrascomars}.
		
	\section{The smooth flow}\label{sec_smoothflow}
		We first concentrate on the properties of smooth solutions of STIMCF. Assuming that $\sqrt{H^2-P^2}{\Big\vert_{\Sigma_0}}>0$, we establish evolution equations for geometric quantities and show that the initial value problem for the flow is parabolic with surfaces $\Sigma_t\definedas F(t,\Sigma)$ expanding smoothly as long as their speed remains bounded. The main result of this section (Theorem \ref{thm_interiorgradientestimate}) provides an upper bound on the inverse speed restricted to a ball with sufficiently small radius depending on the geometry of $(M, g, K)$. This will later serve as an interior gradient estimate in the level-set formulation of the flow and will be derived from the 
		evolution equations for STIMCF.
		\begin{lem}\label{lem_evolutioneq} Smooth solutions of \eqref{stimcf} with $\Phi:=\sqrt{H^2-P^2}>0$ and $\Psi:=\frac{1}{\Phi}$ satisfy the following evolution equations:
			\begin{enumerate}
				\item[(i)] $\frac{\d}{\d t}\gamma_{ij}=2\Psi h_{ij}$, $\frac{\d}{\d t}\gamma^{ij}=-2\Psi h^{ij}$, $\frac{\d}{\d t}\d\mu_{\gamma}=\frac{H}{\sqrt{H^2-P^2}}\d\mu_{\gamma}$,
				\item[(ii)]
				$\frac{\d}{\d t}\nu=-D\Psi$,
				\item[(iii)]
				$\frac{\d}{\d t} h_{ij}=-\operatorname{Hess}_{\gamma}\Psi_{ij}+\Psi h_i^kh_{kj}-\Psi\operatorname{Riem}_{i\nu j\nu}$,
				\item[(iv)] 
				$\frac{\d}{\d t}H=-\Delta_\gamma\Psi-\Psi\left(\Ric(\nu,\nu)+\btr{h}^2_{\gamma}\right)$,
				\item[(v)] 
				$\frac{\d}{\d t}P=\Psi\tr_{\gamma}\nabla_\nu K+2K_{\nu i} D^i\Psi$,
				\item[(vi)]
				$\frac{\d}{\d t}\Phi=\Phi^{-2}\left(\frac{H}{\Phi}\Delta_\gamma\Phi-\frac{2H}{\Phi^2}\btr{D\Phi}_{\gamma}^2-H\left(\Ric(\nu,\nu)+\btr{h}^2_{\gamma}\right)-P\tr_{\gamma}\nabla_\nu K+\frac{2P}{\Phi}K_{\nu i} D^i\Phi\right)$.
			\end{enumerate}
		\end{lem}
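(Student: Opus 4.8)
The plan is to read the flow equation \eqref{stimcf} as a normal flow $\partial_t F = f\,\nu$ with speed function $f = \Psi = \Phi^{-1}$ and to specialise the standard first-variation formulae for the induced geometry of $\Sigma_t$ under such a flow; the one genuinely new ingredient is the contribution of $P = \tr_\gamma K$ to the speed, which enters only through (v). First I would record the elementary parts (i) and (ii): differentiating $\gamma_{ij} = g(\partial_i F,\partial_j F)$ in $t$ and using the Weingarten relation gives $\partial_t\gamma_{ij} = 2f\,h_{ij}$, whence $\partial_t\gamma^{ij} = -2f\,h^{ij}$ from $\gamma^{ik}\gamma_{kj} = \delta^i_j$ and $\partial_t\,\d\mu_\gamma = \tfrac12\gamma^{ij}(\partial_t\gamma_{ij})\,\d\mu_\gamma = fH\,\d\mu_\gamma$; and differentiating the relations $g(\nu,\nu)=1$ and $g(\nu,\partial_i F)=0$ along the flow gives $\partial_t\nu = -\grad_\gamma f = -Df$. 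Substituting $f=\Psi$ yields (i) and (ii); for $K\equiv 0$ these are precisely the known evolution equations for inverse mean curvature flow.

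For (iii) I would carry out the classical computation of the evolution of the second fundamental form under a normal flow. Starting from $h_{ij} = -g(\nu,\nabla_{\partial_i}\partial_j F)$ (equivalently $g(\nabla_{\partial_i}\nu,\partial_j F)$), one differentiates in $t$, commutes the $t$- and $x$-covariant derivatives (which produces an ambient curvature term), substitutes $\partial_t\nu = -Df$ from (ii) together with the decomposition $\nabla_{\partial_t}\partial_i F = \nabla_{\partial_i}(f\nu) = (\partial_i f)\,\nu + f\,h_i^k\partial_k F$, and rewrites the curvature term via the definition of $\operatorname{Riem}$ fixed in Section \ref{sec_preliminaries}; this produces $\partial_t h_{ij} = -\operatorname{Hess}_\gamma f_{ij} + f\,h_i^k h_{kj} - f\,\operatorname{Riem}_{i\nu j\nu}$, i.e.\ (iii) with $f=\Psi$. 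Tracing (iii) with $\gamma^{ij}$ and using $\partial_t\gamma^{ij} = -2\Psi\,h^{ij}$ from (i) then gives $\partial_t H = -\Delta_\gamma\Psi - \Psi\big(\Ric(\nu,\nu) + \btr{h}^2_\gamma\big)$, which is (iv). For (v) the key observation is that $K$ is a fixed tensor field on $M$, so $\partial_t\big(K(\partial_i F,\partial_j F)\big) = f\,(\nabla_\nu K)(\partial_i F,\partial_j F) + K\big(\nabla_{\partial_t}\partial_i F,\partial_j F\big) + K\big(\partial_i F,\nabla_{\partial_t}\partial_j F\big)$; inserting $\nabla_{\partial_t}\partial_i F = (\partial_i f)\,\nu + f\,h_i^k\partial_k F$ and tracing with $\gamma^{ij}$, the terms $2f\,h^{ij}K_{ij}$ coming from the Weingarten parts cancel against $(\partial_t\gamma^{ij})K_{ij} = -2f\,h^{ij}K_{ij}$, and what remains is $\partial_t P = f\,\tr_\gamma(\nabla_\nu K) + 2K_{\nu i}D^i f$, which is (v).

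Finally, (vi) is purely algebraic once (iv) and (v) are in hand: differentiating $\Phi^2 = H^2 - P^2$ gives $\partial_t\Phi = \Phi^{-1}\big(H\,\partial_t H - P\,\partial_t P\big)$, into which I substitute (iv) and (v) with $f=\Psi=\Phi^{-1}$ and then convert all $\Psi$-derivatives to $\Phi$-derivatives via $D^i\Psi = -\Phi^{-2}D^i\Phi$ and $\Delta_\gamma\Psi = 2\Phi^{-3}\btr{D\Phi}_\gamma^2 - \Phi^{-2}\Delta_\gamma\Phi$; collecting terms and factoring out $\Phi^{-2}$ produces the stated formula. The one step I expect to demand genuine care is the computation of $\partial_t h_{ij}$ in (iii) — keeping the sign conventions for $\nu$, $h$ and $\operatorname{Riem}$ mutually consistent through the commutation of covariant derivatives is exactly where such computations usually go wrong — while (v) is a matter of bookkeeping the tangential/normal split of $\nabla_{\partial_t}\partial_i F$ against the frozen tensor $K$, and (vi) is then mechanical.
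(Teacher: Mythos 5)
Your proposal is correct and follows essentially the same route as the paper: (i)--(iv) are the standard first-variation formulae for a normal flow with speed $f=\Psi$ (the paper just cites Huisken--Polden, Theorem 3.2), (v) is the evolution of $\tr_\gamma K$ using that $K$ is a frozen ambient tensor together with the Weingarten relation and $\partial_t\nu=-D\Psi$, and (vi) is the algebraic consequence of (iv) and (v) via $\partial_t\Phi=\Phi^{-1}(H\partial_tH-P\partial_tP)$. The only cosmetic difference is in (v): the paper differentiates $P=\tr_gK-K(\nu,\nu)$ (so the Weingarten terms never appear), whereas you differentiate $\gamma^{ij}K_{ij}$ directly and observe that the $2\Psi h^{ij}K_{ij}$ contributions from $\partial_t\gamma^{ij}$ and from the tangential part of $\nabla_{\partial_t}\partial_iF$ cancel --- both give the same result.
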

		\begin{proof}
			(i)--(iv) follow as a direct consequence of the well-known evolution equations for general flows, see e.g. \cite[Theorem 3.2]{huiskenpolden}. For (v), we compute the evolution of $P$ as in \cite[Lemma 3]{moore}:
			\begin{align*}
			\frac{\d}{\d t}P
			&=\frac{\d}{\d t}(\tr_gK-\nu^{\alpha}\nu^{\beta}K_{\alpha\beta})\\
			&=\frac{\d}{\d t}\tr_gK-\nu^{\alpha}\nu^{\beta}\frac{\d}{\d t}K_{\alpha\beta}-2\nu^{\beta}K_{\alpha\beta}\frac{\d}{\d t}\nu^{\alpha}\\
			&=\Psi\left(\nabla_\nu\tr_gK-\left(\nabla_\nu K\right)(\nu,\nu)\right)+2K_{\nu i}D^i\Psi\\
			&=\Psi\left(\tr_g\nabla\nu K-\left(\nabla_\nu K\right)(\nu,\nu) \right)+2K_{\nu i}D^i\Psi\\
			&=\Psi\tr_{\gamma}\nabla_\nu K+2K_{\nu i}D^i\Psi.
			\end{align*}
			(vi) follows directly from (iv) and (v).
		\end{proof}
		
		While the main aim of this paper is the construction of a global weak solution to STIMCF, we begin by proving
that for smooth closed initial hypersurfaces $F_0$ with $\Phi > 0$ a smooth solution to the flow exists as long as $\Phi > 0$ remains true.
A corresponding result for inverse mean curvature flow in Euclidean space was shown in \cite[Corollary 2.3]{huiskenilmanen2}, for general ambient manifolds the result appears to be new also for inverse mean curvature flow.

\begin{thm}\label{smooth} Suppose the initial hypersurface $F_0\colon\Sigma\to M$ is a smooth immersion satisfying  
$\Phi>\delta_0>0$.  Then there exists $T>0$ depending on $\delta_0$ and the regularity of $F_0, (M,g,K)$ with a unique smooth solution $F\colon\Sigma\times[0,T)\to M$ to \eqref{stimcf} on $[0, T)$.  If the space-time mean curvature $\Phi$ remains bounded from below by a constant $\delta_1>0$ for all $t\in [0,T)$, then the solution can be extended beyond $T$. In particular, if 
$[0, T_{max}) $ is the maximal time interval of existence for a smooth
solution of \eqref{stimcf} with $T_{max} <\infty$, then the speed $\Psi = 1/\Phi$ is unbounded for $t\to T_{max}$.
\end{thm}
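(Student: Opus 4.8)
\noindent\textit{Proof plan.}\quad The flow \eqref{stimcf} is a quasilinear parabolic system, and the whole argument is about keeping this parabolicity uniform. First note that parabolicity forces $H>0$ (not just $H^2>P^2$): linearising \eqref{stimcf} and using Lemma \ref{lem_evolutioneq}(iv) together with $\Psi=(H^2-P^2)^{-1/2}$, the principal part of the evolution is $\tfrac{H}{\Phi^3}\,\Delta_\gamma$, which is elliptic exactly when $H>0$; and since $\Phi>0$ prevents $H$ from vanishing, $H>0$ is preserved along the flow. Granting this, for short-time existence I would represent neighbouring hypersurfaces as normal graphs over $\Sigma$, writing $F(\cdot,t)=\exp_{F_0}\!\big(u(\cdot,t)\,\nu_0\big)$ with $u$ a scalar function, and reparametrise so the velocity stays normal; then \eqref{stimcf} becomes a scalar quasilinear equation $\partial_t u=\mathcal Q[u]$ which is uniformly parabolic near $u\equiv 0$ because $\Phi>\delta_0>0$ and $H>0$ on $\Sigma_0$. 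A standard contraction / inverse-function-theorem argument in parabolic Hölder spaces $C^{2+\alpha,\,1+\alpha/2}$ (as in \cite{huiskenpolden}) then gives a unique solution on some $[0,T)$, with $T$ bounded below only in terms of $\delta_0$ and the Hölder norms of $F_0$ and of $g,K$ near $F_0(\Sigma)$, since one only needs $u$ to stay small enough that $\Phi>\tfrac{\delta_0}{2}$ and $H>0$ persist. Uniqueness for the geometric flow follows from uniqueness for this scalar Cauchy problem.

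For the extension statement, assume $\Phi\ge\delta_1>0$ on $[0,T)$ with $T<\infty$; the goal is uniform $C^k$-control of $F(\cdot,t)$ as $t\to T$. Since $\big|\tfrac{\partial F}{\partial t}\big|=\Psi\le\delta_1^{-1}$ by \eqref{stimcf}, every point travels a bounded distance, so the $\Sigma_t$ stay in a fixed compact set $\mathcal K\subset M$; hence $g$, $K$ and their derivatives are bounded on $\mathcal K$, and $|P|=|\tr_\gamma K|\le C_0$ there. Next I would establish an a priori upper bound $\Phi\le C$ by the maximum principle applied to Lemma \ref{lem_evolutioneq}(vi): at a spatial maximum of $\Phi$ one has $D\Phi=0$ and $\Delta_\gamma\Phi\le 0$, so only the reaction terms survive, and since $|h|^2_\gamma\ge H^2/n$ and $H\ge\Phi\ge\delta_1$ the term $-H|h|^2_\gamma\le-\Phi^3/n$ dominates the others (which are $O(\Phi)+O(1)$ with constants coming from $\mathcal K$), so $\tfrac{d}{dt}\max_\Sigma\Phi<0$ as soon as $\max_\Sigma\Phi$ exceeds a constant depending only on $n$, $\delta_1$ and the geometry on $\mathcal K$. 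Hence $\delta_1\le\Phi\le C$ on $[0,T)$, and with $|P|\le C_0$ also $\delta_1\le H\le C'$; in particular $H/\Phi^3$ is pinched between two positive constants, so the flow is uniformly parabolic on $[0,T)$.

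The remaining step — from uniform parabolicity to uniform $C^k$-bounds — is the heart of the matter. From Lemma \ref{lem_evolutioneq}(iii) I would derive the evolution equation of $|h|^2_\gamma$, using the Simons identity to turn $h^{ij}\operatorname{Hess}_\gamma H_{ij}$ into $\Delta_\gamma|h|^2_\gamma$ modulo a favourable gradient term and terms controlled by the bounds already obtained; this gives a parabolic inequality for $|h|^2_\gamma$ whose delicate feature is that the reaction is super-quadratic in $h$ — there is even a quartic term $\sim\tfrac{H}{\Phi^3}|h|^4_\gamma$ stemming from the Simons term $|h|^2h_{ij}$. I would close the argument by applying the maximum principle not to $|h|^2_\gamma$ directly but to an appropriate combination of $|h|^2_\gamma$ and $\Phi$, chosen so that the top-order reaction terms cancel against those produced by the evolution of $\Phi$ in Lemma \ref{lem_evolutioneq}(vi), and using in an essential way the favourable gradient term and the fact that STIMCF is an expanding flow with scale-covariant speed; together with the uniform parabolicity and the bounds on $\mathcal K$ this yields $|h|_\gamma\le C$ on $[0,T)$. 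Once $C^2$-bounds are available, differentiating the flow and invoking interior parabolic Schauder estimates gives uniform $C^k$-bounds for all $k$, so $F(\cdot,t)\to F_T$ in $C^\infty$ as $t\to T$, with $F_T$ a smooth immersion and $\Phi|_{F_T}\ge\delta_1>0$; applying the short-time existence result to $F_T$ extends the solution beyond $T$. The final assertion is the contrapositive: if $[0,T_{max})$ is maximal with $T_{max}<\infty$ and $\Psi$ were bounded there, then $\Phi\ge(\sup\Psi)^{-1}>0$ on $[0,T_{max})$ and the extension just established would contradict maximality. As indicated, the main obstacle is precisely the curvature estimate of this last paragraph: the super-quadratic reaction terms must be absorbed by carefully exploiting the algebraic form of the evolution equations and the expanding, scale-covariant nature of the flow — which is where the structure of STIMCF, rather than that of mean curvature flow, is genuinely used.
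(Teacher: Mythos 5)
Your short–time existence argument and the a priori upper bound on $\Phi$ match the paper's (implicit function theorem / Huisken--Polden for short time, maximum principle on Lemma~\ref{lem_evolutioneq}(vi) for $\sup\Phi$). The genuine gap is exactly where you flag ``the heart of the matter'': the curvature estimate.

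First, a linear ``combination of $|h|^2_\gamma$ and $\Phi$'' cannot cancel the quartic reaction term $\sim H\Phi^{-3}|h|^4$ against anything produced by the evolution of $\Phi$, since Lemma~\ref{lem_evolutioneq}(vi) only contributes terms that are at most quadratic in $h$. The degrees do not match. What actually works is the \emph{weighted} tensor $M_{ij}\definedas\Phi h_{ij}$. Writing $\frac{d}{dt}M_{ij}=\Phi\,\partial_t h_{ij}+h_{ij}\,\partial_t\Phi$, the cubic Simons term $H\Phi^{-3}h_{ij}|h|^2$ coming from $\operatorname{Hess}\Phi$ in $\partial_t h_{ij}$ cancels identically against the $-H\Phi^{-2}|h|^2 h_{ij}$ coming from $\partial_t\Phi$, and the remaining quadratic term combines to $h_{im}h^m_j(1-H^2\Phi^{-2})=-P^2\Phi^{-2}h_{im}h^m_j\le0$, which has a good sign. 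This is an exact algebraic cancellation using $H^2=\Phi^2+P^2$, not a consequence of the ``expanding, scale-covariant nature'' of the flow; that heuristic does not by itself produce the needed inequality. The paper then adds the auxiliary tensor $N_{ij}\definedas\Phi^2\gamma_{ij}$ (whose evolution produces $-|h|^2\gamma_{ij}-\Phi^{-2}|D\Phi|^2\gamma_{ij}$) to absorb the remaining $|h|^2$ and gradient terms, and applies Hamilton's \emph{tensor} maximum principle to $\tilde M_{ij}=M_{ij}+\alpha N_{ij}-\beta\gamma_{ij}$, bounding the largest eigenvalue of $h$ rather than $|h|^2$. Your sketch in terms of a scalar quantity $|h|^2$ plus gradient terms does not obviously close without this specific structure.

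Second, passing from a $C^2$-bound to a $C^\infty$-bound is not just ``differentiating the flow and invoking interior parabolic Schauder estimates.'' The speed $\Psi=(H^2-P^2)^{-1/2}$ is a fully nonlinear function of the second fundamental form, so the linearized operator has coefficients depending on second derivatives of the graph function, and Schauder theory needs $C^{2,\alpha}$ coefficients to start. The paper closes this using Krylov's regularity theory, which applies because $\Phi$ is a \emph{concave} function of $h$; once $C^{2,\alpha}$ estimates are in hand, the usual bootstrap gives $C^\infty$. You should make this concavity/Krylov step explicit, as it is not a triviality.
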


First note that short-time existence follows from the implicit function theorem as in \cite[Section 7]{huiskenpolden} since the
linearised operator 
\begin{align*}
\frac{d}{dt} - \frac{H}{\Phi^3}\Delta
\end{align*}
associated with the evolution system \eqref{stimcf} is parabolic modulo tangential diffeomorphisms since
by assumption $\Phi >\delta_0>0$ and hence $H > \delta_0 > 0$.

To prove that the solution can be extended as long as $\Phi$ remains positive, it is enough to prove uniform
bounds on the curvature: Higher derivative estimates then follow from the regularity theory of Krylov \cite{krylov} 
as in the case of inverse mean curvature flow \cite[Corollary 2.3]{huiskenilmanen2}, as $\Phi$ is a concave function 
of the second fundamental form.

To derive curvature estimates from the lower bound on $\Phi$ we begin with a global upper bound on $\Phi$. 
A more sophisticated interior estimate without assuming a lower bound on $\Phi$ will be established in 
Theorem \ref{thm_interiorgradientestimate}.

\begin{lem}\label{lem_upperboundPhi}
Under the assumptions of Theorem \ref{smooth} there is an upper bound for $\Phi$ on $M^{n+1} \times [0, T)$
$$
\max_{\Sigma \times [0, T)} \Phi \leq \max (\max_{\Sigma_0} \Phi, C(\delta_1)),
$$
where $C(\delta_1)$ depends on $\delta_1,\,\textcolor{red}{n},\, \sup_{\Sigma \times [0, T)} |{\rm Ric}|_g,\, \sup_{\Sigma \times [0, T)} |K|_g \,and \,\sup_{\Sigma \times [0, T)} |\nabla K|_g$.
\end{lem}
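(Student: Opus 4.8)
The plan is to run a maximum principle argument on the evolution equation for $\Phi$ from Lemma \ref{lem_evolutioneq}(vi). At a point where $\Phi$ attains a positive spatial maximum at time $t$, we have $D\Phi = 0$ and $\Delta_\gamma \Phi \le 0$, so the gradient terms $-\frac{2H}{\Phi^2}|D\Phi|_\gamma^2$ and $\frac{2P}{\Phi}K_{\nu i}D^i\Phi$ vanish and the term $\frac{H}{\Phi}\Delta_\gamma\Phi$ is nonpositive (recall $H \ge \Phi > 0$). Hence at such a point
\begin{align*}
\frac{\d}{\d t}\Phi \le \Phi^{-2}\left(-H\left(\Ric(\nu,\nu)+|h|_\gamma^2\right) - P\,\tr_\gamma\nabla_\nu K\right).
\end{align*}
The term $-H|h|_\gamma^2$ is the good sign (it helps), so the only danger comes from $-H\,\Ric(\nu,\nu)$ and $-P\,\tr_\gamma\nabla_\nu K$, which are bounded in terms of $\sup|\Ric|_g$, $\sup|\nabla K|_g$ and the quantities $H$, $P$.

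First I would record the pointwise algebraic facts that on the evolving hypersurfaces $\Phi > 0$ forces $H > |P| \ge 0$, so that $H = \sqrt{\Phi^2 + P^2}$; and by Cauchy--Schwarz $|h|_\gamma^2 \ge \frac{1}{n}H^2$. I would then estimate, at a spatial maximum of $\Phi$,
\begin{align*}
\frac{\d}{\d t}\Phi \le \Phi^{-2}\left(H\,\sup|\Ric|_g - \tfrac1n H^3 + |P|\,n\sup|\nabla K|_g\right).
\end{align*}
Using $|P| \le \sqrt{H^2 - \Phi^2} \le H$ and $H = \sqrt{\Phi^2 + P^2} \le \Phi + |P|$, together with a bound $|P| \le n\sup|K|_g =: p_0$, one sees $H \le \Phi + p_0$, so the right-hand side is of the form $\Phi^{-2}\bigl(A(\Phi + p_0) - \tfrac1n(\Phi+p_0)^3 + B p_0\bigr)$ for constants $A = \sup|\Ric|_g$, $B = n\sup|\nabla K|_g$ — wait, one must be slightly careful since $-\tfrac1n H^3 \le -\tfrac1n\Phi^3$ is the inequality one actually wants; this is fine because $H \ge \Phi$. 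Thus at a spatial max,
\begin{align*}
\frac{\d}{\d t}\Phi \le \Phi^{-2}\left(A(\Phi + p_0) + B p_0 - \tfrac1n \Phi^3\right),
\end{align*}
and the cubic $-\tfrac1n\Phi^3$ dominates for $\Phi$ large, so there is a threshold $C_0 = C_0(n, A, B, p_0) = C(\delta_1, n, \sup|\Ric|_g, \sup|K|_g, \sup|\nabla K|_g)$ such that the bracket is negative whenever $\Phi > C_0$. Hence $\Phi_{\max}(t) := \max_{\Sigma}\Phi(\cdot, t)$ cannot increase past $\max(\max_{\Sigma_0}\Phi, C_0)$, which is the claimed bound. (The dependence on $\delta_1$ enters only to guarantee $\Phi$ stays positive so the evolution equation for $\Phi$ and the division by powers of $\Phi$ make sense on $[0,T)$; I would remark that the bound itself is then genuinely independent of $\delta_1$ except through this.)

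The main technical point — and the only real obstacle — is making the maximum-principle step rigorous: one should either invoke a parabolic maximum principle for the function $\Phi$ directly (note from (vi) that $\Phi$ satisfies a quasilinear parabolic equation with principal part $\frac{H}{\Phi^3}\Delta_\gamma\Phi$, which is uniformly parabolic on $[0,T')$ for $T' < T$ once $\Phi$ is bounded below and, a priori, above), or run Hamilton's trick with $\Phi_{\max}(t)$ as a Lipschitz function of $t$ and differentiate it a.e. Since we only want an a priori bound (used to feed into the curvature/Krylov bootstrapping for Theorem \ref{smooth}), the cleanest route is: suppose for contradiction that $\sup_{[0,T)}\Phi_{\max} > \max(\max_{\Sigma_0}\Phi, C_0)$; let $t_1$ be the first time $\Phi_{\max}$ exceeds this value; at $t_1$ the spatial maximum point gives $\frac{\d}{\d t}\Phi < 0$ by the computation above, contradicting that $\Phi_{\max}$ was increasing there. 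One should also double-check the sign conventions in (vi) so that the curvature term really appears as $-H|h|_\gamma^2$ (it does, matching the inverse-mean-curvature-flow case), and handle the boundary term or non-compactness by noting that under the hypotheses of Theorem \ref{smooth} the $\Sigma_t$ are closed, so the spatial max is attained.
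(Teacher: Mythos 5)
Your proof is correct and follows essentially the same strategy as the paper's: a maximum-principle argument on the evolution equation Lemma~\ref{lem_evolutioneq}(vi), with $-H\btr{h}^2\le-\tfrac{1}{n}H^3\le-\tfrac{1}{n}\Phi^3$ as the dominating negative term and the first-time argument to close. The one small difference is procedural: you evaluate directly at the spatial maximum so $D\Phi=0$ kills the gradient and cross terms outright, whereas the paper absorbs the cross term $2\sup\btr{P}\sup\btr{K}\Phi^{-3}\btr{D\Phi}$ into $-2\Phi^{-3}\btr{D\Phi}^2$ by Cauchy--Schwarz and then bounds the leftover by $C(\delta_1)$ using $\Phi\ge\delta_1$; this makes the paper's constant depend on $\delta_1$, and your remark that the threshold $C_0$ need not in fact depend on $\delta_1$ (beyond justifying the computation) is an accurate and slightly sharper observation.
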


\begin{proof} 
Using $|h|^2 \geq (1/n) H^2$ and $H^2 = \Phi^2 + P^2 \geq \Phi^2 \geq \delta_1^2$  the RHS in the evolution equation for $\Phi$ in Lemma \ref{lem_evolutioneq} (vi) can be estimated from above to get
\begin{eqnarray}\nonumber
\frac{d}{dt} \Phi &\leq& H\Phi^{-3}\Delta_{\gamma} \Phi - 2\Phi^{-3}|D \phi|^2 - \frac{1}{n}H^3\Phi^{-2} \\ \nonumber
&+& \sup |{\rm Ric}|_g H\Phi^{-2}  + n \sup |P|_g \sup |\nabla K|_g \Phi^{-2} + 2 \sup |P|_g \sup |K|_g \Phi^{-3} |D\Phi| \\
\nonumber
&\leq& H\Phi^{-3}\Delta_{\gamma} \Phi - \Phi^{-3}|D \phi|^2 - \frac{1}{n}\Phi + C(\delta_1).
\end{eqnarray}
The conclusion of the lemma then follows from considering the first event where a new maximum of
$\Phi$ is attained. 
\end{proof}

To derive an estimate for the full second fundamental form we first have to rewrite the evolution equation 
for the second fundamental form such that a tensor version of the parabolic maximum principle can be applied to the 
largest eigenvalue. For this we need the Simons identity for the second derivatives of the second fundamental form, 
which we write as in \cite[Corollary 2.2]{huiskenpolden}:
\begin{eqnarray}\label{simonsidentity}
D_i D_j H &=& \Delta_{\gamma} h_{ij} - Hh_{im}h_{mj} +h_{ij} \vert h \vert^2 -
H {\rm Riem}_{\nu i \nu j}
\\ 
\nonumber
&&+ {\rm Ric}_{\nu\nu}h_{ij} - {\rm Riem}_{kikm}h_{mj} -  {\rm Riem}_{kjkm}h_{im}
\\ 
\nonumber
&&- {\rm Riem}_{kijm}h_{km} -  {\rm Riem}_{mjik}h_{km} -  \nabla_k  {\rm Riem}_{\nu jik} -
\nabla_i  {\rm Riem}_{\nu kjk},
\\  
\nonumber
 &=&  \Delta_{\gamma} h_{ij} - Hh_{im}h_{mj} +h_{ij} \vert h \vert^2 + Z_{ij}, 
 \\
 \nonumber
 Z_{ij} &:=& -H {\rm Riem}_{\nu i \nu j} + {\rm Ric}_{\nu\nu}h_{ij} - {\rm Riem}_{kikm}h_{mj} -  {\rm Riem}_{kjkm}h_{im}
\\ 
&&- {\rm Riem}_{kijm}h_{km} -  {\rm Riem}_{mjik}h_{km} -  \nabla_k  {\rm Riem}_{\nu jik} -
\nabla_i  {\rm Riem}_{\nu kjk}.
\nonumber
 \end{eqnarray}

From $D_j \Phi = \phi^{-1}(H D_j H - P D_j P)$ we may then compute
\begin{eqnarray}\label{hessianphi}
D_i D_j \Phi &= & - \Phi^{-2} D_i \Phi (H D_j H - P D_j P) + H \Phi^{-1} D_i D_j H  \\
\nonumber
&& + \Phi{-1} D_i H D_j H - \Phi^{-1}(P D_i D_j P + D_i P D_j P) \\
\nonumber
&=& H\Phi^{-1} D_i D_j H - \Phi^{-1} D_i \Phi D_j\Phi + \Phi^{-1} D_i H D_j H \\
\nonumber
&& - \Phi^{-1}(P D_i D_j P + D_i P D_j P) \\
\nonumber
&=&  H\Phi^{-1} \Big(\Delta_{\gamma} h_{ij} - Hh_i^m h^m_j + h_{ij} |h|^2  + Z_{ij} \Big)\\
\nonumber
&& -\Phi^{-1} D_i \Phi D_j \Phi + \Phi^{-1} D_i H D_j H - \Phi^{-1} (P D_i D_j P + D_i P D_j P).
\end{eqnarray}

Then the evolution equation for the second fundamental form in Lemma \ref{lem_evolutioneq} (iii) becomes
\begin{eqnarray}\label{evol_fundform}
\frac{\d}{\d t} h_{ij} &=& -D_i D_j\Psi +\Psi h_i^kh_{kj}-\Psi\operatorname{Riem}_{i\nu j\nu}\\
\nonumber
&=& \Phi^{-2}D_i D_j \Phi - 2 \Phi^{-3}D_i\Phi D_j\Phi  + \Phi^{-1} h_i^kh_{kj}-\Phi^{-1}\operatorname{Riem}_{i\nu j\nu}\\
\nonumber
&=& H\Phi^{-3}\Big(\Delta_{\gamma} h_{ij} - H h_i^k h_{kj} + h_{ij} |h|^2 +Z_{ij} \Big) 
+ \Phi^{-1} h_i^kh_{kj}-\Phi^{-1}\operatorname{Riem}_{i\nu j\nu}\\
\nonumber
&&  -3\Phi^{-3} D_i \Phi D_j \Phi + \Phi^{-3} D_i H D_j H - \Phi^{-3} (P D_i D_j P + D_i P D_j P). 
\end{eqnarray}

The aim is now to get an upper bound on the eigenvalues of the tensor $M_{ij} \definedas{\Phi h_{ij}}$. Combining
the evolution equations for $h_{ij}$ and $\Phi$ we get

\begin{eqnarray}\label{weightedFundForm_eq1}
\frac{d}{dt} M_{ij} &=& H\Phi^{-3} \Delta_{\gamma} M_{ij} -2H\Phi^{-4}D^k \Phi D_k M_{ij} -3 \Phi^{-2} D_i \Phi D_j \Phi 
+ \Phi^{-2}D_i H D_j H \\
 \nonumber
 && + h_{im}h^m_j (1-H^2\Phi^{-2}) - \operatorname{Riem}_{i\nu j\nu}\\
 \nonumber
 &&  + \Phi^{-1}\Big(H\Phi^{-1}Z_{ij} - \Phi^{-1}(P D_i D_j P + D_i P D_j P) \Big)\\
 \nonumber
 && + h_{ij} \Big(-H\Phi^{-2} \operatorname{Ric}_{\nu\nu} -P\Phi^{-2}tr_{\gamma}\nabla_{\nu} K 
 + 2P\Phi^{-3}K_{\nu m}D^m \Phi \Big).
\end{eqnarray}

Using now
\begin{align}
D_i\Phi D_j\Phi = H^2\Phi^{-2} D_i H D_j H + P^2 \Phi^{-2} D_iP D_j P - PH\Phi^{-2} (D_i P D_j H + D_jP D_i H) 
\end{align}
we can write the critical gradient terms in the evolution equation of $M_{ij}$ as
\begin{align}\label{eqn-M}
\begin{split}
-3\Phi^{-2}D_i\Phi D_j\Phi + \Phi^{-2}D_iH D_j H =& -\Phi^{-2}D_i\Phi D_j\Phi + (1 - 2H^2\Phi^{-2}) \Phi^{-2}D_iH D_j H \\
&-2P^2\Phi^{-4}D_i P D_j P \\
&+ 2PH\Phi^{-4} (D_i P D_j H + D_jP D_i H) .
\end{split}
\end{align}	

With $H^2 = \Phi^2 + P^2$ and setting $\xi_i := PD_i H - 2H D_i P$ we can rewrite the RHS of this equation as
\begin{eqnarray}\label{complete-square}
&&-\Phi^{-2}D_i\Phi D_j\Phi - (H^2 + P^2)\Phi^{-4}D_i H D_j H \\ \nonumber
&& -2P^2\Phi^{-4}D_i P D_j P + 2PH\Phi^{-4} (D_i P D_j H + D_jP D_i H) \\ \nonumber
= && -\Phi^{-2}D_i\Phi D_j\Phi - H^2 \Phi^{-4}D_i H D_j H -\Phi^{-4} \xi_i \xi_j + 2\Phi^{-4} (H^2 +\Phi^2) D_i PD_jP
\end{eqnarray}

such that
\begin{eqnarray}\label{weightedFundForm_eq2}
\frac{d}{dt} M_{ij} &=& H\Phi^{-3} \Delta_{\gamma} M_{ij} -2H\Phi^{-4}D^k \Phi D_k M_{ij} - \Phi^{-2} D_i \Phi D_j \Phi \\
 \nonumber
&& - H^2\Phi^{-4}\Phi^{-2}D_i H D_j H  -\Phi^{-4} \xi_i \xi_j - P^2 h_{im}h^m_j \\
 \nonumber
 && + \Phi^{-4} (2H^2 +\Phi^2) D_i PD_jP - \operatorname{Riem}_{i\nu j\nu} + H\Phi^{-2}Z_{ij} - \Phi^{-2}P D_i D_j P \\
 \nonumber
 && + h_{ij} \Big(-H\Phi^{-2} \operatorname{Ric}_{\nu\nu} -P\Phi^{-2}tr_{\gamma}\nabla_{\nu} K 
 + 2P\Phi^{-3}K_{\nu m}D^m \Phi \Big).
\end{eqnarray}

Now notice that, using the Codazzi equations $D_j h_{ik} = D_k h_{ij} + \operatorname{Riem}_{\nu jik}$,
\begin{eqnarray}\label{deriv_P}
D_iP &=& D_i(tr_g K - K_{\alpha\beta}\nu^{\alpha}\nu{\beta}) \\
\nonumber
&=& D_i tr_g K - D_iK_{\alpha\beta} \nu^{\alpha}\nu{\beta} -2K_{m\beta}h^m_i\nu^{\beta},\\
\nonumber
D_jD_i P &=& D_jD_i tr_g K - D_j D_i K_{\alpha\beta} \nu^{\alpha}\nu^{\beta} -2D_i K_{m\beta}h^m_j \nu^{\beta}\\
\nonumber
&& -2K_{ml}h^m_i h^l_j + 2h_{im}h^m_j K_{\alpha\beta}\nu^{\alpha}\nu{\beta} -2 D_j h_i^m K_{m\beta}\nu^{\beta}\\
\nonumber
&=& -2 D_k M_{ij} \Phi^{-1}K^k_{\beta}\nu^{\beta} + 2D_k\Phi h_{ij} K^k_{\beta}\nu^{\beta} + B_{ij}
\end{eqnarray}
where 
\begin{align}\label{estim_RHS}
|B_{ij}| \le C(|h|^2 \sup_{\Sigma \times [0, T)}|K| + |h|\sup_{\Sigma \times [0, T)}|\nabla K| + \sup_{\Sigma \times [0, T)}|\nabla^2K|).
\end{align}

We say that a symmetric bilinear form $T$ is non-negative and write $T_{ij}\geq 0$ if all eigenvalues of $\{T^i_j\}$ are non-negative or, equivalently, $T_{ij}X^i X^j \geq 0$ $\,\forall X\in T_p\Sigma, p\in \Sigma.$ Setting

\begin{align}\label{coefficient}
b^k \definedas -2H\Phi^{-4}D^k \Phi +2\Phi^{-3}P K^k_{\beta}\nu^{\beta}
\end{align}

we then conclude from (\ref{weightedFundForm_eq2},  \ref{deriv_P}, \ref{estim_RHS}) and Lemma \ref{lem_upperboundPhi}

\begin{eqnarray}
\frac{d}{dt} M_{ij} &\leq& H\Phi^{-3} \Delta_{\gamma} M_{ij} +b^k D_k M_{ij}  
+ C(1 + |h|^2 + \Phi^{-2}|D\Phi|^2) \gamma_{ij} + H\Phi^{-2} Z_{ij} \\
 \nonumber
 && + h_{ij} \Big(-H\Phi^{-2} \operatorname{Ric}_{\nu\nu} -P\Phi^{-2}tr_{\gamma}\nabla_{\nu} K 
 + 2P\Phi^{-3}K_{\nu m}D^m \Phi \Big),
\end{eqnarray}

where we used again $H^2 = \Phi^2 + P^2$ and  $C$ depends	on $\delta_1, \sup_{\Sigma \times [0,T)} |{\rm Riem}|_g, 
|K|_{C^2 (\Sigma \times [0,T))}$. Estimating now the quantity $Z_{ij}$ from (\ref{simonsidentity}) by
$$
Z_{ij}  \leq C(1 +  |h|) \gamma_{ij}
$$
with a constant depending on $ |{\rm Riem}|_{C^1(\Sigma \times [0, T))}$  we conclude

\begin{eqnarray}\label{final_estimate_M}
\frac{d}{dt} M_{ij} &\leq& H\Phi^{-3} \Delta_{\gamma} M_{ij} +b^k D_k M_{ij}  
+ C_1(1 + |h|^2 + \Phi^{-2}|D\Phi|^2) \gamma_{ij} 
\end{eqnarray}

where we again used  Lemma \ref{lem_upperboundPhi} and  the constant
$C_1$  depends on $\delta_1$, $n$, $\sup_{\Sigma \times [0,T)} |{\rm Riem}|_g$, $|K|_{C^2 (\Sigma \times [0,T))}$. To handle the terms on the RHS we consider the auxiliary tensor $N_{ij} \definedas \Phi^2 \gamma_{ij}$ and compute from $H \geq \Phi$, (\ref{coefficient}) and Lemma \ref{lem_evolutioneq} (i), (vi) 

\begin{eqnarray}\nonumber
\frac{d}{dt} N_{ij} &\leq& H\Phi^{-3}\Delta_{\gamma} N_{ij} - 2H\Phi^{-4}D^k \Phi D_k N_{ij} -2 H\Phi^{-3}|D\Phi|^2
-2|h|^2  \\ \nonumber
&+& C(1+|h|)\gamma_{ij}  + C |D\Phi|\\
\nonumber
&\leq& H\Phi^{-3}\Delta_{\gamma} N_{ij} + b^k D_k N_{ij} - \Phi^{-2}|D\Phi|^2 \gamma_{ij} - |h|^2\gamma_{ij} + C_2\gamma_{ij},
\end{eqnarray}
where the constant $C_2$ depends on $\delta_1$ and the same quantities as before. Now consider a combination
$\tilde M_{ij} \definedas M_{ij} + \alpha N_{ij} -\beta\gamma_{ij}$ for $\alpha = C_1 +1$ and $\beta>0$ such that $\tilde M_{ij} < 0$ at time $t=0$. We get from (\ref{final_estimate_M}) and Lemma \ref{lem_evolutioneq}

\begin{eqnarray}
\frac{d}{dt} \tilde M_{ij} &\leq& H\Phi^{-3} \Delta_{\gamma} \tilde M_{ij} +b^k D_k \tilde M_{ij} \\
 \nonumber
 && -|h|^2 \gamma_{ij} + (C_1 +1)C_2 \gamma_{ij} -2\beta \Phi^{-1}h_{ij}.
\end{eqnarray}
Similar as in \cite[Theorem 9.1]{hamilton} this parabolic differential inequality for the tensor $\tilde M_{ij}$ now leads to a contradiction for sufficiently large $\beta$ depending on $C_1, C_2$ if $\tilde M_{ij}$ reaches a zero eigenvalue somewhere for the first time, since $H\geq 0$, $|h|^2$ is quadratic in the largest eigenvalue $\lambda_n$ of $h$ and $\lambda_n >0$ at such a point. This implies the desired curvature bound since $H\geq 0$ and $\Phi\geq \delta_1$. \qed

We now establish an interior upper bound on $\Phi$ which will be crucial for the construction of weak solutions later on.
		\begin{thm}\label{thm_interiorgradientestimate}
			Let $x\in M^{n+1}$, let $d_x$ denote the distance to $x$, and let $R>0$ be such that $B_R(x)\subset\subset M^{n+1}$, $\Ric_g\ge -\frac{1}{100(n+1)R^2}$ in $B_R(x)$, and there exists a function $p\nolinebreak\in\nolinebreak C^2(B_R(x))$ such that
			\[
			p(x)=0,\text{ } \frac{3}{2}d_x^2\ge p\ge d_x^2\text{ on }B_R(x), \text{ }\btr{\nabla p}_g\le 3d_x,\text{ and }\nabla^2p\le 3g\text{ on }B_R(x).
			\]
			Assuming furthermore that $H_{\Sigma_s}>0$ for $s\in[0,t]$, there exists a constant $C(n)>0$ depending only on the dimension, such that
			\begin{align}
			\sup\limits_{F(\Sigma,[0,t])\cap B_{\nicefrac{R}{2}}(x)}\Phi\le
			\max\left(\max\limits_{\Sigma_0\cap B_R(x)}\Phi,C(n)\left(\frac{1}{R}+\sup\limits_{\overline B_R(x)}\btr{K}_g+\sup\limits_{\overline B_R(x)}\btr{\nabla K}_g^\frac{1}{2}\right)\right),
			\end{align}
			where $\Phi=\sqrt{H^2-P^2}$.
		\end{thm}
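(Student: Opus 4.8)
The plan is to run a localized maximum-principle argument on the evolution equation for $\Phi$ from Lemma \ref{lem_evolutioneq}(vi), testing against a cutoff built from the auxiliary function $p$ so that the bad terms are absorbed. Concretely, I would consider the quantity $\eta = (R^2 - p)^2_+ \Phi$ (or a comparable power of the cutoff) defined on $F(\Sigma,[0,t]) \cap B_R(x)$, which vanishes on $\partial B_R(x)$, and examine the first time and point where a would-be supremum larger than the claimed bound is attained. At an interior spatial maximum of $\eta$ at time $s$ we have $D\eta = 0$, giving $D\Phi = \frac{2\Phi}{R^2 - p} Dp$ (so $|D\Phi|$ is controlled by $\Phi$, $|Dp|_\gamma \le |\nabla p|_g \le 3 d_x$, and the cutoff), and $\frac{d}{dt}\eta \ge 0$, $\Delta_\gamma \eta \le 0$. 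Plugging the evolution of $\Phi$ and the Hessian bound $\nabla^2 p \le 3g$ (projected to $\Sigma$) into $0 \le \frac{d}{dt}\eta - H\Phi^{-3}\Delta_\gamma \eta$ should, after using $|h|^2 \ge \frac{1}{n}H^2$ and $H^2 = \Phi^2 + P^2$, produce an inequality of the shape $0 \le -c(n) \frac{H^3}{\Phi^2}(R^2-p)^2 \eta^{\text{-ish}} + (\text{lower order})$, where the lower-order terms carry only $R^{-1}$, $\sup|K|_g$, and $\sup|\nabla K|_g^{1/2}$ after Young's inequality against the dominant negative term. This forces $H$ (hence $\Phi$) at that point to be bounded by $C(n)(R^{-1} + \sup|K|_g + \sup|\nabla K|_g^{1/2})$, contradicting the assumption that the supremum exceeds the stated bound.

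In carrying this out the key structural features I would exploit are: (1) the leading term $\frac{H}{\Phi}\Delta_\gamma \Phi$ in (vi) combined with $H \ge \Phi$ means the operator $\frac{d}{dt} - H\Phi^{-3}\Delta_\gamma$ is genuinely parabolic and the cutoff argument closes; (2) the genuinely \emph{negative} zeroth-order term in (vi), namely $-\Phi^{-2}H(\Ric(\nu,\nu) + |h|^2)$, which after $\Ric_g \ge -\frac{1}{100(n+1)R^2}$ and $|h|^2 \ge H^2/n$ dominates like $-\frac{1}{n}\frac{H^3}{\Phi^2} + \frac{1}{100(n+1)R^2}\frac{H}{\Phi^2}$, and the first piece is cubic in $H$, beating everything else; (3) the gradient term $\frac{2P}{\Phi}K_{\nu i}D^i\Phi$ and the $-P\tr_\gamma \nabla_\nu K$ term, bounded by $|P| \le H$ times $\sup|K|_g$ and $\sup|\nabla K|_g$ respectively — these get absorbed by Young's inequality, and the square-root appears because $\sup|\nabla K|_g \cdot \frac{H}{\Phi^2} \le \varepsilon \frac{H^3}{\Phi^2} + C_\varepsilon \frac{\sup|\nabla K|_g^{3/2}}{\Phi}$... actually one wants $\sup|\nabla K|_g^{1/2}$ as a bound on $\Phi \sim H$ at the max point, so the balancing is $|P|\sup|\nabla K|_g \Phi^{-2} \le \varepsilon H^3 \Phi^{-2}/n + C H^{-1}\sup|\nabla K|_g^2 \Phi^{-2} \cdot \ldots$ — the correct Young pairing is the one that, at a point where $\eta$ peaks and all of $(R^2-p)$, cutoff-derivatives are $O(1)$-controlled, yields $H \lesssim \sup|\nabla K|_g^{1/2}$. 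I would also need to handle the term $\frac{d}{dt}$ hitting the cutoff: since $p$ is fixed on $M$ but $\Sigma_s$ moves, $\frac{d}{dt}p(F(x,s)) = \langle \nabla p, \nu\rangle \Psi \le |\nabla p|_g \Phi^{-1} \le 3 d_x \Phi^{-1}$, which is again lower-order relative to the cubic term and absorbed.

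I expect the main obstacle to be bookkeeping in the cutoff computation rather than any conceptual difficulty: one must track how $D_k$ and $\Delta_\gamma$ act on $(R^2 - p)^2$ restricted to the moving hypersurface, using only $|\nabla p|_g \le 3d_x \le 3R$ and $\nabla^2 p \le 3g$ (whose tangential projection gives $\text{Hess}_\gamma p \le 3\gamma + |\nabla p|_g |h| \le 3\gamma + 3R|h|\gamma$, and the $|h|$ term here is dangerous — it couples to the second fundamental form, so one must make sure the coefficient in front is small, i.e. that $R$ times the relevant weight is small, which is presumably why the radius $R$ enters and why $\Ric \ge -\frac{1}{100(n+1)R^2}$ with the specific constant $\frac{1}{100}$ is imposed). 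The delicate point is thus ensuring the $|h|$-linear contribution coming from $\text{Hess}_\gamma p$ through the cutoff's Laplacian, and the $\Phi^{-2}|D\Phi|^2$ terms, are strictly dominated by the cubic $-\frac{1}{n}H^3\Phi^{-2}$ term after the substitution $D\Phi = \frac{2\Phi}{R^2-p}Dp$; this requires choosing the exponent of the cutoff (I'd guess $(R^2-p)^2$ or $(R^2-p)^4$) large enough and using $R \le$ small implicitly, or more precisely absorbing with the freedom that at the contradiction point $H$ is large. Finally, the reduction "curvature bound $\Rightarrow$ $\Phi$ bound" is immediate since $\Phi \le H \le \sqrt{n}|h|$, but one should double check the argument does not secretly need $|h|$ bounded a priori — it does not, because $|h|^2 \ge H^2/n$ is used in the \emph{favorable} direction, and at the first bad point the cubic term has the right sign regardless of how large $|h|$ is.
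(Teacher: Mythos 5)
Your proposal takes essentially the same route as the paper's proof: the paper considers $\varphi := (R^2-p)^2_+\Phi$, applies the parabolic maximum principle to its evolution (derived from Lemma \ref{lem_evolutioneq}(vi)), uses $|h|^2/\Phi^2 \ge H^2/(n\Phi^2) \ge 1/n$ to extract the dominant negative term, and absorbs the remaining curvature and $K$-terms via Young after multiplying the resulting pointwise inequality by $\varphi^2 = \eta^2\Phi^2$. Your worry about the $h_{ij}\langle\nabla p,\nu\rangle$ contribution in the cutoff's second derivatives is correct but benign: what actually enters through $\Delta_\gamma\eta$ is only the trace $H\langle\nabla p,\nu\rangle$, and after writing $H^2=\Phi^2+P^2$ and using $|P|\le n\sup|K|_g$ it is controlled together with the other lower-order terms exactly as you anticipated.
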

		\begin{bem}\label{bem_interiorgradientestimate}
			 We choose $p(y)\definedas\btr{x-y}^2$ in flat space with $R=\infty$, but in general $R>0$ will depend on the injectivity radius and Ricci curvature. However, as argued in the Remark to \cite[Definition 3.3]{huiskenilmanen}, each $x\in M^{n+1}$ admits a positive radius $R$, such that the assumptions are satisfied.
		\end{bem}
		\begin{proof}
			Similar as in Daskalopoulos--Huisken \cite{daskalopouloshuisken} we consider the function $\eta(y)\definedas(R^2-p(y))^2_{+}$ on $B_R(x)$, where $p$ and $R$ are as above. Then the evolution of $\varphi\definedas \Phi\cdot \eta$ is given by
			\begin{align*}
			\frac{\d}{\d t}\varphi
			=&\frac{H}{\Phi^3}\Delta_\gamma\varphi-\frac{2H}{\Phi^4}\gamasp{D\Phi, D\varphi}+\frac{2P}{\Phi^3}K_{\nu i}D^i\varphi\\
			&+\frac{2H}{\Phi^2}\eta^{\frac{1}{2}}\Delta_\gamma p-\frac{2H}{\Phi^2}\btr{Dp}^2-\frac{H\eta}{\Phi^2}\left(\Ric(\nu,\nu)+\btr{h}^2\right)\\
			&-\frac{P\eta}{\Phi^2}\tr_{\gamma}\nabla_\nu K+\frac{4P}{\Phi^2}K_{\nu i}D^i p-2\gspann{\nabla p,\nu}\eta^\frac{1}{2}.
			\end{align*}
			We now assume that $\varphi$ takes a new maximum in both space and time at a point $(t_0,x_0)$. Hence we compute at $(t_0,x_0)$
			\begin{align*}
			0
			\le&\frac{2H}{\Phi^2}\eta^{\frac{1}{2}}\Delta_\gamma p-\frac{2H}{\Phi^2}\btr{D p}^2-\frac{H\eta}{\Phi^2}\left(\Ric(\nu,\nu)+\btr{h}^2\right)\\
			&-\tr_{\gamma}\nabla_\nu K\frac{P\eta}{\Phi^2}+\frac{4P\eta^{\frac{1}{2}}}{\Phi^2}K_{\nu i} D^i p-2\gspann{\nabla p,\nu}\eta^\frac{1}{2}.
			\end{align*}
			By assumption we know that $H>0$, so 
			\begin{align*}
			\Phi=\sqrt{H^2-P^2}\le H\le\Phi+\btr{P},
			\end{align*}
			and therefore, we conclude that
			\[
			\frac{\btr{h}^2}{\Phi^2}\ge\frac{\btr{h}^2}{H^2}\ge\frac{1}{n}.
			\]
			Using this, along with the Cauchy--Schwarz inequality together with the assumptions on $\Ric$ and $p$, we get
			\begin{align*}
			0
			\le&6(n+1)\frac{\Phi +\btr{P}}{\Phi^2}\eta^{\frac{1}{2}}+\frac{18}{\Phi}R^2+\frac{\varphi +\btr{P}\eta}{100(n+1)\Phi^2R^2}-\frac{\varphi}{n}\\
			&+n\sup\limits_{\overline B_R(x)}\btr{\nabla K}_g\frac{\btr{P}\eta}{\Phi^2}+\frac{12\btr{P}\eta^{\frac{1}{2}}}{\Phi^2}\sup\limits_{\overline B_R(x)}\btr{K}_gR+6R\eta^\frac{1}{2},
			\end{align*}
			where we used $-H\le-\Phi$, whenever $-H$ is multiplied by a non-negative factor and $H\le\Phi+\btr{P}$ otherwise. We now have
			\begin{align*}
			\varphi
			\le& n\left(6(n+1)\frac{\Phi +\btr{P}}{\Phi^2}\eta^{\frac{1}{2}}+\frac{18}{\Phi}R^2+\frac{\varphi +\btr{P}\eta}{100(n+1)\Phi^2R^2}\right)\\
			&+n\left(n\sup\limits_{\overline B_R(x)}\btr{\nabla K}_g\frac{\btr{P}\eta}{\Phi^2}+\frac{12\btr{P}\eta^{\frac{1}{2}}}{\Phi^2}\sup\limits_{\overline B_R(x)}\btr{K}_gR+6R\eta^\frac{1}{2}\right).
			\end{align*}
			Finally, we use $\btr{P}(t_0,x_0)\le n\sup\limits_{\overline B_R(x)}\btr{K}_g$ and a rearrangement for the sake of clarity yields
			\begin{align*}
			\varphi
			\le&n\left(6(n+1)\frac{1 }{\Phi}\eta^{\frac{1}{2}}+\frac{18}{\Phi}R^2+\frac{\varphi }{100(n+1)\Phi^2R^2}+6R\eta^\frac{1}{2}\right)\\
			&+n\left(n^2\sup\limits_{\overline B_R(x)}\btr{K}_g\sup\limits_{\overline B_R(x)}\btr{\nabla K}_g\frac{\eta}{\Phi^2}+12n\sup\limits_{\overline B_R(x)}\btr{K}_g^2\frac{\eta^{\frac{1}{2}}}{\Phi^2}R\right)\\
			&+n\left(6(n+1)n\sup\limits_{\overline B_R(x)}\btr{K}_g\frac{\eta^{\frac{1}{2}}}{\Phi^2}+n\sup\limits_{\overline B_R(x)}\btr{K}_g\frac{\eta}{100(n+1)\Phi^2R^2}\right).
			\end{align*}
			We multiply the equation by $\varphi^2=\eta^2\cdot\Phi^2$, and obtain
			\begin{align*}
			\varphi^3
			\le
			&C(n)\left(\eta^{\frac{3}{2}}\varphi+R^2\eta\varphi\frac{\eta^2}{R^2}\varphi+R\eta^{\frac{1}{2}}\varphi^2 \right)\\
			&+C(n)\left(\sup\limits_{\overline B_R(x)}\btr{K}_g\left(\eta^{\frac{5}{2}}+\frac{\eta^3}{R^2}\right)+\sup\limits_{\overline B_R(x)}\btr{K}_g^2\eta^{\frac{5}{2}}R+\sup\limits_{\overline B_R(x)}\btr{K}_g\sup\limits_{\overline B_R(x)}\btr{\nabla K}_g\eta^3\right).
			\end{align*}
			Here $C(n)$ is an appropriate positive constant only depending on $n$, and which may vary from line to line in the following estimates. 
			We now employ Youngs inequality for a fixed $\varepsilon>0$ and obtain
			\begin{align*}
			\varphi^3
			\le&
			C(n)\left(\frac{4\varepsilon}{4}\varphi^3+\frac{4}{\varepsilon^2}
			\left(\eta^\frac{9}{4}+2R^3\eta^{\frac{3}{2}}+\frac{\eta^3}{R^3}\right)\right)\\
			&+C(n)\left(\sup\limits_{\overline B_R(x)}\btr{K}_g\left(\eta^{\frac{5}{2}}+\frac{\eta^3}{R^2}\right)+\sup\limits_{\overline B_R(x)}\btr{K}_g^2\eta^{\frac{5}{2}}R+\sup\limits_{\overline B_R(x)}\btr{K}_g\sup\limits_{\overline B_R(x)}\btr{\nabla K}_g\eta^3\right).
			\end{align*}
			It follows that
			\begin{align*}
			\left(1-C(n)\varepsilon\right)\varphi^3
			\le&
			C(n)\frac{4}{\varepsilon^2}
			\left(\eta^\frac{9}{4}+2R^3\eta^{\frac{3}{2}}+\frac{\eta^3}{R^3}\right)+C(n)\sup\limits_{\overline B_R(x)}\btr{K}_g\left(\eta^{\frac{5}{2}}+\frac{\eta^3}{R^2}\right)\\
			&+C(n)\left(\sup\limits_{\overline B_R(x)}\btr{K}_g^2\eta^{\frac{5}{2}}R+\sup\limits_{\overline B_R(x)}\btr{K}_g\sup\limits_{\overline B_R(x)}\btr{\nabla K}_g\eta^3\right).
			\end{align*}
			Choosing $0<\varepsilon(n)<\frac{1}{C(n)}$, we obtain the upper bound
			\begin{align*}
			\varphi^3
			\le&
			C(n)
			\left(\eta^\frac{9}{4}+2R^3\eta^{\frac{3}{2}}+\frac{\eta^3}{R^3}\right)+C(n)\sup\limits_{\overline B_R(x)}\btr{K}_g\left(\eta^{\frac{5}{2}}+\frac{\eta^3}{R^2}\right)\\
			&+C(n)\left(\sup\limits_{\overline B_R(x)}\btr{K}_g^2\eta^{\frac{5}{2}}R+\sup\limits_{\overline B_R(x)}\btr{K}_g\sup\limits_{\overline B_R(x)}\btr{\nabla K}_g\eta^3\right),
			\end{align*}
			at the maximal point $(t_0,x_0)$, and the righthand-side is now independent of $\varphi$. Since $\eta\le R^4$,
			\begin{align*}
			\varphi^3
			\le&
			4C(n)R^9
			+C(n)R^{12}\left(\frac{2}{R^2}\sup\limits_{\overline B_R(x)}\btr{K}_g+\frac{1}{R}\sup\limits_{\overline B_R(x)}\btr{K}_g^2+\sup\limits_{\overline B_R(x)}\btr{K}_g\sup\limits_{\overline B_R(x)}\btr{\nabla K}_g\right).
			\end{align*}
			Using Youngs inequality again, we find
			\begin{align*}
			\varphi^3(t_0,x_0)
			\le C(n)R^9+C(n)R^{12}\left(\sup\limits_{\overline B_R(x)}\btr{K}_g^3+\sup\limits_{\overline B_R(x)}\btr{\nabla K}_g^\frac{3}{2}\right).
			\end{align*}
			Hence, we can conclude that
			\begin{align*}
			\sup\limits_{F(\Sigma,[0,t])\cap B_{R(x)}}\varphi\le	\max\left(\max\limits_{\Sigma_0\cap B_R(x)}\varphi,C(n)\left(R^3+R^4\left(\sup\limits_{\overline B_R(x)}\btr{K}_g+\sup\limits_{\overline B_R(x)}\btr{\nabla K}_g^\frac{1}{2}\right)\right)\right).
			\end{align*}
			Finally, $p\le\frac{3}{2}\d_x$, so $\eta\vert_{B_{\frac{R}{2}}(x)}\ge\frac{25}{64}R^4$, which gives the desired estimate
			\begin{align*}
			\sup\limits_{F(\Sigma,[0,t])\cap B_{\nicefrac{R}{2}}(x)}\Phi\le
			\max\left(\max\limits_{\Sigma_0\cap B_R(x)}\Phi,C(n)\left(\frac{1}{R}+\sup\limits_{\overline B_R(x)}\btr{K}_g+\sup\limits_{\overline B_R(x)}\btr{\nabla K}_g^\frac{1}{2}\right)\right).
			\end{align*}
		\end{proof}
	
	\section{Level-set description and elliptic regularisation}\label{sec_levelsetdescription}
		To reformulate STIMCF as a level-set flow, we first assume that $\{\Sigma_t\}$ is a family of smooth hypersurfaces given as level-sets
		\[
		\Sigma_t=\partial\{x\in M\vert u(x)<t\}
		\]
		of a smooth scalar function $u\colon M\to \R$ with $\nabla_M u\not=0$. Then $u(y)=t$ if and only if there exists $x\in\Sigma$, such that $F(x,t)=y$, and we call $u$ the \emph{time-of-arrival function}. Since for fixed $x\in\Sigma$, $u\circ F(x,\cdot)$ is the identity map on the existence interval of $F$ we conclude that
		\begin{align}\label{prePDE}
		\btr{\nabla u}=\sqrt{H^2-P^2}.
		\end{align}
		In this smooth setting we have		
		\begin{align*}
		H=\dive_{g}\left(\frac{\nabla u}{\btr{\nabla u}_g}\right) \text{ and }
		P=\left(g^{\alpha\beta}-\frac{\nabla^{\alpha} u \nabla^{\beta} u}{\btr{\nabla u}^2_g}\right)K_{\alpha\beta}
		\end{align*}
		such that \eqref{prePDE} can be rearranged as
		\begin{align*}
		\dive_g\left(\frac{\nabla u}{\btr{\nabla u}_g}\right)= + \sqrt{\btr{\nabla u}_g^2+\left(\big(g^{\alpha\beta}-\frac{\nabla^{\alpha} u\nabla^{\beta} u}{\btr{\nabla u}^2_g}\big)K_{\alpha\beta}\right)^2}.
		\end{align*}
		The sign on the RHS is chosen such that STIMCF is consistent with inverse mean curvature flow in the time symmetric case and is further necessary to apply Theorem \ref{thm_interiorgradientestimate} below. If we now assume that $\Sigma_0=\partial E_0$, where $E_0$ is a  precompact $C^2$-domain in $M$, we are led to the degenerate elliptic boundary value problem
		\begin{align}\label{mainPDE}
		\begin{cases}
		\dive_g\left(\frac{\nabla u}{\btr{\nabla u}_g}\right)-\sqrt{\btr{\nabla u}_g^2+\left(\big(g^{\alpha\beta}-\frac{\nabla^{\alpha} u \nabla^{\beta} u}{\btr{\nabla u}^2_g}\big)K_{\alpha\beta}\right)^2}=0,\\
		u\vert_{\partial E_0}=0.
		\end{cases}
		\end{align}
		
		\begin{figure}[h]
			\centering
			\includegraphics{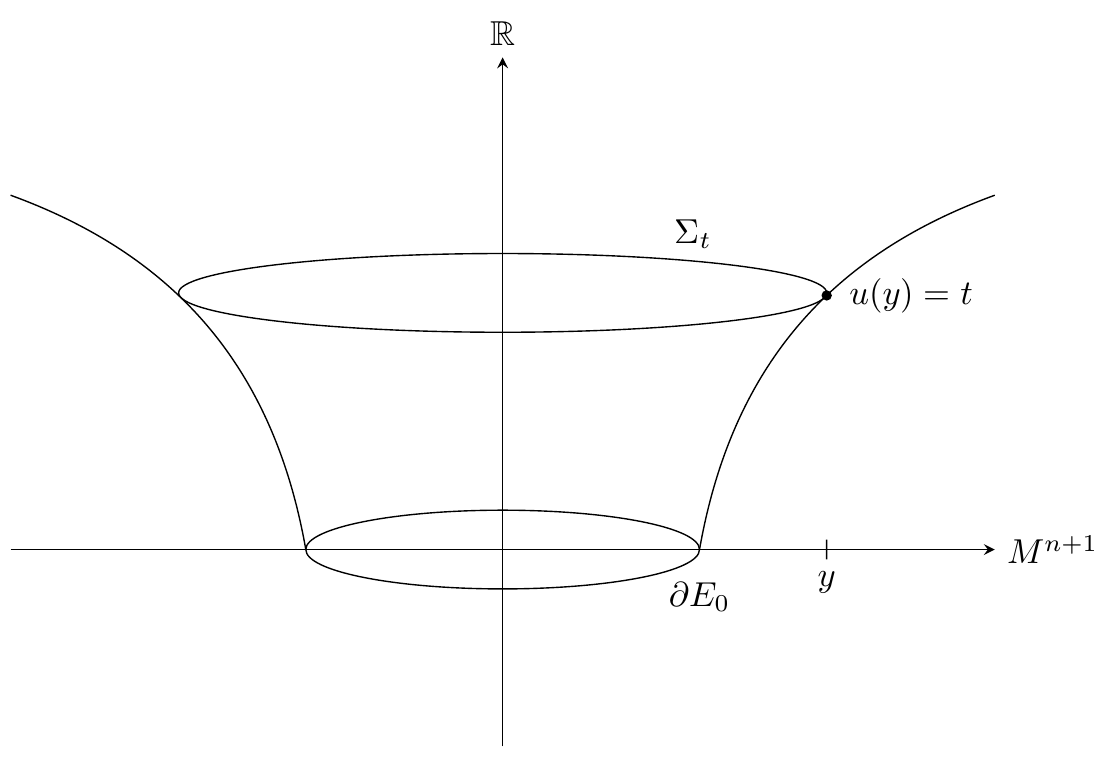}
			\caption{Time-of-arrival function $u$.}
		\end{figure}
		We want to find weak solutions of \eqref{mainPDE} by elliptic regularisation in a broad class of asymptotically flat exterior regions and begin with the construction of subsolutions, motivated by the smooth spherical solutions $u(x)=n\ln\left(\frac{\btr{x}}{R_0}\right)$ in the trivial initial data set $(\R^{n+1},\delta,0)$.
		\begin{lem}\label{lem_subsolution} Let $0<\alpha<n$ and let $(M, g, K)$ be an asymptotically flat exterior region with coordinate system $x: M\setminus \overline{\mathcal{B}}\to  \R^{n+1}\setminus \overline{B_{1}(0)}$ as in Definition \ref{defi_asymflat}. Then, using the notation  $\mathcal{O}_{R_0}= x^{-1}\left(\R^{n+1}\setminus \overline{B}_{R_0}(0)\right)$, there exists $R_0=R_0(\alpha,g,K)>1$, such that
			\[
			v\colon \mathcal{O}_{R_0}\to\R, \, v(y) = \alpha\ln(\btr{x(y)})-\alpha\ln(R_0),
			\]
			is a smooth strict subsolution of \eqref{mainPDE} with $v\vert_{\partial \mathcal{O}_{R_0}}=0$.
		\end{lem}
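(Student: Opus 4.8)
The plan is to work entirely in the asymptotic chart $x\colon M\setminus\overline{\mathcal{B}}\to\R^{n+1}\setminus\overline{B_1(0)}$, to abbreviate $r\definedas|x|$ and $\omega\definedas x/r$, and to compare $v$ with the exact spherical solution of the model problem $(\R^{n+1},\delta,0)$. Since $r\ge R_0>1$ on $\mathcal{O}_{R_0}$, the function $v$ is smooth there, we have $\partial_\beta v=\tfrac{\alpha}{r}\omega_\beta$ so that $\nabla v$ never vanishes, and $v|_{\partial\mathcal{O}_{R_0}}=0$ is immediate; it remains to verify the strict differential inequality. In the flat model one computes $|\nabla v|_\delta=\tfrac{\alpha}{r}$ and $\divf_\delta\big(\tfrac{\nabla v}{|\nabla v|_\delta}\big)=\divf_\delta\,\omega=\tfrac{n}{r}$, the divergence of the radial unit field in $\R^{n+1}$, so the left-hand side of \eqref{mainPDE} applied to $v$ equals $\tfrac{n-\alpha}{r}>0$ by $\alpha<n$. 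The whole point is therefore to show that the corrections produced by $g-\delta$ and by $K$ are of strictly smaller order as $r\to\infty$.

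For this I would exploit that, since $\partial v$ is a positive multiple of the radial covector $\omega$, the normalised $g$-gradient can be written as $W^\alpha\definedas\tfrac{\nabla^\alpha v}{|\nabla v|_g}=\tfrac{g^{\alpha\beta}\omega_\beta}{\sqrt{g^{\mu\nu}\omega_\mu\omega_\nu}}$, a $0$-homogeneous, algebraic---hence smooth---function of $(g,\omega)$ which equals $\omega^\alpha$ when $g=\delta$. The decay hypotheses of Definition \ref{defi_asymflat} force $g\to\delta$ uniformly, so for $R_0$ large $g$ stays in a fixed compact neighbourhood of $\delta$ in the cone of positive-definite matrices; there $W$ and its derivatives in $g$ and $\omega$ are bounded and $W^\alpha=\omega^\alpha+O(|g-\delta|)$. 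Differentiating by the chain rule, $\partial_\alpha W^\alpha$ splits into terms of the form $(\text{bounded})\cdot\partial g$ and $(\partial_\omega W)\cdot\partial\omega$; in the latter the leading contribution reproduces the flat divergence $\divf_\delta\,\omega=\tfrac{n}{r}$, leaving a remainder of size $O\big(|g-\delta|\,r^{-1}+|\partial g|\big)$, and together with the volume term $W^\alpha\partial_\alpha\log\sqrt{\det g}=O(|\partial g|)$ one gets $\divf_g\big(\tfrac{\nabla v}{|\nabla v|_g}\big)=\tfrac{n}{r}+O\big(r^{-1}|g-\delta|+|\partial g|\big)$. In the same way $|\nabla v|_g=\tfrac{\alpha}{r}\big(1+O(|g-\delta|)\big)$, while $P_v=\big(g^{\alpha\beta}-W^\alpha W^\beta\big)K_{\alpha\beta}=O(|K|)$, so the right-hand side of \eqref{mainPDE} evaluated at $v$ satisfies $\sqrt{|\nabla v|_g^2+P_v^2}=\tfrac{\alpha}{r}+O\big(r^{-1}|g-\delta|+|K|\big)$.

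Finally one inserts the explicit rates from Definition \ref{defi_asymflat}: for $n\ge2$ they give $|g-\delta|\le Cr^{-n+\frac32-\varepsilon}$, $|\partial g|\le Cr^{-n+\frac12-\varepsilon}$ and $|K|\le Cr^{-n+\frac12-\varepsilon}$, so every error term above is $O(r^{-\frac32-\varepsilon})$, which is $o(r^{-1})$. Combining the two estimates, the left-hand side of \eqref{mainPDE} evaluated at $v$ is $\tfrac{n-\alpha}{r}+O(r^{-\frac32-\varepsilon})$, so there exists $R_0=R_0(\alpha,g,K)>1$---depending on the gap $n-\alpha$ and on the constant $C$ and exponent $\varepsilon$ in the decay---such that this expression is strictly positive throughout $\mathcal{O}_{R_0}$; this makes $v$ a smooth strict subsolution of \eqref{mainPDE} with $v|_{\partial\mathcal{O}_{R_0}}=0$. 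The step needing the most care is the cancellation inside $\partial_\alpha W^\alpha$: one must verify that after differentiating the normalised gradient field the full flat divergence $n/r$ is recovered, so that the residual genuinely carries an extra power of decay---either a derivative landing on $g$, or a factor $|g-\delta|=o(1)$ multiplying $\partial\omega=O(r^{-1})$---and that no spurious $r^{-1}$ term survives to spoil the sign. Since this argument only uses $g\to\delta$, $\partial g=o(r^{-1})$ and $K=o(r^{-1})$, it also yields the lemma under the weaker decay alluded to in Remark \ref{bem_asymflat}.
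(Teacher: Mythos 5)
Your argument is correct and is exactly the ``straightforward computation'' that the paper's proof invokes without carrying out: compute $\divf_g(\nabla v/|\nabla v|_g)$ and the bulk term in the asymptotic chart, identify the flat leading contribution $\frac{n-\alpha}{r}$, and check that the corrections from $g-\delta$, $\partial g$ and $K$ are $o(r^{-1})$ under the decay of Definition~\ref{defi_asymflat}. Your careful splitting of $\partial_\alpha W^\alpha$ to recover the flat divergence $n/r$ up to lower-order remainders is precisely the cancellation the paper takes for granted, and your closing remark that the argument survives under the weaker decay \eqref{asympdecay2} matches Remark~\ref{bem_subsolution}.
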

		\begin{proof}
			For the sake of simplicity, we express the decay in Definition \ref{defi_asymflat} in terms of Landau symbols, e.g., $g_{ij}=\delta_{ij}+O\left(\frac{1}{\btr{x}^{n-1}}\right)$. W.l.o.g. $R_0>1$, so $\mathcal{O}_{R_0}$ is well-defined and a straightforward computation yields
			\begin{align*}
			\dive_g\left(\frac{\nabla v}{\btr{\nabla v}_g}\right)&-\sqrt{\btr{\nabla v}_g^2+\left(\big(g^{\alpha\beta}-\frac{\nabla^{\alpha} v \nabla^{\beta} v}{\btr{\nabla v}^2_g}\big)K_{\alpha\beta}\right)^2}\ge\frac{1}{\btr{x}}\left((n-\alpha)+\landau{\frac{1}{\btr{x}^{n-1}}}\right).
			\end{align*}
			So $v$ is a subsolution of \eqref{mainPDE}, if $\btr{x}>R_0$, where $R_0=R_0(\alpha,g,K)>1$ can be chosen appropriately.
		\end{proof}
		\begin{bem}\label{bem_subsolution}
			Note that $v$ will remain a subsolution of the elliptic regularisation \eqref{ellreg} defined below on compact regions for sufficiently small $\varepsilon>0$  with respect to the same $R_0$.\\
			Furthermore, $v$ remains a strict subsolution, even if we assume the weaker decay assumptions \eqref{asympdecay2} below in Section \ref{sec_asymptoticbehavior}.
		\end{bem}
		Similar inverse mean curvature flow we expect jump regions in solutions where $\nabla_M u=0$ and \eqref{mainPDE} is not well-defined. To address this problem we use the method of \emph{elliptic regularisation} and approximate weak solutions to inverse space-time mean curvature flow by smooth solutions of strictly elliptic equations. Let $\varepsilon>0$ and consider the following strictly elliptic quasilinear PDE, writing now $\btr{\nabla u } = \btr{\nabla u}_g$ for simplicity,
		\begin{align}\label{ellreg}
		\dive_g\left(\frac{\nabla u_\varepsilon}{\sqrt{\varepsilon^2+\btr{\nabla u_\varepsilon}^2}}\right)-\sqrt{\varepsilon^2+\btr{\nabla u_\varepsilon}^2+\left(\big(g^{\alpha\beta}-\frac{\nabla^{\alpha} u_\varepsilon \nabla^{\beta} u_\varepsilon}{\varepsilon^2+\btr{\nabla u_\varepsilon}^2}\big)K_{\alpha\beta}\right)^2}=0.
		\end{align}
		Rescaling \eqref{ellreg} via $\widehat u_{\varepsilon}\definedas\frac{u_\varepsilon}{\varepsilon}$ gives
		\begin{align*}
		\dive_g\left(\frac{\nabla \widehat{u}_\varepsilon}{\sqrt{1+\btr{\nabla \widehat{u}_\varepsilon}^2}}\right)-\sqrt{\varepsilon^2+\varepsilon^2\btr{\nabla \widehat{u}_\varepsilon}^2+\left(\big(g^{\alpha\beta}-\frac{\nabla^{\alpha} \widehat{u}_\varepsilon \nabla^{\beta} \widehat{u}_\varepsilon}{1+\btr{\nabla \widehat{u}_\varepsilon}^2}\big)K_{\alpha\beta}\right)^2}=0,
		\end{align*}
		which is equivalent to
		\begin{align*}
		\sqrt{\dive_g\left(\frac{\nabla \widehat{u}_\varepsilon}{\sqrt{1+\btr{\nabla \widehat{u}_\varepsilon}^2}}\right)^2-\left(\big(g^{\alpha\beta}-\frac{\nabla^{\alpha} \widehat{u}_\varepsilon \nabla^{\beta} \widehat{u}_\varepsilon}{1+\btr{\nabla \widehat{u}_\varepsilon}^2}\big)K_{\alpha\beta}\right)^2}=\varepsilon\sqrt{1+\btr{\nabla \widehat{u}_{\varepsilon}}^2}.
		\end{align*}
		Then the left-hand side corresponds to the term $\sqrt{\widehat{H}^2_\varepsilon-\widehat P^2}$ of the hypersurfaces \linebreak $\widehat\Sigma^\varepsilon_t\definedas\graph\left(\widehat u_\varepsilon-\frac{t}{\varepsilon} \right)$ in the product manifold $(M \times\R, g+\d z^2,\widetilde{K})$, where we extent $K$ onto $M \times\R$ by $\widetilde{K}_{ij}\definedas K_{ij}$, $\widetilde{K}_{iz}=\widetilde{K}_{zz}\definedas0$. So the downward translating graphs 
$\widehat\Sigma_t^\varepsilon$ solve \eqref{mainPDE} in $M \times\R$ with $U_\varepsilon(y,z)=u_\varepsilon(y)-\varepsilon z$, since $\widehat\Sigma^\varepsilon_t=\{U_\varepsilon(y,z)=t\}$. Equivalently, given smooth solutions $u_\varepsilon$ to \eqref{ellreg}, the hyper\-surfaces $\widehat\Sigma_t^\varepsilon$ are smooth translating solutions of STIMCF in 
		$M\times \R$ with $\sqrt{\widehat H_\varepsilon-\widehat P^2}\Big\vert_{\widehat\Sigma_t^\varepsilon}>0$ along the hypersurfaces.
		\begin{figure}[h]
			\centering
			\includegraphics[scale=0.76]{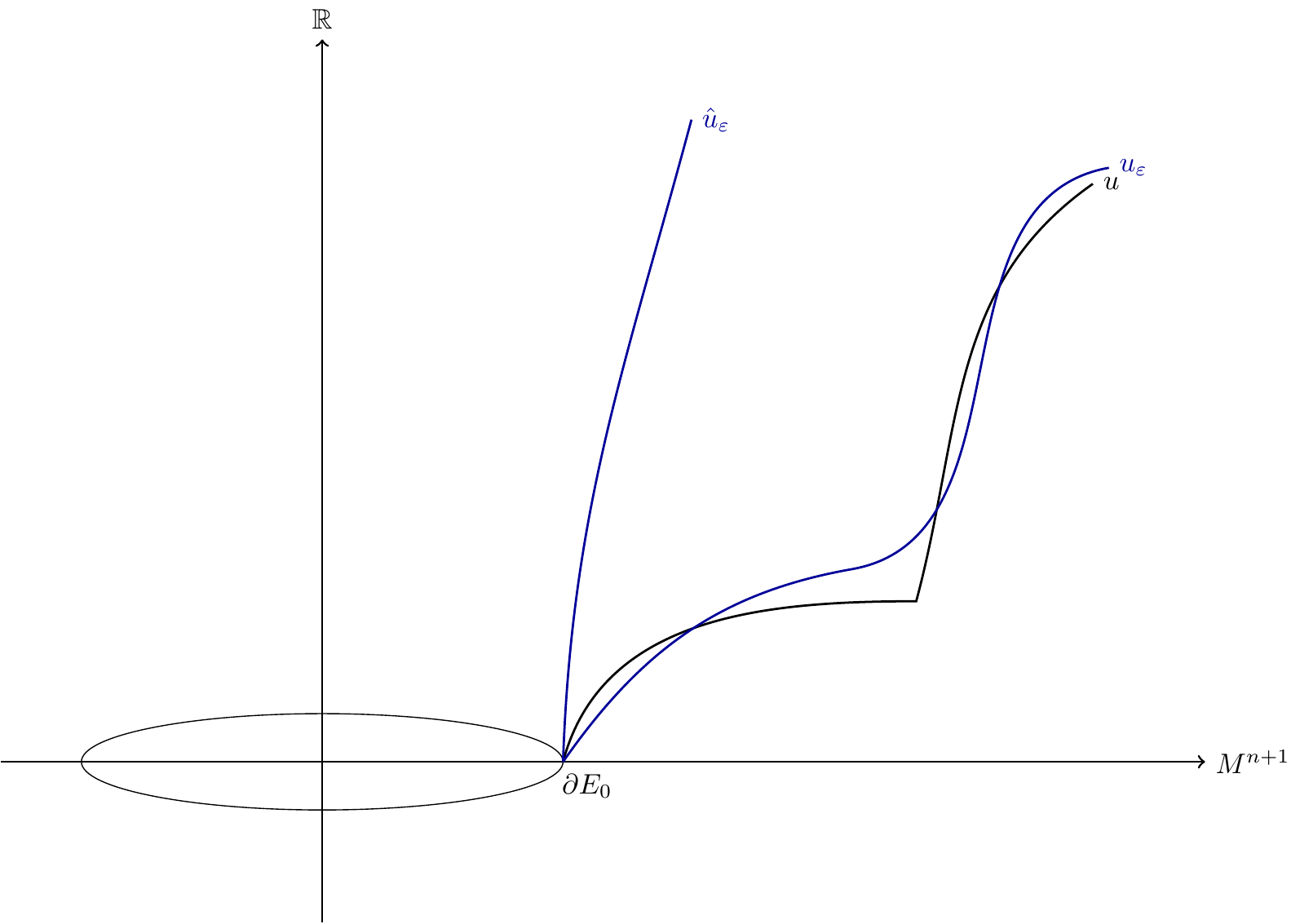}
			\caption{The time-of-arrival function $u$, the elliptic regularisation $u_\varepsilon$,\newline and the rescaling $\hat u_{\varepsilon}$ .}
		\end{figure}
		
We will dedicate the rest of this section to the existence of smooth solutions of the elliptic regularisation \eqref{ellreg}. 
Suppose that the initial hypersurface $\Sigma_0=\partial E_0$ is given as boundary of a precompact domain 
$E_0 \subset M$ and let $F\subset (M\setminus E_0)$ be another precompact domain. Then a solution $u_\varepsilon$ of the
regularisation \eqref{ellreg} can only exist on $F$ if $\varepsilon >0$ is sufficiently small, since the rescaled solution
$\widehat u_\varepsilon$ will satisfy
\begin{align*}
		\varepsilon\btr{F}&\le
		\int\limits_F\varepsilon\sqrt{1+\btr{\nabla \widehat{u}_\varepsilon}^2}\d y
		=\int\limits_F\dive_g\left(\frac{\nabla \widehat{u}_\varepsilon}{\sqrt{1+\btr{\nabla \widehat{u}_\varepsilon}^2}}\right)\d y=\int\limits_{\partial F}\frac{g(\nabla \widehat{u}_\varepsilon, \eta)}{\sqrt{1+\btr{\nabla \widehat{u}_\varepsilon}^2}}\d \sigma\le\btr{\partial F},
		\end{align*}
where $\eta$ is the unit normal to $\partial U$. Therefore we need to specify boundary data for \eqref{ellreg} on precompact domains $\Omega_L$ exhausting $M\setminus E_0$ as $L\to \infty$: We use the subsolution $v$ as in Lemma \ref{lem_subsolution} to define the domains $F_L=\{x\in M\colon x\in M\setminus \mathcal{O}_{R_0}\text{ or }v(x)<L\}$ for all $0\leq L < \infty$. 
On $\Omega_L\definedas F_L\setminus\overline E_0$ we consider the boundary value problem
		\begin{align}\label{initialellreg}
		\begin{cases}
		E^\varepsilon u_\varepsilon=0&\text{on }\Omega_L,\\
		u_\varepsilon=0&\text{on }\partial E_0,\\
		u_\varepsilon=L-2&\text{on }\partial F_L,
		\end{cases}
		\end{align}
		where  $E^\varepsilon u_\varepsilon$ is the ellipitic regularisation on the LHS of \eqref{ellreg}. As we only consider initial data sets with one asymptotically flat end and $v(x) \to \infty$ as $|x| \to \infty$, the domain $\Omega_L\definedas F_L\setminus\overline E_0$ is precompact. If $M$ contains multiple asymptotic ends, we can also proceed as in the following under the additional assumption, that $E_0$ contains all but one end.
		\begin{bem}\label{methodofcont}
			We prove existence of smooth solutions $u_\varepsilon$ of the elliptic regularisation \eqref{initialellreg} by using the method of continuity. For $s\in[0,1]$, we consider the boundary value problem
			\begin{align}\label{pdemethodcont}
			\begin{cases}
			E^{\varepsilon,s} u_{\varepsilon,s}=0&\text{on }\Omega_L,\\
			u_{\varepsilon,s}=0&\text{on }\partial E_0,\\
			u_{\varepsilon,s}=s(L-2)&\text{on }\partial F_L,
			\end{cases}
			\end{align}
			where the operator $E^{\varepsilon,s}u_{\varepsilon,s}$ is defined as
			\begin{align*}
			\dive_g\left(\frac{\nabla u_{\varepsilon,s}}{\sqrt{\varepsilon^2+\btr{\nabla u_{\varepsilon,s}}^2}}\right)-\sqrt{\varepsilon^2+\btr{\nabla u_{\varepsilon,s}}^2+s\left(\left(g^{\alpha\beta}-\frac{\nabla^{\alpha} u_{\varepsilon,s}\nabla^{\beta} u_{\varepsilon,s}}{\varepsilon^2+\btr{\nabla u_{\varepsilon,s}}^2}\right)K_{\alpha\beta}\right)^2},
			\end{align*}
			 and aim to show that $S_\varepsilon\definedas\{s\in[0,1]\colon E^{\varepsilon,s}\text{ admits a unique } C^{2,\alpha}(\overline{\Omega}_L)\text{ solution}\}=[0,1]$ for sufficiently small $\varepsilon>0$.
		\end{bem}
		From now on, we also impose the additional maximality condition $\tr_g K\equiv0$ on the initial data set $(M,g,K)$ to ensure the existence of a subsolution barrier in the compact region. Therefore the quasilinear operator $E^{\varepsilon,s}$ becomes
		\begin{align}\label{maxpde}
		E^{\varepsilon,s}=\dive_g\left(\frac{\nabla u_{\varepsilon,s}}{\sqrt{\varepsilon^2+\btr{\nabla u_{\varepsilon,s}}^2}}\right)-\sqrt{\varepsilon^2+\btr{\nabla u_{\varepsilon,s}}^2+s\left(\frac{\nabla^{\alpha} u_{\varepsilon,s}\nabla^{\beta} u_{\varepsilon,s}}{\varepsilon^2+\btr{\nabla u_{\varepsilon,s}}^2}K_{\alpha\beta}\right)^2}.
		\end{align}
		In view of the work of Cederbaum--Sakovich \cite{cederbaumsakovich} we can also interpret the method of continuity as modifying the underlying initial data set $(M,g,K)$ via $(M_s,g_s,K_s)\definedas(M,g,\sqrt{s}K)$ instead of the operator $E^{\varepsilon}$.
		\begin{thm}\label{thm_aprioriestimates}
			Let $(M,g,K)$ be an asymptotically flat, maximal initial data set and $E_0\subset M$ a precompact $C^{2,\alpha}$ domain. Then, for every $L>2$, satisfying $E_0\subset F_L$ and $d(\partial E_0,\partial F_L)>2$, there exists an $\varepsilon(L)>0$, such that for $0<\varepsilon<\varepsilon(L)$ and $s\in[0,1]$ a smooth solution $u_{\varepsilon,s}$ to \eqref{pdemethodcont} satisfies the following a-priori estimates:
			\begin{enumerate}
				\item[\emph{(i)}] $u_{\varepsilon,s}\ge-\varepsilon$ in $\overline{\Omega}_L$, $u_{\varepsilon,s}\ge v+(s-1)(L-1)-2$ in $\overline{F}_L\setminus F_0$,
				\item[\emph{(ii)}] $u_{\varepsilon,s}\le s(L-2)$ in $\overline{\Omega}_L$,
				\item[\emph{(iii)}] $\btr{\nabla u_{\varepsilon,s}}\le H_++\varepsilon$ on $\partial E_0$, $\btr{\nabla u_{\varepsilon,s}}\le C(L)$ on $\partial F_L$,
				\item[\emph{(iv)}] $\btr{\nabla u_{\varepsilon,s}} (y)\le \max\limits_{B_r(x)\cap\partial E_0}\btr{\nabla u}+\varepsilon+\frac{C(n)}{r}+C(n,\norm{K},\norm{\nabla K})$ for all $y\in\overline{\Omega}_L$,
				\item[\emph{(v)}] $\btr{u_{\varepsilon,s}}_{C^{2,\alpha}(\overline{\Omega}_L)}\le C(\varepsilon,L,n,g,\norm{K},\norm{\nabla K},\partial E_0)$,
			\end{enumerate}
			where $H_+$ is the positive part of the mean curvature of $\partial E_0$ and $r>0$ such that the conditions of Theorem \ref{thm_interiorgradientestimate} are satisfied at $y$ with $r$.
		\end{thm}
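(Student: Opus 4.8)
The plan is to derive (i)--(v) one at a time, using throughout that for fixed $\varepsilon>0$ the operator $E^{\varepsilon,s}$ is a uniformly elliptic quasilinear operator of prescribed–mean–curvature type — of the form $\dive_g\big(a^\varepsilon(\nabla w)\big)-f^{\varepsilon,s}(\,\cdot\,,\nabla w)$ with $a^\varepsilon$ strictly monotone, $f^{\varepsilon,s}$ bounded and Lipschitz in the gradient, and with no dependence on $w$ itself — so that the classical comparison principle holds: if $E^{\varepsilon,s}\underline w\ge 0\ge E^{\varepsilon,s}\overline w$ in $\Omega_L$ and $\underline w\le\overline w$ on $\partial\Omega_L$, then $\underline w\le\overline w$ in $\Omega_L$. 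Since $E^{\varepsilon,s}(\mathrm{const})\equiv-\varepsilon<0$, every constant is a strict supersolution; comparing $u_{\varepsilon,s}$ with the constant $s(L-2)$, which dominates the boundary data, gives (ii). For (i), the function $v$ of Lemma~\ref{lem_subsolution} is a strict subsolution of the regularisation (Remark~\ref{bem_subsolution}) and remains one for $E^{\varepsilon,s}$ because $s\le 1$ only decreases the right–hand side; adding a constant does not change $E^{\varepsilon,s}v$, and the domains $F_L=\{v<L\}\cup(M\setminus\mathcal O_{R_0})$ together with the boundary value $s(L-2)$ are set up precisely so that $v+(s-1)(L-1)-2$ lies below $u_{\varepsilon,s}$ on $\partial(\overline F_L\setminus F_0)$; comparison then gives the second bound in (i). The bound $u_{\varepsilon,s}\ge-\varepsilon$ I would obtain from a maximum–principle/barrier argument exploiting that the right–hand side of \eqref{pdemethodcont} is $\ge\varepsilon$ — equivalently, that by maximality the horizontal slices $M\times\{c\}$ are generalised apparent horizons, which act as barriers for the translated graphs constructed below.

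For the boundary gradient bounds (iii) I would, near $\partial E_0$, build a barrier $\Gamma\circ d$ with $d$ the distance to $\partial E_0$ and $\Gamma$ concave and increasing chosen so that $\Gamma\circ d$ is a supersolution on a one–sided collar with $\Gamma'(0)\le H_++\varepsilon$; since $u_{\varepsilon,s}=0=\Gamma\circ d$ on $\partial E_0$ and (ii) controls $u_{\varepsilon,s}$ from above at the inner edge of the collar, comparison forces $\partial_\nu u_{\varepsilon,s}\le\Gamma'(0)$, while $u_{\varepsilon,s}\ge-\varepsilon$ and $u_{\varepsilon,s}=0$ on $\partial E_0$ give the matching lower bound; as the tangential derivative vanishes there, $|\nabla u_{\varepsilon,s}|\le H_++\varepsilon$ on $\partial E_0$. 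The estimate on $\partial F_L$ is analogous, using $v$ as a lower barrier and a distance/constant supersolution from above, with $C(L)$ coming from the geometry of the level set $\partial F_L=\{v=L\}$.

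The crucial step is the interior gradient estimate (iv), which I would deduce from Theorem~\ref{thm_interiorgradientestimate}. By the discussion preceding that theorem, the rescaling $\widehat u_{\varepsilon,s}=u_{\varepsilon,s}/\varepsilon$ turns $u_{\varepsilon,s}$ into the downward–translating graphs $\widehat\Sigma^\varepsilon_t=\graph(\widehat u_{\varepsilon,s}-t/\varepsilon)$, which are smooth solutions of STIMCF in $(M\times\R,\,g+\d z^2,\,\sqrt s\,\widetilde K)$ (that is, STIMCF for the modified data $(M,g,\sqrt s K)$) with $\widehat H>0$ and $\Phi=\sqrt{\widehat H^2-\widehat P^2}=\sqrt{\varepsilon^2+|\nabla u_{\varepsilon,s}|^2}>0$ along the flow. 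Applying Theorem~\ref{thm_interiorgradientestimate} in this product manifold — the dimension is $n+1$, so the curvature hypothesis reduces to $\Ric_g\ge -c/R^2$ on small balls, valid for $R$ depending on the local geometry (Remark~\ref{bem_interiorgradientestimate}), with $p$ the squared distance in $M\times\R$ — at a point $(y,z_0)$ with $z_0$ chosen so far below $\graph(\widehat u_{\varepsilon,s})$ that $B_r(y,z_0)$ either misses the initial graph or meets only $\partial E_0\times\R$ (this is where the term $\max_{B_r(x)\cap\partial E_0}|\nabla u|$ enters), and using $|\nabla u_{\varepsilon,s}|\le\Phi$, yields (iv). Finally, (v) follows from classical regularity theory: with (i)--(iv) pinning down $u_{\varepsilon,s}$ in $C^1$, equation \eqref{pdemethodcont} is for fixed $\varepsilon>0$ uniformly elliptic with coefficients bounded by the $C^1$–norm of $u_{\varepsilon,s}$, $g$, $\|K\|_{C^1}$ and the geometry of $\partial E_0$, so De Giorgi--Nash--Moser and Schauder estimates — interior and up to $\partial\Omega_L$, the latter using the $C^{2,\alpha}$–regularity of the boundary — bootstrap to the $C^{2,\alpha}$ bound, and hence to smoothness.

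The main obstacle is the execution of (iv): one must arrange that Theorem~\ref{thm_interiorgradientestimate} applies with constants uniform in $s\in[0,1]$ and in $y$, and in particular handle points close to $\partial E_0$, where $\widehat\Sigma^\varepsilon_t$ has a moving boundary on $\partial E_0\times\R$ and the interior bound must be married to the boundary gradient bound (iii). The smallness threshold $\varepsilon<\varepsilon(L)$ is dictated by this step together with the requirement that $v$ stay a subsolution of the regularisation on all of $\Omega_L$; and pinning down the precise constant $-\varepsilon$ in (i) is the other place where a little care is needed.
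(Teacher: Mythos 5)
Your overall plan — barrier/comparison arguments for (i)–(iii), lifting to $M\times\R$ and invoking Theorem~\ref{thm_interiorgradientestimate} for (iv), then De~Giorgi–Nash and Schauder for (v) — matches the paper's. The minor differences are harmless and arguably cleaner in places: for (ii) you compare directly against the constant supersolution $s(L-2)$ (noting $E^{\varepsilon,s}(\mathrm{const})=-\varepsilon<0$) rather than against the IMCF elliptic regularisation $u_{\varepsilon,s}^{(\mathrm{IMCF})}$ that the paper uses, and for (iii) you propose a Serrin-type barrier $\Gamma\circ d$ rather than sandwiching $v_1\le u_{\varepsilon,s}\le u_{\varepsilon,s}^{(\mathrm{IMCF})}$ and citing the boundary gradient estimate $(3.8)$ from \cite{huiskenilmanen}; both routes work, though the paper's sandwiching avoids redoing Serrin's barrier computation.

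The genuine gap is in the argument for $u_{\varepsilon,s}\ge-\varepsilon$, which is in fact the crux of the theorem and the one place where maximality enters essentially. Your heuristic — exploit that the right-hand side is $\ge\varepsilon$, or that the horizontal slices $M\times\{c\}$ are generalised apparent horizons and hence act as barriers for the translated graphs — does not produce a subsolution. Note that the sign is wrong: constants satisfy $E^{\varepsilon,s}(\mathrm{const})=-\varepsilon<0$, making them strict \emph{super}solutions, so they serve as \emph{upper} barriers, not lower ones. And the barrier property of $\mathcal{H}=0$ surfaces in the weak theory does not survive the elliptic regularisation, where the speed $1/\sqrt{\varepsilon^2+|\nabla u|^2}$ is finite even when $\nabla u=0$. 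Likewise, the differential inequality $\dive(\nabla u/\sqrt{\varepsilon^2+|\nabla u|^2})\ge\varepsilon$ holding for $u_{\varepsilon,s}$ does not, via a minimum principle, prevent $u_{\varepsilon,s}$ from dipping arbitrarily low in the interior: at an interior minimum one only learns $\varepsilon\ge 0$. What the paper actually does is construct a nontrivial lower barrier $v_1(y)=f(d(y,E_0))$ with $f(b)=\tfrac{\varepsilon}{A}\bigl(-1+e^{-Ab}\bigr)$, $A=2(C_2+|\lambda|+4)$, so that $v_1\in[-\varepsilon,0]$, $v_1\vert_{\partial E_0}=0$, and $v_1$ satisfies the pointwise ODE-type inequality \eqref{eq_subsolODI} which makes it a (viscosity) subsolution of $E^{\varepsilon,s}$ on all of $\Omega_L\setminus\overline{E_0}$. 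Maximality is what makes this construction possible: without $\tr_g K\equiv0$ the anisotropic bulk term contributes an $O(1)$ quantity that does not vanish as $f'\to0$, and the inequality \eqref{eq_subsolODI} cannot hold near $b=0$ for any choice of $f$ with small initial slope. This explicit barrier is what you need to supply; the intuitive link to apparent horizons of the slices motivates why maximality should suffice, but it does not substitute for the construction.
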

		\begin{bem}\label{bem_aprioriestimates}
			Note that $E^{\varepsilon,s}$ is uniformly elliptic for any smooth function on the compact domain $\Omega_L$ and the assumptions on $L$ ensure that the boundary data of \eqref{methodofcont} are realized by a $C^{2,\alpha}$-function $\varphi$, such that $\norm{\varphi}_{C^{2,\alpha}(\overline{\Omega_L})}\le C(\partial E_0,L,g,n,\norm{K})$. For $C^2$-domains, solutions in $C^{2,\alpha}(\Omega_L)\cap C^{1,\alpha}(\overline{\Omega}_L)$ still satisfying (i)-(iv) exist by approximation.
		\end{bem}
		\begin{proof}
			We prove (i)--(iii) by constructing appropriate barrier functions.
			
			\noindent Note that for fixed $\varepsilon, s$, with $\varepsilon$ sufficiently small (only depending on $L$), the elliptic regularisation $u_{\varepsilon,s}^{(IMCF)}$ corresponding to inverse mean curvature flow satisfying $(\star)_{\varepsilon,s}$ in \cite[p. 384]{huiskenilmanen} exists for the same boundary data due to \cite[Lemma 3.5]{huiskenilmanen}, and is a supersolution to \eqref{pdemethodcont}. By the maximum principle, $u_{\varepsilon,s}^{(IMCF)}$ is an upper barrier function, in particular
			\[
				u_{\varepsilon,s}\le u_{\varepsilon,s}^{(IMCF)}\le s(L-2),
			\]
			so (ii) is proven.
			
			Once we establish a lower bound for $u_{\varepsilon,s}$ (we in fact show $u_{\varepsilon,s}\ge -\varepsilon\ge -2$), it is straightforward, that an appropriate translation of the subsolution $v$ is a lower barrier function in the asymptotic region. More precisely, 
			\[
				u_{\varepsilon,s}\ge v_2\definedas v+(s-1)(L-2)\text{ in }\overline{F_L}\setminus F_0.
			\]
			Thus, to establish (i), it remains to show that $u_{\varepsilon,s}\ge -\varepsilon$ for $\varepsilon$ sufficiently small (only depending on $L$). We will do so by constructing a lower barrier that bridges between $\partial E_0$ to where $v$ is defined in the asymptotic region, allowing for unrestricted jumps in the compact region of $M$. Note that the assumption $\tr_g K=0$ will be crucial in establishing  the existence of this subsolution.
			Recall that that the distance function $d(\cdot,E_0)$ of $E_0$ is smooth on $M\setminus\left(\overline{E}_0\cup Cut(E_0)\right)$, where $Cut(E_0)$ denotes the Cut locus of $E_0$. We define $G_0\definedas E_0$ and $G_b\definedas \{y\colon d(y,E_0)<b\}$ and choose $b=b_L$ large enough, such that $F_L\subseteq G_{b_L}$. By \cite[Theorem 3.2]{huiskenpolden}, the evolution of the mean curvature for a surface moving in normal direction $\nu$ with speed $f$ is given by
			the Jacobi operator on the hypersurface applied to $f$ such that we obtain for the evolution of the mean curvature of the hypersurfaces $\partial G_b$ with unit normal $\nu_{b}$
			\[
			\frac{\d}{\d b}H=-\left(\Ric_g(\nu_{b},\nu_b)+\btr{h_b}^2\right)\le-\Ric(\nu_b,\nu_b)\le C_1(L)\text{ on }\partial G_b\setminus Cut(E_0)\text{, }0\le b\le b_L,
			\]
			where $C_1(L)\definedas(n+1)\max\limits_{i,j\in\{1,\dotsc,n+1\}}\max\limits_{y\in\overline{G}_{b_L}, }\btr{\Ric_{ij}(y)}$. Therefore, we can estimate the mean curvature of the surfaces $\partial G_b$, by
			\begin{align*}
			H_{\partial G_b}\le \max\limits_{\partial E_0}H_++C_1b\le C_2(L,\partial E_0)&&\text{ on }\partial G_b\setminus Cut(E_0)\text{, }0\le b\le b_L,
			\end{align*}
			where $H_+\definedas \max(0,H)$. We denote the maximum of the absolute value of the largest eigenvalue of $K$ over $\Omega_L$ by $\btr{\lambda}$ and for $A\definedas 2(C_2+\btr{\lambda} +4)>1$, we define the function $f(b)\definedas\frac{\varepsilon}{A}\left(-1+e^{-Ab}\right)$. If $\varepsilon\le e^{-Ab_L}$, it holds
			\begin{align}\label{eq_subsolODI}
			f'(C_2+2+\btr{\lambda})+\frac{\varepsilon^2f''}{\varepsilon^2+f'^2}-f'^2-\varepsilon^2>0
			\end{align}
			From now on, we will always assume that $\varepsilon\le e^{-Ab_L}$, and consider the function\linebreak $v_1(y)\definedas f(d(y, E_0))$. In particular, $\nabla v_1=f'\nu_b$, and
			\begin{align*}
			\dive_g\left(\frac{\nabla v_1}{\sqrt{\varepsilon^2+\btr{\nabla v_1}^2}}\right)
			&=	\dive_{\partial G_L}\left(\frac{\nabla v_1}{\sqrt{\varepsilon^2+\btr{\nabla v_1}^2}}\right)+\left(\nabla_{v_b}\left(\frac{\nabla v_1}{\sqrt{\varepsilon^2+\btr{\nabla v_1}^2}}\right)\right)^\perp\\
			&=\dive_{\partial G_L}\left(\frac{f'\nu_b}{\sqrt{\varepsilon^2+f'^2}}\right)+\left(\nabla_{v_b}\left(\frac{f'\nu_b}{\sqrt{\varepsilon^2+f'^2}}\right)\right)^\perp\\
			&=\frac{f'}{\sqrt{\varepsilon^2+f'^2}}H_{\partial G_b}+\frac{\varepsilon^2f''}{\sqrt{\left(f'^2+\varepsilon^2\right)^3}}.
			\end{align*}
			Hence 
			\begin{align*}
			\sqrt{\varepsilon^2+f'^2}E^{\varepsilon}v_1\ge C_2f'+\frac{\varepsilon^2 f''}{f'^2+\varepsilon^2}-\sqrt{\varepsilon^2+f'^2}\left(\sqrt{\varepsilon^2+f'^2}+\frac{f'^2\btr{\lambda}}{\varepsilon^2+f'^2}\right)>0
			\end{align*}
			due to \eqref{eq_subsolODI}. As in the proof of \cite[Lemma 3.4]{huiskenilmanen}, $v_1$ is in fact a \emph{viscosity} subsolution of $E^{\varepsilon,s}$ on all of $\Omega_L\setminus \overline{E_0}$, and since $u_{\varepsilon,s}\ge v_1$ on the boundary, $u_{\varepsilon,s}\ge v_1$, by the maximum principle for viscosity solutions.
			
			To obtain the boundary gradient estimates (iii), we argue as in the proof of \cite[Lemma 3.4]{huiskenilmanen}, using that
			\[v_1\le u_{\varepsilon,s}\le u_{\varepsilon,s}^{(IMCF)}, \text{ and }v_2\le u_{\varepsilon,s}\le u_{\varepsilon,s}^{(IMCF)}\]
			with equality on $\partial E_0$ and $\partial F_L$, respectively, and using the boundary gradient estimate \cite[(3.8)]{huiskenilmanen} for $u_{\varepsilon,s}^{(IMCF)}$.
			So we established the a-priori estimates (i)--(iii) as desired.
			
			We obtain the interior gradient estimate (iv) using Theorem \ref{thm_interiorgradientestimate} on $(M_s,g_s,K_s)$. As for $s=1$, the downward translating graphs $\widetilde\Sigma^{\varepsilon,s}_t\definedas\graph\left(\frac{u_{\varepsilon,s}}{\varepsilon}-\frac{t}{\varepsilon}\right)$ are smooth solutions to the inverse geometric flow with inverse speed $\Phi_s\definedas\sqrt{H^2-sP^2}$, and have strictly positive mean curvature since $E^{\varepsilon,s}u_{\varepsilon,s}=0$. Since $U_{\varepsilon,s}$ is the time-of-arrival function of the downward translating graphs, this implies
			\[
			\sqrt{H^2-sP^2}\Big\vert_{\widetilde{\Sigma}^{\varepsilon,s}_t}=\sqrt{\varepsilon^2+\btr{\nabla 
			u_{\varepsilon,s}}^2}.
			\]
			Furthermore, by the definition of $(M \times \R, g+\d z^2,\widetilde{K})$, the assumptions on the radius $r$ of Theorem \ref{thm_interiorgradientestimate} are satisfied for $(y,z)\in M \times\R$ on the $(n+2)$-dimensional ball $B^{n+2}_r(y,z)$ if they are satisfied for the $(n+1)$-dimensional ball $B_r(y)\subseteq M$. Then Theorem \ref{thm_interiorgradientestimate} implies
			\begin{align*}
			\btr{\nabla u_{\varepsilon,s}}_g(y)
			&\le\sqrt{\varepsilon^2+\btr{\nabla u_{\varepsilon,s}}_g^2(y)}\\
			&=\sqrt{H^2-sP^2}\Big\vert_{\widetilde{\Sigma}^{\varepsilon,s}_t}(y,z)\\
			&\le \max\limits_{\left(\partial E_0\times\R\right)\cap B^{n+2}_r(y,z)}\sqrt{H^2-sP^2}\Big\vert_{\widetilde{\Sigma}^{\varepsilon,s}_t}+\frac{C(n)}{r}+C(n,\norm{K},\norm{\nabla K})\\
			&\le \max\limits_{\partial E_0\cap B_r(y)}\btr{\nabla_Mu_{\varepsilon,s}}_g+\varepsilon+\frac{C(n)}{r}+C(n,\norm{K},\norm{\nabla K}),
			\end{align*}
			for all $y\in\overline{\Omega}_L$.
			Since we can estimate $\btr{\nabla u_{\varepsilon,s}}$ at the boundary $\partial E_0$, we can conclude
			\begin{align}\label{lipestimate}
			\btr{\nabla u_{\varepsilon,s}}(y)\le \max\limits_{\partial E_0\cap B_r(y)}H_++2\varepsilon+\frac{C(n)}{r}+C(n,\norm{K},\norm{\nabla K}).
			\end{align}
			Since $\Omega_L$ is precompact there is an $r'=r'(L,g)$ satisfying the assumptions of Theorem \ref{thm_interiorgradientestimate} for all $y\in\Omega_L$, so we obtain the Lipschitz estimate
			\begin{align}
			\norm{u_{\varepsilon,s}}_{C^{1}(\overline{\Omega}_L )}\le C(L,n,g,\norm{K},\norm{\nabla K},\partial E_0).
			\end{align}
			The DeGiorgi--Nash Theorem, \cite[Theorem 13.2]{gilbargtrudinger}, implies
			\[
			\norm{u_{\varepsilon,s}}_{C^{1,\alpha}(\overline{\Omega}_L )}\le C(\varepsilon,L,n,g,\norm{K},\norm{\nabla K},\partial E_0).
			\]
			In particular, the coefficients of $E^{\varepsilon,s}u_{\varepsilon,s}$ are Hölder continuous, so Schauder theory, \cite[Theorem 6.17]{gilbargtrudinger}, improves this to a $C^{2,\alpha}$-estimate,
			\[
			\norm{u_{\varepsilon,s}}_{C^{2,\alpha}(\overline{\Omega}_L )}\le C(\varepsilon,L,n,g,\norm{K},\norm{\nabla K},\partial E_0).
			\]
		\end{proof}
			\begin{thm}\label{thm_ellipticregexistence}
			Let $(M,g,K)$ be an asympotically flat, maximal initial data set and $E_0\subseteq M$ a precompact $C^{2,\alpha}$ domain. Then, for every $L>2$, satisfying $E_0\subset F_L$ and $d(\partial E_0,F_L)>2$, there exists an $\varepsilon_0(L)\le\varepsilon(L)$, such that a smooth solution $u_{\varepsilon}$ of \eqref{initialellreg} exists for all $\varepsilon<\varepsilon_0(L)$.
		\end{thm}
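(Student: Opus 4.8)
The plan is to run the method of continuity set up in Remark~\ref{methodofcont}: for $\varepsilon$ below a threshold $\varepsilon_0(L)\le\varepsilon(L)$, show that the set $S_\varepsilon\subseteq[0,1]$ of parameters $s$ for which \eqref{pdemethodcont} admits a unique $C^{2,\alpha}(\overline{\Omega}_L)$ solution is non-empty, open, and closed; since $[0,1]$ is connected this forces $S_\varepsilon=[0,1]$, and then $u_\varepsilon\definedas u_{\varepsilon,1}$ solves \eqref{initialellreg}. We take $\varepsilon_0(L)\le\varepsilon(L)$ small enough that Theorem~\ref{thm_aprioriestimates} applies to every solution $u_{\varepsilon,s}$ of \eqref{pdemethodcont} and that the $s=0$ problem below is solvable. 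For non-emptiness, note that at $s=0$ the operator $E^{\varepsilon,0}$ in \eqref{maxpde} is exactly the elliptic regularisation of inverse mean curvature flow and that the boundary values in \eqref{pdemethodcont} reduce to $u=0$ on both $\partial E_0$ and $\partial F_L$; hence a unique $C^{2,\alpha}(\overline{\Omega}_L)$ solution exists by \cite[Lemma 3.5]{huiskenilmanen} (the same fact already used in the proof of Theorem~\ref{thm_aprioriestimates}), so $0\in S_\varepsilon$.

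For openness, fix a smooth $\chi$ on $M$ that vanishes near $\overline{E_0}$ and equals $L-2$ near $\partial F_L$, set $\varphi_s\definedas s\chi$, and consider the map of Banach spaces $\mathcal{E}\colon[0,1]\times\mathcal{X}\to C^{0,\alpha}(\overline{\Omega}_L)$, $\mathcal{E}(s,w)\definedas E^{\varepsilon,s}(\varphi_s+w)$, where $\mathcal{X}\definedas\{w\in C^{2,\alpha}(\overline{\Omega}_L)\colon w|_{\partial\Omega_L}=0\}$; for $\varepsilon>0$ this is a smooth map, since the nonlinearity is a smooth function of $\nabla u$ and the quantity under the root in \eqref{maxpde} is bounded below by $\varepsilon^2$. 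The key structural point is that $E^{\varepsilon,s}u$ depends on $u$ only through $\nabla u$ and $\nabla^2u$ — the metric $g$ and the tensor $K$ carry no $u$-dependence — so at any solution $(s_0,w_0)$ the partial derivative $D_w\mathcal{E}(s_0,w_0)$ is a uniformly elliptic linear operator in divergence form \emph{with identically vanishing zeroth-order term}. By the maximum principle it is injective on $\mathcal{X}$, and together with the Fredholm alternative and the Schauder estimates for linear elliptic operators it is an isomorphism onto $C^{0,\alpha}(\overline{\Omega}_L)$; the implicit function theorem then produces solutions for all $s$ near $s_0$, so $S_\varepsilon$ is open. The same structural point gives uniqueness: if $u_1,u_2$ solve \eqref{pdemethodcont} for the same $s$, the difference $w\definedas u_1-u_2$ satisfies $Lw=0$ where $L$ is assembled by the fundamental theorem of calculus from the linearisations of $E^{\varepsilon,s}$ along the segment $u_2+tw$, hence is again uniformly elliptic with no zeroth-order term; since $w|_{\partial\Omega_L}=0$, the maximum principle forces $w\equiv0$.

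For closedness, let $s_k\in S_\varepsilon$ with $s_k\to\bar{s}$ and put $u_k\definedas u_{\varepsilon,s_k}$. By Theorem~\ref{thm_aprioriestimates}(v) the norms $\btr{u_k}_{C^{2,\alpha}(\overline{\Omega}_L)}$ are bounded uniformly in $k$ (the constant there depends on $\varepsilon$ and $L$ but not on $s$), so, after passing to a subsequence, $u_k\to\bar{u}$ in $C^{2,\beta}(\overline{\Omega}_L)$ for some $\beta<\alpha$. The coefficients of $E^{\varepsilon,s_k}$ converge to those of $E^{\varepsilon,\bar{s}}$, so $\bar{u}$ solves $E^{\varepsilon,\bar{s}}\bar{u}=0$ with $\bar{u}|_{\partial E_0}=0$ and $\bar{u}|_{\partial F_L}=\bar{s}(L-2)$; Schauder theory upgrades $\bar{u}$ to $C^{2,\alpha}(\overline{\Omega}_L)$, and the uniqueness just established gives $\bar{s}\in S_\varepsilon$. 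Finally, since $g$ and $K$ are smooth and the nonlinearity in \eqref{maxpde} is a smooth function of $\nabla u$, differentiating the equation and iterating the Schauder estimates shows that every $u_{\varepsilon,s}$, in particular $u_\varepsilon$, is smooth.

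I expect the openness step to be the only genuinely delicate point, and it hinges entirely on the observation that the degenerate-elliptic right-hand side of \eqref{maxpde} contains no undifferentiated $u$: this keeps the zeroth-order coefficient of the linearised operator identically zero, which is what lets the maximum principle deliver both invertibility of the linearisation and uniqueness at no extra cost. Once that is in place, closedness and compactness are an immediate consequence of the a-priori estimates of Theorem~\ref{thm_aprioriestimates}, and the remaining steps are the routine packaging of the continuity method.
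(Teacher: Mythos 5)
Your proof is correct and follows essentially the same route as the paper: the method of continuity in the parameter $s$ from Remark~\ref{methodofcont}, with the base case $s=0$ handled via the approximate-existence lemma of Huisken--Ilmanen for the elliptic regularisation of IMCF, openness via the implicit function theorem applied to the map associated with $E^{\varepsilon,s}$ and the boundary data, and closedness via the $s$-uniform $C^{2,\alpha}$ a priori estimate of Theorem~\ref{thm_aprioriestimates}(v), Arzel\`a--Ascoli, and Schauder theory. You are slightly more explicit than the paper in noting that the linearisation has identically vanishing zeroth-order coefficient (since $E^{\varepsilon,s}$ depends on $u$ only through its derivatives), which is exactly the hypothesis behind the citation of \cite[Theorem 6.14]{gilbargtrudinger} for invertibility, and behind the uniqueness argument.
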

		\begin{proof}
			Using the method of continuity, it will suffice to show that the set 
			\[
			S_\varepsilon\definedas\{s\in[0,1]\colon E^{\varepsilon,s}\text{ admits a unique } C^{2,\alpha}(\overline{\Omega}_L)\text{ solution}\},
			\]
			is non-empty, closed and relatively open with respect to the induced topology on $[0,1]$.
			
			\noindent We first show that $0\in S_\varepsilon$ for sufficiently small $0<\varepsilon<\varepsilon(L)$, in particular $S_\varepsilon$ non-empty.
			
			Note that for $s=0$, the operator $E^{0,\varepsilon}$ corresponds to the operator $E^{1,\varepsilon}$ on the inital data set $(M,g,0)$, so STIMCF reduces to inverse mean curvature flow. Then the Approximate Existence Lemma \cite[Lemma 3.5]{huiskenilmanen} applies, if $\varepsilon$ is sufficiently small. So $0\in S_\varepsilon$, if $\varepsilon<\varepsilon_1$.
			From now on, we fix $0<\varepsilon<\varepsilon_0(L)\definedas\min(\varepsilon(L),\varepsilon_1)$ and vary $s$.
			
			Next we show, that $S_\varepsilon$ is relatively open in $[0,1]$.
			
			\noindent So assume that $s_0\in S_\varepsilon$, i.e., there exists a unique $C^{2,\alpha}$-solution $u_{\varepsilon,s_0}$ to \eqref{pdemethodcont} for $s_0\in[0,1]$.
			For the boundary value map $\pi\colon u\mapsto u{\Big\vert_{\partial\Omega_L}}$, we see that the existence of a solution to \eqref{pdemethodcont} is equivalent to  $G(w,s)=0$, , where
			\begin{align*}
			G\colon C^{2,\alpha}(\overline{\Omega}_L)\times\R&\to C^\alpha(\overline{\Omega}_L)\times C^{2,\alpha}(\partial\overline{\Omega}_L)\\
			(w,s)&\mapsto(E^{\varepsilon,s}(w),\pi(w)-s(L-2)\chi_{\partial F_L}).
			\end{align*}
			Note that $G$ is continuously differentiable and the differential at $(u_{s_0},s_0)$ is given as
			\[
			DG_{(u_{s_0},s_0)}\colon C^{2,\alpha}_0(\overline{\Omega}_L)\to C^\alpha_0(\overline{\Omega}_L)\times C^{2,\alpha}_0(\partial\Omega_L), 
			h\mapsto (A^{ij}\operatorname{Hess} (h)_{ij}+B_k\nabla_Mh^k,\pi(h)),
			\]
			where $A^{ij}$, and $B_k$ are Hölder continuous, since $u_{s_0}\in C^{2,\alpha}$. Schauder theory \cite[6.14]{gilbargtrudinger} implies that $DG_{(u_{s_0},s_0)}$ is invertible with continuous inverse, due to the Schauder estimates, \cite[Theorem 6.6]{gilbargtrudinger}. Using the implicit function theorem, \cite[Theorem 17.6]{gilbargtrudinger} there exists $\delta>0$, such that $G(\cdot, s)$ has a unique solution $u_s\in C^{2,\alpha}(\overline{\Omega}_L)$ for all $s\in(s_0-\delta,s_0+\delta)$. Hence \eqref{pdemethodcont} has a unique solution $u_{\varepsilon,s}$ in $C^{2,\alpha}(\overline{\Omega}_L)$ for all $s\in[0,1]\cap(s_0-\delta,s_0+\delta)$. So $S_\varepsilon$ is relatively open in $[0,1]$. It remains to show that $S_\varepsilon$ is closed.
			
			Let $(s_k)_{k\in\N}$ be a sequence in $S_\varepsilon$, such that it exists an $s\in[0,1]$ with $s_k\to s$ as $k\to\infty$. Using Theorem \ref{thm_aprioriestimates} (5), we know that the solutions $u_{s_k}$ satisfy 
			\[
			\norm{u_{s_k}}_{C^{2,\alpha}(\overline{\Omega}_L)}\le C(\varepsilon,L,n,g,\norm{K},\norm{\nabla K},\partial E_0),
			\]
			independent on $k$. By Arzel\`a--Ascoli, we acquire a subsequence $(u_{k_i})_{i\in\N}$, such that $u_{k_i}\to u_{\infty}$ uniformly in $C^{2}(\overline{\Omega}_L)$. In particular, $u_{\infty}\in C^{2}(\overline{\Omega}_L)$, and $E^{\varepsilon,s}u_\infty=0$ with $u_{\infty}=0$ along $\partial E_0$, and $u_{\infty}=s(L-2)$ along $\partial F_L$, respectively. Schauder theory yields that $u_s=u_{\infty}\in C^{2,\alpha}(\overline{\Omega}_L)$ is the unique solution to \eqref{pdemethodcont} for $s$. Hence $s\in S_\varepsilon$, so $S_\varepsilon$ is closed.
		\end{proof}
		Since the $C^1$ a-priori estimates in Theorem \ref{thm_aprioriestimates} are independent on $\varepsilon$, we can pass via Arzel\`a--Ascoli to a subsequence $u_{\varepsilon_k}$, such that $u_{\varepsilon_k}\to u$ locally uniformly, where $u$ is locally Lipschitz.
		\begin{kor}\label{lem_sublimit}
			Let $(M,g,K)$ be an asymptotically flat, maximal initial data set and \linebreak $E_0 \subseteq M$ a  precompact $C^2$ domain. Then there exists a locally Lipschitz function \linebreak$u\colon M \setminus E_0\to \R$, such that:
			\begin{enumerate}
				\item[(i)] there exists a sequence $L_k\to\infty$, and a sequence $\varepsilon_k<\varepsilon_0(L)$ with $\varepsilon_k\to 0$, such that $u_{\varepsilon_k}\to u$ locally uniformly,
				\item[(ii)] $0\le u \le u^{(IMCF)}$, where $u^{(IMCF)}$ is the unique\footnote{with precompact level-sets as constructed in \cite{huiskenilmanen}} weak solution to inverse mean curvature flow on 
				$M \setminus E_0$,
				\item[(iii)] $u\to \infty$ as $\btr{x}\to \infty$,
				\item[(iv)] $\norm{u}_{C^{0,1}(\overline{B}_R(x))}\le \frac{C(n)}{R}+C(n,\norm{K},\norm{\nabla K})$, whenever $R<d(\partial E_0,x)$ and $R$ satisfies the assumptions of Theorem \ref{thm_interiorgradientestimate}.
			\end{enumerate}
		\end{kor}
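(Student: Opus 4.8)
The plan is to obtain $u$ as a locally uniform limit of the smooth regularised solutions produced by Theorem \ref{thm_ellipticregexistence}, exploiting that the $C^1$ a priori estimates of Theorem \ref{thm_aprioriestimates} are uniform in both $\varepsilon$ and $L$; properties (ii)--(iv) are then read off by passing the relevant barriers and estimates to the limit. Concretely, I would first fix an exhaustion: choose $L_k\to\infty$ with $L_k>2$, $E_0\subset F_{L_k}$ and $d(\partial E_0,\partial F_{L_k})>2$ (possible since $v\to\infty$, so the $F_L$ exhaust $M$), and then $\varepsilon_k<\varepsilon_0(L_k)$ with $\varepsilon_k\to0$. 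Theorem \ref{thm_ellipticregexistence} supplies smooth solutions $u_{\varepsilon_k}$ of \eqref{initialellreg} on $\Omega_{L_k}$ with $u_{\varepsilon_k}=0$ on $\partial E_0$. The bound \eqref{lipestimate} from the proof of Theorem \ref{thm_aprioriestimates}(iv), namely $\btr{\nabla u_{\varepsilon_k}}(y)\le\max_{\partial E_0\cap B_r(y)}H_++2\varepsilon_k+\frac{C(n)}{r}+C(n,\norm K,\norm{\nabla K})$, is independent of $L$ and of $\varepsilon$; hence on any fixed compact $A\subset M\setminus E_0$ (which lies in $\Omega_{L_k}$ once $k$ is large) the family $u_{\varepsilon_k}$ is uniformly Lipschitz, and together with the boundary value $0$ and the connectedness of $M\setminus E_0$ this gives a uniform $C^{0,1}(A)$ bound. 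An Arzel\`a--Ascoli and diagonal argument over an exhaustion of $M\setminus E_0$ by compacta then yields a subsequence (not relabelled) with $u_{\varepsilon_k}\to u$ locally uniformly and $u$ locally Lipschitz, which is (i). Passing the same gradient estimate to the limit gives (iv): for $R<d(\partial E_0,x)$ with $R$ satisfying the hypotheses of Theorem \ref{thm_interiorgradientestimate} one has $B_R(x)\cap\partial E_0=\emptyset$, so the $\partial E_0$--term in Theorem \ref{thm_aprioriestimates}(iv) is vacuous and $\varepsilon_k\to0$ leaves $\btr{\nabla u}\le\frac{C(n)}{R}+C(n,\norm K,\norm{\nabla K})$ a.e.\ on $B_R(x)$.

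For (ii) and (iii) I would pass the barriers of Theorem \ref{thm_aprioriestimates}(i),(ii) to the limit. The lower bound $u_{\varepsilon_k}\ge-\varepsilon_k$ gives $u\ge0$. The upper barrier $u_{\varepsilon_k}\le u_{\varepsilon_k}^{(IMCF)}$, where $u_{\varepsilon_k}^{(IMCF)}$ is the Huisken--Ilmanen elliptic regularisation of inverse mean curvature flow on $\Omega_{L_k}$ for the same boundary data, combined with the convergence $u_{\varepsilon_k}^{(IMCF)}\to u^{(IMCF)}$ locally uniformly as $\varepsilon_k\to0$, $L_k\to\infty$ established in \cite{huiskenilmanen}, yields $u\le u^{(IMCF)}$; this is (ii), and since $u^{(IMCF)}\to\infty$ at infinity it already gives (iii). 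Alternatively, the case $s=1$ of Theorem \ref{thm_aprioriestimates}(i) reads $u_{\varepsilon_k}\ge v-2$ on $\overline F_{L_k}\setminus F_0$, so $u\ge v-2$ on $\mathcal O_{R_0}$ in the limit, and $v(y)=\alpha\ln\btr{x(y)}-\alpha\ln R_0\to\infty$.

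No genuinely new estimate enters; the one point requiring care is the bookkeeping caused by the solutions $u_{\varepsilon_k}$ and their comparison functions $u_{\varepsilon_k}^{(IMCF)}$ and $v-2$ all living on the varying domains $\Omega_{L_k}$, so that one must combine the established $\varepsilon\to0$, $L\to\infty$ convergence of the Huisken--Ilmanen construction with the uniformity in $\varepsilon$ and $L$ of Theorem \ref{thm_aprioriestimates}(i)--(iv) in order for the comparisons to survive the limit.
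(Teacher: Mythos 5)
Your proposal is correct and follows the same route the paper takes (the paper essentially states this corollary as an immediate consequence of Theorem \ref{thm_aprioriestimates} and Arzel\`a--Ascoli, leaving the details implicit): you obtain $u$ as a diagonal sublimit on an exhaustion of $M\setminus E_0$ by the domains $\Omega_{L_k}$ using the $L$- and $\varepsilon$-uniform Lipschitz bound \eqref{lipestimate}, and read off (ii)--(iv) by passing the barriers $-\varepsilon_k\le u_{\varepsilon_k}\le u_{\varepsilon_k}^{(IMCF)}$, the subsolution $v$ in the asymptotic region, and the interior gradient estimate to the limit. The only point worth flagging is wording: the estimate \eqref{lipestimate} is not literally \emph{independent} of $\varepsilon$ (it carries a $2\varepsilon$ term), but it is uniformly bounded for $\varepsilon<\varepsilon_0(L)$, which is what the argument needs; and in (iv) the bound $\frac{C(n)}{R}+C(n,\|K\|,\|\nabla K\|)$ should be understood as a bound on the Lipschitz seminorm, as your computation produces, rather than on the full $C^{0,1}$ norm.
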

		
\section{The limiting behavior of the downward-translating graphs $\Sigma_t^\varepsilon$}\label{sec_limitingbehavior}
	By the a-priori estimates \eqref{lipestimate} we concluded in Corollary \ref{lem_sublimit}, that there exists a locally Lipschitz function $u$ on $M\setminus E_0$, and a subsequence $(\varepsilon_i)$ such that $u_i\definedas u_{\varepsilon_i}$ converges to $u$ locally uniformly. We will show in this section that  for $n<6$ in addition to the function $u$ we can extract a limiting vectorfield $\nu \in C^{0, \alpha}((M\setminus E_0) \times \R)$ that will be crucial in defining a weak solution to the anisotropic equation (\ref{stimcf}). 
	
	The functions $U_i\definedas u_i-\varepsilon z$ defined on $\Omega_L\times \R$ converge locally uniformly to the locally Lipschitz function $U(y,z):= u(x)$. In this and the following section, we will concentrate on the objects on the cylinder $M\times \R$ and study the limiting behavior of the hypersurfaces $\widetilde{\Sigma}_t^\varepsilon\definedas\{U_\varepsilon=t\}$ in $M\times\R$.
	In Section \ref{sec_variationalformulation}, we will define weak solutions as minimizers to a parametric variation principle and want to argue that the sublimits $U$ and $u$ are indeed minimizers on $(M\setminus \overline{E}_0)\times\R$ and $(M\setminus \overline{E}_0)$ respectively. 
	As it is the case for inverse null mean curvature flow first studied by Moore in \cite{moore}, the introduced bulk term energy requires a notion of a unit vector field $\nu$ across jump regions. Following  their strategy, we first introduce a variational principle $\mathcal{J}_{U,\nu}$ for Caccioppoli sets on a compact subset $A$ inside a domain $\Omega$ defined as
	\begin{align}\label{comprintset}
	\mathcal{J}_{U,\nu}^A(F)\definedas\btr{\partial^*F\cap A}-\int\limits_{F\cap A}\sqrt{\btr{\nabla U}^2+P_\nu^2},
	\end{align}
	where $P_\nu\definedas\left(g^{ij}-\nu^i\nu^j\right)K_{ij}$ which reduces to $P_\nu=-\nu^i\nu^jK_{ij}$ here as we always impose that $\tr_MK\equiv0$.
	We say that $E$ minimizes \eqref{comprintset} in a set $\Omega$ (form the inside/ outside respectively), if 
	\begin{align}
	\mathcal{J}^A_{U,\nu}(E)\le \mathcal{J}^A_{U,\nu}(F)
	\end{align}
	for all $F$ ($F\subseteq E$, $E\subseteq F$ respectively), with $E\triangle F\subset\subset A\subset\subset \Omega$, where $\triangle$ denotes the symmetric difference. Since this does not depend on the particular choice of the compact set $A$, we will often omit the subscript $A$ in the following. Note that the well-known inequality
	\begin{align}\label{eq_reducedboundary}
	\btr{\partial^*(E_1\cup E_2)}+\btr{\partial^*(E_1\cap E_2)}\le \btr{\partial^*E_1}+\btr{\partial^*E_2},
	\end{align} 
	implies
	\begin{align}\label{ineqcaccio1}
	\mathcal{J}_{U,\nu}^A(E_1\cup E_2)+\mathcal{J}_{U,\nu}^A(E_1\cap E_2)\le \mathcal{J}_{U,\nu}^A(E_1)+\mathcal{J}_{U,\nu}^A(E_2),
	\end{align}
	for Caccioppoli sets $E_1$ and $E_2$ satisfying $E_1\triangle E_2\subset\subset A$. In particular, $E$ minimizes $\mathcal{J}_{U,\nu}$, if and only if $E$ minimizes $J_{u,\nu}$ from the outside and the inside.
	
	As already discussed, the bulk term $\sqrt{\btr{\nabla U}^2+P_\nu^2}$ requires a notion of unit normal $\nu$ on all of $\Omega$, since the canonical choice $\nu=\frac{\nabla U}{\btr{\nabla U}}$ fails across jump regions. Hence, the main task of this section will be to foliate the interior of jump regions of $U$ in $(M\setminus E_0)\times\R$, thus defining a notion of unit vector field $\nu$. In fact $\nu$ is translation invariant and in particular gives rise to a vector field $\nu_M\definedas\pi_{TM}\nu$ on $M$.
	
	To establish the existence of this foliation, we will draw heavily upon regularity theory for obstable problems \eqref{eq_obstacle} below. In particular, if a Caccioppoli set E minimizes \eqref{comprintset} and $\btr{\nabla U}$ admits an upper bound, $E$ is almost minimizing in the sense that
	\[
		\btr{\partial^*E\cap B_R}\le \btr{\partial^*F\cap B_R}+C(n,\norm{\nabla U}_{\infty},\norm{K}_{C^0})R^{n+2},
	\]
	for $E\triangle F\subset\subset B_R\subset M\times \R$. In particular, this allows us to apply regularity results of geometric measure theory to obtain higher regularity for $\partial^*E$. The following $C^{1,\alpha}$ result can be obtained by modifying the proof given in \cite{tamanini} for $\R^n$ to general Riemannian manifolds. We refer to the comments preceding \cite[Regularity Theorem 1.3]{huiskenilmanen} for a broad overview of references.
	\begin{thm}\label{thm_regularity}\emph{(Regularity Theorem)}\newline
		Let $f$ be a bounded, measurable function on a domain $\Omega \subset \widetilde{M}^m$ of a smooth Riemannian manifold $(\widetilde{M}^m, \widetilde{g})$ of dimension $m<8$. Suppose $E\subset \Omega$ contains an open set $\mathfrak{U}$ and minimizes the functional 
		\begin{align}\label{eq_obstacle}
		\btr{\partial^* F}+\int\limits_F f
		\end{align}
		with respect to competitors $F$, such that $\mathfrak{U}\subseteq F$ and $F\triangle E\subset\subset \Omega$. If $\partial \mathfrak{U}$ is $C^{1,\alpha}$, $0<\alpha<\frac{1}{2}$, then $\partial E$ is a $C^{1,\alpha}$-submanifold of $\Omega$ with $C^{1,\alpha}$ estimates only depending on the distance to $\partial \Omega$, $\mathrm{ess}\text{\,}\sup\btr{f}$, $C^{1,\alpha}$-bounds for $\partial \mathfrak{U}$ and $C^1$-bounds on the metric $\widetilde{g}$.
		
		When $m\ge 8$, this remains true away from a closed singular set $Z$ of dimension at most $m-8$, that is disjoint from $\overline{\mathfrak{U}}$.
	\end{thm}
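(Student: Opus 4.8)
The proof adapts the scheme of Tamanini \cite{tamanini} for almost minimizers in Euclidean space, together with the interior obstacle--problem techniques (compare the comments preceding \cite[Regularity Theorem 1.3]{huiskenilmanen}), to the present Riemannian setting with an active obstacle $\partial\mathfrak{U}\in C^{1,\alpha}$. The plan is to treat separately the region away from $\overline{\mathfrak{U}}$, where $E$ is an \emph{unconstrained} almost minimizer of \eqref{eq_obstacle}, and a neighbourhood of the contact set $\partial E\cap\partial\mathfrak{U}$, where the obstacle matters. Throughout one localizes in geodesic normal coordinates: since $\widetilde g$ is $C^1$, one has $\widetilde g_{ij}=\delta_{ij}+O(|x|^2)$ with $\partial\widetilde g=O(|x|)$, so on small balls the metric is $C^1$-close to the Euclidean one and the curvature error, together with the bounded bulk term $\int_F f$, is absorbed into an $(\Lambda,r_0)$-almost-minimality inequality $|\partial^*E\cap B_r|\le|\partial^*F\cap B_r|+\Lambda|E\triangle F|$ with $\Lambda$ controlled by $\operatorname{ess\,sup}|f|$ and $\|\widetilde g\|_{C^1}$ (using $|E\triangle F|\le Cr^m$ whenever $E\triangle F\subset\subset B_r$). \textbf{Step 1 (away from the obstacle).} On any ball $B\subset\subset\Omega\setminus\overline{\mathfrak{U}}$ every competitor $F$ with $F\triangle E\subset\subset B$ automatically satisfies $\mathfrak{U}\subseteq F$, so $E$ minimizes \eqref{eq_obstacle} without constraint and is a genuine $(\Lambda,r_0)$-perimeter minimizer there; the De Giorgi--Tamanini theory — density estimates, the Caccioppoli inequality, excess decay, blow-up to a hyperplane — carries over to $C^1$ metrics by the localization above, giving $\partial E\cap(\Omega\setminus\overline{\mathfrak{U}})\in C^{1,\alpha}$ for every $\alpha\in(0,\tfrac12)$, away from a closed set of dimension $\le m-8$ when $m\ge8$, with the stated dependence of constants.

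\textbf{Step 2 (straightening and densities).} Fix $p\in\partial E\cap\partial\mathfrak{U}$ and choose $C^{1,\alpha}$ coordinates near $p$ in which $\mathfrak{U}=\{x_m<\psi(x')\}$, $\psi(0)=0$, $\nabla\psi(0)=0$, $|\psi(x')|\le C|x'|^{1+\alpha}$. Applying $\Phi(x',x_m)=(x',x_m-\psi(x'))$, which is a $C^{1,\alpha}$ diffeomorphism with $D\Phi(0)=\mathrm{Id}$, the set $\widehat E\definedas\Phi(E)$ contains the half-space $\{x_m<0\}$ near $0$ and minimizes a parametric functional whose elliptic integrand has $C^{0,\alpha}$ coefficients and whose bulk term is bounded. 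Since any $F\supseteq E$ also contains $\mathfrak{U}$, the set $E$ (hence $\widehat E$) is a $\Lambda$-minimizer \emph{from the outside} on all of $\Omega$, which yields upper perimeter- and volume-density bounds and a lower density bound for the complement; combined with $\{x_m<0\}\subseteq\widehat E$, which forces a matching lower density bound for $\widehat E$ at points of $\{x_m=0\}$, one obtains uniform two-sided density estimates for $\widehat E$ up to the obstacle. \textbf{Step 3 (blow-up at contact points).} Rescaling $\widehat E$ at $0$ and using these density bounds for compactness, every blow-up limit $\widehat E_0$ minimizes the Euclidean area (the integrand frozen at $0$ is the area integrand, as $D\Phi(0)=\mathrm{Id}$ and $\widetilde g(0)=\delta$), is a cone, and contains $\{x_m<0\}$; as $\partial\widehat E_0$ is then an area-minimizing hypersurface contained in $\{x_m\ge0\}$ and passing through $0$, the strong maximum principle forces $\widehat E_0=\{x_m<0\}$. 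Thus every point of $\partial\widehat E\cap\{x_m=0\}$ is a regular point with half-space blow-up; equivalently $p$ is a regular point of $\partial E$, and in dimension $m\ge8$ the standard singular set of $\partial E$ is disjoint from $\overline{\mathfrak{U}}$.

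\textbf{Step 4 (excess decay through the free boundary).} Having identified the blow-up as a half-space at every boundary point, one runs Tamanini's excess-decay iteration for $\widehat E$ as a one-sided obstacle problem with flat obstacle $\{x_m=0\}$ and $C^{0,\alpha}$ coefficients: on the coincidence set $\partial\widehat E$ equals $\{x_m=0\}$ and contributes zero flatness excess, on the complement $\partial\widehat E$ is a free almost minimizer, and the only non-trivial source in the decay estimate is the $\le Cr^{\alpha}$ oscillation of the frozen coefficients, so the flatness excess at scale $r$ decays like $r^{2\alpha}$. This gives a uniform $C^{1,\alpha}$ estimate for $\partial\widehat E$ near $0$, and pulling back by $\Phi^{-1}\in C^{1,\alpha}$ yields $\partial E\in C^{1,\alpha}$ near $p$ with constants depending only on $\operatorname{dist}(\cdot,\partial\Omega)$, $\operatorname{ess\,sup}|f|$, the $C^{1,\alpha}$-bounds of $\partial\mathfrak{U}$ and $\|\widetilde g\|_{C^1}$. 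A covering argument combining Steps 1 and 4 finishes the proof, and the dimensional statement follows from Step 1 together with the fact (Step 3) that contact points are regular.

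\textbf{Main obstacle.} The delicate point is Step 4: making the excess-decay scheme work simultaneously across the free boundary — where $\partial\widehat E$ alternates between coinciding with the obstacle and being a free almost minimizer — and in the presence of merely $C^{0,\alpha}$ coefficients produced by the straightening, so that precisely the $C^{1,\alpha}$ (and no higher) regularity of $\partial\mathfrak{U}$ is inherited. Verifying that the density estimates and the one-sided comparison inequalities remain uniform up to $\partial\mathfrak{U}$, and that all constants have the asserted dependence, is where most of the work lies; the Riemannian adaptation of the Euclidean arguments (Step 1 and the blow-up compactness) is routine by localization in normal coordinates.
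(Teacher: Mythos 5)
The paper itself provides no proof of Theorem~\ref{thm_regularity}: it merely points to Tamanini's almost-minimizer theory \cite{tamanini} and to the discussion around \cite[Regularity Theorem 1.3]{huiskenilmanen}, asserting that the Euclidean arguments carry over to a $C^1$ Riemannian background. Your four-step sketch --- unconstrained almost-minimizer regularity away from $\overline{\mathfrak{U}}$, straightening and two-sided density estimates at the obstacle, half-space blow-up at contact points, and Tamanini-style excess decay across the free boundary --- is exactly the route those references take, so you have supplied a correct outline of the argument the authors invoke by citation. One small imprecision to flag in Step 3: a blow-up limit $\widehat E_0$ is not an unconstrained perimeter minimizer but a one-sided (obstacle-constrained) minimizer subject to $\{x_m<0\}\subseteq\widehat E_0$; the varifold $\partial\widehat E_0$ is stationary only in the open half-space $\{x_m>0\}$ and carries a signed first variation concentrated on the coincidence set. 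The identification $\widehat E_0=\{x_m<0\}$ still goes through --- the half-space is a competitor, which bounds the perimeter density of the cone at $0$ from above, while monotonicity for one-sided minimizers supplies the matching lower bound, forcing the cone to be a hyperplane, which then must be $\{x_m=0\}$ since it lies in $\{x_m\ge0\}$ --- but the phrase ``$\partial\widehat E_0$ is an area-minimizing hypersurface'' should be read with that constraint in mind. Apart from this, your ``Main obstacle'' paragraph correctly isolates the genuinely technical point (running the excess-decay iteration uniformly across the free boundary with only $C^{0,\alpha}$ straightened coefficients, giving exactly $C^{1,\alpha}$ with $\alpha<\tfrac12$), which is the content of Tamanini's result that the paper takes as a black box.
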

	\begin{bem}
	From here onwards we will always assume that $n<6$ so the previous theorem applies in $M \times \R$ with $m=n+2$. If $n\geq 6$, the limit $u_{\varepsilon} \to u$ will lead to weak solutions of (\ref{stimcf}) with similar regularity properties away from the singular set.
	\end{bem}
	Another essential tool that will be used in this section is the following Compactness Theorem for Caccioppoli sets minimizing \eqref{comprintset}.
	\begin{thm}\label{thm_compactness1}\emph{(Compactness Theorem)}\newline
		Let $(\widetilde{M},\widetilde{g},\widetilde{K})$ be an initial data set, let $\Omega\subseteq \widetilde{M}$, and let $E_i\subseteq \Omega$ be a sequence of sets with $C^{1,\alpha}_{loc}$ boundary such that $\partial E_i\to\partial E$ locally in $C^{1,\alpha}$. Let $\nu_i$ be a unit vector field on $T\Omega$ satisfying $\nu_i\vert_{\partial E_i}=\nu_{\partial E_i}$, such that there exists a unit vector field $\nu\in T\Omega$, such that $\nu_i\to \nu$ a.e. locally uniformly and $\nu\vert_{\partial E}=\nu_{\partial E}$. Further, let $U_i\in C^{0,1}_{loc}(\Omega)$, such that $U_i\to U$ locally uniformly for an $U\in C^{0,1}_{loc}(\Omega)$ and
		\[
		\btr{\nabla U_i}\to\btr{\nabla U}\text{ in }\mathcal{L}^1_{loc}(\Omega).
		\]
		Then, if $E_i$ minimizes $\mathcal{J}_{U_i,\nu_i}$ in $\Omega$, $E$ minimizes $\mathcal{J}_{U,\nu}$ in $\Omega$.
	\end{thm}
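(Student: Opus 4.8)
The plan is to prove lower semicontinuity of the functional along the sequence and then an upper bound via a comparison/cut-and-paste argument. Fix a compact set $A \subset\subset \Omega$ and a competitor $F$ with $E \triangle F \subset\subset A$. We must show $\mathcal{J}_{U,\nu}^A(E) \le \mathcal{J}_{U,\nu}^A(F)$.

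\textbf{Step 1: Lower semicontinuity of the left-hand side.} Since $\partial E_i \to \partial E$ locally in $C^{1,\alpha}$, in particular $E_i \to E$ in $L^1_{loc}$, so $\chi_{F\triangle E_i} \to \chi_{F\triangle E}$ in $L^1_{loc}$ and the symmetric differences $E_i \triangle F$ are eventually contained in a fixed compact set $A' \subset\subset \Omega$ containing $A$. By lower semicontinuity of perimeter under $L^1_{loc}$ convergence, $|\partial^* E \cap A| \le \liminf_i |\partial^* E_i \cap A|$. For the bulk term, $|\nabla U_i| \to |\nabla U|$ in $L^1_{loc}$ by hypothesis, $\nu_i \to \nu$ a.e., and $K$ is fixed and (locally) bounded, so $P_{\nu_i} = (g^{ij} - \nu_i^i \nu_i^j)K_{ij} \to P_\nu$ a.e. with a uniform $L^\infty$ bound on compacta; hence $\sqrt{|\nabla U_i|^2 + P_{\nu_i}^2} \to \sqrt{|\nabla U|^2 + P_\nu^2}$ in $L^1_{loc}$ (dominated convergence plus the $L^1$ convergence of $|\nabla U_i|$, e.g.\ via the elementary inequality $|\sqrt{a^2+b^2}-\sqrt{c^2+d^2}| \le |a-c| + |b-d|$). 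Combined with $\chi_{E_i} \to \chi_E$ in $L^1_{loc}$ this gives $\int_{E_i \cap A}\sqrt{|\nabla U_i|^2 + P_{\nu_i}^2} \to \int_{E \cap A}\sqrt{|\nabla U|^2+P_\nu^2}$. Therefore $\mathcal{J}_{U,\nu}^A(E) \le \liminf_i \mathcal{J}_{U_i,\nu_i}^A(E_i)$.

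\textbf{Step 2: Upper bound via modified competitors.} Given the fixed competitor $F$, form $F_i \definedas (F \setminus A') \cup (F \cap A') $ glued to $E_i$ outside — more precisely, since $E_i$ agrees with $E$ only in the limit, set $F_i \definedas (E_i \setminus A') \cup (F \cap A')$, so that $F_i \triangle E_i \subset\subset A'' \subset\subset \Omega$ for a slightly larger compact $A''$ and $F_i \to (E\setminus A') \cup (F\cap A') =: \widetilde{F}$ in $L^1_{loc}$. One arranges $A'$ so that $\widetilde F = F$ near $A$ (possible since $E \triangle F \subset\subset A$); then $\mathcal{J}_{U,\nu}^A(\widetilde F) = \mathcal{J}_{U,\nu}^A(F)$. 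By minimality of $E_i$ for $\mathcal{J}_{U_i,\nu_i}$, $\mathcal{J}_{U_i,\nu_i}^{A''}(E_i) \le \mathcal{J}_{U_i,\nu_i}^{A''}(F_i)$. Now pass to the limit: the left side we bounded below in Step 1 (on $A''$; outside $A$ the contributions of $E_i$ and $F_i$ cancel since they agree there, so it suffices to control things on $A$ after subtracting the common part). On the right side, $\partial^* F_i$ converges in $L^1_{loc}$ to $\partial^* \widetilde F$ away from $\partial A'$, and one needs $|\partial^* F_i \cap A''| \to |\partial^* \widetilde F \cap A''|$ — not merely $\liminf$. This upgrade uses the $C^{1,\alpha}$ convergence $\partial E_i \to \partial E$: near $\partial A'$ the boundaries $\partial F_i$ consist of pieces of $\partial E_i$ which converge smoothly, so no perimeter is lost in the gluing region; the bulk term on the right converges by the same argument as in Step 1.

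\textbf{Step 3: Conclude.} Subtracting the common exterior contributions (which is legitimate because they are finite on $A''$ and agree) yields $\mathcal{J}_{U,\nu}^A(E) \le \mathcal{J}_{U,\nu}^A(F)$, as desired; since $F$ and $A$ were arbitrary with $E \triangle F \subset\subset A \subset\subset \Omega$, $E$ minimizes $\mathcal{J}_{U,\nu}$ in $\Omega$. Alternatively, by inequality \eqref{ineqcaccio1} it suffices to check minimality from the inside and the outside separately, which streamlines the gluing in Step 2 since then $F \subseteq E$ (or $E \subseteq F$) and the modified competitors $F_i := F \cup (E_i \setminus A')$ etc.\ are cleaner to control.

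The main obstacle is Step 2: obtaining an \emph{upper} (not just $\liminf$) bound on the perimeters of the competitors $\partial^* F_i$ after the cut-and-paste, i.e.\ ensuring no perimeter is created or lost in the transition annulus where $F_i$ interpolates between $F$ (inside) and $E_i$ (outside). This is exactly where the hypothesis that $\partial E_i \to \partial E$ \emph{in $C^{1,\alpha}_{loc}$} (rather than merely in $L^1$ or as varifolds) is essential: it guarantees that the gluing can be performed along a smooth hypersurface with controlled perimeter, uniformly in $i$. A clean way to organize this is to choose the cutting surface $\partial A'$ transverse to all $\partial E_i$ (possible for a.e.\ choice of a sweep-out of compact sets, by the coarea formula and the uniform $C^{1,\alpha}$ bounds), so that $|\partial^* F_i \cap A''| \le |\partial^* E_i \cap (A'' \setminus A')| + |\partial^* F \cap A'| + (\text{lower-order gluing term} \to 0)$.
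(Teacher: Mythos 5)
Your overall strategy coincides with the paper's: reduce to one-sided minimality, glue $F$ to $E_i$ in a compact region, use the minimality of $E_i$, and pass to the limit using lower semicontinuity of perimeter on the left and the $\mathcal{L}^1_{loc}$ convergence $\btr{\nabla U_i}\to\btr{\nabla U}$ for the bulk terms. The difference lies precisely at the point you flag as the main obstacle. You control the perimeters of the modified competitors $F_i=(E_i\setminus A')\cup(F\cap A')$ directly, which forces you to upgrade $\liminf$ to an actual limit for $\btr{\partial^*F_i\cap A''}$; you propose transversality of $\partial A'$ to all $\partial E_i$ and invoke the $C^{1,\alpha}_{loc}$ convergence. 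The paper sidesteps this entirely by applying the submodularity inequality \eqref{ineqcaccio1} a second time: from $\mathcal{J}(E_i)\le\mathcal{J}\lr{E_i\cup(F\cap\widetilde{G})}$ and the perimeter decomposition across $\partial\widetilde{G}$ one obtains
\[
\mathcal{J}_{U_i,\nu_i}^{\widetilde{G}}(F\cap E_i)-\int\limits_{\partial\widetilde{G}}\btr{\chi^-_{F\cup E_i}-\chi^+_{E_i}}\d \mathcal{H}^n\le \mathcal{J}_{U_i,\nu_i}^{\widetilde{G}}(F),
\]
so the fixed competitor $F$ alone sits on the right, its perimeter is a constant, and only lower semicontinuity is needed on the left. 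The gluing error is then a boundary trace integral that tends to zero for a suitable choice of $\widetilde{G}$ by $\mathcal{L}^1_{loc}$ convergence of the characteristic functions and a slicing argument; in particular, your assertion that the $C^{1,\alpha}_{loc}$ convergence of $\partial E_i$ is \emph{essential} for the gluing is an overstatement — the standard BV trace/slicing mechanism with mere $\mathcal{L}^1_{loc}$ convergence suffices, as in Huisken--Ilmanen's original compactness argument. Your route is salvageable (the transversal slicing you sketch does give convergence of $\btr{\partial^*E_i\cap(A''\setminus A')}$), but it carries more technical overhead and conceals the fact that the theorem is really a statement at the level of Caccioppoli sets, independent of the extra boundary regularity. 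You do mention at the end that splitting into inside/outside via \eqref{ineqcaccio1} "streamlines" things, but you stop short of the second application of submodularity that makes the perimeter-of-competitor issue disappear; that is the decisive simplification in the paper.
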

	\begin{proof}
		As argued above, it suffices to show that $E$ minimizes $\mathcal{J}_{U,\nu}$ in $\Omega$ from the outside and from the inside. As both directions are similar in spirit, we merely show that $E$ minimizes $\mathcal{J}_{U,\nu}$ in $\Omega$ from the outside.
		
		So let $E\subseteq F$ such that $F\setminus E\subset\subset \Omega$ and let $G\subset\subset\Omega$ such that $F\setminus E\subset\subset G$. We further consider a compact set $\widetilde{G}\subset\subset\Omega$ with smooth boundary, such that $G\subset int(\widetilde{G})$,
		\[
		\btr{\partial^*(F\cup E_i)\cap\partial\widetilde{G}}=
		\btr{\partial^*(F\cap E_i)\cap\partial\widetilde{G}}=
		\btr{\partial^*E_i\cap\partial\widetilde{G}}=0
		\]
		for $i$ large enough, and $\int\limits_{\partial\widetilde{G}}\btr{\chi^-_{F\cup E_i}-\chi^+_{E_i}}\d \mathcal{H}^n\to 0$ as $i\to\infty$, which is possible since $F\cup E_i\to E$, $F\cap E_i\to E$, and $E_i\to E$ in $\mathcal{L}^1_{loc}(\Omega\setminus G)$. Setting $F_i\definedas E_i\cup (F\cap\widetilde{G})$, we see that
		\[
		\btr{\partial^*F_i\cap\Omega}
		=\btr{\partial^*E_i\cap\partial(\Omega\setminus\widetilde{G})}
		+\btr{\partial^*(F\cup E_i)\cap\widetilde{G}}+\int\limits_{\partial\widetilde{G}}\btr{\chi^-_{F\cup E_i}-\chi^+_{E_i}}\d \mathcal{H}^n.
		\]
		Furthermore $F_i\triangle E_i\subset\subset\Omega$, so $\mathcal{J}_{U_i,\nu_i}^{\widetilde{G}}(E_i)\le \mathcal{J}_{U_i,\nu_i}^{\widetilde{G}}(F_i)$. With the above identity, we can conclude
		\[
		\mathcal{J}_{U_i,\nu_i}^{\widetilde{G}}(E_i)\le \mathcal{J}_{U_i,\nu_i}^{\widetilde{G}}(F\cup E_i)+\int\limits_{\partial\widetilde{G}}\btr{\chi^-_{F\cup E_i}-\chi^+_{E_i}}\d \mathcal{H}^n.
		\]
		Together with inequality \eqref{ineqcaccio1}, this implies that
		\[
		\mathcal{J}_{U_i,\nu_i}^{\widetilde{G}}(F\cap E_i)-\int\limits_{\partial\widetilde{G}}\btr{\chi^-_{F\cup E_i}-\chi^+_{E_i}}\d \mathcal{H}^n
		\le \mathcal{J}_{U_i,\nu_i}^{\widetilde{G}}(F).
		\]
		Using that $E_i\cap F\to E$ in $\widetilde{G}$, $U_i\to U$ and $\nu_i\to \nu$ a.e. locally uniformly, $\btr{\nabla U_i}\to \btr{\nabla U}$ in $\mathcal{L}^{1}_{loc}$ and $\int\limits_{\partial\widetilde{G}}\btr{\chi^-_{F\cup E_i}-\chi^+_{E_i}}\d \mathcal{H}^n\to 0$ as $i\to\infty$, we can conclude that
		\[
		\mathcal{J}_{U,\nu}^{\widetilde{G}}(E)\le \mathcal{J}_{U,v}^{\widetilde{G}}(F),
		\]
		so $E$ minimizes $\mathcal{J}_{U,\nu}$ to the outside.
	\end{proof}
	We will see in Section \ref{sec_variationalformulation} (Lemma \ref{lem_smoothflowlemma}, Lemma \ref{lem_equivalence}) that the sublevelsets of smooth solutions of STIMCF satisfy the variational principle \eqref{comprintset}. Applying this to the smooth approximating solutions $U_\varepsilon$ constructed in Section \ref{sec_levelsetdescription} the Regularity Theorem \ref{thm_regularity} and the interior gradient estimate Theorem \ref{thm_interiorgradientestimate} yield the following:
	\begin{kor}\label{cor_c1alphabound}
		The downward translating graphs $\Sigma^\varepsilon_t=\{U_{\varepsilon}=t\}$ are locally uniformly bounded in $C^{1,\alpha}$ for sufficiently small $\varepsilon>0$.
	\end{kor}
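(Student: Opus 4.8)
The plan is to combine three items already available: the minimizing property of the sublevel sets $E_t^\varepsilon\definedas\{U_\varepsilon<t\}$ for the parametric functional $\mathcal{J}_{U_\varepsilon,\nu_\varepsilon}$ (Lemma~\ref{lem_smoothflowlemma}, Lemma~\ref{lem_equivalence}), the $\varepsilon$-independent gradient bound of Theorem~\ref{thm_interiorgradientestimate}, and the Regularity Theorem~\ref{thm_regularity}. The point is that for each fixed $\varepsilon>0$ the anisotropic flow speed turns into a \emph{fixed} bounded potential, so isotropic GMT regularity applies with constants not depending on $\varepsilon$. Since $U_\varepsilon$ is smooth with $\btr{\nabla U_\varepsilon}^2=\btr{\nabla u_\varepsilon}^2+\varepsilon^2\ge\varepsilon^2>0$ on $\Omega_L\times\R$, the field $\nu_\varepsilon\definedas\frac{\nabla U_\varepsilon}{\btr{\nabla U_\varepsilon}}$ is a smooth unit vector field on \emph{all} of $\Omega_L\times\R$, $\Sigma_t^\varepsilon=\partial E_t^\varepsilon$ is a smooth hypersurface, and the bulk term $\sqrt{\btr{\nabla U_\varepsilon}^2+P_{\nu_\varepsilon}^2}$ (with $P_{\nu_\varepsilon}$ the $\nu_\varepsilon$-tangential trace of $\widetilde K$ on $M\times\R$) is a fixed function of position. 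Thus minimizing $\mathcal{J}_{U_\varepsilon,\nu_\varepsilon}$ over Caccioppoli competitors is literally minimizing the obstacle-free functional $\btr{\partial^*F}+\int_F f_\varepsilon$ with
\[
f_\varepsilon\definedas-\sqrt{\btr{\nabla U_\varepsilon}^2+P_{\nu_\varepsilon}^2}.
\]

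Next I would bound $f_\varepsilon$ uniformly in $\varepsilon$. Exactly as in the proof of Theorem~\ref{thm_aprioriestimates}(iv), one applies Theorem~\ref{thm_interiorgradientestimate} in $(M\times\R,g+\d z^2,\widetilde K)$ to the translating graphs $\widehat\Sigma_t^\varepsilon$: their time-of-arrival function is $U_\varepsilon$, so $\sqrt{\widehat H_\varepsilon^2-\widehat P^2}=\btr{\nabla U_\varepsilon}$ along them, and the radius conditions of Theorem~\ref{thm_interiorgradientestimate} for a ball in $M\times\R$ reduce to those for the corresponding ball in $M$. This reproduces the estimate \eqref{lipestimate} one dimension higher, i.e.\ a locally uniform, $\varepsilon$-independent bound $\btr{\nabla U_\varepsilon}\le C$ with $C$ depending only on $n$, $\norm{K}$, $\norm{\nabla K}$, the geometry of $(M,g)$ and the boundary data on $\partial E_0$. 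Together with $\btr{P_{\nu_\varepsilon}}\le C(n)\norm{K}_{C^0}$ this gives $\norm{f_\varepsilon}_{L^\infty(\mathcal{K})}\le\Lambda(\mathcal{K})$ for every compact $\mathcal{K}\subset\Omega_L\times\R$, with $\Lambda$ independent of $\varepsilon$ and of $t$.

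Finally I would invoke the Regularity Theorem~\ref{thm_regularity} on $\Omega_L\times\R$ with $m=n+2<8$ (empty singular set) and $\mathfrak U=\emptyset$: each $\Sigma_t^\varepsilon=\partial E_t^\varepsilon$ is then a $C^{1,\alpha}$-submanifold with $C^{1,\alpha}$ estimates depending only on the distance to $\partial(\Omega_L\times\R)$, on $\mathrm{ess}\,\sup\btr{f_\varepsilon}$, and on the fixed $C^1$-norm of $g+\d z^2$ — all controlled locally uniformly in $\varepsilon$ (and in $t$, since $f_\varepsilon$ is $t$-independent) by the previous step. Letting $L=L_k\to\infty$ along the sequence of Corollary~\ref{lem_sublimit} then gives the claim on $(M\setminus E_0)\times\R$. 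The main obstacle here is conceptual rather than computational: one must have arranged the variational principle \eqref{comprintset} so that freezing $(U_\varepsilon,\nu_\varepsilon)$ genuinely converts the anisotropic speed into an admissible bounded potential; for $\varepsilon>0$ this is automatic because $\nabla U_\varepsilon$ stays away from zero, but it is precisely the mechanism that will have to be set up with care across jump regions in Sections~\ref{sec_limitingbehavior}--\ref{sec_variationalformulation} in order to survive the limit $\varepsilon\to0$, and all quantitative $\varepsilon$-uniformity rests on Theorem~\ref{thm_interiorgradientestimate}.
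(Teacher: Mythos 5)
Your proposal is correct and follows essentially the same route as the paper: you invoke the Smooth Flow Lemma (via Lemma~\ref{lem_equivalence}) to realize $E^\varepsilon_t=\{U_\varepsilon<t\}$ as minimizers of $\mathcal{J}_{U_\varepsilon,\nu_\varepsilon}$ with the frozen bulk potential, use Theorem~\ref{thm_interiorgradientestimate} transplanted to $M\times\R$ to obtain an $\varepsilon$-independent local bound on that potential, and then apply the Regularity Theorem~\ref{thm_regularity}. The paper's proof is just a terser version of the same three steps, with the $\varepsilon$-uniformity of the bulk term and the observation that $\nu_\varepsilon$ is globally smooth left implicit.
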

	\begin{proof}
		By the Smooth Flow Lemma \ref{lem_smoothflowlemma}, we know that the sets $E^{\varepsilon}_t\definedas\{U_{\varepsilon}<t\}$ minimize $\mathcal{J}_{U_{\varepsilon},\nu_{\varepsilon}}$ on $E_b\setminus E_a$ for all $a\le t<b$, where $\nu_{\varepsilon}=\frac{\nabla U_{\varepsilon}}{\btr{\nabla U_{\varepsilon}}}$.
		
		Let $(y,z)\in (M^{n+1}\setminus E_0)\times\R$ and $d=dist((y,z),\partial E_0\times \R)=dist(y,\partial E_0)$. We now take $L$ large enough, such that $B^M_{2r}\subset\subset F_L$, $2r<d$, and $r$ and satisfies the assumptions of Theorem \ref{thm_interiorgradientestimate}. So for $\varepsilon<\varepsilon(L)$ we have an upper bound of $\sqrt{\btr{\nabla U_{\varepsilon}}^2+P_{\nu_\varepsilon}^2}$ on $B^{M\times\R}_r((y,z))$. 
		Then the Regularity Theorem \ref{thm_regularity} implies that the hypersurfaces $\Sigma_t^\varepsilon\cap B^{M\times\R}_r((y,z))$ are uniformly bounded in $C^{1,\alpha}$.
	\end{proof}
	Using the locally uniform bounds on the downward translating graphs we are able to construct limiting hypersurfaces in a jump region of $U$ in $(M\setminus E_0)\times\R$.
	\begin{prop}\label{prop_lamination}
		Let $\mathcal{K}_{t_0}$ denote the interior of a jump region $\{U=t_0\}$, at a jump time $t_0$. Then each point $X_0=(y_0,z_0)\in\mathcal{K}_{t_0}$ lies in a complete hypersurface $\widetilde{\Sigma}_{X_0}\subseteq \overline{\mathcal{K}}_{t_0}$ that is the limit of a sequence $\widetilde{\Sigma}^{\varepsilon_{i_j}}_{t_{i_j}}$ and locally uniformly bounded in $C^{1,\alpha}$.
	\end{prop}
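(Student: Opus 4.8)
The plan is, for a fixed $X_0=(y_0,z_0)\in\mathcal{K}_{t_0}$, to choose times along which all the approximating translating graphs pass through $X_0$ and then extract a $C^{1,\alpha}_{loc}$-controlled limit. Concretely, I would set $t_i:=U_{\varepsilon_i}(X_0)=u_{\varepsilon_i}(y_0)-\varepsilon_i z_0$; since $u_{\varepsilon_i}\to u$ locally uniformly and $\varepsilon_i\to0$ (Corollary \ref{lem_sublimit}) we get $t_i\to u(y_0)=t_0$, while by construction $X_0\in\widetilde\Sigma^{\varepsilon_i}_{t_i}=\{U_{\varepsilon_i}=t_i\}$ for every $i$. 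By Corollary \ref{cor_c1alphabound} the hypersurfaces $\widetilde\Sigma^{\varepsilon}_t$ are locally uniformly bounded in $C^{1,\alpha}$ for small $\varepsilon$, uniformly in $t$; representing them in local coordinates as graphs of uniformly bounded $C^{1,\alpha}$-norm, applying Arzel\`a--Ascoli, and diagonalising along a compact exhaustion of $(M\setminus E_0)\times\R$, I would pass to a subsequence $\varepsilon_{i_j}$ with $\widetilde\Sigma^{\varepsilon_{i_j}}_{t_{i_j}}\to\widetilde\Sigma_{X_0}$ in $C^{1,\beta}_{loc}$ for all $\beta<\alpha$. Each $\widetilde\Sigma^{\varepsilon}_t$ is a regular level set of $U_\varepsilon$ (indeed $|\nabla U_\varepsilon|^2=|\nabla u_\varepsilon|^2+\varepsilon^2>0$), hence properly embedded and, on any fixed compact region once $L$ is large, boundaryless, so the uniform local graph bounds survive in the limit: $\widetilde\Sigma_{X_0}$ is a $C^{1,\alpha}$ hypersurface, closed as a subset of $(M\setminus E_0)\times\R$, passing through $X_0$ and locally uniformly bounded in $C^{1,\alpha}$. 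Replacing it by its connected component through $X_0$ gives the asserted complete hypersurface.

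It then remains to prove $\widetilde\Sigma_{X_0}\subseteq\overline{\mathcal{K}}_{t_0}$. Given $(y_1,z_1)\in\widetilde\Sigma_{X_0}$, choose $(y^j,z^j)\in\widetilde\Sigma^{\varepsilon_{i_j}}_{t_{i_j}}$ with $(y^j,z^j)\to(y_1,z_1)$; then $u_{\varepsilon_{i_j}}(y^j)=t_{i_j}+\varepsilon_{i_j}z^j\to t_0$, and local uniform convergence $u_{\varepsilon_{i_j}}\to u$ forces $u(y_1)=t_0$, so $\widetilde\Sigma_{X_0}\subseteq\{U=t_0\}=\{u=t_0\}\times\R$; moreover, over any $y$ with $u(y)\neq t_0$ the heights $(u_{\varepsilon_{i_j}}(y)-t_{i_j})/\varepsilon_{i_j}$ of the defining graphs diverge to $\pm\infty$ locally uniformly, so $\widetilde\Sigma_{X_0}$ carries no point over $\{u\neq t_0\}$. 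To sharpen this to $\overline{\operatorname{int}\{u=t_0\}}\times\R=\overline{\mathcal{K}}_{t_0}$ I would use the dichotomy for the level set $\{u=t_0\}$: over $\operatorname{int}\{u=t_0\}$ the graphs stay at bounded height, whereas on the complementary (regular) part of $\{u=t_0\}$ --- a $C^{1,\alpha}$ hypersurface $\Gamma\subset M$ traversed transversally by $u$, on which $|\nabla u_{\varepsilon_{i_j}}|$ admits a uniform positive lower bound (this is the one place the maximality hypothesis $\tr_gK\equiv0$ is needed) --- the translating graphs $z=(u_{\varepsilon_{i_j}}-t_{i_j})/\varepsilon_{i_j}$ become vertical over $\Gamma$ with height tending to $+\infty$, so any limit sheet sitting over such a region would be a piece of the infinite vertical cylinder $\Gamma\times\R$. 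Since $\widetilde\Sigma_{X_0}$ is connected, passes through the interior point $X_0\in\mathcal{K}_{t_0}$, and stays at bounded height over the plateau, in the limit it can meet such an infinite wall only over $\partial\mathcal{K}_{t_0}\subseteq\overline{\mathcal{K}}_{t_0}$; this yields $\widetilde\Sigma_{X_0}\subseteq\overline{\mathcal{K}}_{t_0}$.

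The extraction of the limit, its completeness, and the inclusion in $\{U=t_0\}$ are routine consequences of Corollaries \ref{lem_sublimit} and \ref{cor_c1alphabound}; I expect the genuine difficulty to be the refined containment, i.e. the analysis of the approximating graphs near the (a priori irregular) boundary of the jump region and the argument that infinite-height sheets over the regular part of $\{u=t_0\}$ detach from the bounded-height leaf through $X_0$ as $\varepsilon\to0$. This is exactly where one needs the interior lower bound on the speed $|\nabla u_\varepsilon|$ that is available only for maximal data; without it the limit could in principle acquire a spurious vertical cylindrical piece over a regular level set. A secondary point to verify is that $\widetilde\Sigma_{X_0}$ does not degenerate away from $X_0$, which follows since the approximating surfaces are properly embedded and the $C^{1,\alpha}$ bounds are uniform.
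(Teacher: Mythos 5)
Your construction of the limiting hypersurface (take $t_i := U_{\varepsilon_i}(X_0)$ so $X_0\in\widetilde\Sigma^{\varepsilon_i}_{t_i}$, note $t_i\to t_0$, use Corollary~\ref{cor_c1alphabound} for uniform $C^{1,\alpha}$ bounds, pass to local graphs over the tangent plane, apply Arzel\`a--Ascoli, diagonalize) matches the paper's proof up to the confinement step, and your argument that the limit lies in $\{U=t_0\}$ is the same one the paper uses. The divergence is in how you obtain $\widetilde\Sigma_{X_0}\subseteq\overline{\mathcal K}_{t_0}$: you diagonalize over a compact exhaustion of all of $(M\setminus E_0)\times\R$ and then try to rule out the limit sheet extending over the ``regular'' part $\{u=t_0\}\setminus\operatorname{int}\{u=t_0\}$ by arguing that the approximating graphs there become vertical with height diverging, using a claimed uniform positive lower bound on $|\nabla u_{\varepsilon_{i_j}}|$ that you attribute to maximality. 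This is where the proposal has a genuine gap: no interior \emph{lower} bound on $|\nabla u_\varepsilon|$ is available in this paper, and the maximality hypothesis $\tr_gK\equiv0$ is not used for that purpose --- it enters in Theorem~\ref{thm_aprioriestimates} to build the compact-region lower barrier $v_1$ for the elliptic regularisation. Moreover, at this stage of the argument you do not know that $\{u=t_0\}\setminus\operatorname{int}\{u=t_0\}$ is a $C^{1,\alpha}$ hypersurface; a priori it is just a closed set, and your dichotomy argument (bounded heights over the plateau, diverging walls over the regular part) requires regularity and transversality of the non-plateau part that has not been established.

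The paper sidesteps all of this with a simpler device: the local extraction radius is taken as $d(X_0):=\min\bigl(\iota(X_0),\,r(X_0),\,\operatorname{dist}(X_0,\partial\mathcal K_{t_0})\bigr)$, so the local graph construction never leaves $\mathcal K_{t_0}$, and the surface built by iterating this construction is automatically contained in $\overline{\mathcal K}_{t_0}$; the final observation that $\widetilde\Sigma_{X_0}\subseteq\{U=t_0\}$ is then just a consistency check, not the mechanism of confinement. So the fix for your proposal is to replace the global diagonalization and the a posteriori ruling-out argument with this a priori restriction of the extraction radius. To your credit, you correctly identified the confinement step as the genuine difficulty and noticed that the surface does not degenerate away from $X_0$ thanks to the uniform local graph bounds; those observations are sound.
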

	\begin{proof}
	We take a pointwise approach 	similar to Heidusch \cite{heidusch} and Moore \cite{moore}. We fix a target point $X_0=(y_0,z_0)\in\mathcal{K}_{t_0}$. Taking the sequence $(\varepsilon_i)\to 0$, such that the solutions to the elliptic regularisation $u_{\varepsilon_i}$ converge to $u$, we consider the corresponding sequence of times $(t_i)$ defined by $t_i\definedas U_{\varepsilon_i}(X_0)\in (-\infty.\infty)$, i.e., $X_0\in \widetilde{\Sigma}_{t_i}^{\varepsilon_i}$. Note that $t_i\to t_0$, since $U_i\to U$ locally uniformly. Let $\iota(X_0)$ denote the injectivity radius of $X_0$ in $(M\setminus E_0)\times\R$ and set 
		\[
		d=d(X_0)\definedas\min(\iota(X_0),r(X_0),dist(X_0,\partial \mathcal{K}_{t_0})),
		\]
		where $r(X_0)$ is chosen as in Corollary \ref{cor_c1alphabound}. Therefore it exists $\varepsilon'>0$, such that for all $t$ and $\varepsilon\le\varepsilon'$, the surface pieces $\widetilde{\Sigma}_{t_i}^{\varepsilon_i}\cap B^{M\times\R}_d(X_0)$ are $C^{1,\alpha}$ bounded uniformly in $t$ and $\varepsilon$. We now consider the exponential map
		\[
		\exp_{X_0}=(\exp_{y_0},\mathrm{id}_\R)\colon T_{X_0}(M\times\R)\cap B_d^{n+2}(0,z_0)\to B_d^{M\times\R}(X_0),
		\]
		and set $\widehat{\Sigma}_{t_i}^{\varepsilon_i}\definedas\exp_{X_0}^{-1}\left(\widetilde{\Sigma}_{t_i}^{\varepsilon_i}\cap B_d^{M\times\R}(X_0)\right)\subseteq T_{X_0}(M\times\R)\cong \R^{n+2}$. Then the surfaces $\widehat{\Sigma}_{t_i}^{\varepsilon_i}$ are $C^{1,\alpha}$ bounded uniformly in $t$ and $\varepsilon$. In particular, we have uniform $C^{0,\alpha}$ bounds on the unit normal $\widehat{v}_i(\hat{X_0})$ to $\widehat{\Sigma}_{t_i}^{\varepsilon_i}$ at $\hat{X_0}=(0,z_0)$. By sequence compactness, there exists limit $\widehat{v}(\hat{X_0})$, such that $\widehat{v}_{i}(\hat{X_0})\to \widehat{v}(\hat{X_0})$ uniformly (up to taking a subsequence). Then $v({\hat{X_0}})$ uniquely determines a hyperplane $\hat{T}$ centered at $\hat{X_0}$. By the uniform convergence of $\widehat{v}_i(\hat{X_0})$ to $v(\hat{X_0})$ and the uniform $C^{1,\alpha}$ bounds of the hypersurfaces $\widehat{\Sigma}_{t_i}^{\varepsilon_i}$, there exists $R\le d$, such that for $i>>1$ large enough, we can write each $\widehat{\Sigma}_{t_i}^{\varepsilon_i}$ locally as a graph over $\hat{T}\cap B_R(\hat{X_0})$. So there exists a $C^{1,\alpha}$ function $\hat{\omega}_i$ on $\hat{T}\cap B_R(\hat{X_0})$, such that $\widehat{\Sigma}_{t_i}^{\varepsilon_i}\cap B_R(\hat{X_0})=\graph(\hat\omega_i)$, and the functions $\hat{\omega}_i$ are uniformly $C^{1,\alpha}$ bounded. Using Arzel\`a--Ascoli and denoting the new H\"older exponent $0<\beta<\alpha$ again by $\alpha$ for convenience, there is a further subsequence $\hat{\omega}_{i_j}$ and a $C^{1,\alpha}$ function $\hat{\omega}\colon \hat{T}\cap B_R(\hat{X_0})\to \R$, such that
		\[
		\hat{\omega}_{i_j}\to \hat{\omega}\text{ in } C^{1,\alpha}\left(\hat{T}\cap B_R(\hat{X_0})\right),
		\]
		$\hat{\omega}$ satisfies the same $C^{1,\alpha}$ bounds, and $\hat{\omega}$ is locally the graph of a hypersurface $\widehat{\Sigma}_{X_0}$ around $\hat{X_0}$ with $\hat{T}=T_{\hat{X_0}}\widehat{\Sigma}$. Thus the hypersurface $\exp_{X_0}(\widehat{\Sigma}_{\hat{X_0}})$ in $M^{n+1}\times\R$ is uniformly $C^{1,\alpha}$ bounded. By successively taking subsequences, the hypersurfaces $\widetilde{\Sigma}_{t_i}^{\varepsilon_i}$ converge in $C^{1, \alpha}_{loc}$ to a complete hypersurface that we will henceforth denote by $\widetilde{\Sigma}_{X_0}$ satisfying $\widetilde{\Sigma}_{X_0}\cap B^{M\times\R}_R({X_0})=\exp_{X_0}(\widehat{\Sigma}_{\hat{X_0}})$.
		
		We want to conclude by showing that $\widetilde{\Sigma}_{X_0}\subseteq \overline{\mathcal{K}}_{t_0}$. Consider a point $Y\in \widetilde{\Sigma}_{\hat{X_0}}$. Then there exists a sequence $(Y_i)\to Y$ with $Y_i\in \widetilde{\Sigma}_{t_i}^{\varepsilon_i}$, and 
		\[
		\btr{U_{\varepsilon_i}(Y_i)-U(Y)}\le \btr{U_{\varepsilon_i}(Y_i)-U_{\varepsilon_i}(Y)}+\btr{U_{\varepsilon_i}(Y)-U(Y)}\to 0,
		\]
		so $U(Y)=\lim\limits_{i\to\infty}U_{\varepsilon_i}(Y_i)=\lim\limits_{i\to\infty}t_i=t_0$, so $Y\in\{U=t_0\}$.
	\end{proof}
	Using the $C^{1,\alpha}$ bounds on the limiting surfaces, we will dedicate the rest of this section on improving the above result. In particular, we will show that the hypersurfaces $\widetilde{\Sigma}_{X_0}$ indeed foliate $\mathcal{K}_{t_0}$ and are $C_{loc}^{2,\alpha}$ generalized apparent horizons that bound Caccioppoli sets that satisfy our comparison principle \eqref{comprintset}, see Theorem \ref{thm_apparenthorizons} and \ref{thm_foliation} below. To this end, we want to verify all assumptions to apply the Compactness Theorem \ref{thm_compactness1} on $\mathcal{K}_{t_0}$. First, we argue that the limiting surfaces are already sufficient to define a notion of unit normal on $\mathcal{K}_{t_0}$.
	\begin{prop}\label{prop_unitnormal}
		Let $\mathcal{K}_{t_0}$ be the interior of a jump region. Then there exists a H\"older continuous unit vector field $\nu$ on $\mathcal{K}_{t_0}$, such that $\nu_i\to\nu$ locally uniformly for a subsequence $\varepsilon_i\to0$. Moreover $\nu$ is translation invariant and everywhere normal to the hypersurfaces $\widetilde{\Sigma}_{X}$ as constructed in Proposition \ref{prop_lamination}.
	\end{prop}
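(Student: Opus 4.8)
The plan is to extract the limiting unit vector field $\nu$ from the unit normals $\nu_i\definedas\nabla U_{\varepsilon_i}/\btr{\nabla U_{\varepsilon_i}}$ of the downward translating graphs by an Arzel\`a--Ascoli argument, exploiting that these normals are uniformly H\"older continuous on compact subsets of $\mathcal{K}_{t_0}$, and then to identify $\nu$ with the normal fields of the surfaces $\widetilde\Sigma_X$ from Proposition \ref{prop_lamination}. Two structural observations drive the argument. First, since $U(y,z)=u(y)$, the region $\mathcal{K}_{t_0}$ is invariant under vertical translations: it is the product of $\operatorname{int}_M\{u=t_0\}$ with $\R$. Second, $\nabla U_{\varepsilon_i}=(\nabla u_{\varepsilon_i},-\varepsilon_i)$ never vanishes, so each $\nu_i$ is a smooth unit vector field on $\Omega\times\R$, and rescaling as in Section \ref{sec_levelsetdescription} gives $\nu_i(y,z)=(\nabla\widehat u_{\varepsilon_i}(y),-1)/\sqrt{1+\btr{\nabla\widehat u_{\varepsilon_i}(y)}^2}$, which depends only on $y$; in particular every $\nu_i$, and hence any locally uniform limit, is translation invariant.

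For the uniform H\"older estimate, fix $X_0=(y_0,z_0)\in\mathcal{K}_{t_0}$ and set $t_i\definedas U_{\varepsilon_i}(X_0)$, so that the level set $\widetilde\Sigma^{\varepsilon_i}_{t_i}\ni X_0$ is, near $X_0$, the graph $z=\widehat u_{\varepsilon_i}(y)-\widehat u_{\varepsilon_i}(y_0)+z_0$ over a neighbourhood of $y_0$ in $M$. By Corollary \ref{cor_c1alphabound} (which rests on the interior gradient estimate Theorem \ref{thm_interiorgradientestimate} through the Regularity Theorem) these graph functions are bounded in $C^{1,\alpha}$ uniformly in $i$ on a fixed such neighbourhood; shrinking it and passing to a finite subcover, this holds uniformly on compact subsets of $\mathcal{K}_{t_0}$. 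Hence $\nabla\widehat u_{\varepsilon_i}$ is uniformly bounded in $C^{0,\alpha}$ there, and so is $\nu_i$, i.e. $\{\nu_i\}$ is bounded in $C^{0,\alpha}_{loc}(\mathcal{K}_{t_0})$.

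Covering $\mathcal{K}_{t_0}$ by countably many such neighbourhoods and combining Arzel\`a--Ascoli with a diagonal argument, I would then pass to a subsequence, still written $\varepsilon_i$, with $\nu_i\to\nu$ in $C^{0,\alpha'}_{loc}(\mathcal{K}_{t_0})$ for each $\alpha'<\alpha$; the limit $\nu$ is a unit vector field in $C^{0,\alpha}_{loc}$, translation invariant by the remark above, and in this sense defined already on the base $\operatorname{int}_M\{u=t_0\}$ (cf. the field $\nu_M\definedas\pi_{TM}\nu$ mentioned earlier). To see that $\nu$ is normal to each $\widetilde\Sigma_X$, I would re-run the pointwise construction of Proposition \ref{prop_lamination} along this very subsequence — legitimate since that construction only uses $u_{\varepsilon_i}\to u$, which survives passing to subsequences. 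For $X\in\mathcal{K}_{t_0}$, the surface $\widetilde\Sigma_X$ is then a $C^{1,\alpha}_{loc}$-limit of graphs $\widetilde\Sigma^{\varepsilon_{i_k}}_{t_{i_k}}$; $C^{1,\alpha}_{loc}$ convergence of the graphs forces their unit normals to converge to the unit normal of $\widetilde\Sigma_X$, while those unit normals are precisely $\nu_{i_k}$ restricted to the graphs, which converge locally uniformly to $\nu$. The two limits agree, so $\nu$ is everywhere normal to $\widetilde\Sigma_X$.

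I expect the main obstacle to be the uniform $C^{0,\alpha}$ bound for $\nu_i$ on $\mathcal{K}_{t_0}$: Corollary \ref{cor_c1alphabound} a priori only controls the level sets $\widetilde\Sigma^\varepsilon_t$ as hypersurfaces, whereas one needs a bound on the \emph{ambient} field $\nu_i$, and this must survive $\btr{\nabla U_{\varepsilon_i}}\to0$ throughout the jump region. The resolution is exactly the rescaled, translation-invariant form $\nu_i(y,z)=\nu_i(y)$ noted above, which reduces the estimate to a $C^{0,\alpha}$ bound on $\nabla\widehat u_{\varepsilon_i}$ over the base $\operatorname{int}_M\{u=t_0\}$; a secondary bookkeeping point is to ensure the subsequence used in Proposition \ref{prop_lamination} is a refinement of the one selected here, so that the normals can legitimately be compared.
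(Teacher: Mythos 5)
There is a genuine gap at the central step. You claim that Corollary \ref{cor_c1alphabound} bounds the graph functions $z=\widehat u_{\varepsilon_i}(y)-\widehat u_{\varepsilon_i}(y_0)+z_0$ in $C^{1,\alpha}$ uniformly over a fixed neighbourhood in $M$, and hence that $\nabla\widehat u_{\varepsilon_i}$, and with it $\nu_i$, is uniformly $C^{0,\alpha}$-bounded on compact subsets of the base $\mathfrak{K}_{t_0}$. But Corollary \ref{cor_c1alphabound} controls the level sets $\widetilde\Sigma^{\varepsilon}_t$ as $C^{1,\alpha}$ \emph{hypersurfaces} — locally as graphs over their own tangent planes — not as graphs over the base $M$. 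These are not the same, and the distinction is exactly what matters in the jump region: by Theorem \ref{thm_foliation} the limiting leaves are either translating graphs or \emph{vertical cylinders}, and when a leaf is a vertical cylinder its unit normal is horizontal, so $\btr{\nabla\widehat u_{\varepsilon_i}}\to\infty$ there. Thus the quantity you need to be bounded is unbounded in general, and the H\"older estimate on $\nu_i(y,z)=(\nabla\widehat u_{\varepsilon_i},-1)/\sqrt{1+\btr{\nabla\widehat u_{\varepsilon_i}}^2}$ does not follow from the claimed estimate on $\widehat u_{\varepsilon_i}$. Your own identification of the ``main obstacle'' is correct, but the proposed resolution — passing to the translation-invariant form — does not remove it; it only re-expresses the problem on the base, where the needed bound still fails.

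What the paper does instead is to avoid estimating $\widehat u_{\varepsilon_i}$ over $M$ altogether. After reducing (as you do) to the slice $\mathfrak{K}_{t_0}$ by translation invariance, it fixes a countable dense set $\zeta\subset\mathfrak{K}_{t_0}$, extracts a single subsequence via a diagonal argument so that the leaves $\widetilde\Sigma_{X_k}$ exist simultaneously for all $X_k\in\zeta$, and then exploits the structural fact that these limit leaves only touch tangentially, never transversally. This permits writing nearby approximating surfaces $\widetilde\Sigma^{\varepsilon_{i_j}}_{t_{i_j}}$ as \emph{normal graphs} over a fixed leaf $\widetilde\Sigma_{X_k}$ — a graph over the tangent space of the leaf, not over $M$ — and the uniform $C^{1,\alpha}$ bounds of Corollary \ref{cor_c1alphabound} do apply in that gauge, yielding the H\"older estimate $\btr{\nu_{\varepsilon_{i_j}}(X_k)-\nu_{\varepsilon_{i_l}}(X_m)}\le C\btr{X_k-X_m}^{\alpha}$. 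The vector field is then extended from $\zeta$ by continuity, and the normality to $\widetilde\Sigma_X$ for $X\notin\zeta$ is recovered by uniqueness of subsequential limits — a point you also raise, and that part of your argument is sound. To repair your proof you would need to replace the step bounding $\nabla\widehat u_{\varepsilon_i}$ by a genuine normal-graph comparison between nearby leaves, using the non-transversal-intersection property, which is precisely the extra ingredient the paper introduces and which is absent from your proposal.
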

	\begin{proof}
		Throughout the proof, let $\widetilde{\Sigma}_{X}$ denote a hypersurface through $X\in\mathcal{K}_{t_0}$ constructed in Proposition \ref{prop_lamination} as a sublimit of the level-sets $\{U_{\varepsilon_i}=t_i\}$ such that $U_{\varepsilon_i}(X)=t_i$. Note that the hypersurfaces $\widetilde{\Sigma}_{X}$ are not a-priori unique, as the construction relies on taking (subsequent) subsequences. However, comparing a point $X_0=(x_0,z_0)\in \mathcal{K}_{t_0}$ with a vertical translate $X_\alpha=(x_0,z_0+\alpha)$, and assuming that for a subsequence $i_j$
		\[X_0\in \widetilde{\Sigma}_{t_{i_j}}^{\varepsilon_{i_j}}=\graph\left(\frac{u_{i_j}}{\varepsilon_{i_j}}-\frac{t_{i_j}}{\varepsilon_{i_j}}\right)\to\widetilde{\Sigma}_{X_0},
		\]
		we have  $X_\alpha\in \widetilde{\Sigma}_{t_{i_j}-\alpha\varepsilon_{i_j}}^{\varepsilon_{i_j}}$, and 
		\[
		\widetilde{\Sigma}_{t_{i_j}-\alpha\varepsilon_{i_j}}^{\varepsilon_{i_j}}=\graph\left(\frac{u_{i_j}}{\varepsilon_{i_j}}-\frac{t_{i_j}-\alpha\varepsilon_{i_j}}{\varepsilon_{i_j}}\right)= \graph\left(\frac{u_{i_j}}{\varepsilon_{i_j}}-\frac{t_{i_j}}{\varepsilon_{i_j}}\right)+\alpha e_{n+2}\to\widetilde{\Sigma}_{X_0+\alpha e_{n+2}},
		\]
		so we have convergence for all vertical translates with respect to the same subsequence. Thus, it suffices to construct a unit vector field $\nu\in T\mathcal{K}_{t_0}$ along $\mathfrak{K}_{t_0}\definedas int\{u=t_0\}=\mathcal{K}_{t_0}\cap (M\times\{0\})$ that is normal to the hypersurfaces $\widetilde{\Sigma}_{X}$ for $X\in\mathfrak{K}_{t_0}$, which is then trivially extended in the $z$-direction and satisfies all the desired properties.
		
		Let $\zeta:=\{X_k\}$ be a dense, countable subset of $\mathfrak{K}_{t_0}$. Then, by taking subsequent subsequences we can choose the hypersurfaces $\widetilde{\Sigma}_{X_k}$, such that
		\[
		\widetilde{\Sigma}_{t^k_{i_j}}^{\varepsilon_{i_j}}\to\widetilde{\Sigma}_{X_k}
		\]
		locally uniformly in $C^{1,\alpha}$ with respect to the same subsequence $(\varepsilon_{i_j})$ via a diagonal sequence argument. In particular, the hypersurfaces $\widetilde{\Sigma}_{X_k}$ are locally uniform limits of the level-sets $\{U_{\varepsilon_{i_j}}=t^k_{i_j}\}$, which implies that if two surfaces touch each other, they have to do so tangentially and without intersecting anywhere.
		This yields a well-defined unit vector field on $\zeta$ dense in $\mathfrak{K}_{t_0}$. In view of the (locally) uniform $C^{1,\alpha}$ estimates established in Corollary \ref{cor_c1alphabound} and since the limiting hypersurfaces $\widetilde{\Sigma}_{X_k}$, $X_k\in\zeta$, do not intersect transversally, there is $r_0$ such that for $X_k,X_m\in\zeta$ with $\btr{X_k-X_m}<r_0$ all $\widetilde{\Sigma}^{\varepsilon_{i_j}}_{t_{i_j}}\cap B_{2r_0}(X_k,0)$ for $\varepsilon$ sufficiently small can be written as normal graphs over $\widetilde{S}_{X_k}$ with uniformly bounded $C^{1,\alpha}$ norm and 
		\[\btr{\nu_{\varepsilon_{i_j}}(X_k)-\nu_{\varepsilon_{i_l}}(X_m)}\le C\btr{X_k-X_m}^\alpha
		\] for all $j,l$ sufficiently large.
		Therefore, $\nu_{\varepsilon_{i_j}}\to\nu$ locally uniformly in $\zeta$ and $\nu$ can be extended to a Hölder continuous unit vector field on $\mathfrak{K}_{t_0}$.
		
		It remains to show that $\nu(X)$ is normal to the hypersurfaces $\widetilde{\Sigma}_X$ for all $X\in \mathfrak{K}_{t_0}\setminus \zeta$. To this end, we construct $\widetilde{\Sigma}_X$ as in Proposition \ref{prop_lamination} by taking a further subsequence $(i_{j_k})_{k\in\N}$. In particular $\nu_{\varepsilon_{i_{j_k}}}(X)\to\nu_{\widetilde{\Sigma}_X}(X)$, but we also necessarily have $\nu_{\varepsilon_{i_{j_k}}}\to\nu$, so
		\[
		\nu_{\widetilde{\Sigma}_X}(X)=\nu(X).
		\]
	\end{proof}
	Besides the existence of a measurable unit vector field $\nu$, the compactness theorems Theorem \ref{thm_compactness1} and Theorem \ref{thm_compactness2} below also require that $\btr{\nabla U_i}\to\btr{\nabla U}$ in $\mathcal{L}^1_{loc}$. The interior gradient estimate Theorem \ref{thm_interiorgradientestimate} implies that for any $L$ large enough, $\varepsilon_i<\varepsilon_0(L)$ and domain $\Omega\subseteq\Omega_L$, it holds that
	\[
	\sup_{A}\btr{\nabla{U_i}}\le C(A),
	\]
	for all $A\subset\subset\Omega$, where $C(A)$ is a positive constant only depending on $A$. Then the Compactness Theorem for BV functions, implies the weak convergence and semilowercontinuity of the gradient, i.e.,
	\[
	\nabla U_i\to \nabla U\text{ in }(C_0^0(\Omega))^*\text{, }
	\btr{\nabla U}_{\mathcal{L}^1}\le\liminf_{i\to\infty}\btr{\nabla U_i}_{\mathcal{L}^1}.
	\]
	In the interior of jump regions, the $\mathcal{L}^1_{loc}$ convergence is readily established by the weak convergence, but will demand a more delicate analysis away from jumps (see Lemma \ref{lem_l1convergence}).
	Before proving this, we want to point out that this improvement of the convergence is not required in the respective Compactness Theorems for inverse mean curvature flow \cite[Theorem 2.1]{huiskenilmanen} and for inverse null mean curvature flow \cite[Compactness Property 9]{moore}, where the proofs merely rely on the semilowercontinuity.
	\begin{lem}\label{lem_l1convergence_jumpregion}
		\[
		\btr{\nabla U_i}\to \btr{\nabla U}\text{ in }\mathcal{L}^1_{loc}\left(\mathcal{K}_{t_0}\right).
		\]
	\end{lem}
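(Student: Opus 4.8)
The plan is to derive the claim quickly from the weak-$*$ convergence of the gradients together with the \emph{strong} (locally uniform) convergence $\nu_i\to\nu$ on $\mathcal{K}_{t_0}$ supplied by Proposition \ref{prop_unitnormal}. Since $\mathcal{K}_{t_0}$ is the interior of the jump region $\{U=t_0\}$, we have $U\equiv t_0$ and hence $\nabla U=0$ on $\mathcal{K}_{t_0}$, so the assertion is simply that $\int_{A}\btr{\nabla U_i}\to 0$ for every compact $A\subset\mathcal{K}_{t_0}$. Covering $A$ by finitely many geodesic balls, it suffices to treat $A=B:=B_r(X_0)$ with $\overline{B_{2r}(X_0)}\subset\subset\mathcal{K}_{t_0}$. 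Three facts are available: by the BV compactness recorded just before this lemma, $\nabla U_i\to\nabla U=0$ in $(C_0^0)^*$ on $\Omega_L\times\R$, hence on $\mathcal{K}_{t_0}$; by the interior gradient estimate (Theorem \ref{thm_interiorgradientestimate}) there is, for $i$ large, a bound $\sup_{\overline{B_{2r}(X_0)}}\btr{\nabla U_i}\le C_0$; and by Proposition \ref{prop_unitnormal} the unit fields $\nu_i:=\frac{\nabla U_i}{\btr{\nabla U_i}}$ converge to the (H\"older continuous, hence continuous) field $\nu$ uniformly on $\overline{B_{2r}(X_0)}$.

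Since $\btr{\nabla U_i}\ge\varepsilon_i>0$ everywhere, $\langle\nabla U_i,\nu_i\rangle=\btr{\nabla U_i}$, so for $0<\delta\le r$ and $B':=B_{r+\delta}(X_0)$ we split
\[
\int_{B}\btr{\nabla U_i}=\int_{B}\langle\nabla U_i,\nu\rangle+\int_{B}\langle\nabla U_i,\nu_i-\nu\rangle .
\]
The second integral is at most $C_0\,\btr{B}\,\sup_{\overline{B}}\btr{\nu_i-\nu}\to 0$. For the first, fix a cutoff $\chi\in C_0^0(\mathcal{K}_{t_0})$ with $0\le\chi\le 1$, $\chi\equiv 1$ on $B$, and $\operatorname{supp}\chi\subset B'$; then $\chi\nu$ is a compactly supported continuous vector field, so $\int\chi\,\langle\nu,\nabla U_i\rangle\to 0$ by the weak-$*$ convergence, while $\btr{\int_{B'\setminus B}\chi\,\langle\nu,\nabla U_i\rangle}\le C_0\,\btr{B'\setminus B}$. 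Hence $\limsup_{i\to\infty}\btr{\int_{B}\langle\nabla U_i,\nu\rangle}\le C_0\,\btr{B'\setminus B}$, and letting $\delta\to 0$ forces $\int_{B}\langle\nabla U_i,\nu\rangle\to 0$. Combining the two pieces yields $\int_{B}\btr{\nabla U_i}\to 0$, and summing over the covering balls proves the lemma.

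I expect essentially no obstacle at this stage: the substance lies upstream, in the fact that on the interior of a jump region one has \emph{locally uniform} (not merely almost everywhere) convergence $\nu_i\to\nu$, obtained from the $C^{1,\alpha}$-lamination of Propositions \ref{prop_lamination} and \ref{prop_unitnormal}; this is precisely the property that is unavailable away from jumps and that makes the companion Lemma \ref{lem_l1convergence} substantially harder. The only mild technical point above is that the weak-$*$ limit tests against compactly supported continuous vector fields, which is why the cutoff $\chi$ and the slightly larger ball $B'$ with $\btr{B'\setminus B}\to 0$ are introduced. One could alternatively argue by slicing: uniform closeness of $\nu_i$ to $\nu(X_0)$ on a small ball makes $U_i$ strictly monotone along the (approximately geodesic) $\nu(X_0)$-direction there, so its level sets are graphs of uniformly bounded area and the coarea formula gives $\int_{B_{r/2}(X_0)}\btr{\nabla U_i}\le C(n)\,r^{n+1}\,\operatorname{osc}_{\overline{B}_{r/2}(X_0)}U_i\to 0$, using $U_i\to t_0$ uniformly on compacts of $\mathcal{K}_{t_0}$.
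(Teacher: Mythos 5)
Your main argument is precisely the paper's proof: split $\varphi\,\btr{\nabla U_i}=\langle\nabla U_i,\varphi\nu\rangle+\varphi\langle\nabla U_i,\nu_i-\nu\rangle$, kill the first term by weak-$*$ convergence of $\nabla U_i$ to $\nabla U=0$ against the continuous compactly supported field $\varphi\nu$, kill the second by the uniform bound on $\btr{\nabla U_i}$ together with the locally uniform convergence $\nu_i\to\nu$ from Proposition \ref{prop_unitnormal}, and close the gap between cutoffs and indicators using the uniform pointwise bound. Your cutoff-on-a-slightly-larger-ball bookkeeping is just the paper's ``$\varphi$ arbitrarily close to $1$ in $\mathcal{L}^1_{loc}$'' written out, so this is essentially the same proof.
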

	\begin{proof}
		Let $X_0\in \mathcal{K}_{t_0}$, and for $\varepsilon>0$ small enough, we consider the geodesic ball $B_{\varepsilon}(X_0)\subseteq\subseteq \mathcal{K}_{t_0}$ such that $\nu_i\to\nu$ uniformly and $\btr{\nabla U_i}$ uniformly bounded in $B_{\varepsilon}(X_0)$. Let $\varphi\in C^0_0(B_{\varepsilon}(X_0))$, then
		\begin{align*}
			\int_{B_{\varepsilon}(X_0)}\varphi\btr{\nabla U_i}
			=\int_{B_{\varepsilon}(X_0)}\spann{\nabla U_i,\varphi\nu}-\varphi\spann{\nabla U_i,\nu_i-\nu}
			\to \int_{B_{\varepsilon}(X_0)}\spann{\nabla U,\varphi\nu}=0
		\end{align*}
		by the weak${}^*$ convergence implied by the Compactness Theorem for BV functions. Since $\btr{\nabla U_i}$ is uniformly (pointwise) bounded, the claim follows since $\varphi$ can be choosen arbitrarily close to $1$ in $\mathcal{L}^1_{loc}$ and $\btr{\nabla U}=0$ on $\mathcal{K}_{t_0}$.
	\end{proof}
	We are now in the position to use the Compactness Theorem \ref{thm_compactness1}, to show that the hypersurfaces $\widetilde\Sigma_{X_0}$ bound minimizing Caccioppoli sets, which will allow us to improve their regularity.
	\begin{thm}\label{thm_apparenthorizons}
		Each hypersurface $\widetilde{\Sigma}_{X_0}$ constructed in Proposition \ref{prop_lamination} bounds a Caccioppoli set that minimizes $\mathcal{J}_{U,\nu}$ in  $\mathcal{K}_{t_0}$ and is in fact a $C^{2,\alpha}$ generalized apparent horizon.
	\end{thm}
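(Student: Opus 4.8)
The plan is to realize $E_{X_0}$ as a limit of the smooth sublevel sets via the Compactness Theorem \ref{thm_compactness1}, read off the Euler--Lagrange equation from its two-sided minimality, and then bootstrap regularity. Fix $X_0\in\mathcal{K}_{t_0}$ and let $(\varepsilon_{i_j})$ be the subsequence produced in Propositions \ref{prop_lamination} and \ref{prop_unitnormal}, with $t_{i_j}:=U_{\varepsilon_{i_j}}(X_0)\to t_0$, so that $\widetilde\Sigma^{\varepsilon_{i_j}}_{t_{i_j}}\to\widetilde\Sigma_{X_0}$ in $C^{1,\alpha}_{loc}$, $\nu_{\varepsilon_{i_j}}\to\nu$ locally uniformly, and $\nu|_{\widetilde\Sigma_{X_0}}=\nu_{\widetilde\Sigma_{X_0}}$. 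Set $E_{i_j}:=\{U_{\varepsilon_{i_j}}<t_{i_j}\}$, so $\partial E_{i_j}=\widetilde\Sigma^{\varepsilon_{i_j}}_{t_{i_j}}$; since these hypersurfaces are graphs whose defining functions converge in $C^{1,\alpha}_{loc}$, the sets $E_{i_j}$ converge in $\mathcal{L}^1_{loc}(\mathcal{K}_{t_0})$ to a Caccioppoli set $E_{X_0}$ with $\partial^* E_{X_0}=\partial E_{X_0}=\widetilde\Sigma_{X_0}$ (reduced and topological boundary coincide by the $C^{1,\alpha}$ regularity). By the Smooth Flow Lemma \ref{lem_smoothflowlemma} each $E_{i_j}$ minimizes $\mathcal{J}_{U_{\varepsilon_{i_j}},\nu_{\varepsilon_{i_j}}}$ on $\Omega_{L_{i_j}}\setminus E_0\supseteq\mathcal{K}_{t_0}$, hence in $\mathcal{K}_{t_0}$.

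With $U_{\varepsilon_{i_j}}\to U$ locally uniformly, $|\nabla U_{\varepsilon_{i_j}}|\to|\nabla U|$ in $\mathcal{L}^1_{loc}(\mathcal{K}_{t_0})$ by Lemma \ref{lem_l1convergence_jumpregion}, $\nu_{\varepsilon_{i_j}}|_{\partial E_{i_j}}=\nu_{\partial E_{i_j}}$ with $\nu_{\varepsilon_{i_j}}\to\nu$ locally uniformly, and $\nu|_{\partial E_{X_0}}=\nu_{\partial E_{X_0}}$, all hypotheses of Theorem \ref{thm_compactness1} hold on $\Omega=\mathcal{K}_{t_0}$. It follows that $E_{X_0}$ minimizes $\mathcal{J}_{U,\nu}$ in $\mathcal{K}_{t_0}$, which is the first assertion.

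To identify $\widetilde\Sigma_{X_0}$, note that $E_{X_0}$ minimizes $\mathcal{J}_{U,\nu}$ both from the inside and from the outside and that $\partial E_{X_0}$ is a $C^{1,\alpha}$ hypersurface. Testing with normal variations of $\widetilde\Sigma_{X_0}$ and using that the bulk term $\sqrt{|\nabla U|^2+P_\nu^2}$ is a fixed bounded function on $\mathcal{K}_{t_0}$, the vanishing of the first variation yields, in the distributional sense, that $H_{\widetilde\Sigma_{X_0}}$ equals $\sqrt{|\nabla U|^2+P_\nu^2}$ evaluated along $\widetilde\Sigma_{X_0}$. On $\mathcal{K}_{t_0}$ one has $|\nabla U|\equiv 0$, and along $\widetilde\Sigma_{X_0}$ the field $\nu$ is the surface unit normal, so $P_\nu|_{\widetilde\Sigma_{X_0}}=P_{\widetilde\Sigma_{X_0}}$ and therefore $H_{\widetilde\Sigma_{X_0}}=|P_{\widetilde\Sigma_{X_0}}|$ weakly.

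Finally we upgrade the regularity. Writing $\widetilde\Sigma_{X_0}$ locally as the graph of $w\in C^{1,\alpha}$ over its tangent hyperplane, the equation $H=|P_\nu|$ takes the form of a uniformly elliptic quasilinear equation $a^{ij}(\nabla w)\,\partial_{ij}w=g(x,w,\nabla w)$, whose coefficients $a^{ij}(\nabla w)$ are $C^{0,\alpha}$ because $\nabla w\in C^{0,\alpha}$, and whose right-hand side is $C^{0,\alpha}$ as well: $P_\nu$ depends smoothly on $(x,\nu)$ and $\nu$ smoothly on $\nabla w$, so $g$ is Lipschitz in its arguments (the remaining absolute value being Lipschitz), and a Lipschitz function composed with the $C^{0,\alpha}$ map $x\mapsto(x,w(x),\nabla w(x))$ is again $C^{0,\alpha}$. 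Schauder theory (\cite[Theorem 6.17]{gilbargtrudinger}) then gives $w\in C^{2,\alpha}_{loc}$, so $\widetilde\Sigma_{X_0}$ is a $C^{2,\alpha}$ hypersurface satisfying $H=|P|$ classically; since $H\ge 0$ this forces $\mathcal{H}=\sqrt{H^2-P^2}\equiv 0$, i.e.\ $\widetilde\Sigma_{X_0}$ is a generalized apparent horizon. The point requiring most care is the passage from two-sided $\mathcal{J}_{U,\nu}$-minimality of the merely $C^{1,\alpha}$ set $E_{X_0}$ with frozen bulk term to the weak equation $H=|P_\nu|$ --- in particular that the frozen field $\nu$ coincides with the surface normal along the minimizer --- after which the elliptic bootstrap, including the Lipschitz-composed-with-H\"older observation that handles the absolute value, is routine.
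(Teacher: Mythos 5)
Your proof is correct and follows essentially the same route as the paper: it establishes the first assertion by combining the Smooth Flow Lemma with the Compactness Theorem \ref{thm_compactness1}, Proposition \ref{prop_lamination}, and the $\mathcal{L}^1_{loc}$ convergence from Lemma \ref{lem_l1convergence_jumpregion}, then extracts the equation $H=|P_\nu|$ from two-sided minimality (which the paper makes explicit via Giusti's subgraph identity \eqref{subgraph}, while you phrase it as a first-variation argument for normal graphs — the same computation), and finally bootstraps to $C^{2,\alpha}$ by Schauder, noting as the paper does that the absolute value is harmless since Lipschitz composed with $C^{0,\alpha}$ stays $C^{0,\alpha}$.
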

	\begin{proof}
		Employing the Compactness Theorem \ref{thm_compactness1}, Proposition \ref{prop_lamination} and Lemma \ref{lem_l1convergence_jumpregion} imply that $\widetilde{\Sigma}_{X_0}$ bounds a Caccioppoli set $\widetilde{E}_{X_0}$ such that $\partial\widetilde{E}_{X_0}=\widetilde{\Sigma}_{X_0}$, $\nu$ is the outward unit normal to $\partial\widetilde{E}_{X_0}$, and $\widetilde{E}_{X_0}$ minimizes $\mathcal{J}_{U,\nu}$ in $\mathcal{K}_{t_0}$. To complete the proof it remains to show, that the hypersurfaces $\widetilde{\Sigma}_{X_0}$ are $C^{2,\alpha}$ generalized apparent horizons.
		
		We recall the relationship between a function $\omega\in BV_{loc}(\Omega)$ and its subgraph
		\[
		W=\{(x,t)\in\Omega\times\R\colon t<\omega(x)\}.
		\]
		If in particular $\chi_W$ denotes the characteristic function of the subgraph $W$ of $\omega$, by \cite[Theorem 14.6]{giusti} it holds that
		\begin{align}\label{subgraph}
		\btr{\partial^*W}=\int\limits_{\Omega\times\R}\btr{\nabla \chi_W}=\int\limits_{\Omega}\sqrt{1+\btr{\nabla \omega}^2}.
		\end{align}
		As argued by the construction in Proposition \ref{prop_lamination}, we now choose a ball $B_R(X_0)$ around a point $X_0\in\widetilde{\Sigma}_{X_0}$, such that $\widetilde{\Sigma}_{X_0}\cap B_R(X_0)=\graph(\omega)$ for a function $\omega\in C^{1,\alpha}(T_{X_0}\widetilde{\Sigma}_{X_0}\cap B_R(X_0))$. Then $\widetilde{E}_{X_0}\cap B_R(X_0)=W$ is the subgraph of $\omega$, where $\Omega=B_R(X_0)$. Since $W$ minimizes $\mathcal{J}_{U,\nu}$ in $\mathcal{K}_{t_0}$, $\omega$ minimizes the functional
		\[
		\mathcal{J'}_{\nu}(\omega)\definedas\int\limits_{B_R(X_0)}\sqrt{1+\btr{\nabla \omega}^2}\d x
		-\int\limits_{B_R(X_0)}\int\limits_0^{\omega(x)}\btr{P_{\nu}}(x,s)\d s\d x,
		\]
		where we used the identity \eqref{subgraph} and the fact that $\btr{\nabla U}=0$ on $\mathcal{K}_{t_0}$. The corresponding Euler-Lagrange equation is
		\begin{align*}\label{eulersubgraph}
		\dive\left(\frac{\nabla \omega}{\sqrt{1+\btr{\nabla \omega}^2}}\right)+\btr{P_\nu}=0.
		\end{align*}
		Note that this is exactly the generalized apparent horizon, since $H=\dive(\nu)$ and by construction $\nu=\frac{(-\nabla \omega,1)}{\sqrt{1+\btr{\nabla \omega}^2}}$. Since $\omega\in C^{1,\alpha}(T_{X_0}\widetilde{\Sigma}_{X_0}\cap B_R(X_0))$, $\omega$ in particular weakly solves the uniformly elliptic equation
		\[
		a^{ij}\nabla_i\nabla_j\omega=f,
		\]
		where
		\begin{align*} a^{ij}\definedas\frac{1}{\sqrt{1+\btr{\nabla \omega}^2}^3}\left(g^{ij}-\frac{\nabla\omega^i\nabla\omega^j}{1+\btr{\nabla\omega}^2}\right),\text{ }
		f\definedas-\btr{K_{ij}\frac{\nabla\omega^i\nabla\omega^j}{1+\btr{\nabla\omega}^2}},
		\end{align*}
		are $C^{0,\alpha}$ functions on $T_{X_0}\widetilde{\Sigma}_{X_0}\cap B_R(X_0)$. Schauder Theory, \cite[Theorem 6.13]{gilbargtrudinger}, then implies that for $R(X_0)$ sufficiently small $\omega\in C^{2,\alpha}(T_{X_0}\widetilde{\Sigma}_{X_0}\cap B_R(X_0))$ is the unqiue solution which solves the equation in the strong sense. Therefore $\Sigma_{X_0}$ is an generalized apparent horizon.
	\end{proof}
	We can now use the apparent horizon equation on the hypersurfaces $\widetilde{\Sigma}_{X_0}$ to improve the result of Proposition \ref{prop_lamination}, which is the concluding statement of this Section. 
	\begin{thm}\label{thm_foliation}
		Let $\mathcal{K}_{t_0}$ denote the interior of a jump region $\{U=t_0\}$ at a jump time $t_0$. Then each point $X_0=(y_0,z_0)$ lies in a complete $C^{2,\alpha}$-hypersurface $\widetilde{\Sigma}_{X_0}$, such that the hypersurfaces $\widetilde{\Sigma}_{X_0}$ are generalized apparent horizons foliating $\mathcal{K}_{t_0}$ and possess unit normal $\nu\in C^{1,\alpha}_{loc}(\mathcal{K}_{t_0})$. If $(M,g,K)$ further satisfies the dominant energy condition \eqref{eq_dec}, then away from a set of $\mathcal{H}^{n}$ measure zero, the hypersurfaces are either vertical cylinders or translating graphs.
	\end{thm}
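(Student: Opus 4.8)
The plan is to derive everything beyond Theorem \ref{thm_apparenthorizons} --- which already gives that the $\widetilde\Sigma_{X_0}$ are $C^{2,\alpha}$ generalized apparent horizons bounding Caccioppoli sets minimizing $\mathcal J_{U,\nu}$ in $\mathcal K_{t_0}$ --- from three ingredients: a strong maximum principle for the foliation property, a Schauder bootstrap for $\nu\in C^{1,\alpha}_{loc}$, and a stability argument using \eqref{eq_dec} for the dichotomy. For the foliation I would first record that, by the compatible-subsequence construction in Propositions \ref{prop_lamination} and \ref{prop_unitnormal}, no two of the surfaces $\widetilde\Sigma_X$ cross transversally: each is locally a $C^{1,\alpha}$-limit of level sets of one fixed function $U_{\varepsilon_{i_j}}$, whose distinct level sets are disjoint, so any contact in the limit is tangential and one-sided. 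If $\widetilde\Sigma_{X_1}$ and $\widetilde\Sigma_{X_2}$ meet at a point $Y$, I would write both near $Y$ as graphs $\omega_1,\omega_2$ over their common tangent hyperplane; by the proof of Theorem \ref{thm_apparenthorizons} each $\omega_k$ solves $\mathrm{div}_g\big(\nabla\omega_k/\sqrt{1+|\nabla\omega_k|^2}\big)+|P_\nu|=0$, in which $|P_\nu|$ does not depend on the $z$-variable since $\nu$ is translation invariant and $\widetilde K$ was extended $z$-trivially. Hence $\omega_1-\omega_2$ is a one-signed solution of a linear, uniformly elliptic equation with no zeroth-order term, so $\omega_1\equiv\omega_2$ near $Y$, and a continuation argument along the connected $C^{2,\alpha}$-leaves gives $\widetilde\Sigma_{X_1}=\widetilde\Sigma_{X_2}$. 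In particular the leaf through a point is unique (independent of the chosen subsequences), so together with Proposition \ref{prop_lamination} the $\widetilde\Sigma_X$ foliate $\mathcal K_{t_0}$; since $U(y,z)=u(y)$ we have $\mathcal K_{t_0}=\mathfrak K_{t_0}\times\R$ and the foliation is invariant under vertical translation.

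For the regularity of $\nu$ I would straighten the foliation locally: exponentiating the normal bundle of a fixed leaf produces a chart in which the remaining leaves are ordered graphs, hence a defining function $w$ that is constant on leaves, $C^1$ with $\nabla w\neq 0$, and --- by De Giorgi--Nash applied to the divergence-form equation below --- $C^{1,\beta}_{loc}$ for some $\beta>0$. Since every level set of $w$ is a generalized apparent horizon, $w$ solves $\mathrm{div}_g\big(\nabla w/|\nabla w|_g\big)=\pm\big|\big(g^{ij}-\tfrac{\nabla^iw\,\nabla^jw}{|\nabla w|_g^2}\big)K_{ij}\big|$; freezing the coefficients along $w$ turns this into $\bar a^{ij}D_{ij}w=\bar f$ with $\bar a^{ij},\bar f\in C^{0,\beta}_{loc}$, so Schauder theory gives $w\in C^{2,\beta}_{loc}$, whence $\nabla w$ is locally Lipschitz; reinserting this yields $\bar a^{ij},\bar f\in C^{0,\alpha}_{loc}$ and hence $w\in C^{2,\alpha}_{loc}$, so that $\nu=\nabla w/|\nabla w|_g\in C^{1,\alpha}_{loc}(\mathcal K_{t_0})$.

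For the dichotomy, write $\hat\nu=(\nu_M,\nu^z)$; by translation invariance $\nu^z$ is a $C^{1,\alpha}$ function on $\mathfrak K_{t_0}$, a leaf being locally a vertical cylinder exactly where $\nu^z=0$ and locally a translating graph over $M$ exactly where $\nu^z\neq 0$. Using $\mathrm{tr}_gK\equiv 0$ one gets $\widehat P=-K(\nu_M,\nu_M)$ on a leaf, and differentiating the relation $\widehat H^2=\widehat P^2$ in the ambient Killing direction $\partial_z$ --- invoking the Gauss equation and the momentum constraint $\mathrm{div}_gK=8\pi J$ --- produces a Jacobi-type elliptic equation for $\nu^z$ on each leaf, for which the dominant energy condition \eqref{eq_dec} supplies the sign needed to run the strong maximum principle and Aronszajn's unique continuation. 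This forces $\nu^z$ either to vanish identically on a leaf --- a vertical cylinder whose base is a generalized apparent horizon in $M$ --- or to vanish only on a nowhere dense nodal-type set, and controlling these sets across the foliation confines the transition locus to a set of vanishing $\mathcal H^{n}$-measure, off which each leaf is a translating graph.

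I expect this last step to be the main obstacle: extracting the correct Jacobi/stability structure for the verticality of the foliation from the generalized-apparent-horizon equation, and pinning down how \eqref{eq_dec} enters through the Gauss and momentum constraints so as to bound the transition locus, is the delicate point, whereas the foliation property is a clean strong-maximum-principle argument and the regularity of $\nu$ is a standard De Giorgi--Nash and Schauder bootstrap once the foliation is in place.
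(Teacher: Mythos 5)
Your three-part plan (strong maximum principle for the foliation, a Schauder-type argument for $\nu\in C^{1,\alpha}_{loc}$, and a maximum-principle/Harnack argument using \eqref{eq_dec} for the dichotomy) matches the structure of the paper's proof, and the foliation step is essentially the same argument. The middle step, however, proceeds differently in the paper: rather than straightening the foliation to a defining function $w$ and running De Giorgi--Nash on the level-set PDE for $w$ --- which already presupposes a $C^1$ function with $\nabla w\neq 0$ constant on leaves, i.e.\ much of what is to be shown --- the paper fixes a small ball about $X_0$, writes every nearby leaf as a graph $u^X$ over $T\bigl(\widetilde\Sigma_{X_0}\cap B_r(X_0)\bigr)$, notes $u^X\to u^{X_0}$ uniformly by construction, and upgrades to $C^{2,\alpha}$ convergence by the Schauder interior estimates applied to each graph; $\nu\in C^{1,\alpha}_{loc}$ is then read off from the $C^{2,\alpha}$-controlled family $u^X$. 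Your $w$-bootstrap risks circularity as stated, and should be recast in this graph form.

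For the dichotomy your sketch misses the crucial structure and misplaces the exceptional set. The paper observes that although $P_\nu\in C^{1,\alpha}_{loc}$ once $\nu$ is, the bulk term in the apparent-horizon equation is $|P_\nu|$, which extends to $C^{1,\alpha}_{loc}$ only where $P\neq 0$, or $P=0$ and $\nabla P=0$; the transition set is therefore $S_{X_0}\definedas\overline{\{X\in\widetilde\Sigma_{X_0}\colon P=0,\ \nabla P\neq 0\}}$, the closure of the non-differentiability locus of $|P|$ along the leaf, and this (not a nodal set of $\nu^z$) is the asserted $\mathcal{H}^n$-null set. Away from $S_{X_0}$ the coefficients of the equation are $C^{1,\alpha}$ and the leaves are $C^{3,\alpha}_{loc}$ --- a regularity upgrade you do not carry out but which is indispensable for differentiating the equation --- and the Harnack inequality of \cite[Proposition 2]{schoenyau} for $\gspann{\partial_z,\nu}$ then forces this function to be either identically zero or strictly positive on each connected component of $\widetilde\Sigma_{X_0}\setminus S_{X_0}$. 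In particular there is no ``nowhere dense nodal-type set'' to control: Harnack excludes a nontrivial zero set on each component, and Aronszajn-type unique continuation is not needed. So while you name the right tools, the proof as the paper carries it out is organized around $S_{X_0}$ and the $C^{3,\alpha}$ upgrade that justifies the Harnack step, both of which are absent from your sketch.
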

	\begin{bem}
		Note that, although a leave of the foliation $\widetilde{\Sigma}_{X_0}$ is always either locally a vertical cylinder or a translating graph, here we are able to fully characterize the set when  $\widetilde{\Sigma}_{X_0}$ changes character as a $\mathcal{H}^{n}$ zero measure set along the hypersurface, which is essentially given as the set, where $\btr{P}$ fails to be differentiable along the hypersurface (compare the precise definition of the set $S_{X_0}$ in the proof below). In fact, we expect that the Hausdorff dimension of the $\mathcal{H}^n$ zero measure set is at most $n-1$ and is (if non-empty) a rectifiable varifold with bounded variation.
	\end{bem}
	\begin{proof}
		By Proposition \ref{prop_unitnormal}, we already know that the hypersurfaces $\widetilde{\Sigma}_{X}$ can only touch tangentially and locally remain sheeted at the same side with respect to each other. Thus, since the hypersurfaces are in fact generalized apparent horizons, the possibility of them touching tangentially is immediately ruled out by the strong maximum principle.
		To show that the unit normal $\nu$ is indeed in $C^{1,\alpha}_{loc}(\mathcal{K}_{t_0})$, we fix a point $X_0\in \mathcal{K}_{t_0}$ and choose sufficiently small $r>0$, such that the geodesic ball $B_r(X_0)\subseteq\subseteq \mathcal{K}_{t_0}$ satisfies all the restrictions assumed in the previous propositions, and additionally such that each leave of the foliation $\widetilde{\Sigma}_{X}$ intersecting the ball can be written as the graph of a function $u^X$ over the tangent space of $T\left(\widetilde{\Sigma}_{X_0}\cap B_r(X_0)\right)$ (this follows from the $C^{1,\alpha}_{loc}$ a-priori estimates on $B_r$ independent of the hypersurface). Clearly, if $X\to X_0$, $u^X\to u^{X_0}$ uniformly on $\overline{B}_{\frac{3}{4}{r}}$ by virtue of construction. Then, the Schauder interior estimates \cite[Corollary 6.3]{gilbargtrudinger} yield $C^{2,\alpha}$ convergence on $B_{\frac{r}{2}}(X)$. In particular, $\nu\in C^{1,\alpha}_{loc}(\mathcal{K}_{t_0})$.
		
		We now assume that $(M,g,K)$ satisfies the dominant energy condition \eqref{eq_dec}. Then, ${(M\times\R, g+\d z^2,K)}$ also satisfies the dominant energy condition. Since $\nu\in C_{loc}^{1,\alpha}$ along the hypersurfaces, we have $P_\nu\in C_{loc}^{1,\alpha}$, and $\btr{P_\nu}$ extends to $C_{loc}^{1,\alpha}$, if $P\not=0$, or $P=0$ and $\nabla P=0$. For a leave of the foliation $\widetilde{\Sigma}_{X_0}$ we therefore consider the $\mathcal{H}^n$ zero measure set $S_{X_0}\definedas \overline{\{X\in \widetilde{\Sigma}_{X_0}\colon P=0,\nabla P\not=0\}}$. Thus $\btr{P_\nu}\in C^{1,\alpha}_{loc}(\widetilde{\Sigma}_{X_0}\setminus S_{X_0})$, which in particular implies that the hypersurfaces $\widetilde{\Sigma}_{X_0}$ are $C^{3,\alpha}_{loc}$ away from $S_{X_0}$ (and smooth away from $\{P_\nu=0\}$).
		Closely following the proof of \cite[Proposition 2]{schoenyau}, we can establish a Harnack inequality for the function $\gspann{\partial_z,\nu}$ along the connected components of $\widetilde{\Sigma}_{X_0}\setminus S_{X_0}$, which yields that $\widetilde{\Sigma}_{X_0}$ is either a vertical cylinder or a translating graph along these connected components. This concludes the proof.
	\end{proof}

\section{Variational formulation of weak solutions}\label{sec_variationalformulation}
	By freezing $\sqrt{\btr{\nabla u}^2+P_\nu^2}$ and treating it as a bulk term energy, we can interpret \eqref{mainPDE} as the Euler-Lagrange equation to the functional
	\begin{align}\label{comprinfunc}
	\mathcal{J}_{u,\nu}^A(v)\definedas\int\limits_{A}\btr{\nabla v}+v\sqrt{\btr{\nabla u}^2+P_\nu^2}.
	\end{align}
	If for all $A\subset\Omega$ compact
	\begin{align}\label{eq_weaksolutionsJ}
	\mathcal{J}_{u,\nu}^A(u)\le\mathcal{J}_{u,\nu}^A(v),
	\end{align}
	for all $v\in C^{0,1}_{loc}(\Omega)$ ($v\le u$, $v\ge u$ respectively), such that $\{v\not=u\}\subset\subset A\subset \Omega$, we call $u$ a \emph{weak solution (subsolution, supersolution respectively)} of \eqref{eq_weaksolutionsJ} in $\Omega$.
	\begin{bem}
		Appealing to the results of Section \ref{sec_limitingbehavior}, we will later define the concept of weak solutions of STIMCF, cf. Definition \ref{defi_weaksolutios}, on the cylinder $M\times \R$. We will thus frequently use the notion of weak solutions to \eqref{eq_weaksolutionsJ} for pairs $(U,\nu)$ on open subsets $\Omega\subseteq M\times\R$ for the functional $\mathcal{J}_{U,\nu}$ as defined by \eqref{comprinfunc}. It is easy to check that all the results and observations regarding weak solutions $u$ of \eqref{eq_weaksolutionsJ} on $M$ similarly hold for weak solutions $U$ of \eqref{eq_weaksolutionsJ} on $M\times\R$, and in fact remain true for variational principles with general, bounded bulk term energy, cf. \cite{visintin}.
		
		The relation between translation invariant weak solutions of \eqref{eq_weaksolutionsJ} on $M\times \R$ and weak solutions on $M$ is established in Lemma \ref{lem_projection} below.
	\end{bem}
	For any $v,w\in C^{0,1}_{loc}(\Omega)$ satisfying $\{v\not=w\}\subset\subset\Omega$, we find that
	\begin{align*}
	\mathcal{J}_{u,\nu}^A(\min(v,w))+\mathcal{J}_{u,\nu}^A(\max(v,w))=\mathcal{J}_{u,\nu}^A(v)+\mathcal{J}_{u,\nu}^A(w),
	\end{align*}
	so by choosing $w=u$, we can conclude that $u$ is a solution, if and only if $u$ is a subsolution and supersolution.
	Furthermore, smooth solutions to the corresponing Euler-Lagrange equation \eqref{mainPDE} are in fact minimizers of the comparison priniciple $\mathcal{J}_{u,\nu}$ as the following Lemma establishes.
	\begin{lem}[Smooth flow Lemma]\label{lem_smoothflowlemma}
		Let $u$ be a smooth solution of \eqref{mainPDE}, $E_t\definedas\{u<t\}$. Then the sets $E_t$ minimize $\mathcal{J}_{u,\nu}$ with $\nu=\frac{\nabla u}{\btr{\nabla u}}$ in $E_b\setminus E_a$ for all $a\le t<b$.
	\end{lem}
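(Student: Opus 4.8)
The plan is to exhibit $\nu\definedas\frac{\nabla u}{\btr{\nabla u}}$ as a calibration for the functional $\mathcal{J}_{u,\nu}$. Since $u$ is a \emph{smooth} solution of \eqref{mainPDE} on $\Omega\definedas E_b\setminus E_a$, we have $\btr{\nabla u}=\Phi=\sqrt{H^2-P^2}>0$ throughout $\Omega$, so $\nu$ is a well-defined $C^1$ (in fact smooth) unit vector field on $\Omega$, agreeing with the outer unit normal of each $E_t$ along $\partial E_t=\{u=t\}$. The algebraic point is that for this particular choice of $\nu$ one has $P_\nu=\lr{g^{\alpha\beta}-\frac{\nabla^\alpha u\,\nabla^\beta u}{\btr{\nabla u}^2}}K_{\alpha\beta}=P$, so by the smooth level-set identity $H=\divf_g\nu$ together with \eqref{mainPDE},
\begin{align*}
\divf_g\nu\;=\;H\;=\;\sqrt{\btr{\nabla u}^2+P^2}\;=\;\sqrt{\btr{\nabla u}^2+P_\nu^2}.
\end{align*}
Thus the bulk-term integrand appearing in $\mathcal{J}_{u,\nu}$ (see \eqref{comprintset}) is exactly $\divf_g\nu$, which converts the minimality comparison into an application of the divergence theorem.

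Concretely, I would fix $a\le t<b$, a compact set $A\subset\subset\Omega$, and a Caccioppoli competitor $F$ with $E_t\triangle F\subset\subset A$, and pick an open set $W$ with smooth boundary such that $E_t\triangle F\subset\subset W\subset\subset A$. Since $E_t=F$ on $A\setminus W$, one has $\btr{\partial^*F\cap A}-\btr{\partial^*E_t\cap A}=\btr{\partial^*F\cap W}-\btr{\partial^*E_t\cap W}$ and likewise the difference of the bulk integrals localizes to $W$. Applying the Gauss--Green formula for sets of finite perimeter with the $C^1$ field $\nu$ to $E_t\cap W$ and to $F\cap W$, and using that $E_t=F$ near $\partial W$ so that the $\partial W$–contributions cancel, gives
\begin{align*}
\int_{E_t\cap W}\divf_g\nu\,\d\mu-\int_{F\cap W}\divf_g\nu\,\d\mu
=\int_{\partial^*E_t\cap W}\spann{\nu,\nu_{E_t}}\,\d\mathcal{H}^n-\int_{\partial^*F\cap W}\spann{\nu,\nu_F}\,\d\mathcal{H}^n.
\end{align*}
On $\partial^*E_t$ we have $\nu=\nu_{E_t}$, so the first term equals $\btr{\partial^*E_t\cap W}$, while $\btr{\spann{\nu,\nu_F}}\le\btr{\nu}=1$ bounds the second term by $\btr{\partial^*F\cap W}$. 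Combining these two displays with the minus sign in \eqref{comprintset} yields $\mathcal{J}_{u,\nu}^A(F)-\mathcal{J}_{u,\nu}^A(E_t)\ge0$. This treats arbitrary competitors simultaneously, so there is no need to separate the inside and outside cases, although one could instead invoke \eqref{ineqcaccio1}.

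I do not expect a genuine obstacle; this is essentially a textbook calibration argument. The two points that require care are: (i) ensuring $\nu$ is $C^1$ on \emph{all} of $\Omega$, which is precisely where smoothness of the flow and $\Phi>0$ enter — and which is exactly what fails across jump regions, motivating the weak theory; and (ii) the measure-theoretic bookkeeping in the divergence theorem, namely the decomposition $\partial^*(E_t\cap W)=(\partial^*E_t\cap W)\cup(\partial^*W\cap E_t^{(1)})$ up to $\mathcal{H}^n$–null sets and the cancellation of the $\partial^*W$–terms coming from $E_t=F$ near $\partial W$; both are standard. As an alternative, one could first establish $\mathcal{J}_{u,\nu}(u)\le\mathcal{J}_{u,\nu}(v)$ at the level of the functional \eqref{comprinfunc} on locally Lipschitz functions — integrating by parts in $\int(v-u)\divf_g\nu$ and using $\btr{\nabla v}\ge\spann{\nabla v,\nu}$ together with $\btr{\nabla u}=\spann{\nabla u,\nu}$ — and then deduce minimality of all sublevel sets via the coarea formula; the direct set argument above is more economical.
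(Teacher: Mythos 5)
Your calibration argument — $\nu=\nabla u/\btr{\nabla u}$ is $C^1$ on $E_b\setminus E_a$, $\divf_g\nu$ equals the bulk integrand by \eqref{mainPDE}, and the divergence theorem converts the comparison to $\spann{\nu,\nu_F}\le 1$ with equality on $\partial E_t$ — is correct and is precisely the argument in Huisken--Ilmanen's Lemma 2.3 and Moore's Lemma 15, which the paper simply cites with the bulk term replaced by $\sqrt{\btr{\nabla u}^2+P_\nu^2}$. Same approach, fully fleshed out; no gap.
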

	\begin{proof}
		Follows exactly as in \cite[Lemma 2.3]{huiskenilmanen}, \cite[Lemma 15]{moore} by replacing the respective bulk term energies with $\sqrt{\btr{\nabla u}^2+P_{\nu}^2}$.
	\end{proof}
	We also see that the respective comparisons principles \eqref{comprintset} for Caccioppoli sets and \eqref{comprinfunc} for locally Lipschitz functions are indeed closely related.
	\begin{lem}\label{lem_equivalence}
		Let $u$ be a locally Lipschitz function on an open set $\Omega$. Then $u$ is a weak solution (subsolution, supersolution respectively) of \eqref{eq_weaksolutionsJ} in $\Omega$, if and only if for each $t>0$, the sets $E_t\definedas\{u<t\}$ minimize $\mathcal{J}_{u,\nu}$ in $\Omega$ (on the outside, the inside respectively).
	\end{lem}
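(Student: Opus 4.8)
\noindent\emph{Proof proposal.} The plan is to pass between the two comparison principles \eqref{comprintset} and \eqref{comprinfunc} by means of the coarea formula, in the spirit of \cite[Lemma~1.2]{huiskenilmanen} and the corresponding step in \cite{moore}; the only feature not present there is the bulk term $\sqrt{\btr{\nabla u}^2+P_\nu^2}$, but since it is bounded and independent of the competitor it enters merely as a harmless lower order term. Using the $\min$--$\max$ identity recorded just before the lemma together with \eqref{ineqcaccio1} (a Caccioppoli set minimizes $\mathcal{J}_{u,\nu}$ precisely when it does so from the inside and from the outside), it suffices to show that $u$ is a supersolution of \eqref{eq_weaksolutionsJ} in $\Omega$ if and only if every $E_t=\{u<t\}$ minimizes $\mathcal{J}_{u,\nu}$ from the inside, and dually for subsolutions and minimization from the outside.

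The heart of the argument is an identity. Fixing a competitor $v\in C^{0,1}_{loc}(\Omega)$ with $\{v\neq u\}\subset\subset A$ and writing $f\definedas\sqrt{\btr{\nabla u}^2+P_\nu^2}$, the coarea formula gives $\int_A\btr{\nabla v}=\int_{-\infty}^{\infty}\btr{\partial^*\{v<t\}\cap A}\,\d t$, while the pointwise layer-cake identity $v-u=\int_{-\infty}^{\infty}\big(\chi_{\{u<t\}}-\chi_{\{v<t\}}\big)\,\d t$ together with Fubini gives $\int_A(v-u)f=\int_{-\infty}^{\infty}\big(\int_{\{u<t\}\cap A}f-\int_{\{v<t\}\cap A}f\big)\,\d t$. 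Subtracting, one obtains
\[
\mathcal{J}_{u,\nu}^A(v)-\mathcal{J}_{u,\nu}^A(u)=\int_{-\infty}^{\infty}\Big(\mathcal{J}_{u,\nu}^A(\{v<t\})-\mathcal{J}_{u,\nu}^A(E_t)\Big)\,\d t.
\]
If every $E_t$ minimizes $\mathcal{J}_{u,\nu}$ from the inside then, for $v\ge u$ with $\{v\neq u\}\subset\subset A$, each sublevel set satisfies $\{v<t\}\subseteq E_t$ and $E_t\triangle\{v<t\}\subseteq\{v\neq u\}\subset\subset A$, so the integrand above is nonnegative and $u$ is a supersolution; the subsolution case is identical with ``inside'' replaced by ``outside''. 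This disposes of the easy implication.

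For the converse I would run the insertion argument. Given a competitor $F\subseteq E_{t_0}$ with $G\definedas E_{t_0}\setminus F\subset\subset A\subset\subset\Omega$, build locally Lipschitz competitors $v_j\ge u$ that coincide with $u$ off a shrinking neighbourhood of $G$ and push $u$ up to the level $t_0$ on an exhausting family $G_j\uparrow G$, e.g.\ $v_j\definedas\max\!\big(u,\,t_0-j\,\mathrm{dist}(\cdot,G_j)\big)$ suitably capped from below; feeding these into the identity above, controlling the modified sublevel sets at the levels $t\neq t_0$ by \eqref{ineqcaccio1}, and letting $j\to\infty$ produces the inequality $\mathcal{J}_{u,\nu}^A(F)\ge\mathcal{J}_{u,\nu}^A(E_{t_0})$ for almost every $t_0$. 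Since $t\mapsto E_t$ is left continuous in $\mathcal{L}^1_{loc}$ and $\mathcal{J}_{u,\nu}$ is lower semicontinuous under $\mathcal{L}^1_{loc}$ convergence — with the requisite perimeter bounds coming from the local Lipschitz continuity of $u$ via the coarea formula — this upgrades to every $t_0$, much as in the proof of Theorem~\ref{thm_compactness1}; the dual construction with $\min$ in place of $\max$ handles minimization from the outside, hence subsolutions.

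The step I expect to be the main obstacle is precisely this converse: arranging genuinely locally Lipschitz competitors that realise a prescribed alteration of a single sublevel set $E_{t_0}$ — the continuity of the competitor across $\partial G$ is the delicate point — and then checking that the contributions to the identity from the levels $t\neq t_0$ do not destroy the inequality, which is where \eqref{ineqcaccio1} and a careful measure-theoretic limiting argument are needed, exactly as carried out in \cite{huiskenilmanen,moore}. Once the coarea identity is in hand, the rest is essentially bookkeeping, the bulk term $\sqrt{\btr{\nabla u}^2+P_\nu^2}$ playing no role beyond its boundedness.
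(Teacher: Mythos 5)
Your overall strategy — reduce to the supersolution/inside-minimizing and subsolution/outside-minimizing cases via the $\min$/$\max$ identity and \eqref{ineqcaccio1}, then pass between the functional comparison principle \eqref{comprinfunc} and the set comparison principle \eqref{comprintset} by the coarea identity — is exactly the structure of Huisken--Ilmanen's Lemma 1.1 and Moore's Lemma 12, which the paper's one-line proof simply cites; your observation that the bulk term $\sqrt{\btr{\nabla u}^2 + P_\nu^2}$ is bounded and independent of the competitor $v$, so it only ever enters through the layer-cake formula, is precisely the adaptation the paper is alluding to. The coarea identity you write down is correct, and the direction ``all $E_t$ minimize $\Rightarrow u$ solves'' follows cleanly from it.

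The converse is where I would press you. Inserting $v_j = \max\bigl(u,\, t_0 - j\,\mathrm{dist}(\cdot,G_j)\bigr)$ into the coarea identity gives
\begin{equation*}
0 \le \int_{-\infty}^{t_0}\Bigl[\mathcal{J}_{u,\nu}^A\bigl(\{v_j<t\}\bigr) - \mathcal{J}_{u,\nu}^A(E_t)\Bigr]\,\d t,
\end{equation*}
an inequality integrated over \emph{all} levels $t<t_0$, not concentrated at $t_0$. Letting $j\to\infty$, the sets $\{v_j<t\}$ converge in $\mathcal{L}^1_{loc}$ to $E_t\setminus\overline G$ for each fixed $t<t_0$, but the perimeter is only \emph{lower} semicontinuous under this convergence, giving $\liminf_j\btr{\partial^*\{v_j<t\}\cap A} \ge \btr{\partial^*(E_t\setminus\overline G)\cap A}$ — the wrong inequality to pass to the limit and, in any case, nothing in the sketch isolates the single level $t_0$ from the integral. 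You rightly flag this as the delicate point and gesture at \eqref{ineqcaccio1} plus a ``careful measure-theoretic limiting argument,'' but the mechanism that actually closes the gap (a Sard/coarea selection of good tubular radii around $G$, so that the perimeters of the removed neighbourhoods genuinely converge to $\btr{\partial^* G}$, combined with submodularity to control the off-$t_0$ contributions) needs to be spelled out; the ``a.e.~$t_0$ then upgrade by lower semicontinuity'' route can be made to work, but the a.e.~statement itself is not established by what you have written.
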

	\begin{proof} Is proven in complete analogue to \cite[Lemma 1.1]{huiskenilmanen} and \cite[Lemma 12]{moore} replacing the respective bulk term energies by $B_{u,\nu}\definedas\sqrt{\btr{\nabla u}^2+\btr{P_\nu}^2}$.
	\end{proof}
	\begin{bem}\label{bem_equivalence}
		As it is the case for inverse mean curvature flow and inverse null mean curvature flow this equivalence also extends to the initial value problems
		\begin{align}\label{boundary1}
		\begin{split}
		u\in C^{0,1}_{loc}(M)\text{, }\nu\text{ a measurable vector field on }T(M\setminus E_0),\\
		E_0=\{u<0\}\text{, and }u\text{  is a weak solution of \eqref{eq_weaksolutionsJ} in }M\setminus E_0,
		\end{split}
		\end{align}
		and 
		\begin{align}\label{boundary2}
		\begin{split}
		u\in C^{0,1}_{loc}(M)\text{, }\nu\text{ a measurable vector field on }T(M\setminus E_0),\\
		\text{and }\forall t>0\text{ }E_t\definedas\{u<t\}\text{  minimizes \eqref{comprintset} in }M\setminus E_0.
		\end{split}
		\end{align}
		This follows directly from Lemma \ref{lem_equivalence}. Lastly, by approximating $s\searrow t$, we see that \eqref{boundary1} and \eqref{boundary2} are equivalent to
		\begin{align}\label{boundary3}
		\begin{split}
		u\in C^{0,1}_{loc}(M)\text{, }\nu\text{ a measurable vector field on }T(M\setminus E_0),\\
		\text{and }\forall t\ge 0\text{ }E_t^+\definedas\{u\le t\}\text{  minimizes \eqref{comprintset} in }M\setminus E_0,
		\end{split}
		\end{align}
		via the Compactness Theorem \ref{thm_compactness1}.
	\end{bem}
	Further, we can identify the weak mean curvature of the hypersurfaces $\widetilde{\Sigma}_t=\{u<t\}$ for weak solutions of \eqref{eq_weaksolutionsJ} outside of jump regions, where $u$ remains constant. 
	
	Recall that the first variation $\delta(\mu)$ of a $C^1$ hypersurface $\widetilde{N}$ in $M\times\R$ is defined as
	\[
	\delta(\mu)(X)\definedas\int\limits_{\widetilde{N}}\dive_{\widetilde{N}}X\d\mu
	\text{ }\forall X\in C^{\infty}_{C}(T(M\times\R)),
	\]
	where $\mu$ is the induced volume form on $\widetilde{N}$. For the following, we refer to \cite{simon} for a detailed introduction. If $\widetilde{N}$ has bounded variation, the Riesz Representation theorem implies that we can identify $\delta(\mu)$ with a vector valued measure. If the measure $\delta(\mu)$ is furthermore absolutely continuous with respect to $\mu$, the weak mean curvature vector $\vec{H}=-H\nu$ is defined via the Lebesque differentiation theorem,
	\[
	\vec{H}\definedas - D_{\mu}\delta(\mu).
	\]
	The weak mean curvature $H$ is thus characterized by the following identity
	\begin{align}\label{weakmeancur}
	\int\limits_{\widetilde{N}}\dive_{\widetilde{N}}X\d\mu
	=\int\limits_{\widetilde{N}}H\gspann{\nu,X}\d \mu
	\text{ }\forall X\in C^{\infty}_{C}(T(M\times\R)).
	\end{align}
	\begin{lem}\label{lem_weakmeancurvature}
		Let $\widetilde{\Sigma}_t=\{u<t\}$ minimize $J_{u,\nu}$ in $\widetilde{E}_b\setminus\overline{\widetilde{E}}_a$, where $u\in C^{0,1}_{loc}(\widetilde{E}_b\setminus\overline{\widetilde{E}}_a)$, and let $\Omega$ be an open set, such that $\Omega\cap \widetilde{E}_b\setminus\overline{\widetilde{E}}_a$ contains no jump regions. Then the surfaces $\widetilde{\Sigma}_t=\partial \widetilde{E}_t$ have weak mean curvature $H$ satisfying
		\[
		H=\sqrt{\btr{\nabla U}^2+P_{\nu}^2}
		\]
		a.e. in $\Omega\cap\widetilde{\Sigma}_t$ for a.e. $t\in(a,b)$.
	\end{lem}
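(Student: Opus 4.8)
The plan is to adapt the first-variation argument of \cite[Lemma 2.3]{huiskenilmanen} and \cite{moore} to the frozen bulk term $B:=\sqrt{\btr{\nabla U}^2+P_\nu^2}$. Write $\Omega':=\Omega\cap(\widetilde{E}_b\setminus\overline{\widetilde{E}}_a)$. Since $\Omega'$ contains no jump regions, $\nabla U\neq 0$ a.e.\ in $\Omega'$, hence $\nu=\nabla U/\btr{\nabla U}$ a.e.\ there; with this choice of $\nu$ and $\tr_M K=0$ one has, in the smooth picture, $B^2=\btr{\nabla U}^2+P^2=(H^2-P^2)+P^2=H^2$, so the asserted identity is precisely the rigorous version of ``$H=B$''. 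Since $U$ is locally Lipschitz and $K$ is bounded, $B\in L^\infty_{loc}(\Omega')$.

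First I would record the Euler--Lagrange identity. By Lemma \ref{lem_equivalence} the hypothesis is equivalent to $U$ being a weak solution of \eqref{eq_weaksolutionsJ} on $\Omega'$; in particular $s=0$ minimizes $s\mapsto\mathcal{J}_{U,\nu}^A(U+s\zeta)$ for every $\zeta\in C^\infty_c(\Omega')$. Differentiating at $s=0$, using the pointwise bound $\btr{s^{-1}(\btr{\nabla(U+s\zeta)}-\btr{\nabla U})}\le\btr{\nabla\zeta}$ and dominated convergence, yields
\[
-\int_{\Omega'}\langle\nu,\nabla\zeta\rangle=\int_{\Omega'}\zeta\,B\qquad\text{for all }\zeta\in C^\infty_c(\Omega'),
\]
i.e.\ the measurable unit field $\nu$ has distributional divergence $\dive\nu=B$ on $\Omega'$.

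Next I would transfer this to the level sets. Since $B$ is locally bounded, each $\widetilde{E}_t=\{U<t\}$ is a perimeter almost-minimizer, so by the Regularity Theorem \ref{thm_regularity} (here $m=n+2<8$) the boundary $\widetilde{\Sigma}_t=\partial\widetilde{E}_t$ is a $C^{1,\alpha}$ hypersurface in $\Omega'$, and by the coarea formula, for a.e.\ $t\in(a,b)$, $\partial^*\widetilde{E}_t=\{U=t\}$ up to $\mathcal{H}^n$-null sets, with $\nu\vert_{\widetilde{\Sigma}_t}=\nabla U/\btr{\nabla U}$. Fix such a $t$. Deforming $\widetilde{E}_t$ by the flow $\phi^X_s$ of an arbitrary $X\in C^\infty_c(T\Omega')$ and using that $\widetilde{E}_t$ minimizes $\mathcal{J}_{U,\nu}$ from both sides, the first variation vanishes:
\[
0=\frac{\d}{\d s}\Big\vert_{s=0}\mathcal{J}_{U,\nu}^A(\phi^X_s(\widetilde{E}_t))=\int_{\widetilde{\Sigma}_t}\dive_{\widetilde{\Sigma}_t}X\,\d\mathcal{H}^n-\int_{\widetilde{\Sigma}_t}B\,\langle X,\nu\rangle\,\d\mathcal{H}^n,
\]
where the first term is the classical first variation of area ($\widetilde{\Sigma}_t\in C^{1,\alpha}$) and the second is the first variation of the bulk volume term. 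Comparing with the defining identity \eqref{weakmeancur} of the weak mean curvature gives $H=B=\sqrt{\btr{\nabla U}^2+P_\nu^2}$ $\mathcal{H}^n$-a.e.\ on $\Omega\cap\widetilde{\Sigma}_t$, for a.e.\ $t\in(a,b)$.

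The main obstacle is making the first variation of the bulk term $F\mapsto-\int_{F\cap A}B$ rigorous, since $B$ is only $L^\infty$ and the integrand cannot be differentiated. I would argue that for small $s$ the symmetric difference $\phi^X_s(\widetilde{E}_t)\,\triangle\,\widetilde{E}_t$ is a thin two-sided normal collar of the $C^{1,\alpha}$ hypersurface $\widetilde{\Sigma}_t$ of signed normal width $s\langle X,\nu\rangle+o(s)$; Fubini along the normal exponential map of $\widetilde{\Sigma}_t$, together with $\norm{B}_\infty<\infty$ and the fact that $\mathcal{H}^n$-a.e.\ point of $\widetilde{\Sigma}_t$ is a Lebesgue point of $B$ (valid for a.e.\ $t$ by the coarea formula), then gives $s^{-1}\int(\chi_{\phi^X_s(\widetilde{E}_t)}-\chi_{\widetilde{E}_t})\,B\to\int_{\widetilde{\Sigma}_t}B\,\langle X,\nu\rangle\,\d\mathcal{H}^n$ by dominated convergence. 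Alternatively, one can bypass the geometric first variation and obtain the claim directly from $\dive\nu=B$ by slicing with the coarea formula against test functions $\lambda(U)$ concentrating near $\widetilde{\Sigma}_t$, which is the route of \cite{huiskenilmanen,moore}.
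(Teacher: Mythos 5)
Your argument is essentially the paper's, which simply defers to \cite[Section 1]{huiskenilmanen} and \cite[Lemma 16]{moore}: the sublevel sets $\widetilde{E}_t$ are almost-minimizers so their boundaries are $C^{1,\alpha}$ where $n+2<8$, and comparing the first variation of perimeter with the first variation of the frozen bulk volume under a compactly supported deformation of $\widetilde{E}_t$ identifies the weak mean curvature with $B=\sqrt{\btr{\nabla U}^2+P_\nu^2}$ $\mathcal{H}^n$-a.e.\ on $\widetilde{\Sigma}_t$ for a.e.\ $t$. One caveat in your preliminary step: ``$\Omega'$ contains no jump regions'' only says no level set $\{U=t_0\}$ has nonempty interior there, and this does \emph{not} force $\nabla U\neq 0$ $\mathcal{L}^{n+2}$-a.e.\ (Cantor--function-type behaviour, with $\{\nabla U=0\}$ of positive measure but no interior-bearing level set, is not excluded), so the dominated-convergence derivation of the distributional identity $\dive\nu=B$ on all of $\Omega'$ is not quite justified as written; fortunately your geometric first-variation argument does not use it, needing only $\nabla U\neq 0$ and the Lebesgue-point property of $B$ $\mathcal{H}^n$-a.e.\ on $\widetilde{\Sigma}_t$ for a.e.\ $t$, which the coarea formula (cf.\ Lemma \ref{lem_positivegradient}) delivers without any hypothesis on jump regions.
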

	\begin{proof}
		Follows exactly as in \cite[Section 1]{huiskenilmanen} and \cite[Lemma 16]{moore}.
	\end{proof}
	As suggested by Moore in \cite{moore} for inverse null mean curvature flow, we will define weak solutions of STIMCF on $M$ one dimension higher in $M\times\R$, as pairs $(U,\nu)$ of translation invariant locally Lipschitz functions $U$ and measurable unit vector fields $\nu$, where $U$ minimizes $\mathcal{J}_{U,\nu}$ on $(M\setminus E_0)\times\R$. In analogue to \cite[Definition 15]{moore}, we will incorporate the results of Section \ref{sec_limitingbehavior} into the general definition of weak solutions. 
	\begin{defi}\label{defi_weaksolutios}
		Let $E_0\subset M$ be a precompact, open set with $C^2$ boundary $\Sigma_0=\partial E_0$. We call the pair $(U,\nu)$ a \emph{weak solution of inverse space-time mean curvature flow with initial condition $E_0$}, if $U\in C^{0,1}_{loc}(M\times\R)$ and $\nu$ is a measurable unit vector field which satisfy
		\begin{enumerate}
			\item[(i)] $U$ is translation invariant in the vertical direction. In particular, there exists a locally Lipschitz function $u:M\to\R$, such that $U(y,z)= u(y)$. Moreover $u$ satisfies
			\begin{align*}
			&u(x)\ge 0\text{ }\forall x\in M\setminus E_0,\\
			&u\vert_{\partial E_0}=0,\text{ }u(y)<0\text{ }\forall x\in E_0,\\
			&u(x)\to+\infty\text{ as }dist(x,E_0)\to\infty.
			\end{align*}
			\item[(ii)] The set $\widetilde{E}_t=\{U<t\}$ minimizes $\mathcal{J}_{U,\nu}$ in $(M\setminus E_0)\times\R$ for each $t>0$. At jump times $t_0$, each point $X_0=(y_0,z_0)$ in the interior $\mathcal{K}_{t_0}$ of the jump region $\{U=t_0\}$ lies in a $C^{1,\alpha}$ hypersurface $\widetilde{\Sigma}_{X_0}$, which is the boundary of a Caccioppoli set $\widetilde{E}_{X_0}$ that minimizes $\mathcal{J}_{U,\nu}$ in $\mathcal{K}_{t_0}$.
			\item[(iii)] $\nu$ is a translation invariant with
			\begin{align*}
			&\nu(X+\alpha e_z)=\nu(X)\text{ }\forall X\in(M\setminus E_0)\times\R,\text{ }\alpha\in\R;\\
			&\nu(X)\text{ is in }C^{0}_{loc}\text{ away from jump times and is the unit normal vector to }\partial\widetilde{E}_t;\\
			&\text{ at each point }X\in\partial\widetilde{E}_t\\
			&\nu(X)\text{ is in }C^{1,\alpha}_{loc}(\mathcal{K}_{t_0})\text{ and is the unit normal vector to }\partial\widetilde{E}_{X_0}\text{ at each point }X\in\partial\widetilde{E}_{X_0}\\
			&\text{ at jump times }t_0\text{ and points }X_0\in\mathcal{K}_{t_0}.
			\end{align*}
		\end{enumerate}
	\end{defi}
	\begin{bem}\label{bem_weaksolutions}\,
		\begin{enumerate}
			\item[(i)] As we require the variational principle \eqref{comprintset} for $\mathcal{J}_{U,\nu}$ to be satisfied everywhere, in particular in the interior of jump regions, we can argue as in Theorem \ref{thm_apparenthorizons} to conclude that the interior of any jump region is foliated by $C^{2,\alpha}$ generalized apparent horizons.
			\item[(ii)] By Lemma \ref{lem_equivalence}, we find that any weak solutions $(U,\nu)$ of inverse space-time mean curvature flow is a weak solution of \eqref{eq_weaksolutionsJ} on $(M\setminus E_0)\times\R$. Additionally, we formulated further restrictions that arise naturally in our construction that couple our choice of unit vector field $\nu$ to the function $U$ in an intuitively geometric way. Without these restrictions, there are in general several (translation invariant) weak solutions to \eqref{eq_weaksolutionsJ}, but where the function $U$ and the unit vector field $\nu$ are not coupled in any meaningful way. To see this, not that under the restriction $\tr_M K=0$ there exists a unit vector $\nu_p\in T_pM$ with $K_p(\nu_p,\nu_p)=0$ for all $p\in M$ and under fairly generic conditions on $K$ we can make that choice in a (uniformly) continuous way. In particular, $P_\nu=0$ for this choice of unit vector field\footnote{One could of course always choose $\nu=\partial_z$ on $M\times\R$, but we want to emphasize that this phenomenon also persists on a large class of initial data sets $(M,g,K)$ with $\nu$ tangent to $M$.} and hence $(U^{IMCF},\nu)$ is a translation invariant weak solution to \eqref{eq_weaksolutionsJ} where $U^{IMCF}$ denotes the translation invariant weak solution of inverse mean curvature flow as constructed by Huisken--Ilmanen \cite{huiskenilmanen}. In particular, we can no longer expect  any statement of uniqueness that solely relies on the comparison principle \ref{comprinfunc} and the methods of Huisken--Ilmanen developed for inverse mean curvature flow, cf. \cite[Uniquness Theorem 2.2]{huiskenilmanen}, can not be extended to the anisotropic case. Instead, a completely different approach has to be developed that also takes the the geometric restrictions of the unit vector field $\nu$ into account to conclusively answer the question of uniqueness for the concept of weak solutions to inverse space-time mean curvature flow. These observations are similarly relevant for the concept of weak solutions to inverse null mean curvature flow introduced by Moore \cite{moore}.
		\end{enumerate}
	\end{bem}
	\begin{bsp}
		We give an illustrative example for the above observation that also highlights that weak solutions to \eqref{eq_weaksolutionsJ} in general lack the geometric properties of weak solutions of STIMCF. We choose $(M,g)=(\R^3,\delta)$ and $E_0=B_1(0)$. In particular, the corresponding weak solution to inverse mean curvature flow on $M$ is the smooth expanding sphere solution $v=3\ln(\btr{x})$. We further choose $K:=\frac{6}{1+r^6}\left(\d r^2-r^2\d\theta^2\right)$ in polar coordinates, so $(M,g,K)$ is a maximal initial data set with $K\left(\frac{1}{r}\partial_\varphi,\frac{1}{r}\partial_\varphi\right)=0$. 
		
		Hence, $\left(V,\frac{1}{r}\partial_\varphi\right)$ is a weak solution to \eqref{eq_weaksolutionsJ} on $(M\setminus \overline{E_0})\times\R$ where $V(x,z)=v(x)$ and thus the solution does not exhibit any jumps. However
		\[
			H\vert_{\partial E_0}=2<3=\btr{P_{\vert\partial E_0}},
		\]
		so the weak solution constructed in Theorem \ref{thm_mainresult} has to immediately jump to a generalized apparent horizon $\partial E_0^+$, see Corollary \ref{kor_detecthorizon} below. In particular, $V\not=U$ and since $\R^3$ does not allow for any closed minimal surfaces we further have $\nu\not=\frac{1}{r}\partial_\varphi$ on $E_0^+\setminus\overline{E_0}$. Hence, we found translation invariant weak solutions $\left(V,\frac{1}{r}\partial_\varphi\right)$ and $(U,\nu)$ to \eqref{eq_weaksolutionsJ} such that neither the choice of function nor the choice of unit vector field agree.\qed
	\end{bsp}
	By projecting the translation invariant solution down to the original initial data set, we find that $(u,\nu_M)$ is a weak solution of \eqref{eq_weaksolutionsJ} on $M\setminus E_0$, where $\nu_M\definedas \nu\vert_{TM}$ is the restriction of the translation invariant vector field $\nu$ onto the tangential space of $M$. As elaborated above, we lose geometric information of the solution during the process. Nonetheless, as the level-sets of $u$ are precompact it will be more convenient from a technical perspective to formulate the formation of jumps of a weak solution $(U,\nu)$ in Section \ref{sec_jumpformation} and and the blow-down procedure in Section \ref{sec_asymptoticbehavior} below with respect to $(u,\nu_M)$.
	\begin{lem}\label{lem_projection}
		Let $(U(y,z)\definedas u(y), \nu)$ be a weak solution of inverse space-time mean curvature flow with initial condition $E_0$. Then the pair $(u,\nu_M\definedas\nu\vert_{TM})$ is a weak solution of \eqref{eq_weaksolutionsJ} on $M\setminus E_0$, and $E_t=\{u<t\}$ minimizes \eqref{comprintset} for each $t>0$.
	\end{lem}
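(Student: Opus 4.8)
The plan is to lift any competitor for $u$ on $M\setminus E_0$ to a competitor for $U$ on the product $(M\setminus E_0)\times\R$ by inserting a Lipschitz cutoff in the vertical variable, to invoke the minimisation property of $U$ on a tall slab, and then to divide by the height of the slab and send it to infinity.

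First I would record that the bulk term energy is unchanged under the vertical projection. Writing the translation invariant unit field as $\nu=\nu_M+\nu_z\partial_z$ with $\nu_M\in TM$, and using $\widetilde K_{iz}=\widetilde K_{zz}=0$ together with $\tr_M K\equiv0$, one computes
\[
P_\nu=(\widetilde g^{\alpha\beta}-\nu^\alpha\nu^\beta)\widetilde K_{\alpha\beta}=-\nu_M^i\nu_M^jK_{ij}=(g^{ij}-\nu_M^i\nu_M^j)K_{ij}=P_{\nu_M},
\]
while trivially $\btr{\nabla U}(y,z)=\btr{\nabla u}(y)$. Hence the bulk term $\sqrt{\btr{\nabla U}^2+P_\nu^2}$ evaluated at $(y,z)$ equals $B_{u,\nu_M}\definedas\sqrt{\btr{\nabla u}^2+P_{\nu_M}^2}$ evaluated at $y$; in particular it is bounded on compact sets, so the equivalence Lemma \ref{lem_equivalence} applies to the pair $(u,\nu_M)$ even though $\nu_M$ need not be a unit field (cf.\ the remark following \eqref{comprinfunc} on general bounded bulk terms). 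Moreover, by Definition \ref{defi_weaksolutios}(ii) and Lemma \ref{lem_equivalence} applied on $M\times\R$, $U$ is itself a weak solution of \eqref{eq_weaksolutionsJ} on $(M\setminus E_0)\times\R$.

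Next, fix a compact $A\subset M\setminus E_0$ and $v\in C^{0,1}_{loc}(M\setminus E_0)$ with $\{v\ne u\}\subset\subset A$. For $T>0$ choose a Lipschitz $\chi_T\colon\R\to[0,1]$ with $\chi_T\equiv1$ on $[-T,T]$, $\chi_T\equiv0$ off $[-T-1,T+1]$ and $\btr{\chi_T'}\le1$, and set $V_T(y,z)\definedas\chi_T(z)v(y)+(1-\chi_T(z))u(y)$. Then $V_T\in C^{0,1}_{loc}((M\setminus E_0)\times\R)$ agrees with $U$ outside $A\times[-T-1,T+1]$, so $\{V_T\ne U\}\subset\subset A'\definedas A\times[-T-2,T+2]$; and if $v\le u$ (resp.\ $v\ge u$) then $V_T\le U$ (resp.\ $V_T\ge U$), so the argument covers the sub- and supersolution variants as well. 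Applying the minimisation of $U$ on $A'$ gives $\mathcal{J}^{A'}_{U,\nu}(U)\le\mathcal{J}^{A'}_{U,\nu}(V_T)$. Since $U$, $\btr{\nabla U}$ and $B_{u,\nu_M}$ are independent of $z$, the left-hand side equals $(2T+4)\,\mathcal{J}^A_{u,\nu_M}(u)$. On the slab $\{\btr z\le T\}$ one has $V_T=v$, contributing exactly $2T\,\mathcal{J}^A_{u,\nu_M}(v)$; on the transition layer $A\times\{T\le\btr z\le T+2\}$ both $V_T$ and $\btr{\nabla V_T}$ (whose $z$-component is controlled by $\btr{v-u}$) are bounded by a constant depending only on $A,u,v$, and the layer has measure $4\btr A$, so its contribution is at most some $C=C(A,u,v)$ independent of $T$. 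Altogether
\[
(2T+4)\,\mathcal{J}^A_{u,\nu_M}(u)\le 2T\,\mathcal{J}^A_{u,\nu_M}(v)+C.
\]
All three quantities are finite because $u,v$ are locally Lipschitz and $K$ is bounded, so dividing by $2T$ and letting $T\to\infty$ yields $\mathcal{J}^A_{u,\nu_M}(u)\le\mathcal{J}^A_{u,\nu_M}(v)$. As $A$ and $v$ were arbitrary, $(u,\nu_M)$ is a weak solution of \eqref{eq_weaksolutionsJ} on $M\setminus E_0$; Lemma \ref{lem_equivalence} then gives that $E_t=\{u<t\}$ minimises \eqref{comprintset} for every $t>0$.

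The one place that needs genuine care is the uniform-in-$T$ bound on the transition-layer contribution: one must check that inserting the cutoff costs only $O(1)$ while the bulk slab grows linearly in $T$, which is precisely what legitimises dividing by the slab height. Everything else is bookkeeping with the translation invariance of $U$ and $\nu$ and the identity $P_\nu=P_{\nu_M}$.
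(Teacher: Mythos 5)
Your argument is correct, and it is exactly the slab/cutoff argument that the paper delegates to Huisken--Ilmanen (Theorem 3.1) and Moore (Lemma 14) after recording the identity $P_\nu=P_{\nu_M}$: you establish the same identity and then fill in the standard lift-and-divide-by-height computation that those references carry out. The only cosmetic point is that the transition layer where $V_T$ genuinely differs from both $u$ and $v$ has $z$-measure $2$, not $4$, but your overcounting is harmless since you only need an $O(1)$ bound.
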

	\begin{proof}
		Since we extended the tensor $K$ trivially in the $z$-direction, we find
		\[
		\widetilde{P}_{\nu}=(\widetilde{g}^{ij}-\nu^i\nu^j)\widetilde{K}_{ij}=({g}^{ij}-\nu_M^i\nu_M^j){K}_{ij}=P_{\nu_M}.
		\]
		In particular, we can conclude that $\sqrt{\btr{\nabla U}^2+P_\nu^2}(y,z)=\sqrt{\btr{\nabla u}^2+P_{\nu_M}^2}(y)$ for all $(y,z)$ in $(M\setminus E_0)\times\R$.
		The rest of the proof then proceeds as in the proof of Theorem 3.1 in \cite{huiskenilmanen} and \cite[Lemma 14]{moore}.
	\end{proof}
	We close this section with a Compactness Theorem for weak solutions of inverse space-time mean curvature flow.
	\begin{thm}\label{thm_compactness2}
		Let $(U_i)$ with $U_i\in C^{0,1}_{loc}(\widetilde{\Omega}_i)$ be a sequence of weak solutions of \eqref{eq_weaksolutionsJ} on open sets $\widetilde{\Omega}_i\subset M\times\R$ for a sequence of  measurable, a.e. locally uniformly continuous unit vector fields $(\nu_i)$, such that $\widetilde{\Omega}_i\to \Omega$, $U_i\to U$ in $C^{0,1}_{loc}$ and $\nu_i\to\nu$ a.e. locally uniformly for a pair $(U,\nu)$, and $\btr{\nabla U_i}\to\btr{\nabla U}$ in $\mathcal{L}^1_{loc}$.
		Then $(U,\nu)$ is a weak solution of \eqref{eq_weaksolutionsJ} on $\Omega$. 
		
		If in addition, $(U_i,\nu_i)$ is a sequence of weak solutions of inverse space-time mean curvature flow as in Definition \ref{defi_weaksolutios}, then $(U,\nu)$ is a weak solution of inverse space-time mean curvature flow.
	\end{thm}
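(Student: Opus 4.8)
The plan is to reduce the first assertion to the characterisation of weak solutions of \eqref{eq_weaksolutionsJ} by their sublevel sets (Lemma \ref{lem_equivalence}, Remark \ref{bem_equivalence}) and then to invoke the compactness argument already carried out in Theorem \ref{thm_compactness1}. By Lemma \ref{lem_equivalence} it suffices to show that $\widetilde{E}_t\definedas\{U<t\}$ minimises $\mathcal{J}_{U,\nu}$ in $\Omega$ from the inside and the outside for every $t>0$. Fix first a level $t$ with $\btr{\{U=t\}}=0$; all but countably many levels are of this type. Since $U_i\to U$ locally uniformly, $\widetilde{E}_t^i\definedas\{U_i<t\}\to\widetilde{E}_t$ in $\mathcal{L}^1_{loc}(\Omega)$, and since each $U_i$ is a weak solution of \eqref{eq_weaksolutionsJ} on $\widetilde{\Omega}_i$, Lemma \ref{lem_equivalence} gives that $\widetilde{E}_t^i$ minimises $\mathcal{J}_{U_i,\nu_i}$. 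One then repeats the competitor construction from the proof of Theorem \ref{thm_compactness1} --- which uses only the $\mathcal{L}^1_{loc}$-convergence of the sublevel sets together with the stated convergences of $U_i$ and $\nu_i$, not any regularity of $\partial\widetilde{E}_t^i$: given a competitor $F$ one forms $F_i=\widetilde{E}_t^i\cup(F\cap\widetilde{G})$ (and symmetrically for the inside) on a suitable smooth domain $\widetilde{G}$ whose boundary is not charged by any of the relevant perimeter measures, uses the perimeter inequality \eqref{eq_reducedboundary}, and passes to the limit. Lower semicontinuity of perimeter handles the surface term, while the bulk term passes to the limit because $\sqrt{\btr{\nabla U_i}^2+P_{\nu_i}^2}\to\sqrt{\btr{\nabla U}^2+P_\nu^2}$ in $\mathcal{L}^1_{loc}$ --- here the hypothesis $\btr{\nabla U_i}\to\btr{\nabla U}$ in $\mathcal{L}^1_{loc}$, together with $\nu_i\to\nu$ a.e.\ and the local gradient bounds $\btr{\nabla U_i}\le C$ which follow from $U_i\to U$ in $C^{0,1}_{loc}$, is exactly what is needed (and is why, in contrast to the compactness theorems of Huisken--Ilmanen and Moore, mere semicontinuity of the bulk term does not suffice). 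This yields the minimality of $\widetilde{E}_t$ for all but countably many $t$; the remaining levels are recovered by approximating $s\searrow t$ via the equivalence \eqref{boundary3}. By Lemma \ref{lem_equivalence}, $(U,\nu)$ is a weak solution of \eqref{eq_weaksolutionsJ} on $\Omega$.

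For the second assertion we verify the remaining conditions of Definition \ref{defi_weaksolutios}. Translation invariance of $U$ and of $\nu$ survives the $C^{0,1}_{loc}$ and a.e.\ limits, and the sign, boundary and growth conditions on the associated function $u$ pass to the limit (the strict inequality in $E_0$ and $u\to\infty$ being guaranteed by the fixed sub- and supersolution barriers of Lemma \ref{lem_subsolution} and Theorem \ref{thm_aprioriestimates}(i) that are in force in the situations to which this theorem is applied). For condition (ii), the minimality of $\widetilde{E}_t=\{U<t\}$ is the first assertion, so it remains to produce, at a jump time $t_0$, the $C^{1,\alpha}$ hypersurfaces $\widetilde{\Sigma}_{X_0}\subseteq\overline{\mathcal{K}}_{t_0}$ bounding Caccioppoli sets that minimise $\mathcal{J}_{U,\nu}$ in $\mathcal{K}_{t_0}$, and to establish the regularity of $\nu$ there. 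For this one re-runs Section \ref{sec_limitingbehavior}: since $\btr{\nabla U_i}\le C$ locally, the Regularity Theorem \ref{thm_regularity} shows, as in Corollary \ref{cor_c1alphabound}, that the level sets of $U_i$ inside $\mathcal{K}_{t_0}$ --- or, in the part of $\mathcal{K}_{t_0}$ meeting a jump region of $U_i$, the generalized apparent horizons furnished by Definition \ref{defi_weaksolutios}(ii) for $U_i$ --- are locally uniformly bounded in $C^{1,\alpha}$; the pointwise compactness argument of Proposition \ref{prop_lamination} then yields, through each $X_0\in\mathcal{K}_{t_0}$, a $C^{1,\alpha}$ hypersurface $\widetilde{\Sigma}_{X_0}$ as a sublimit. (Alternatively, as in Remark \ref{bem_weaksolutions}(i), these may be built directly from the variational principle for $(U,\nu)$ through the obstacle problems of Theorem \ref{thm_regularity} with shrinking obstacle balls around $X_0$.) That the $\widetilde{\Sigma}_{X_0}$ bound minimising Caccioppoli sets and are generalized apparent horizons follows as in Theorem \ref{thm_apparenthorizons} from the Compactness Theorem \ref{thm_compactness1} and the $\mathcal{L}^1_{loc}$-convergence $\btr{\nabla U_i}\to\btr{\nabla U}=0$ in $\mathcal{K}_{t_0}$ (argued exactly as in Lemma \ref{lem_l1convergence_jumpregion}), and the foliation statement together with $\nu\in C^{1,\alpha}_{loc}(\mathcal{K}_{t_0})$ follows as in Proposition \ref{prop_unitnormal} and Theorem \ref{thm_foliation}.

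Condition (iii) away from jump times is then immediate: by Corollary \ref{cor_c1alphabound}, using the local gradient bounds, the level sets of $U_i$ converge in $C^{1,\alpha}_{loc}$ to those of $U$ where $\nabla U\neq0$, so $\nu_i=\tfrac{\nabla U_i}{\btr{\nabla U_i}}\to\tfrac{\nabla U}{\btr{\nabla U}}$ uniformly near $\partial\widetilde{E}_t$; combined with $\nu_i\to\nu$ a.e.\ this forces $\nu=\tfrac{\nabla U}{\btr{\nabla U}}$ on $\partial\widetilde{E}_t$ and yields the continuity claimed in Definition \ref{defi_weaksolutios}(iii), while the $C^{1,\alpha}_{loc}$ regularity of $\nu$ on $\mathcal{K}_{t_0}$ and its being the unit normal to the leaves $\widetilde{\Sigma}_{X_0}$ were obtained in the previous paragraph. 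Hence $(U,\nu)$ is a weak solution of inverse space-time mean curvature flow. The main obstacle is the one flagged in the second paragraph: because the approximants $U_i$ need not be constant exactly on $\mathcal{K}_{t_0}$, the foliation of $\mathcal{K}_{t_0}$ by generalized apparent horizons cannot simply be inherited from the $U_i$ but must be reconstructed by the lamination procedure of Section \ref{sec_limitingbehavior}, and it is there that the locally uniform $C^{1,\alpha}$ estimates, the Regularity Theorem \ref{thm_regularity}, and the $\mathcal{L}^1_{loc}$-convergence of the gradients are indispensable.
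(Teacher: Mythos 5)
Your argument is correct, but for the first assertion it takes a genuinely different route than the paper. You reduce via Lemma \ref{lem_equivalence} to minimality of the Caccioppoli sets $\widetilde{E}_t=\{U<t\}$, re-run the competitor construction from Theorem \ref{thm_compactness1} at levels with $\btr{\{U=t\}}=0$ (correctly observing that that construction only needs $\mathcal{L}^1_{loc}$ convergence of sublevel sets plus the stated convergences, not the $C^{1,\alpha}$ boundary regularity appearing as a hypothesis of Theorem \ref{thm_compactness1}), and then recover the remaining levels by the $s\searrow t$ approximation of Remark \ref{bem_equivalence}. The paper instead argues entirely at the level of the functional: given a Lipschitz competitor $V$ with $\{V\ne U\}\subset\subset\Omega$ and a cutoff $\Phi\in C^1_c(\Omega)$ equal to $1$ on $\{V\ne U\}$, it tests $U_i$ against $V_i:=\Phi V+(1-\Phi)U_i$, rearranges the resulting inequality to isolate $\int_W\Phi\bigl(\btr{\nabla U_i}+U_iB_{U_i,\nu_i}\bigr)\le\int_W\Phi\bigl(\btr{\nabla V}+VB_{U_i,\nu_i}\bigr)+\int_W\btr{\nabla\Phi}\,\btr{U_i-V_i}$, and passes to the limit using precisely $U_i\to U$ locally uniformly, $\nu_i\to\nu$ a.e.\ and $\btr{\nabla U_i}\to\btr{\nabla U}$ in $\mathcal{L}^1_{loc}$. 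The paper explicitly flags this as replacing the ``inductive structure'' of \cite[Theorem~2.1]{huiskenilmanen} by a more direct argument; it dispenses entirely with the good-level/all-level bookkeeping, the choice of shell $\widetilde{G}$, and reduced-boundary manipulations, whereas your route has the merit of making explicit that the proof of Theorem \ref{thm_compactness1} carries essentially no regularity content and of funnelling everything through the set-level compactness already established. For the second assertion your elaboration matches what the paper compresses into ``Using Lemma \ref{lem_equivalence} and arguing as in Section \ref{sec_limitingbehavior}''; one small redundancy is that you re-derive $\btr{\nabla U_i}\to\btr{\nabla U}=0$ in $\mathcal{K}_{t_0}$ via Lemma \ref{lem_l1convergence_jumpregion}, although this is already a hypothesis of the theorem.
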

	\begin{bem}\label{bem_compactness2}
		As in the Remark following \cite[Theorem 2.1]{huiskenilmanen} the statement of Theorem \ref{thm_compactness2} is still valid if we allow $(U_i,\nu_i)$ to be a weak solution of \eqref{eq_weaksolutionsJ} with respect to metrics $g_i$ and symmetric $(0,2)$-tensors $K_i$, such that $g_i\to g$ and $K_i\to K$ in $C^1_{loc}$.
	\end{bem}
	\begin{proof}
		By the stronger assumptions on the convergence of $\btr{\nabla U_i}$, we can replace the inductive structure of the proof of \cite[Theorem 2.1]{huiskenilmanen} by a more direct argument. Let $V$ be a locally Lipschitz function, such that $\{V\not=U\}\subset\subset \Omega$. Let $\Phi\in C^1_c(\Omega)$ with $\Phi=1$ on $\{V\not=U\}$. Then $V_i\definedas\Phi V+(1-\Phi)U_i$ is a locally Lipschitz comparison function for $U_i$, if $i>>1$ is large enough. For an appropriate open, precompact set $W\subset \Omega$, such that $\operatorname{supp}\Phi\subset W$, we have
		\[
			\mathcal{J}_{U_i,\nu_i}^W(U_i)\le \mathcal{J}_{U_i,\nu_i}^W(V_i).
		\]
		Therefore, by definition of $\mathcal{J}_{U_i,\nu_i}$, we find that
		\begin{align*}
			\int\limits_{W}\btr{\nabla U_i}+U_iB_{U_i,\nu_i}
			&\le \int\limits_W\btr{\nabla V_i}+V_iB_{U_i,\nu_i}\\
			&\le \int\limits_W\Phi\btr{\nabla V}+(1-\Phi)\btr{\nabla V}+\btr{\nabla \Phi}\btr{U_i-V_i}+(\Phi V+(1-\Phi)U_i)B_{U_i,\nu_i},
		\end{align*}
		which implies
		\begin{align*}
			\int\limits_W\Phi\left(\btr{\nabla U_i}+U_iB_{U_i,\nu_i}\right)
			\le \int\limits_W\Phi\left(\btr{\nabla V}+VB_{U_i,\nu_i}\right)+\int\limits_W\btr{\nabla \Phi}\btr{U_i-V_i}.
		\end{align*}
		By the choice of $\Phi$, we see that $\btr{\nabla \Phi}\btr{U_i-V_i}\to 0$ uniformly, and  by letting $i\to\infty$ we obtain
		\[
			\mathcal{J}_{U,\nu}^W(U)\le \mathcal{J}_{U,\nu}(V).
		\]
		If we assume the pairs $(U_i,\nu_i)$ to be weak solutions of STIMCF the translation invariance of $(U_i,\nu_i)$ implies the translation invariance of the pair $(U,\nu)$. Using Lemma \ref{lem_equivalence} and arguing as in Section \ref{sec_limitingbehavior}, we conclude that the pair $(U,\nu)$ satisfies all properties of Definition \ref{defi_weaksolutios}, so the pair is a weak solution to inverse space-time mean curvature flow.
	\end{proof}

\section{The existence of weak solutions}\label{sec_mainresults}
	We will now proof our main result. For this, we use the unit vector field $\nu$ constructed in Section \ref{sec_limitingbehavior} to extend the unit normal vector $\frac{\nabla U}{\btr{\nabla U}}$ across jump regions and employ the Compactness Theorem \ref{thm_compactness2} to the locally Lipschitz sublimit $U$.
	\begin{thm}\label{thm_mainresult}
		Let $(M^{n+1},g,K)$ be an asymptotically flat maximal initial data set as in Definition 2.1.\,. Then for any nonempty, precompact, open set $E_0\subset M^{n+1}$ with $C^2$ boundary, there exists a weak solution of inverse space-time mean curvature flow with initial condition $E_0$.
	\end{thm}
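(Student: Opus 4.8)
The plan is to assemble the machinery built up in Sections \ref{sec_levelsetdescription}--\ref{sec_variationalformulation} and pass to the limit in the elliptic regularisation. First I would fix the exhaustion of $M\setminus E_0$ by the precompact domains $\Omega_L=F_L\setminus\overline E_0$ and invoke Theorem \ref{thm_ellipticregexistence}: for each large $L$ and every $\varepsilon<\varepsilon_0(L)$ there is a smooth solution $u_\varepsilon$ of \eqref{initialellreg} on $\Omega_L$. As explained in Section \ref{sec_levelsetdescription}, the associated downward translating graphs $\widehat\Sigma^\varepsilon_t=\graph(\widehat u_\varepsilon-\tfrac{t}{\varepsilon})$ are smooth solutions of STIMCF one dimension higher in $(M\times\R,\,g+\d z^2,\,\widetilde K)$ with strictly positive space-time mean curvature, and their time-of-arrival function is $U_\varepsilon(y,z)=u_\varepsilon(y)-\varepsilon z$, which solves \eqref{mainPDE} on $\Omega_L\times\R$ with nonvanishing smooth gradient. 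Hence by the Smooth Flow Lemma \ref{lem_smoothflowlemma} together with Lemma \ref{lem_equivalence}, the pair $(U_\varepsilon,\nu_\varepsilon)$ with $\nu_\varepsilon=\tfrac{\nabla U_\varepsilon}{|\nabla U_\varepsilon|}$ is a weak solution of \eqref{eq_weaksolutionsJ} on $\Omega_L\times\R$ in the sense of Section \ref{sec_variationalformulation} (note, however, that $U_\varepsilon$ is not translation invariant, so it is not yet a weak solution of STIMCF in the sense of Definition \ref{defi_weaksolutios}).

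Next I would extract a limit. By Corollary \ref{lem_sublimit}, after a diagonal choice $L_k\to\infty$, $\varepsilon_k\to 0$, the $u_{\varepsilon_k}$ converge locally uniformly to a locally Lipschitz $u\colon M\setminus E_0\to\R$ with $0\le u\le u^{(IMCF)}$, $u\vert_{\partial E_0}=0$ and $u\to\infty$ at infinity; extending $u$ into $E_0$ by an arbitrary negative locally Lipschitz function (e.g.\ $-\operatorname{dist}(\cdot,\partial E_0)$) so that $\{u<0\}=E_0$, the function $U(y,z):=u(y)$ is translation invariant and $U_{\varepsilon_k}\to U$ in $C^{0,1}_{loc}((M\setminus E_0)\times\R)$. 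To invoke the Compactness Theorem \ref{thm_compactness2} I still need (a) a measurable unit vector field $\nu$ with $\nu_{\varepsilon_k}\to\nu$ a.e.\ locally uniformly, and (b) the strengthened convergence $|\nabla U_{\varepsilon_k}|\to|\nabla U|$ in $\mathcal{L}^1_{loc}$. The uniform interior bound of Theorem \ref{thm_interiorgradientestimate} already gives weak-$*$ convergence of the gradients and uniform local $C^{1,\alpha}$ bounds on the level-sets (Corollary \ref{cor_c1alphabound}); on the open set $\{\nabla U\neq 0\}$, where the level-sets of $U$ are $C^{1,\alpha}$, one gets $\nu_{\varepsilon_k}\to\nu:=\tfrac{\nabla U}{|\nabla U|}$ in $C^{0,\alpha}_{loc}$ and $|\nabla U_{\varepsilon_k}|\to|\nabla U|$ in $\mathcal{L}^1_{loc}$ by Lemma \ref{lem_l1convergence}, while on the interior $\mathcal{K}_{t_0}$ of a jump region these are exactly Proposition \ref{prop_unitnormal} and Lemma \ref{lem_l1convergence_jumpregion}. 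Since the complement of $\{\nabla U\neq 0\}\cup\bigcup_{t_0}\mathcal{K}_{t_0}$ has measure zero, combining the two cases produces $\nu$ and the $\mathcal{L}^1_{loc}$ convergence on all of $(M\setminus E_0)\times\R$.

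With these in hand, Theorem \ref{thm_compactness2} applied to $(U_{\varepsilon_k},\nu_{\varepsilon_k})$ on $\Omega_{L_k}\times\R\to(M\setminus E_0)\times\R$ shows that $(U,\nu)$ is a weak solution of \eqref{eq_weaksolutionsJ} on $(M\setminus E_0)\times\R$, that is, $\widetilde E_t=\{U<t\}$ minimises $\mathcal{J}_{U,\nu}$ for every $t>0$; this is the first sentence of clause (ii) of Definition \ref{defi_weaksolutios}, and clause (i) is furnished by Corollary \ref{lem_sublimit} together with the extension of $u$ into $E_0$. The remaining clauses — that at a jump time $t_0$ each $X_0\in\mathcal{K}_{t_0}$ lies on a $C^{1,\alpha}$ (in fact $C^{2,\alpha}$) hypersurface $\widetilde\Sigma_{X_0}$ bounding a Caccioppoli set that minimises $\mathcal{J}_{U,\nu}$ in $\mathcal{K}_{t_0}$, and that $\nu$ is translation invariant, lies in $C^0_{loc}$ away from jump times as the unit normal to $\partial\widetilde E_t$, and in $C^{1,\alpha}_{loc}(\mathcal{K}_{t_0})$ in jump regions as the unit normal to $\partial\widetilde E_{X_0}$ — are precisely Proposition \ref{prop_lamination}, Theorem \ref{thm_apparenthorizons} and Theorem \ref{thm_foliation}. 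This verifies all of Definition \ref{defi_weaksolutios}, so $(U,\nu)$ is the desired weak solution of inverse space-time mean curvature flow with initial condition $E_0$.

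I expect the real difficulty to sit in step (b): on a jump region $|\nabla U|\equiv 0$, yet the approximating gradients $|\nabla U_{\varepsilon_k}|$ converge only weakly to zero, so upgrading to $\mathcal{L}^1_{loc}$ convergence forces one to understand the limiting geometry of the translating graphs there — which is exactly why the foliation of $\mathcal{K}_{t_0}$ by $C^{1,\alpha}$ generalised apparent horizons and the uniform convergence of the normals (Section \ref{sec_limitingbehavior}) are needed, and why Theorem \ref{thm_compactness2} had to be strengthened compared with the inverse-mean-curvature-flow and inverse-null-mean-curvature-flow versions. Granted those structural results and the Compactness Theorem, the proof of Theorem \ref{thm_mainresult} itself is the bookkeeping described above.
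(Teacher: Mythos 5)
Your proposal tracks the paper's Section~7 closely and the overall architecture is correct: approximate via Theorem~\ref{thm_ellipticregexistence}, pass to the sublimit of Corollary~\ref{lem_sublimit}, verify the two hypotheses of the Compactness Theorem~\ref{thm_compactness2} --- the a.e.\ convergent unit vector field $\nu$ and the $\mathcal{L}^1_{loc}$ convergence of gradients --- and then read off the clauses of Definition~\ref{defi_weaksolutios} from Propositions~\ref{prop_lamination},~\ref{prop_unitnormal} and Theorems~\ref{thm_apparenthorizons},~\ref{thm_foliation}. So this is essentially the paper's proof.

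The one place where you elide something the paper treats carefully is the construction of $\nu$ and the level-set regularity \emph{outside} jump regions and at the boundary of jump regions. You characterize the non-jump part as ``the open set $\{\nabla U\neq 0\}$, where the level-sets of $U$ are $C^{1,\alpha}$,'' but $\{\nabla U\neq 0\}$ is only an a.e.\ statement for a Lipschitz function, and the $C^{1,\alpha}$ structure of $\widetilde{\Sigma}_t$ does not follow from $\nabla U\neq 0$ alone --- it comes from the same pointwise compactness argument as in Proposition~\ref{prop_lamination}, applied outside jumps. The paper isolates this in Lemma~\ref{lemma_foliation}, which simultaneously produces $C^{1,\alpha}$ hypersurfaces $\widetilde{\Sigma}_t=\partial\{U<t\}$ and $\widetilde{\Sigma}_t^+=\partial\{U>t\}$ \emph{at jump times}, and that distinction feeds into the one-sided-limit prescription of $\nu$ in \eqref{eq_unitnormal} on the inner and outer boundaries of a jump region. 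Likewise, in Lemma~\ref{lem_l1convergence} the reduction is to precompact sets ``strictly away from any jump region'' (a topological condition) plus the jump interiors (Lemma~\ref{lem_l1convergence_jumpregion}), with the jump-region boundaries --- a countable union of $C^{1,\alpha}$ hypersurfaces, hence measure zero --- closed up by the uniform gradient bound. Your measure-zero argument is in the right spirit, but without Lemma~\ref{lemma_foliation} (or its equivalent) you haven't justified the $C^{1,\alpha}$ structure of the limiting level sets away from jumps nor the one-sided definition of $\nu$ on $\widetilde{\Sigma}_t$ and $\widetilde{\Sigma}_t^+$; adding that lemma completes the bookkeeping and makes your proof match the paper's.
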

		\begin{bem}\label{bem_frauend}\,
		\begin{enumerate}
		\item[(i)]
		In initial data sets with multiple ends or inner boundary components the result holds analogously
		for initial data $E_0$ containing all but one end and all inner boundary components.
		\item[(ii)]
		Note that with the methods presented in this paper we can similarly   
		establish the existence of weak solutions of
		\[
		\frac{\d}{\d t}F=\frac{H}{H^2-P^2}
		\]
		under the same conditions imposed on $(M,g,K)$. The speed of this flow corresponds   
		precisely to the tangential projection of the codimension-$2$   
		formulation of inverse mean curvature flow first proposed by   
		Frauendiener \cite{frauendiener}. The level-set equation in this case takes the form
		\begin{equation}
		\divf_g \left(\frac{\nabla u}{\btr{\nabla u}_g}\right)
		=  \frac{1}{2}\btr{\nabla u}_g + \frac{1}{2}\sqrt{\btr{\nabla u}^2_g+ 4\left(\left(g^{ij}-\frac{\nabla^iu\nabla^ju}{\btr{\nabla u}^2}\right)K_{ij}\right)^2}
		\end{equation}
		and leads to the same singularities and  asymptotic behavior as STIMCF. In particular, their jumping behavior   
		is driven by the same outward optimization property.
		\end{enumerate}
	\end{bem}
	The proof proceeds as outlined above, where we essentially repeat the same strategy as in Section \ref{sec_limitingbehavior} and establish the corresponding results in the following lemmata.
	\begin{lem}\label{lem_positivegradient}
		For a.e. $t\ge0$, 
		\[
		\btr{\nabla u}>0 \text{ }\mathcal{H}^{n}\text{-a.e. on }\Sigma_t.
		\]
	\end{lem}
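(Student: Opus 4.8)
The plan is to obtain the statement directly from Federer's coarea formula for the locally Lipschitz function $u$, using only the elementary inclusion $\Sigma_t=\partial\{u<t\}\subseteq u^{-1}(t)$. By Rademacher's theorem the gradient $\nabla u$ exists $\mu_g$-a.e.\ on $M\setminus E_0$; let $N$ be the $\mu_g$-null set where $u$ fails to be differentiable, and set $Z\definedas\{x\in M\setminus(E_0\cup N)\colon\nabla u(x)=0\}$, so that $Z\cup N$ is $\mu_g$-measurable and $\btr{\nabla u}_g$ vanishes $\mu_g$-a.e.\ on it.

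First, one applies the coarea formula to $u$ and the measurable set $Z\cup N$,
\[
0=\int_{Z\cup N}\btr{\nabla u}_g\,\d\mu_g=\int_{\R}\mathcal{H}^{n}\!\big((Z\cup N)\cap u^{-1}(t)\big)\,\d t,
\]
which forces $\mathcal{H}^{n}\big((Z\cup N)\cap u^{-1}(t)\big)=0$ for $\mathcal{L}^{1}$-a.e.\ $t$. Since $u$ is continuous, $\Sigma_t=\partial\{u<t\}\subseteq u^{-1}(t)$ for every $t$, so for a.e.\ $t\ge0$ the gradient $\nabla u$ is defined at $\mathcal{H}^{n}$-a.e.\ point of $\Sigma_t$ and is nonzero there; this is exactly $\btr{\nabla u}>0$ $\mathcal{H}^{n}$-a.e.\ on $\Sigma_t$.

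The main point requiring care is that the assertion ``$\btr{\nabla u}>0$ $\mathcal{H}^{n}$-a.e.\ on $\Sigma_t$'' presupposes $\nabla u$ to be defined $\mathcal{H}^{n}$-a.e.\ on $\Sigma_t$ in the first place; this is precisely why the null set $N$ is kept in the computation above, the coarea identity guaranteeing that $N$ — hence the whole non-differentiability locus — meets $u^{-1}(t)\supseteq\Sigma_t$ in an $\mathcal{H}^{n}$-null set for a.e.\ $t$. It is also convenient (though not logically necessary for the stated conclusion) to record that for all but countably many $t$ the set $\Sigma_t$ is a well-behaved rectifiable hypersurface: by Lemma \ref{lem_equivalence} and Lemma \ref{lem_projection} the sublevel sets $E_t=\{u<t\}$ minimize $\mathcal{J}_{u,\nu_M}$ in $M\setminus E_0$; the jump times $t_0$ (those with $\operatorname{int}\{u=t_0\}\neq\emptyset$) form a countable set, since the jump regions are open, pairwise disjoint, and $M$ is exhausted by precompact sets; and for every non-jump $t$ the Regularity Theorem \ref{thm_regularity} renders $\Sigma_t$ a $C^{1,\alpha}$ hypersurface (away from a closed singular set of dimension at most $n-7$, which is empty for $n<6$). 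Intersecting the co-null set of $t$ from the coarea step with the co-countable set of non-jump times then yields the lemma. In particular no restriction on $n$ is required here, and the argument parallels the corresponding fact used for inverse mean curvature flow in \cite{huiskenilmanen}.
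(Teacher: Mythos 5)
Your proof is correct and follows essentially the same route the paper takes: the paper dispatches this lemma by referring to \cite[Lemma 5.1]{huiskenilmanen}, which is exactly the coarea-formula argument you have written out. Your explicit treatment of the Rademacher null set $N$ is a nice touch of care (it addresses the point that ``$\nabla u$ nonzero'' presupposes $\nabla u$ defined), and the concluding paragraph about rectifiability of $\Sigma_t$ is, as you say, not needed for the statement and is correctly flagged as supplementary.
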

	\begin{proof}
		Follows as in \cite[Lemma 5.1]{huiskenilmanen} directly from the co-area formula.
	\end{proof}
	\begin{lem}\label{lemma_foliation}
		Let $t>0$.
		If is $t$ not a jump time, then $\widetilde\Sigma_t:=\partial\{U<t\}=\{U=t\}$ is a complete hypersurface that is locally uniformly bounded in $C^{1,\alpha}$. 
		If $t$ is a jump time, then $\widetilde\Sigma_t$, $\widetilde{\Sigma}_t^+\definedas\partial\{U>t\}$ are complete hypersurfaces that are locally uniformly bounded in $C^{1,\alpha}$.
	\end{lem}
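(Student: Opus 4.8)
\end{lem}

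\noindent\textbf{Proof idea.}
The plan is to carry over the uniform $C^{1,\alpha}$ bounds on the approximating downward translating graphs $\Sigma_t^\varepsilon=\{U_\varepsilon=t\}$ obtained in Corollary \ref{cor_c1alphabound} to the level-sets of the limiting function $U$, running the pointwise limiting argument of Proposition \ref{prop_lamination} on all of $(M\setminus E_0)\times\R$ rather than only inside a jump region. Fix a target point $X_0=(y_0,z_0)\in(M\setminus E_0)\times\R$, let $(\varepsilon_i)\to0$ be the subsequence of Corollary \ref{lem_sublimit} along which $u_{\varepsilon_i}\to u$ locally uniformly, and set $t_i\definedas U_{\varepsilon_i}(X_0)$, so that $t_i\to U(X_0)\definedas t_0$. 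By Corollary \ref{cor_c1alphabound} the pieces $\widetilde\Sigma_{t_i}^{\varepsilon_i}\cap B_r^{M\times\R}(X_0)$ are uniformly $C^{1,\alpha}$ bounded for some $r=r(X_0)>0$; pulling back by $\exp_{X_0}$ and invoking Arzel\`a--Ascoli exactly as in Proposition \ref{prop_lamination}, after passing to a further subsequence we obtain a complete $C^{1,\alpha}$ hypersurface $\widetilde\Sigma_{X_0}$ through $X_0$ that is a local $C^{1,\alpha}_{loc}$ limit of the $\widetilde\Sigma_{t_i}^{\varepsilon_i}$ and inherits their local $C^{1,\alpha}$ bounds; these bounds are uniform in $X_0$ and $t_0$ because the estimate of Corollary \ref{cor_c1alphabound} depends only on $r(X_0)$, the local geometry and $\norm{K}$. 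As in Proposition \ref{prop_lamination}, every $Y\in\widetilde\Sigma_{X_0}$ arises as a limit $Y_i\to Y$ with $Y_i\in\widetilde\Sigma_{t_i}^{\varepsilon_i}$, so $U(Y)=\lim_i U_{\varepsilon_i}(Y_i)=\lim_i t_i=t_0$; hence $\widetilde\Sigma_{X_0}\subseteq\{U=t_0\}$, and since each $\widetilde\Sigma_{t_i}^{\varepsilon_i}$ separates $\{U_{\varepsilon_i}<t_i\}$ from $\{U_{\varepsilon_i}>t_i\}$, the limit $\widetilde\Sigma_{X_0}$ locally separates $\{U<t_0\}$ from $\{U>t_0\}$.

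If $t_0=t$ is not a jump time, then $\{U=t\}$ has empty interior by definition; combining this with the separation property, one argues as in the weak-solution theory for inverse mean curvature flow \cite{huiskenilmanen} that near $X_0$ the set $\partial\{U<t\}=\{U=t\}$ coincides with $\widetilde\Sigma_{X_0}$, and letting $X_0$ range over $\{U=t\}$ we conclude that $\widetilde\Sigma_t:=\partial\{U<t\}=\{U=t\}$ is a complete hypersurface, locally uniformly bounded in $C^{1,\alpha}$ by the observation above (for a.e.\ such $t$ this identification is additionally streamlined by Lemma \ref{lem_positivegradient}, which exhibits $\{U=t\}$ as a local graph wherever $\btr{\nabla u}>0$). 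If $t_0=t$ is a jump time, the leaves through interior points of $\mathcal{K}_t$ are those already produced in Proposition \ref{prop_lamination}, while the same construction applied to target points on the inner boundary $\partial\{U<t\}$, respectively the outer boundary $\partial\{U>t\}=\partial\{U\le t\}$, of $\mathcal{K}_t$ yields the two complete $C^{1,\alpha}$ hypersurfaces $\widetilde\Sigma_t=\partial\{U<t\}$ and $\widetilde\Sigma_t^+=\partial\{U>t\}$ with the asserted local uniform $C^{1,\alpha}$ bounds. Completeness in the open manifold $(M\setminus\overline E_0)\times\R$ follows from properness: since $u\ge0$, $u\vert_{\partial E_0}=0$, and $u\to\infty$ at infinity, for every $t>0$ the set $\{u=t\}$ is a compact hypersurface of $M$ disjoint from $\partial E_0$, so the translation-invariant surfaces $\widetilde\Sigma_t,\widetilde\Sigma_t^+$ are properly embedded.

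I expect the main obstacle to be this identification step, i.e.\ showing that the pointwise limits $\widetilde\Sigma_{X_0}$ assemble into precisely $\partial\{U<t\}$ (respectively $\partial\{U>t\}$) and not into a proper sub-lamination or into spurious interior sheets of $\{U=t\}$; this will be handled by combining the empty-interior property of $\{U=t\}$ at non-jump times, the fact established in Proposition \ref{prop_unitnormal} that the limit leaves can only touch tangentially and remain sheeted on the same side, and the separation property inherited from the $\widetilde\Sigma_{t_i}^{\varepsilon_i}$. A convenient alternative for this step, available once $(U,\nu)$ is known to minimize $\mathcal{J}_{U,\nu}$ (so that, by Remark \ref{bem_equivalence}, both $\{U<t\}$ and $\{U\le t\}$ are almost-minimizing Caccioppoli sets with locally bounded bulk term), is to apply the Regularity Theorem \ref{thm_regularity} directly in $M\times\R$, where $n+2<8$, concluding that $\partial\{U<t\}$ and $\partial\{U\le t\}=\partial\{U>t\}$ are $C^{1,\alpha}$ hypersurfaces.
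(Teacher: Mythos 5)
Your treatment of the non-jump case is essentially the paper's: fix $X_0$, set $t_i=U_{\varepsilon_i}(X_0)$, apply Corollary \ref{cor_c1alphabound} and Arzel\`a--Ascoli as in Proposition \ref{prop_lamination}, and identify the limit leaf with $\{U=t\}$ using the absence of a jump. The full-sequence convergence observation also matches.

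Your handling of the jump case, however, has a genuine gap that you yourself flag as ``the main obstacle'' without resolving. Running the Proposition \ref{prop_lamination} construction at a boundary point $X_0\in\partial\{U>t_0\}$ with $t_i:=U_{\varepsilon_i}(X_0)$ produces \emph{some} $C^{1,\alpha}$ leaf through $X_0$ inside $\{U=t_0\}$, but nothing in that construction forces it to be the boundary surface $\widetilde\Sigma_{t_0}^+$ rather than a leaf of the interior foliation accumulating at $X_0$ (or something tangent to it); the ``separation'' that $\widetilde\Sigma_{t_i}^{\varepsilon_i}$ inherits only separates $\{U_{\varepsilon_i}<t_i\}$ from $\{U_{\varepsilon_i}>t_i\}$, and in the limit both sides of the leaf may lie in $\{U=t_0\}$. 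The paper avoids this by a different limiting scheme in the jump case: it approaches $t_0$ through a sequence of \emph{non-jump} times $t_i\searrow t_0$ (resp.\ $t_i\nearrow t_0$) and takes $X_i\in\widetilde\Sigma_{t_i}$ with $X_i\to X_0$, where the $\widetilde\Sigma_{t_i}$ are the level sets of $U$ already shown to be $C^{1,\alpha}$ in the non-jump case. The resulting limit graph $\graph(\omega)$ lies in $\widetilde E_{t_0}^+$, and the simple observation that no point of it can lie in $\mathrm{int}(\widetilde E_{t_0}^+)$ (since $U(Y_j)>t_0$ while $Y_j\to Y$) identifies it with $\widetilde\Sigma_{t_0}^+$. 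This outside-in bootstrap is exactly the missing ingredient in your argument, and it is cleaner than trying to combine the tangential-touching and separation properties as you sketch.

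Your proposed alternative (invoke Remark \ref{bem_equivalence} and apply the Regularity Theorem \ref{thm_regularity} directly to $\{U<t\}$) is circular in the paper's logical order: Remark \ref{bem_equivalence} presupposes that $(U,\nu)$ minimizes $\mathcal{J}_{U,\nu}$ on $(M\setminus E_0)\times\R$ with respect to a globally defined unit vector field $\nu$, but $\nu$ is only assembled \emph{after} this lemma (using its conclusion, see the construction \eqref{eq_unitnormal}), and the minimality of $(U,\nu)$ is the content of Theorem \ref{thm_mainresult}, which is proved using the present lemma together with Lemma \ref{lem_l1convergence} and the Compactness Theorem. One could salvage a version of this route by arguing only with almost-minimality of perimeter with a uniform bulk bound, but that is not what your proposal as written does.
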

	\begin{proof}
		Let $U_{\varepsilon}$ be the smooth solution of STIMCF on $\Omega_L\times\R$ defined via the smooth solutions of the elliptic regularisation $u_{\varepsilon}$, which exist under the above assumptions by Theorem \ref{thm_ellipticregexistence}, and let $U$ be the sublimit as in Corollary \ref{lem_sublimit}, where $L_i\to\infty$, $0<\varepsilon_i<\varepsilon_0(L_i)$ as $\varepsilon_i\to 0$. We treat the two cases separately.
		\begin{itemize}
			\item[(i)] In the case that $t$ is not a jump time, where $\widetilde{\Sigma}_t=\widetilde{\Sigma}_t^+$, the surface $\widetilde{\Sigma}_t$ is constructed by fixing a point $X_0=(y_0,z_0)\in \widetilde{\Sigma}_t$ and considering the sequence of times $t_i$, such that $X_0\in\widetilde{\Sigma}_{t_i}^{\varepsilon_i}$ for each $i$. It then follows exactly as in the proof of Proposition \ref{prop_lamination}, that $\widetilde{\Sigma}_{t_i}^{\varepsilon_i}$ converges locally uniformly to $\widetilde{\Sigma}_t$ in $C^{1,\alpha}$ for a subsequence, and $\widetilde{\Sigma}_t$ satisfies the same locally uniform $C^{1,\alpha}$ bounds, where we denote the new Hölder exponent $0<\beta<\alpha$ here and in the following again by $\alpha$ for convenience. Since in this case $\widetilde{\Sigma}_t=\{U=t\}$ and $U_i\to U$ locally  uniformly, the convergence holds for the whole sequence.
			\item[(ii)] If $t$ is a jump time, we will use a slightly different pointwise approach to construct $\widetilde{\Sigma}_{t}$ and $\widetilde{\Sigma}_{t}^+$, since $\widetilde{\Sigma}_{t}\not=\widetilde{\Sigma}_{t}^+$, and hence argue more carefully in this case. To this end, let $X_0\in\widetilde{\Sigma}^+_{t_0}$ at a jump time $t_0$. Since there are only countable many such $t_0$, there exists a sequence of points $X_i\in\widetilde{\Sigma}_{t_i}$ with $t_i>t_0$, such that $\lim\limits_{i\to\infty}X_i=X_0$, $\lim\limits_{i\to\infty}t_i=t_0$, and for $i>>1$ large enough $\widetilde{\Sigma}_{t_i}=\widetilde{\Sigma}_{t_i}^+$. As argued above, each surface piece $\widetilde{\Sigma}_{t_i}\cap B_R^{M\times\R}(X_i)$ can therefore be written via the exponential map as the graph of a $C^{1,\alpha}$ function $\hat\omega_i$ over $T_{X_i}\hat\Sigma_{t_i}$, where
			\[
				\hat\Sigma_{t_i}\definedas\exp_{X_i}^{-1}\left(\widetilde{\Sigma}_{t_i}\cap B_R^{M\times\R}(X_i)\right).
			\]
			Now consider the sequence $\nu_i$ of normal vectors to $\hat\Sigma_{t_i}$ at $\hat X_i$. By the uniform $C^{0,\alpha}$ bounds on $\hat\nu_i$, there exists a subsequence $\hat\nu_{i_k}$ and a unit vector $\hat\nu\in T_{X_0}M$, such that $\hat\nu_{i_j}\to\hat\nu$ uniformly. Let $\hat T$ denote the affine hyperplane orthogonal to $\hat\nu$ centered at $\hat X_0$. For $i>>1$ large enough, we can write each surface $\hat{\Sigma}_{t_i}$ locally as the graph of a $C^{1,\alpha}$ function $\hat\omega_i$ over $\hat T\cap B_R^{n+2}(\hat X_0)$. By Arzel\`a--Ascoli, there exists a further subsequence $\hat\omega_{i_j}$ and a $C^{1,\alpha}$ function $\hat\omega\colon\hat T\cap B_R^{n+1}(\hat X_0)\to\R$, such that
			\[
				\hat\omega_{i_j}\to\omega\text{ in }C^1(\hat T\cap B_R^{n+1}(\hat X_i)),
			\]
			where $\hat X_0\in \graph(\hat\omega)$ and $\hat T=T_{X_0}\graph(\hat\omega)$. We then consider $\omega\definedas \hat\omega\circ\exp_{X_0}^{-1}$. In order to recognize $\graph(\omega)$ as a piece of $\widetilde{\Sigma}_{t_0}^+$, we consider a point $Y\in \graph(\omega)$. By construction, there exists a sequence $Y_j\in \graph(\omega_{i_j})\subset \widetilde{\Sigma}_{t_i}$, such that $Y_j\to Y$. Hence $U(Y_i)=t_i$, which implies that $U(Y)=t_0$, so $Y\in \widetilde{E}_t^+=\{U\le t_0\}$. Assume that $Y\in int(\widetilde{E}_{t_0}^+)$, then there exists a $\delta>0$, such that $B_\delta^{M\times\R}(Y)\subset int(\widetilde{E}_{t_0}^+)$. In particular $Y_j\subset int(\widetilde{E}_{t_0}^+)$ for $j>>1$ large enough, which is a contradiction, since $U(Y_j)>t_0$. Thus $\graph (\omega)\subset\widetilde{\Sigma}_t^+$. We make an analogous argument in the case that $X_0\in \widetilde{\Sigma}_{t_0}$ for a sequence of points $X_i\in\widetilde{\Sigma}_{t_i}$, where $t_i\nearrow t_0$. Again the limit is independent of the choice of subsequence and hence the full sequence converges.
		\end{itemize}
	\end{proof}
	Since $\widetilde{\Sigma}^{\varepsilon_i}_{t_i}\to\{U=t\}$ for the whole sequence away from jump regions, we can now argue as in Proposition \ref{prop_unitnormal} that $\nu_i\to\nu$ locally uniformly away from jumps for the whole sequence, and we can thus construct a locally Hölder continuous unit normal $\nu$ away from jump regions. Since there are at most countable jump times, we consider the normal vector field $\nu$ constructed via Proposition \ref{prop_unitnormal} in the interior of each jump region and by taking successive subsequences once more, we obtained a measurable unit vector field $\nu$ on all of $(M\setminus E_0)\times \R$, such that $U_i\to U$ locally uniformly, and $\nu_i\to\nu$ a.e. locally uniformly with respect to a fixed sequence $(\varepsilon_i)_{i\in\N}$. More precisely, case (ii) in particular shows that $\nu$ is continuous on $M\setminus E_0$ away from the interior of jump regions. Moreover, we will show in the next section that the exterior boundary of any jump region $\widetilde{\Sigma}^+_t\cap\widetilde{\Sigma}^C_t$ is a generalized apparent horizon itself, so arguing with the Schauder interior estimates as in Theorem \ref{thm_foliation}, we see that $\nu$ is continuous across the exterior boundary of any jump region. Thus the continuity of $\nu$ only fails at the interior boundary $\widetilde{\Sigma}_t\cap\left(\widetilde{\Sigma}^+_t\right)^C$ of jump regions. To reconcile, we have constructed $\nu$ such that a.e.
	\begin{align}\label{eq_unitnormal}
	\nu(X)\definedas
	\begin{cases}
	\frac{\nabla U}{\btr{\nabla U}}(X)&\text{if }X\in\widetilde{\Sigma}_t\text{  at regular times }t,\\
	\widetilde{\nu}(X)&\text{if }X\in\mathcal{K}_{t_0}\text{ at a jump time }t_0,\\
	\lim\limits_{l\to\infty}\frac{\nabla U}{\btr{\nabla U}}(X_l)&\text{if }X\in\widetilde{\Sigma}_{t_0},\text{ for }X_l\to X,\text{ }t_l\nearrow t_0,\\
	\lim\limits_{l\to\infty}\frac{\nabla U}{\btr{\nabla U}}(X_l)&\text{if }X\in\widetilde{\Sigma}_{t_0}^+,\text{ for }X_l\to X,\text{ }t_l\searrow t_0.
	\end{cases}.
	\end{align}
	To employ the Compactness Theorem \ref{thm_compactness2}, it remains to show that the gradients converge in  $\mathcal{L}^1_{loc}$.
	\begin{lem}\label{lem_l1convergence}
		\[
		\btr{\nabla U_i}\to \btr{\nabla U}\text{ in }\mathcal{L}^1_{loc}.
		\]
	\end{lem}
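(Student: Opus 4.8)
The plan is to deduce the $\mathcal{L}^1_{loc}$ convergence from two facts:
(a) $\btr{\nabla U_i}\to\btr{\nabla U}$ weakly-$*$ in $L^\infty_{loc}(\Omega)$, where $\Omega\definedas(M\setminus E_0)\times\R$; and
(b) $\liminf_{i\to\infty}\btr{\nabla U_i}\ge\btr{\nabla U}$ $\mathcal{L}$-a.e.
Granting these, fix $A\subset\subset\Omega$. The $\btr{\nabla U_i}$ are uniformly bounded on $A$ by Theorem \ref{thm_interiorgradientestimate}, so testing (a) against $\chi_A\in L^1$ gives $\int_A\btr{\nabla U_i}\to\int_A\btr{\nabla U}$, while (b) and the reverse Fatou lemma — applicable since $0\le(\btr{\nabla U}-\btr{\nabla U_i})_+\le\btr{\nabla U}\in\mathcal{L}^1(A)$ — give $\int_A(\btr{\nabla U}-\btr{\nabla U_i})_+\to0$. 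Using $\btr{\btr{\nabla U_i}-\btr{\nabla U}}=\btr{\nabla U_i}+\btr{\nabla U}-2\min(\btr{\nabla U_i},\btr{\nabla U})$ together with $\min(\btr{\nabla U_i},\btr{\nabla U})=\btr{\nabla U}-(\btr{\nabla U}-\btr{\nabla U_i})_+$, the two facts combine to $\int_A\btr{\btr{\nabla U_i}-\btr{\nabla U}}\to0$, which is the claim.

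For (a): since $U_i\to U$ locally uniformly and $\btr{\nabla U_i}$ is locally uniformly bounded, $\nabla U_i\to\nabla U$ weakly-$*$ in $L^\infty_{loc}$ (the weak-$*$ limit coinciding with the distributional one). Also $\nu_i\to\nu$ locally uniformly off an $\mathcal{L}$-null set $N$, with $\btr{\nu_i}\equiv1$. As the smooth approximations satisfy $\nu_i=\tfrac{\nabla U_i}{\btr{\nabla U_i}}$, we have $\btr{\nabla U_i}=\spann{\nabla U_i,\nu_i}$, so for $\varphi\in L^1$ of compact support, $\int\varphi\spann{\nabla U_i,\nu_i}=\int\varphi\spann{\nabla U_i,\nu}+\int\varphi\spann{\nabla U_i,\nu_i-\nu}$; the first integral converges to $\int\varphi\spann{\nabla U,\nu}$ since $\varphi\nu\in L^1$, and the second is bounded in absolute value by $\big(\sup_i\norm{\nabla U_i}_{L^\infty(\operatorname{supp}\varphi)}\big)\norm{\nu_i-\nu}_{L^\infty(\operatorname{supp}\varphi\setminus N)}\norm{\varphi}_{L^1}\to0$. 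By the construction \eqref{eq_unitnormal} of $\nu$, one has $\nu=\tfrac{\nabla U}{\btr{\nabla U}}$ at a.e.\ point where $\btr{\nabla U}>0$ — such points lying, up to a null set, on regular level sets $\widetilde\Sigma_t$ — while $\spann{\nabla U,\nu}=0=\btr{\nabla U}$ where $\btr{\nabla U}=0$; hence $\spann{\nabla U,\nu}=\btr{\nabla U}$ a.e.\ and (a) follows. (In the interior of jump regions this reproduces Lemma \ref{lem_l1convergence_jumpregion}.)

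Fact (b) is the heart of the matter. It is trivial on $\{\btr{\nabla U}=0\}$, so let $X_0$ satisfy $\btr{\nabla U}(X_0)>0$; a.e.\ such point lies on a regular level set $\widetilde\Sigma_t=\{U=t\}$ ($t$ not a jump time), which is $C^{1,\alpha}_{loc}$ by Lemma \ref{lemma_foliation}, and $\widetilde\Sigma^{\varepsilon_i}_{t_i}\to\widetilde\Sigma_t$ in $C^{1,\alpha}_{loc}$ near $X_0$ by Proposition \ref{prop_lamination}. The key point is to upgrade this to $C^{2}_{loc}$ convergence. Because $\widetilde\Sigma^{\varepsilon_i}_{s}\to\widetilde\Sigma_s$ in $C^{1,\alpha}_{loc}$ for every non-jump time $s$ near $t$, with distinct limits (as $\btr{\nabla U}>0$ forces the level sets of $U$ to be genuinely spread out there), the speed $\btr{\nabla U_i}=\sqrt{\widehat H_i^2-\widehat P_i^2}$ of the translating graphs is, for $i$ large, bounded above \emph{and below} by positive constants on a fixed ball around $X_0$ — the upper bound being Theorem \ref{thm_interiorgradientestimate}. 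On that ball the smooth STIMCF flow has uniformly bounded speed and inverse speed, so the interior a priori estimates for the smooth flow (the evolution equations of Lemma \ref{lem_evolutioneq} together with the uniform parabolicity used in Theorem \ref{smooth}) provide uniform interior $C^{2}$ bounds on $\widetilde\Sigma^{\varepsilon_i}_{t_i}$, whence $C^{2}_{loc}$ convergence to $\widetilde\Sigma_t$. Consequently $\widehat H_i$ and $\widehat P_i=(\widetilde g^{\alpha\beta}-\nu_i^\alpha\nu_i^\beta)\widetilde K_{\alpha\beta}$ converge uniformly near $X_0$, so $\btr{\nabla U_i}=\sqrt{\widehat H_i^2-\widehat P_i^2}\to\sqrt{H^2-P_\nu^2}=\btr{\nabla U}$ locally uniformly there, the last equality being Lemma \ref{lem_weakmeancurvature}. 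Thus $\liminf_i\btr{\nabla U_i}=\btr{\nabla U}$ a.e.\ on $\{\btr{\nabla U}>0\}$, which establishes (b). I expect this second-order upgrade to be the main obstacle: it hinges on the level-set equation being uniformly non-degenerate precisely on $\{\btr{\nabla U}>0\}$, which needs both the lower bound on the speed (from the convergence of a whole pencil of level sets away from jumps) and the upper bound of Theorem \ref{thm_interiorgradientestimate}; on $\{\btr{\nabla U}=0\}$ no second-order control is available, but there (b) is vacuous.
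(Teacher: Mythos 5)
Your decomposition into (a) weak-$*$ convergence of $\btr{\nabla U_i}$ plus (b) an a.e.\ $\liminf$ inequality, and the algebraic manipulation combining them, is clean and correct in outline, and your argument for (a) is essentially the same as Lemma~\ref{lem_l1convergence_jumpregion} and is fine. The difficulty is precisely where you flag it, in (b), and your argument there has a genuine gap that the paper's proof is designed to circumvent.

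The step that does not work is the inference from ``$\widetilde\Sigma^{\varepsilon_i}_s\to\widetilde\Sigma_s$ in $C^{1,\alpha}_{loc}$ for a pencil of times $s$ near $t$, with distinct limits'' to ``$\btr{\nabla U_i}$ is bounded below by a positive constant on a fixed ball around $X_0$ for $i$ large.'' Knowing $\btr{\nabla U}(X_0)>0$ at a Lebesgue point only controls the \emph{average} spread of the level sets; it gives no pointwise upper bound on the flow speed $1/\Phi_i$ of the approximating translating graphs. Since the uniform a priori estimates available on the $u_{\varepsilon}$ that do not degenerate with $\varepsilon$ are only Lipschitz (Theorem~\ref{thm_aprioriestimates}(iv)), nothing rules out $\btr{\nabla U_i}$ dipping to zero along a thin, shrinking set near $X_0$ while the level sets still converge in $C^{1,\alpha}$; the $C^{1,\alpha}$ bound on a hypersurface controls its normal, not its time-of-arrival density. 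Moreover, even granting a two-sided speed bound, the $C^2$ estimates of Theorem~\ref{smooth} are global maximum-principle estimates without a cutoff and would need to be re-proved as interior estimates before they yield local $C^2$ compactness, which your sketch does not address.

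The paper avoids any pointwise non-degeneracy assumption by working integrally: it evolves the carefully chosen quantity $\int_{\widetilde{\Sigma}_t^{\varepsilon_i}}\Phi(z)\left(H^2-P^2\right)^2$, uses the evolution equations of Lemma~\ref{lem_evolutioneq} and Peter--Paul to produce a space-time $W^{1,2}$ Caccioppoli estimate for $\sqrt{H^2-P^2}=\btr{\nabla U_i}$ along the hypersurfaces, then invokes Rellich--Kondrachov for strong $\mathcal{L}^2_{loc}$ subsequential convergence, identifies the limit via the weak convergence of the mean curvature vector together with the locally uniform convergence of $\nu_i$, and finally closes the loop with a weak-$*$ argument and H\"older to upgrade to strong $\mathcal{L}^2$ convergence of $\btr{\nabla U_i}$. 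That integral route does not require $\btr{\nabla U_i}$ to stay away from zero and is what actually makes the lemma go through.
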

		Note that, we will in fact prove convergence of $\nabla U_i$ in $\mathcal{L}^2_{loc}$, which will imply the proposition, since we are claiming convergence on compact regions. More precisely, by the fact that the sequence $\btr{\nabla U_i}$ is locally uniformly bounded in $\mathcal{L}^\infty$, dominated convergence yields convergence in $\mathcal{L}^p_{loc}$ for all $p\ge 1$.
	\begin{proof}
		Since there at most countable many jump regions with boundary given by the union of two $C^{1,\alpha}_{loc}$ hypersurfaces and since the claim is already proven inside jump regions, see Lemma \ref{lem_l1convergence_jumpregion}, it suffices to prove it on precompact open sets strictly away from any jump region.
		
		Let $X_0=(y_0,z_0)\in (M^{n+1}\setminus\overline{E_0})\times\R$ away from any jump region and let\linebreak ${d\definedas\min(\iota(X_0),dist(X_0,E_0\times\R), r(X_0))}$, where $\iota(X_0)$ again denotes the injectivity radius, and $r(X_0)$ is chosen such that the geodesic ball of radius $2r(X_0)$ centered at $X_0$ satisfies the assumptions of Theorem \ref{thm_interiorgradientestimate} and is compactly contained in the complement of the jump regions. Let $\phi\in C_c^{\infty}(\R)$ be a compactly supported function with $\operatorname{supp}\phi\subseteq[z_0-3d,z_0+3d]$, $\phi\equiv 1$ on $[z_0-2,z_0+2]$ and $\btr{\partial_z\phi}\le 2$, and consider $\Phi\definedas\phi^3$. Therefore
		\[
		B^{M\times\R}_d(X_0)\subseteq Z\definedas(\overline{B}^{M}_d(y_0)\times\R)\cap
		(M^{n+1}\times \operatorname{supp} \Phi).
		\]
		Since $t_{\varepsilon_i}=U_{\varepsilon_i}(X_0)\to U(X_0)=t_0$ locally uniformly, we can choose $L>>1$ large enough and $\varepsilon'<\varepsilon(L)$, such that there exist a $\delta>0$ such that
		\[Z\subseteq \bigcup\limits_{t\in[t_0-\delta,t_0+\delta]}\widetilde{\Sigma}_{t}^{\varepsilon_i}\cap (M^{n+1}\times \operatorname{supp}\Phi),\]
		with $\widetilde{\Sigma}_{t}^{\varepsilon_i}\cap(M^{n+1}\times \operatorname{supp}\Phi)$ staying strictly away from any jump region and \linebreak $\partial \widetilde{\Sigma}_{t}^{\varepsilon_i}\cap(M^{n+1}\times \operatorname{supp}\Phi)=\emptyset$ for all $t\in[t_0-\delta,t_0+\delta]$ and all $\varepsilon_i<\varepsilon'$. Furthermore, there exists $R(t_0)>r(t_0)>0$, such that for all $t\in[t_0-\delta,t_0+\delta]$ and all $\varepsilon_i<\varepsilon'$
		\begin{align*}
		\widetilde{\Sigma}_{t}^{\varepsilon_i}\cap(M^{n+1}\times \operatorname{supp}\Phi)\subseteq S(t_0)\definedas(G_{R(t_0)}(X_0)\setminus G_{r(t_0)})\times[z_0-3d,z_0+3d],
		\end{align*}
		where $G_r\definedas \{y\in M\setminus E_0\colon dist(E_0,y)< r\}$. In particular, since the sublevelsets \linebreak $\widetilde{E}_t^{\varepsilon_i}=\{U_{\varepsilon_i}<t\}$ are minimizing $\mathcal{J}_{U_{\varepsilon_i},\nu_{\varepsilon_i}}$ from the outside, we can conclude for all\linebreak ${t\in[t_0-\delta,t_0+\delta]}$ and all $\varepsilon_i<\varepsilon'$, that
		\[
		\btr{	\widetilde{\Sigma}_{t}^{\varepsilon_i}\cap(M^{n+1}\times \operatorname{supp}\Phi)}
		\le \btr{\partial^*S(t_0)'}-\int\limits_{S(t_0)'\setminus \widetilde{E}_t^{\varepsilon_i}}\sqrt{\btr{\nabla U_{\varepsilon_i}}^2+\btr{P_{\nu_{\varepsilon_i}}}^2}\le\btr{\partial^*S(t_0)'}=: C(t_0)
		\]
		for $S(t_0)'\definedas G_{R(t_0)}\times[z_0-3d,z_0+3d]$, where we used $S(t_0)'\cup \widetilde{E}_i^{\varepsilon_i}$ as a competitor.
		Additionally, since $S(t_0)$ is compact, the a-priori upper bound on $\btr{\nabla U_{\varepsilon_i}}$ implies that
		\[
		\sqrt{H_{\varepsilon_i}^2-P_{\nu_{\varepsilon_i}}^2}\le C(t_0,g,n,\norm{K},\norm{\nabla K}),
		\]
		for all $t\in[t_0-\delta,t_0+\delta]$ and all $\varepsilon_i<\varepsilon'$.
		Let $p\ge1$ be fixed. Following the strategy of the calculation of the monotonicity in \cite{huiskenilmanen} and \cite[Lemma 23]{moore}, we use that the functions $U_i$ induces a smooth graphical solution of STIMCF. We then calculate the evolution of
		\[
		\int\limits_{\widetilde{\Sigma}_{t}^{\varepsilon_i}}\Phi(z)\left(H^2-P^2\right)^p
		\]
		using the evolution equations in Lemma \ref{lem_evolutioneq}. After a straightforward but rather lengthy computation, which we will therefore omit, we see that we can make the favorable choice $p=2$ which yields
		\begin{align*}
		\frac{\d}{\d t}\int\limits_{\widetilde{\Sigma}_{t}^{\varepsilon_i}}\Phi(z)\left(H^2-P^2\right)^2
		=&
		-4\int\limits_{\widetilde{\Sigma}_{t}^{\varepsilon_i}}\Phi\btr{D\left(\sqrt{H^2-P^2}\right)}^2\left(\frac{2H}{\sqrt{H^2-P^2}}+\frac{\sqrt{H^2-P^2}}{2H}\right)\\
		&-4\int\limits_{\widetilde{\Sigma}_{t}^{\varepsilon_i}}\Phi\frac{P}{H}D PD\left(\sqrt{H^2-P^2}\right)+HD\Phi D\left(\sqrt{H^2-P^2}\right)\\
		&-4\int\limits_{\widetilde{\Sigma}_{t}^{\varepsilon_i}}\Phi\sqrt{H^2-P^2}\left(H\left(\Ric(\nu_{\varepsilon_i},\nu_{\varepsilon_i})+\btr{A}^2\right)+P\tr_{\widetilde{\Sigma}_{t}^{\varepsilon_i}}\nabla_{\nu_{\varepsilon_i}}K\right)\\
		&+8\int\limits_{\widetilde{\Sigma}_{t}^{\varepsilon_i}}\Phi(z)K_{\nu_{\varepsilon_i}j}P D\left(\sqrt{H^2-P^2}\right)^j\\
		&+\int\limits_{\widetilde{\Sigma}_{t}^{\varepsilon_i}}\left(H^2-P^2\right)^{\frac{3}{2}}\left(\frac{\partial\Phi}{\partial z}\nu_{\varepsilon_i}+\Phi H\right).
		\end{align*}
		Since $\btr{\Ric}\le C(t_0)$ on $S(t_0)$ by compactness, the upper bounds on $\sqrt{H^2-P^2}$ and on $	\btr{	\widetilde{\Sigma}_{t}^{\varepsilon_i}\cap(M^{n+1}\times \operatorname{supp}\Phi)}$ as well as a global bound on $K$ following from its asymptotics imply
		\begin{align*}
		\int\limits_{\widetilde{\Sigma}_{t}^{\varepsilon_i}}\left(H^2-P^2\right)^{\frac{3}{2}}\left(\frac{\partial\Phi}{\partial z}\nu_{\varepsilon_i}+\Phi H\right)-4\Phi\sqrt{H^2-P^2}\left(H\Ric(\nu_{\varepsilon_i},\nu_{\varepsilon_i})+P\tr_{\widetilde{\Sigma}_{t}^{\varepsilon_i}}\nabla_{\nu_{\varepsilon_i}}K\right)\le C(t_0,K, \nabla K).
		\end{align*}
		Furthermore, we can use the Peter--Paul inequality to estimate
		\begin{align*}
		-4\int\limits_{\widetilde{\Sigma}_{t}^{\varepsilon_i}}\Phi\btr{D\left(\sqrt{H^2-P^2}\right)}^2&\left(\frac{2H}{\sqrt{H^2-P^2}}+\frac{\sqrt{H^2-P^2}}{2H}\right)\le -8\int\limits_{\widetilde{\Sigma}_{t}^{\varepsilon_i}}\Phi\btr{D\left(\sqrt{H^2-P^2}\right)}^2,\\
		8\int\limits_{\widetilde{\Sigma}_{t}^{\varepsilon_i}}\Phi(z)K_{\nu_{\varepsilon_i}j}P D\left(\sqrt{H^2-P^2}\right)^j&\le \int\limits_{\widetilde{\Sigma}_{t}^{\varepsilon_i}}\Phi\btr{D\left(\sqrt{H^2-P^2}\right)}^2+16\int\limits_{\widetilde{\Sigma}_{t}^{\varepsilon_i}}\Phi P^2\btr{K_{\nu_{\varepsilon_i} j}}^2,\\
		-2\int\limits_{\widetilde{\Sigma}_{t}^{\varepsilon_i}}\Phi\frac{P}{H}D PD\left(\sqrt{H^2-P^2}\right)&\le \int\limits_{\widetilde{\Sigma}_{t}^{\varepsilon_i}}\Phi\btr{D\left(\sqrt{H^2-P^2}\right)}^2+2\int\limits_{\widetilde{\Sigma}_{t}^{\varepsilon_i}}\Phi\frac{P^2}{H^2}\btr{D P}^2,\\
		-4\int\limits_{\widetilde{\Sigma}_{t}^{\varepsilon_i}}HD\Phi D\left(\sqrt{H^2-P^2}\right)&\le \int\limits_{\widetilde{\Sigma}_{t}^{\varepsilon_i}}\Phi\btr{D\left(\sqrt{H^2-P^2}\right)}^2+4\int\limits_{\widetilde{\Sigma}_{t}^{\varepsilon_i}}\frac{\btr{D \Phi}^2}{\Phi}H^2.
		\end{align*}
		Note that $\frac{P^2}{H^2}\le 1$ since $\sqrt{H^2-P^2}>0$, and $\frac{\btr{\nabla \Phi}^2}{\Phi}\le36\phi$. It follows that
		\[
		\frac{\d}{\d t}\int\limits_{\widetilde{\Sigma}_{t}^{\varepsilon_i}}\Phi(z)\left(H^2-P^2\right)^2
		\le -5\int\limits_{\widetilde{\Sigma}_{t}^{\varepsilon_i}}\Phi\btr{D\left(\sqrt{H^2-P^2}\right)}^2+C(t_0,K,\nabla K),
		\]
		and integrating yields
		\begin{align}\label{rellich1}
		\int\limits_{t_0-\delta}^{t_0+\delta}\int\limits_{\widetilde{\Sigma}_{t}^{\varepsilon_i}}\Phi\btr{D\left(\sqrt{H^2-P^2}\right)}^2\le C(t_0,K,\nabla K).
		\end{align}
		Applying Fatou's Lemma, there exists a subsequence $i_j$ (henceforth just denoted as $i$ for convenience), such that
		\begin{align}\label{rellich2}
		\int\limits_{\widetilde{\Sigma}_{t}^{\varepsilon_i}}\Phi\btr{D\left(\sqrt{H^2-P^2}\right)}^2<\infty
		\end{align}
		for almost every $t\in[t_0-\delta,t_0+\delta]$, but since $U_i$ are translating solutions, we can arrange this to be the case for any sequence of times, in particular \eqref{rellich2} indeed holds for any $t\in[t_0-\delta,t_0+\delta]$. Hence, $\btr{\nabla U_i}=\sqrt{H^2-P^2}$ is uniformly bounded in $W^{1,2}_{loc}(\widetilde{\Sigma}_t^{\varepsilon_i})$ for any $t$, so by Rellich--Kondrachov (writing everything locally as a graph over a fixed tangent plane), there exists a further subsequence $i_j$ and a function $f_t\in\mathcal{L}^{\infty}(\widetilde{\Sigma}_t)$ such that $\btr{\nabla U_{i_j}}\to f_t$ in $\mathcal{L}^2_{loc}$ (up to composition with the local graphs over the fixed tangent plane), which also implies a similarly defined pointwise a.e. convergence.
		
		However, by the locally uniform $C^{1,\alpha}$ convergence and the locally uniform gradient bound Theorem \ref{thm_interiorgradientestimate}, we know that the surfaces $\widetilde\Sigma_t=\{U=t\}$ have a weak mean curvature vector $\vec{H_t}=-H_t\nu$ for a function $H_t\in\mathcal{L}^\infty_{loc}$, such that $\vec{H^{\varepsilon_i}_{t_i}}\to \vec{H_t}$ weakly in $\mathcal{L}^2$. Since $\nu_i\to\nu$ locally uniformly, this implies that
		\[
			\sqrt{\btr{\nabla U_i}^2+K(\nu_i,\nu_i)^2}\to H_t\text{ weakly in }\mathcal{L}^2,
		\]
		where we used that $U_i$ solves equation \eqref{mainPDE} as a strong solution. Since $\btr{\nabla U_{i_j}}$ moreover converges strongly against $f_t$, we see that we in fact have strong convergence for $\sqrt{\btr{\nabla U_{i_j}}^2+K(\nu_{i_j},\nu_{i_j})^2}$ and by the uniqueness of weak limits we find $f_t=\sqrt{H_t^2-K(\nu,\nu)^2}$. In particular, $f_t$ is independent of the choice of subsequence, so in fact $\btr{\nabla U_i}$ and $\vec{H^{\varepsilon_i}_{t_i}}$ converge strongly in $\mathcal{L}^2_{loc}$ against $f_t$ and $\vec{H}_t$ respectively for the full sequence.
		
		We now locally define $f(X):=f_t(X)$ if $X\in \widetilde{\Sigma}_t$ and notice that
		\[
			f(X)=\lim\limits_{i\to\infty}\btr{\nabla U_i}\circ \Pi_i(X)
		\]
		pointwise a.e., where $\Pi_i$ denotes the projection of a point $X\in\widetilde{\Sigma_t}$ to the corresponding point $X_i\in\widetilde{\Sigma}_{t_i}^{\varepsilon_i}$ such that $\widetilde{\Sigma}_{t_i}^{\varepsilon_i}\to\widetilde{\Sigma}_t$ locally in $C^{1,\alpha}$.  Since $U_i\to U$ locally uniformly, $\Pi_i$ is continuous for $i$ sufficiently large and thus $f$ is a measurable, locally a.e. bounded function.
		
		Using the weak * convergence of $\nabla U_i$ similar as in Lemma \ref{lem_l1convergence_jumpregion}, the locally uniform convergence $\nu_i\to\nu$ and the coarea formula, we find
		\begin{align}
			\int_\Omega\btr{\nabla U_i}&\to\int_{\Omega}\btr{\nabla U},\\
			\int_{\Omega}\btr{\nabla U_i}\btr{\nabla U}&\to\int_\Omega\btr{\nabla U}^2,\\
			\int_{\Omega}\btr{\nabla U_i}^2&\to \int_\Omega f\btr{\nabla U}
		\end{align}
		for any open set $\Omega$ compactly contained in the complement of jump regions and these convergences remain true if we integrate with respect to an arbitrary smooth, compactly supported test function. Using first the Hölder inequality with $\btr{\nabla U_i}$ and $\btr{\nabla U}$, and then taking a limit gives
		\begin{align}\label{eq_integral1}
			\int_\Omega\btr{\nabla U}^2\le \int_\Omega f\btr{\nabla U}.
		\end{align}
		Assume that the inequality in \eqref{eq_integral1} is strict for some open set $\Omega$ as above. Then by the above convergences there exists $\delta>0$, such that 
		\[
			0<\delta\le \int_\Omega\btr{\nabla U_i}^2-\int_\Omega\btr{\nabla U_i}\btr{\nabla U}=\int_{\Omega}\btr{\nabla U_i}\left(\btr{\nabla U_i}-\btr{\nabla U}\right)
		\]
		for all $i$ sufficiently large. Let $\Omega_+:=\{X\in\Omega\vert \left(\btr{\nabla U_i}-\btr{\nabla U}\right)> 0\}$, then 
		\[
			0<\delta \le\int_{\Omega_+}\btr{\nabla U_i}\left(\btr{\nabla U_i}-\btr{\nabla U}\right)\le C\int_{\Omega_+}\btr{\nabla U_i}-\btr{\nabla U},
		\]
		where $C$ denotes the uniform bound on $\btr{\nabla U_i}$. Hence, there exists $\delta'>0$, such that
		\[
			\delta'\le \int_{\Omega_+}\btr{\nabla U_i}-\btr{\nabla U}.
		\]
		Consider an open set $\Omega'$ with $\Omega\subseteq\subseteq\Omega'$ and $\Omega'$ also compactly contained in the complement of any jump region. Let $\eta_k\in C^\infty_0(\Omega')$ be a sequence of smooth mollifiers converging in $\mathcal{L}^2$ to the characteristic function $\chi_{\Omega_+}$ of $\Omega_+$ in $\Omega'$. Using the Hölder inequality, we see that
		\[
			\btr{\int_{\Omega'}(\btr{\nabla U_i}-\btr{\nabla U})(\eta_k-\chi_{\Omega_+})}
			\le \norm{(\btr{\nabla U_i}-\btr{\nabla U})}_{\mathcal{L}^2}\cdot \norm{\eta_k-\chi_{\Omega_+}}_{\mathcal{L}^2}\to0,
		\]
		since $\btr{\nabla U_i}$, $\btr{\nabla U}$ are uniformly bounded. Hence, there exists $k_0\in\N$, such that 
		\[
			0<\frac{\delta}{2}\le \int_{\Omega'}\eta_k(\btr{\nabla U_i}-\btr{\nabla U})
		\]
		for all $k\ge k_0$, a contradiction to the the weak * convergence of the gradients. Thus
		\[
			\int_\Omega\btr{\nabla U}^2=\int_{\Omega}f\btr{\nabla U}
		\]
		for all $\Omega$ as above and in particular 
		\[
			\int_\Omega\btr{\nabla U_i}^2\to\int_\Omega\btr{\nabla U}^2.
		\]
		Invoking the weak $\mathcal{L}^2$ convergence one last time, we conclude
		\begin{align*}
			\int_{\Omega}\btr{(\btr{\nabla U_i}-\btr{\nabla U})}^2
			&=\int_{\Omega}\btr{\nabla U_i}^2-2\int_\Omega \spann{\nabla U_i,\nabla U}+\int_\Omega \btr{\nabla U}^2\to0,
		\end{align*}
		concluding the proof.
		
	\end{proof}
	Theorem \ref{thm_mainresult} now follows by using the Compactness Theorem \ref{thm_compactness2} for weak solutions.

\section{The outward optimizing property and the selection of jump times}\label{sec_jumpformation}
	We already know that the interior of jump regions of a weak solution $(U,\nu)$ is foliated by $C^{2,\alpha}$ generalized apparent horizons in $(M\setminus E_0)\times\R$, but did not discuss when such jumps occur. In this section, we will study the selection of jump times of weak solutions of STIMCF via an outward optimization property.
	
	Let $\Omega$ be an open set in $M^{n+1}$, $\nu$ a measurable vector field on $M$. We call the set $E$ outward optimizing in $\Omega$ with respect to $\nu$, if $E$ minimizes area minus bulk term energy $\btr{P_{\nu}}$ on the outside in $\Omega$. That is, if
	\begin{align}\label{outopt}
		\btr{\partial^* E\cap A}\le \btr{\partial^*F\cap A}-\int\limits_{F\setminus E}\btr{P_{\nu}},
	\end{align}
	for any set $F$ containing $E$, such that $F\setminus E\subset A\subset\subset \Omega$, where $P_\nu=\left(g^{ij}-\nu^i\nu^j\right)K_{ij}$ as usual. We further call the set $E$ strictly outward optimizing in $\Omega$, if equality in \eqref{outopt} implies that $F\cap \Omega=E\cap\Omega$ up to a set of measure zero.
	Further, we define the strictly outward optimizing hull (in $\Omega$) $E'=E'_{\Omega}$ of a measurable set $E\subset \Omega$ to be the intersection of Lebesque points of all strictly outward optimizing sets in $\Omega$ that contain $E$. Up to a set of measure zero, $E'$ may be realized as a countable intersection, so $E'$ is in particular strictly outward optimizing and open. Due to the asymptotic decay of $g$ and $K$, the existence of strictly outward optimizing sets follows from the isoperimetric inequality if we allow $\Omega$ to be sufficiently large. In particular, any set admits a precompact outward optimizing hull if the domain $\Omega$ is sufficiently large.
	
	Note that, unlike the corresponding outward optimization property of inverse null mean curvature flow introduced by Moore \cite[Section 6]{moore}, the bulk term energy in \eqref{outopt} is non-positive everywhere. This suggests that in non time-symmetric initial data sets, the additional energy induced by the second fundamental form $K$ makes the evolving surfaces jump sooner than it is the case for inverse mean curvature flow. In fact, it is immediate, that (strictly) outward optimizing surfaces are (strictly) outward minimizing as defined by Huisken--Ilmanen \cite{huiskenilmanen}. This was to be expected as inverse mean curvature flow acts as an upper barrier, cf. Corollary \ref{lem_sublimit}. However, as we can not  expect STIMCF to be a simple reparameterization of inverse mean curvature flow, this underlines the role of the unit normal and the anisotropy of the problem, as the level sets are not only outward minimizing with respect to area, but outward optimizing with respect to the unit normal and second fundamental form $K$. Analogue to the respective optimization properties of inverse mean curvature flow and inverse null mean curvature flow, we establish the following:
	\begin{thm}[Outward optimization property]\label{thm_outwardoptimziation}
		Suppose that $(U,\nu)$ is a weak solution of inverse space-time mean curvature flow with initial condition $E_0$, such that the level-sets $E_t=\{u<t\}$ of the projection $(u,\nu_M)$ onto $M$ are precompact for all $t>0$. Suppose further that $M$ has no compact components.\newline
		Then $E_t\definedas\{u<t\}$ is outward optimizing in $M$ with respect to $\nu_M$ for $t>0$, and\linebreak $E_t^+\definedas\{u\le t\}$ is outward optimizing in $M$ with respect to $\nu_M$ for $t\ge 0$. Furthermore, we have that
		\begin{itemize}
			\item[(i)] $E_t^+$ is strictly outward optimizing in $\Omega$ with respect to $\nu_M$ for all $t\ge 0$, where $\Omega$ is an open set containing $E_t^+$ such that $\Omega$ does not contain any jump regions $\mathcal{K}_{t'}$ for $t'>t$.
			\item[(ii)] $(E_t)'_\Omega=E_t^+$ ,
			\item[(iii)] 
			\[
			\btr{\partial E_t^+}=\btr{\partial E_t}+\int\limits_{E_t^+\setminus E_t}\btr{P_{\nu_M}},
			\]
			for all $t> 0$. This precisely extends to $E_0$, if $E_0$ is outward optimizing.
		\end{itemize}
	\end{thm}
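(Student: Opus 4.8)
\emph{Proof plan.} The plan is to carry out the argument entirely for the projected pair $(u,\nu_M)$ on $M$. By Lemma \ref{lem_projection} this is a weak solution of \eqref{eq_weaksolutionsJ} on $M\setminus E_0$ whose sublevel sets $E_t=\{u<t\}$ minimize the set functional \eqref{comprintset} for every $t>0$, and by Remark \ref{bem_equivalence} (the limit $s\searrow t$ recorded in \eqref{boundary3}) the sets $E_t^+=\{u\le t\}$ minimize \eqref{comprintset} for every $t\ge0$, each from both the inside and the outside. The only new ingredient compared with inverse mean curvature flow \cite[Section 1]{huiskenilmanen} and inverse null mean curvature flow \cite[Section 6]{moore} is the elementary bound $\sqrt{\btr{\nabla u}^2+P_{\nu_M}^2}\ge\btr{P_{\nu_M}}$, which turns a minimizer of $\mathcal{J}_{u,\nu_M}$ into an outward optimizer for area minus $\btr{P_{\nu_M}}$. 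Since every competitor occurring below contains the fixed compact set $\overline{E_0}$, the minimization over $M\setminus E_0$ passes verbatim to $M$; as in \cite{huiskenilmanen}, the hypothesis that $M$ has no compact components enters only to prevent a minimizing hull from absorbing an entire component.

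For the basic assertion, fix $t>0$ and a competitor $F\supseteq E_t$ with $F\setminus E_t\subset\subset A\subset\subset M$. Minimization of $\mathcal{J}_{u,\nu_M}$ from the outside, after cancelling the common integral over $E_t\cap A$ and then using $\sqrt{\btr{\nabla u}^2+P_{\nu_M}^2}\ge\btr{P_{\nu_M}}$ on $F\setminus E_t$, yields
\[
\btr{\partial^*E_t\cap A}\le\btr{\partial^*F\cap A}-\int_{(F\setminus E_t)\cap A}\sqrt{\btr{\nabla u}^2+P_{\nu_M}^2}\le\btr{\partial^*F\cap A}-\int_{F\setminus E_t}\btr{P_{\nu_M}},
\]
which is precisely outward optimality \eqref{outopt} of $E_t$ in $M$; the identical computation for the minimizing sets $E_t^+$, $t\ge0$, gives outward optimality of $E_t^+$ in $M$.

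For (i), fix $t\ge0$ and an open $\Omega\supseteq E_t^+$ avoiding every jump region $\mathcal{K}_{t'}$ with $t'>t$, and suppose $F\supseteq E_t^+$ with $F\setminus E_t^+\subset\subset A\subset\subset\Omega$ realizes equality in \eqref{outopt}. Then both inequalities in the display above (with $E_t^+$ in place of $E_t$) are equalities: the first forces $\mathcal{J}_{u,\nu_M}(F)=\mathcal{J}_{u,\nu_M}(E_t^+)$, so $F$ also minimizes $\mathcal{J}_{u,\nu_M}$ from the outside, while the second forces $\sqrt{\btr{\nabla u}^2+P_{\nu_M}^2}=\btr{P_{\nu_M}}$, i.e. $\nabla u=0$ a.e. on $F\setminus E_t^+\subseteq\{u>t\}$. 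The hard part is then to identify $\{u>t\}\cap\{\nabla u=0\}$: I would show that, up to an $\mathcal{L}^{n+1}$--null set, it equals $\bigcup_{t'>t}\mathcal{K}_{t'}$, using that away from plateaus the level sets of $u$ are $C^{1,\alpha}$ hypersurfaces (Lemma \ref{lemma_foliation}) with $\btr{\nabla u}>0$ $\mathcal{H}^n$--a.e. on them (Lemma \ref{lem_positivegradient}), so a positive--measure piece of $\{\nabla u=0\}$ must sit inside the interior of a plateau, and jump times are at most countable. Granting this, $F\setminus E_t^+$ is null since $\Omega$ meets no $\mathcal{K}_{t'}$ with $t'>t$, whence $F\cap\Omega=E_t^+\cap\Omega$ up to measure zero, which is strict outward optimality. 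I expect this identification of the critical set of $u$ with the union of jump regions -- the only step calling on the fine structure of Section \ref{sec_limitingbehavior} rather than on the variational inequalities alone -- to be the main obstacle.

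For (ii), the inclusion $(E_t)'_\Omega\subseteq E_t^+$ is immediate once (i) is in hand, since $E_t^+$ is then itself a strictly outward optimizing set in $\Omega$ containing $E_t$. For the reverse inclusion, let $F\supseteq E_t$ be any strictly outward optimizing set in $\Omega$; I would compare $F$ with $F\cup E_t^+$ from the outside and $E_t^+$ with $F\cap E_t^+$ from the inside, and feed the perimeter submodularity \eqref{eq_reducedboundary} together with $\nabla u=0$ on the plateau $E_t^+\setminus E_t$ (so that the bulk term of $\mathcal{J}_{u,\nu_M}$ there equals $\btr{P_{\nu_M}}$) into these one--sided inequalities to deduce equality in the outward optimization inequality for $F$ versus $F\cup E_t^+$; strict outward optimality of $F$ then forces $E_t^+\subseteq F$, so $E_t^+\subseteq(E_t)'_\Omega$. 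Item (iii) follows by playing the two one--sided minimizations against each other: outward optimality of $E_t$ with competitor $E_t^+\supseteq E_t$ gives $\btr{\partial E_t}+\int_{E_t^+\setminus E_t}\btr{P_{\nu_M}}\le\btr{\partial E_t^+}$, while inside--minimization of $\mathcal{J}_{u,\nu_M}$ by $E_t^+$ with competitor $E_t$, combined once more with $\nabla u=0$ on $E_t^+\setminus E_t$, gives the reverse inequality; the same pair of estimates at $t=0$ proves the final assertion once $E_0$ is assumed outward optimizing.
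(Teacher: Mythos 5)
Your reduction of the basic assertions and of (iii) to the one--sided minimization of $\mathcal{J}_{u,\nu_M}$, together with the pointwise bound $\sqrt{\btr{\nabla u}^2+P_{\nu_M}^2}\ge\btr{P_{\nu_M}}$ and the observation that $\nabla u=0$ a.e.\ on $E_t^+\setminus E_t\subseteq\{u=t\}$, matches the paper's argument. Your treatment of (ii) via perimeter submodularity \eqref{eq_reducedboundary}, comparing $F$ with $F\cup E_t^+$ from the outside and $E_t^+$ with $F\cap E_t^+$ from the inside, is a valid alternative to the paper's shorter route, which simply tests $E_t^+$ against $(E_t)'_\Omega$ and plays the outward--optimization inequality off against \eqref{hullcontra} directly, using once more that $\nabla u=0$ a.e.\ on $\{u=t\}$.

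The genuine gap is in (i), and it is exactly where you flag trouble. You want the identification, up to an $\mathcal{L}^{n+1}$--null set, of $\{u>t\}\cap\{\nabla u=0\}$ with $\bigcup_{t'>t}\mathcal{K}_{t'}$. Lemmas \ref{lem_positivegradient} and \ref{lemma_foliation} do not give this: Lemma \ref{lem_positivegradient} is a coarea statement controlling $\mathcal{H}^n(\Sigma_t\cap\{\nabla u=0\})$ for a.e.\ $t$, which says nothing about the $(n+1)$--dimensional measure of the critical set away from plateaus, and Lemma \ref{lemma_foliation} concerns regularity of the level sets, not of a generic competitor $F$. In general a locally Lipschitz function can have a critical set of positive measure on which it is nowhere locally constant (a fat--Cantor--type example already shows this in one dimension), so the claimed identification cannot follow from general measure theory; it would require additional structure of weak solutions that you have not established. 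The paper sidesteps the problem entirely: once one knows from equality in \eqref{hullcontra} that $F$ itself is outward optimizing and that $\btr{\nabla u}=0$ a.e.\ on $F\setminus E_t^+$, the regularity of outward optimizing sets (Lebesgue points form an open representative) lets one replace $F$ by an open set, and since $E_t^+=\{u\le t\}$ is closed, $F\setminus E_t^+$ is then open. A locally Lipschitz $u$ with $\nabla u=0$ a.e.\ on an open connected set is constant there, so each nontrivial component of $F\setminus E_t^+$ is the interior of a plateau $\{u=t'\}$ with $t'>t$, contradicting the hypothesis that $\Omega$ meets no jump region $\mathcal{K}_{t'}$, $t'>t$. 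This openness step is the missing ingredient; inserting it closes the gap, and the rest of your argument goes through.
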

	\begin{bem}\label{bem_outwardoptimziation}\,
		\begin{itemize}
			\item[(i)] Note that (iii) in Theorem \ref{thm_outwardoptimziation} implies that $\btr{\Sigma_t}+\int\limits_{\{u\le t\}\setminus E_0}\btr{P_{\nu_M}}$ is continuous on $(0,\infty)$, and continuous in $0$ precisely when $\Sigma_0$ is outward optimizing. Moreover, the quantity is monotone under the smooth flow with
			\[
				\frac{\d}{\d t}\left(\btr{\Sigma_t}+\int\limits_{\{u\le t\}\setminus E_0}\btr{P_{\nu_M}}\right)=\int\limits_{\Sigma_t}\sqrt{\frac{H+\btr{P}}{H-\btr{P}}}\ge \btr{\Sigma_t}.
			\]
			\item[(ii)] By the continuity of the solution, there exists an open set $\Omega$ with the desired properties $\forall t\ge 0$. In particular, if $t'$ denotes the next time the solution will jump, we can choose $\Omega\definedas\{u<t''\}$ for any $t<t''<t'$. If we drop this restriction, then the outward optimizing hull $E_t'$ will agree with $E_t^+$ up to a disjoint union of open, ''cost free'' surfaces that are confined to other jump regions. We will make this statement precise in the following Proposition \ref{lem_costfree}.
			\item[(iii)] Similarly, the sets $\{U<t\}$ and $\{U\le t\}$ are outward optimizing in $(M\setminus E_0)\times\R$ with respect to $\nu$. Moreover, if there exists a family of smooth solutions $(U_i,\nu_i)$ such that $U_i\to U$ locally uniformly and $\nu_i\to\nu$ locally uniformly in jump times of $U$ (which in particular is satisfied by the weak solutions constructed in Theorem \ref{thm_mainresult}) then $\{U\le t\}$ is strictly outward optimizing in $(M\setminus E_0)\times\R$.
		\end{itemize}
	\end{bem}
	\begin{proof}
		The fact that $E_t$ is outward optimizing immediately follows from the results of Lemma \ref{lem_equivalence} and Lemma \ref{lem_projection}, since $E_t$ in particular minimizes $\mathcal{J}_{u,\nu}$ from the outside for all $t>0$, so for all sets $F$ such that $E_t\subset  F$ and $F\setminus E\subset A\subset\subset M$, it holds that
		\begin{align*}
			\btr{\partial^* E_t\cap A}
			\le \btr{\partial^* F\cap A}-\int\limits_{F\setminus E_t}\sqrt{\btr{\nabla u}^2+P_{\nu_M}^2}\le \btr{\partial^* F\cap A}-\int\limits_{F\setminus E_t}\btr{P_{\nu_M}}.
		\end{align*}
		By Remark \ref{bem_equivalence}, we also have for all $t\ge 0$ that
		\begin{align}\label{hullcontra}
		\btr{\partial^* E_t^+\cap A}-\int\limits_{E_t^+\cap A}\sqrt{\btr{\nabla u}^2+P_{\nu_M}^2}
		\le \btr{\partial^* F\cap A}-\int\limits_{F\cap A}\sqrt{\btr{\nabla u}^2+P_{\nu_M}^2},
		\end{align}
		for $F$ such that $E_t^+\triangle F\subset A\subset\subset M\setminus E_0$. In particular, for any $t\ge 0$, we find that $E_t^+$ is outward optimizing in $M$, i.e.
		\begin{align}\label{hilfe1}
		\btr{\partial^* E_t^+\cap A}\le \btr{\partial^* F\cap A}-\int\limits_{F\setminus E_t^+}\btr{P_{\nu_M}},
		\end{align}
		for any $E_t^+\subset F$, $F\setminus E_t^+\subset A\subset\subset M$. We now proof $(i)-(iii)$.
		\begin{itemize}
			\item[(i)] Let $\Omega$ be as above, suppose there exists $F\subset A\subset\subset \Omega$ containing $E_t^+$, such that 
				\[
					\btr{\partial^* E_t^+\cap A}= \btr{\partial^* F\cap A}-\int\limits_{F\setminus E_t^+}\btr{P_{\nu_M}}
				\]
				Assume that $F\not=E_t^+$ and is not a set of measure zero. With regards to \eqref{hullcontra}, this implies that $\btr{\nabla u}=0$ a.e. on $F\setminus E_t^+$ and $F$ is also outward optimizing. Since the Lebesque points of an outward optimizing set form an open outward optimizing set, we can assume by a modification of measure zero, that $F$ is open. Then $u$ is constant on every connected component of $F\setminus E_t^+$. A contradiction, as $\Omega$ contains no jump regions outside of $E_t^+$. Hence, $E_t^+$ is outward optimizing in $\Omega$.
			\item[(ii)] Since $E_t^+$ is strictly outward optimizing in $\Omega$, we have $(E_t)'_\Omega\subseteq E_t^+$ by definition. If
			\[
				\btr{\partial^*E_t'\cap A}=\btr{\partial^*E_t^+\cap A}-\int\limits_{E_t^+\setminus E_t'}\btr{P_{\nu_M}},
			\]
			the strict optimization property of $E_t'$ implies that $E_t'=E_t^+$. Otherwise 
			\[
			\btr{\partial^*E_t'\cap A}<\btr{\partial^*E_t^+\cap A}-\int\limits_{E_t^+\setminus E_t'}\btr{P_{\nu_M}},
			\]
			but this contradicts \eqref{hullcontra}, since $\btr{\nabla u}=0$ on $E_t^+\setminus E_t'$.
			\item[(iii)] Since $E_t$ is outward optimizing, we can use $E_t^+$ as a competitor to obtain
			\[
				\btr{\partial^*E_t\cap A}\le\btr{\partial^*E_t^+\cap A}-\int\limits_{E_t^+\setminus E_t}\btr{P_{\nu_M}},
			\]
			for $t>0$, and for $t=0$ if $E_0$ happens to be outward optimizing itself.
			Again, since $\btr{\nabla u}=0$ on $E_t^+\setminus E_t$, strict inequality would contradict \eqref{hullcontra}.
		\end{itemize}
	\end{proof}
	\begin{prop}\label{lem_costfree}
		Let $(U,\nu)$ be a weak solution of STIMCF, such that $(U,\nu)$ and $M$ satisfy all assumptions of Theorem \ref{thm_outwardoptimziation}. Let $\Omega$ be a domain in $M\setminus \overline{E_0}$ such that $E_t^+=\{u<t\}\subset \Omega$, and assume that $E_t$ admits a precompact outward optimizing hull $(E_t)'_\Omega$ in $\Omega$ with respect to $\nu_M$. Then $E_t^+=\{u\le t\}\subset (E_t)'_\Omega$ up to a measure zero, and up to a set of measure zero $(E_t)'_\Omega\setminus E_t^+$ may be realized as the disjoint union of open sets $F$ contained in jump regions $\{u=t_0\}$ for $t_0>t$, where $F$ satisfies
		\begin{align}\label{costfree}
			\btr{\partial^*F}=\int_F\btr{P_{\nu_M}}.
		\end{align}
		If any of these sets $F$ satisfy $\overline{F}\subset int\{u=t_0\}$, then $\nu_{\partial^*F}=\nu_M=\nu$ a.e. along $\partial^*F$, so the the unit normal $\nu$ in jump regions of the flow aligns with the unit normal to $\partial^*F$.
	\end{prop}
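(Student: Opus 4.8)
The plan is to reduce the statement to the calculus of \emph{outward optimizing hulls} for the non-positive bulk term $-\btr{P_{\nu_M}}$, working throughout with the projection $(u,\nu_M)$ on $M\setminus\overline{E_0}$, which by Lemma \ref{lem_projection} is a weak solution of \eqref{eq_weaksolutionsJ}; thus $E_t=\{u<t\}$ and $E_t^+=\{u\le t\}$ minimize \eqref{comprintset} (from the outside and from the inside). Write $G\definedas(E_t)'_\Omega$. First I would prove $E_t^+\subseteq G$ up to a null set. Let $S$ be any strictly outward optimizing set in $\Omega$ with $E_t\subseteq S$. Testing the outward optimization of $S$ with the competitor $S\cup E_t^+$ and the inward minimality of $E_t^+$ for $\mathcal{J}_{u,\nu_M}$ with $S\cap E_t^+$ — both of which differ from the originals only on $E_t^+\setminus S\subseteq E_t^+\setminus E_t\subset\subset\Omega$, where $\nabla u=0$ so that $\sqrt{\btr{\nabla u}^2+P_{\nu_M}^2}=\btr{P_{\nu_M}}$ — produces
\begin{align*}
\btr{\partial^*S}&\le\btr{\partial^*(S\cup E_t^+)}-\int\limits_{E_t^+\setminus S}\btr{P_{\nu_M}},\\
\btr{\partial^*E_t^+}&\le\btr{\partial^*(S\cap E_t^+)}+\int\limits_{E_t^+\setminus S}\btr{P_{\nu_M}}.
\end{align*}
Adding these and invoking the submodularity \eqref{eq_reducedboundary} forces equality throughout; equality in the first line together with strict optimization of $S$ gives $(S\cup E_t^+)\cap\Omega=S\cap\Omega$, i.e.\ $E_t^+\subseteq S$ a.e. Intersecting over all such $S$ yields $E_t^+\subseteq G$, and in particular $G=(E_t^+)'_\Omega$.

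Next I would identify $G\setminus E_t^+$. By Theorem \ref{thm_outwardoptimziation} the set $E_t^+$ is outward optimizing, and $G$ is its strictly outward optimizing hull in $\Omega$; the bulk-term analogue of the minimizing-hull calculus of Huisken--Ilmanen \cite[Section 1]{huiskenilmanen} (the existence of a precompact hull being guaranteed, as noted in the text, by the isoperimetric inequality and the decay of $g,K$) then yields the energy identity
\[
\btr{\partial^*G}=\btr{\partial^*E_t^+}+\int\limits_{G\setminus E_t^+}\btr{P_{\nu_M}}.
\]
Comparing this with the inequality obtained by using $G$ as an outer competitor for $E_t^+$ in $\mathcal{J}_{u,\nu_M}$, namely $\btr{\partial^*E_t^+}+\int_{G\setminus E_t^+}\sqrt{\btr{\nabla u}^2+P_{\nu_M}^2}\le\btr{\partial^*G}$, and using $\sqrt{\btr{\nabla u}^2+P_{\nu_M}^2}\ge\btr{P_{\nu_M}}$ pointwise, I conclude $\nabla u=0$ a.e.\ on $G\setminus E_t^+$. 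Since $G$ and $E_t^+$ may be taken open modulo null sets and $u$ is continuous, $u$ is locally constant on $G\setminus E_t^+$; thus each connected component $F$ is open with $u\equiv t_0$ for some $t_0>t$, hence $F\subseteq\mathfrak{K}_{t_0}=\operatorname{int}\{u=t_0\}$, and by continuity $\overline F\subseteq\{u=t_0\}$ is disjoint from $\overline{E_t^+}$, so $\partial^*G=\partial^*E_t^+\sqcup\bigsqcup_\alpha\partial^*F_\alpha$ modulo null sets.

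To see each $F_\alpha$ is cost free I would re-use the hull calculus on intermediate sets: for any index subset $S$ the set $F'\definedas E_t^+\cup\bigcup_{\alpha\in S}F_\alpha$ satisfies $E_t^+\subseteq F'\subseteq G$, so $(F')'_\Omega=G$ by monotonicity of the hull and the fact that $G$ is itself strictly outward optimizing; since the hull does not increase $\btr{\partial^*\cdot}-\int_\cdot\btr{P_{\nu_M}}$, the energy identity reduces this to $\sum_{\alpha\in S}\big(\btr{\partial^*F_\alpha}-\int_{F_\alpha}\btr{P_{\nu_M}}\big)\ge0$ for every $S$. Hence each summand is non-negative, and since the full sum vanishes (again by the energy identity) each vanishes, i.e.\ $\btr{\partial^*F_\alpha}=\int_{F_\alpha}\btr{P_{\nu_M}}$, which is \eqref{costfree}. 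Finally, if $\overline F\subseteq\mathfrak{K}_{t_0}$, then $\nu_M\in C^{1}(\overline F)$ and is normal to the generalized apparent horizons foliating $\mathfrak{K}_{t_0}$ (Theorems \ref{thm_apparenthorizons} and \ref{thm_foliation}), so that $\dive_g\nu_M=H=\btr{P_{\nu_M}}$ on $\overline F$; the divergence theorem for $\nu_M$ over $F$ gives
\[
\int\limits_{\partial^*F}\gspann{\nu_M,\nu_{\partial^*F}}\d\mathcal{H}^n=\int\limits_F\dive_g\nu_M=\int\limits_F\btr{P_{\nu_M}}=\btr{\partial^*F},
\]
and since $\nu_M$ and $\nu_{\partial^*F}$ are unit, Cauchy--Schwarz forces $\nu_{\partial^*F}=\nu_M$ $\mathcal{H}^n$-a.e.\ on $\partial^*F$.

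The main obstacle is the hull calculus underlying the second and third steps: establishing, for the functional $\btr{\partial^*\cdot}-\int_\cdot\btr{P_{\nu_M}}$, the existence of precompact strictly outward optimizing hulls and the two facts that the hull of an outward optimizing set preserves this energy and that passing to the hull never increases it — the exact bulk-term analogues of the minimizing-hull lemmas in Huisken--Ilmanen \cite[Section 1]{huiskenilmanen}. Here the sign of the bulk term is an advantage rather than an obstruction: since $-\btr{P_{\nu_M}}\le0$ and $\btr{P_{\nu_M}}\le\btr K$ is bounded, the relevant energy controls the perimeter from below and precompact hulls exist by the isoperimetric inequality, exactly as for inverse mean curvature flow. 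A secondary technical point is that a component $F_\alpha$ whose closure touches $\partial\mathfrak{K}_{t_0}$ is still cost free — this uses only the summed energy identity, not compact containment in the jump region — whereas the normal-alignment statement genuinely requires $\overline F\subseteq\mathfrak{K}_{t_0}$ so that the $C^1$ foliation normal is available for the divergence theorem.
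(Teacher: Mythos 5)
Your proposal is correct in its overall logic and arrives at the same decomposition as the paper, but routes the key step differently: where the paper derives the total energy identity \eqref{eq_costfree4} tersely from the submodularity inequality \eqref{eq_costfree1} together with \eqref{eq_costfree3}, you make explicit the hull-calculus energy identity
$\btr{\partial^*G}=\btr{\partial^*E_t^+}+\int_{G\setminus E_t^+}\btr{P_{\nu_M}}$
and correctly identify it as the crux. This is a fair criticism of the paper's own phrasing: combining \eqref{eq_costfree3} with \eqref{eq_costfree1} for the disjoint pair $(E_t^+,E_t'\setminus E_t^+)$ yields only the one-sided chain
$\int_{E_t'\setminus E_t^+}\btr{P_{\nu_M}}\le\int_{E_t'\setminus E_t^+}B_{u,\nu_M}\le\btr{\partial^*(E_t'\setminus E_t^+)}$,
and the reverse inequality — which is precisely the statement that passing from the outward optimizing $E_t^+$ to its strictly outward optimizing hull $G$ does not increase the energy $\btr{\partial^*\cdot}-\int_\cdot\btr{P_{\nu_M}}$ — is an analogue of \cite[(1.15)]{huiskenilmanen} that is not a one-line consequence of modularity but needs the regularity theory for the obstacle problem (or an exhaustion argument with careful perimeter convergence); it does \emph{not} follow from lower semicontinuity of perimeter for the intersection $G=\bigcap_k T_k$ alone. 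Two smaller remarks. First, your step 1 works by testing every strictly optimizing $S\supset E_t$ and intersecting; the paper tests the single competitor $E_t'$ and uses its strict optimization. Both are fine, but the paper's version is shorter. Second, in step 4 you apply ``the hull does not increase energy'' to the intermediate sets $F'=E_t^+\cup\bigcup_{\alpha\in S}F_\alpha$, which need not be outward optimizing, so the hull-calculus lemma as usually stated (for minimizing hulls) does not directly cover them; a cleaner route, once one has both $\nabla u=0$ on $G\setminus E_t^+$ and the total identity $\btr{\partial^*G}=\btr{\partial^*E_t^+}+\sum_\alpha\btr{\partial^*F_\alpha}=\btr{\partial^*E_t^+}+\sum_\alpha\int_{F_\alpha}\btr{P_{\nu_M}}$ (using that the closures $\overline{F_\alpha}$ are separated from $\partial E_t^+$ by continuity of $u$), is to test the outward optimization of $E_t^+$ against the single competitor $E_t^+\cup F_\alpha$ to get $\int_{F_\alpha}\btr{P_{\nu_M}}\le\btr{\partial^*F_\alpha}$ for each $\alpha$, and then the term-by-term equality follows since the total already balances. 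The final divergence-theorem/Cauchy–Schwarz argument for the normal alignment is exactly the paper's.
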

	\begin{bem}\label{bem_costfree}
		We may allow $\Omega=M\setminus \overline{E_0}$ and see that $E_t^+$ is precisely the union of the connected components of $(E_t)'_\Omega$ that each contain a connected component of $E_t$. Intuitively, this suggests that the flow does not want to increase the number of connected components of the level sets, even if the outward optimization principle would prefer to jump farther. See also the corresponding connectedness lemma, Lemma \ref{lem_connectedness}, for weak solutions of STIMCF below.
	\end{bem}
	\begin{proof}
		We define $P_{\nu_M}$ and $B_{u,\nu_M}=\sqrt{\btr{\nabla u}^2+P_{\nu_M}}$ as before and we will write $E_t'=(E_t)'_\Omega$ for convenience. By assumption $E_t'$ is precompact. To establish the above claim, we will largely exploit the general formula \eqref{eq_reducedboundary}, which allows us state \eqref{hilfe1} more generally, as we have
		\begin{align}\label{eq_costfree1}
			\btr{\partial^*(E_1\cup E_2)}-\int\limits_{E_1\cup E_2}f+\btr{\partial^*(E_1\cap E_2)}-\int\limits_{E_1\cap E_2}f\le \btr{\partial^*E_1}-\int\limits_{E_1}f+\btr{\partial^*E_2}-\int\limits_{E_2}f,
		\end{align}
		for any (locally) integrable function $f$ and precompact Caccioppoli sets $E_1$, $E_2$.
		As $E_t^+$ minimizes $\mathcal{J}_{u,\nu_M}$, we have
		\[
			\btr{\partial^*E_t^+}-\int\limits_{E_t^+}B_{u,\nu_M}\le \btr{\partial^*(E_t^+\cap E_t')}-\int\limits_{E_t^+\cap E_t'}B_{u,\nu_M},
		\]
		and since $\btr{\nabla u}=0$ on $E_t^+\setminus E_t$, we can conclude that in fact
		\begin{align}\label{eq_costfree2}
			\btr{\partial^*E_t^+}-\int\limits_{E_t^+}\btr{P_{\nu_M}}\le \btr{\partial^*(E_t^+\cap E_t')}-\int\limits_{E_t^+\cap E_t'}\btr{P_{\nu_M}}.
		\end{align}
		Choosing $f=\btr{P_{\nu_M}}$, $E_1=E_t^+$, $E_2=E_t'$ in \eqref{eq_costfree1}, we can conclude using \eqref{eq_costfree2} that
		\[
			\btr{\partial^*E_t^+}-\int\limits_{(E_t^+\cup E_t')\setminus E_t'}\btr{P_{\nu_M}}\le \btr{\partial^*E_t'}.
		\]
		Hence $E_t'=E_t^+\cup E_t'$ up to a set of measure zero, as $E_t'$ is strictly outward optimizing. Therefore $E_t^+\subset E_t'$ up to a set of measure zero. Using that $E_t^+$ minimizes $\mathcal{J}_{u,\nu_M}$ from the outside and is outward optimizing in $\Omega$, we find
		\begin{align}
			\begin{split}\label{eq_costfree3}
				\btr{\partial^*E_t^+}&\le \btr{\partial^*E_t'}-\int\limits_{E_t'\setminus E_t^+}B_{\nu_M},\\
				\btr{\partial^*E_t^+}&\le \btr{\partial^*E_t'}-\int\limits_{E_t'\setminus E_t^+}\btr{P_{\nu_M}}.
			\end{split}
		\end{align}
		For $E_1=E_t^+$ and $E_2=E_t'\setminus E_t^+$, and choosing both $f=B_{u,\nu_M}$ and $f=\btr{P_{\nu_M}}$, \eqref{eq_costfree1} allows us to conclude that
		\begin{align}\label{eq_costfree4}
			\btr{\partial^+(E_t'\setminus E_t^+)}=\int\limits_{E_t'\setminus E_t^+}B_{u,\nu_M}=\int\limits_{E_t'\setminus E_t^+}\btr{P_{\nu_M}}.
		\end{align}
		If $int(E_t'\setminus E_t^+)\not=\emptyset$, $\btr{\nabla u}=0$ a.e. on $E_t'\setminus E_t^+$, and therefore each connected component $F$ of $E_t'\setminus E_t^+$ is contained in a jump region $\{u=t_0\}$ for $t_0>t$ and satisfies \eqref{eq_costfree4} by itself.
		
		It remains to show that $\nu_{\partial^*F}=\nu_M=\nu$ a.e. along $\partial^*F$ for any such surface $F$ with $\overline{F}\subset int\{u=t_0\}$. By Theorem \ref{thm_foliation}, the interior $\mathcal{K}_{t_0}$ of the jump region $\{U=t_0\}$ is foliated by generalized apparent horizons with $\nu\in C^{1,\alpha}_{loc}(\mathcal{K}_{t_0})$. By the translation invariance, we find
		\[
			\btr{P_{\nu_M}}=\btr{P_\nu}=\dive_{M\times\R}\nu=\dive_M\nu_{M},
		\]
		and the claim follows by the divergence theorem.
	\end{proof}
	\begin{prop}
		For $t\ge 0$, we find that
		\begin{enumerate}
			\item[(i)] $H=\btr{P_{\nu_M}}$ on $\partial E_t^+\cap {\partial E}_t^C$,
			\item[(ii)] $H\ge \btr{P_{\nu_M}}$ in the weak sense on $\partial E_t^+\cap{\partial E}_t$.
		\end{enumerate}
	\end{prop}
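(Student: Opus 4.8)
The plan is to obtain both statements from first variations of the relevant variational principles, using the outward optimizing property of $E_t^+$ from Theorem \ref{thm_outwardoptimziation} together with the vanishing of $\nabla u$ inside jump regions. First note that for a non-jump time $t$ one has $\partial E_t^+=\partial E_t$, so (i) is vacuous and (ii) is covered by the argument below; hence I may assume $t=t_0$ is a jump time (with $t_0=0$ allowed, corresponding to an immediate jump).

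For (ii): By Theorem \ref{thm_outwardoptimziation} the set $E_{t_0}^+$ is outward optimizing in $M$ with respect to $\nu_M$, i.e. it minimizes $|\partial^*F\cap A|-\int_{F\setminus E_{t_0}^+}|P_{\nu_M}|$ among competitors $F\supseteq E_{t_0}^+$ with $F\setminus E_{t_0}^+\subset\subset A$. Since the boundary piece $\partial E_{t_0}^+\cap\partial E_{t_0}$ avoids the interiors of all jump regions, the vector field $\nu_M$ — and hence $|P_{\nu_M}|$ — is continuous in a neighbourhood of it. I would then take the first variation of this functional under outward normal variations $X=\varphi\nu$ with $\varphi\ge 0$: this shows that the weak mean curvature of $\partial E_{t_0}^+$ satisfies $H\ge|P_{\nu_M}|$ in the weak sense along $\partial E_{t_0}^+$, in particular on $\partial E_{t_0}^+\cap\partial E_{t_0}$. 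That $\partial E_{t_0}^+$ carries a weak mean curvature in $L^\infty_{loc}$ follows from its being almost minimizing for the bounded bulk term.

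For (i): Fix $x_0\in\partial E_{t_0}^+\cap(\partial E_{t_0})^C$. Because $x_0\notin\partial\{u<t_0\}$ while $x_0\in\partial\{u\le t_0\}$, the point $x_0$ cannot belong to $\operatorname{int}\{u<t_0\}$, so it lies in $\operatorname{int}\{u\ge t_0\}$; choosing $r>0$ with $u\ge t_0$ on $B_r(x_0)$ gives $E_{t_0}^+\cap B_r(x_0)=\{u=t_0\}\cap B_r(x_0)$, whence $\nabla u\equiv0$ a.e. there and the bulk term of $\mathcal{J}_{u,\nu_M}$ equals $|P_{\nu_M}|$ on $E_{t_0}^+\cap B_r(x_0)$. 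I would now combine two facts on the ball $A=B_r(x_0)$: first, outward optimization of $E_{t_0}^+$ gives, exactly as in (ii), the comparison $|\partial^*E_{t_0}^+\cap A|\le|\partial^*F\cap A|-\int_{F\setminus E_{t_0}^+}|P_{\nu_M}|$ for all $F\supseteq E_{t_0}^+$; second, $E_{t_0}^+$ minimizes $\mathcal{J}_{u,\nu_M}$ (Remark \ref{bem_equivalence}, equation \eqref{hullcontra}), so for $F\subseteq E_{t_0}^+$, replacing the bulk term by $|P_{\nu_M}|$ (valid on $E_{t_0}^+\cap A$) yields $|\partial^*E_{t_0}^+\cap A|-\int_{E_{t_0}^+\cap A}|P_{\nu_M}|\le|\partial^*F\cap A|-\int_{F\cap A}|P_{\nu_M}|$. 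Together with the submodularity \eqref{eq_reducedboundary} applied to $F\cup E_{t_0}^+$ and $F\cap E_{t_0}^+$, these two inequalities show that $E_{t_0}^+$ is an \emph{unconstrained} minimizer on $A$ of the prescribed–mean–curvature functional $F\mapsto|\partial^*F\cap A|-\int_{F\cap A}|P_{\nu_M}|$. Since $\nu_M$ is continuous across the exterior boundary of the jump region by the analysis of Section \ref{sec_limitingbehavior} and $K$ is smooth, the integrand $|P_{\nu_M}|$ is at least $C^{0,\alpha}$ near $x_0$, so standard regularity theory for such functionals (as in \cite{tamanini}; no singular set arises since $n<6$) yields that $\partial E_{t_0}^+$ is a $C^{1,\alpha}$ hypersurface near $x_0$ solving its Euler–Lagrange equation $H=|P_{\nu_M}|$, and Schauder bootstrapping upgrades this to the classical sense.

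The main obstacle is making the first–variation arguments rigorous, since the natural bulk integrand $\sqrt{|\nabla u|^2+P_{\nu_M}^2}$ of $\mathcal{J}_{u,\nu_M}$ does not have a controlled trace on the evolving surface at a jump time. The resolution — and the reason the argument works at all — is that on the outside of $E_{t_0}^+$ near the relevant boundary piece one may replace that integrand by the genuinely continuous function $|P_{\nu_M}|$ via the outward optimizing property, while on the inside near $\Sigma=\partial E_{t_0}^+\cap(\partial E_{t_0})^C$ one has $\nabla u\equiv0$, so the two competitor comparisons combine into a clean unconstrained prescribed–curvature problem. The continuity of $\nu_M$ up to and across $\Sigma$ established in Section \ref{sec_limitingbehavior} is precisely what legitimizes passing to this reduced problem; alternatively one could identify $\Sigma$ as a limit leaf of the $C^{2,\alpha}$ apparent-horizon foliation of $\mathcal{K}_{t_0}$ (Remark \ref{bem_weaksolutions}(i)) and read off $H=|P_{\nu_M}|$ from the foliation.
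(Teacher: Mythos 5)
Your proposal is correct and follows essentially the same route as the paper: for (i), on a ball $B_r(x_0)$ lying in the exterior portion of the jump region one has $\nabla u\equiv 0$, so the inner comparison from $\mathcal{J}_{u,\nu_M}$ degenerates into a prescribed-mean-curvature comparison with integrand $\btr{P_{\nu_M}}$; combining this with the outward optimizing property and the submodularity \eqref{eq_reducedboundary} gives an \emph{unconstrained} PMC minimizer, whose Euler--Lagrange equation is $H=\btr{P_{\nu_M}}$; for (ii), only the one-sided (outward) comparison is available, giving the weak inequality. These are exactly the steps in the paper's argument; the paper implements the unconstrained-minimizer step by passing to the subgraph functional $\mathcal{J}'_{\nu_M}$ as in Theorem \ref{thm_apparenthorizons}, while you phrase it directly on Caccioppoli sets, but the content is the same. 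Two small presentational differences are worth noting. First, the paper already has $C^{1,\alpha}$ regularity of $\partial E_t^+$ from the construction of the weak solution, so it does not need to re-derive it from Tamanini-type regularity as you do; either route is fine, and yours is more self-contained. Second, for the Schauder upgrade the paper avoids invoking H\"older continuity of the frozen field $\nu_M$ up to the boundary of the jump region by observing that in the Euler--Lagrange equation $\btr{P_\nu}$ is evaluated on the graph, where it can be rewritten in terms of $\nabla\omega$ and $K$ and is therefore $C^{0,\alpha}$ once $\omega\in C^{1,\alpha}$; your version, which invokes the continuity of $\nu_M$ across the exterior boundary from Section \ref{sec_limitingbehavior}, is also valid but requires slightly more of the earlier machinery. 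Your observation that (i) is vacuous at non-jump times, and the alternative identification of $\partial E_{t_0}^+$ as a boundary limit of the apparent-horizon foliation, are both correct remarks that the paper does not spell out.
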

	\begin{proof}\,
		\begin{enumerate}
			\item[(i)] Since $E_t^+$ is outward optimizing, we know that $E_t^+$ minimizes the functional 
			\begin{align}\label{horizonfunc}
			\btr{\partial^*E_t^*\cap A}-\int\limits_{E_t^+\cap A}\btr{P_{\nu_M}}
			\end{align}
			against any competitor $F$, such that $E_t^+\subseteq F$, $F\setminus E_t^+\subset A\subset\subset \Omega$. Now let $y\in\partial E_t^+$ such that $y\not\in\overline{E}_t$. Then locally around $y$ $\partial E_t$ and $\partial E_t^+$ are separated by a jump region, and since $\partial E_t^+$ is $C^{1,\alpha}$, there exists an $R>0$, such that $B_R(y)\cap \partial E_t^+$ is given by the graph over a $C^{1,\alpha}$ function $\omega$ and $W\definedas B_R(y)\cap int (E_t^+)\subset E_t^+\setminus \overline{E}_t$ is the subgraph of $\omega$, and $\btr{\nabla u}=0$ on $W$. Using that $E_t^+$ minimizes \eqref{comprintset}, in particular from the inside, we conclude that $E_t^+$ minimizes \eqref{horizonfunc} from the inside and outside for $K=\overline{B}_R(y)$. We conclude that $W$ minimizes \eqref{horizonfunc} and therefore, as in Theorem \ref{thm_apparenthorizons}, we see that $\omega$ minimizes the functional
			\[
			\mathcal{J}'_{\nu_M}(\omega)\definedas\int\limits_{A}\sqrt{1+\btr{\nabla \omega}^2}-\int\limits_A\int\limits_0^{\omega(x)}\btr{P_{\nu_M}}\d s\d x.
			\]
			In particular $\omega\in C^{2,\alpha}_{loc}$, and $\partial E_t^+\cap\overline{E_t}^C$ satisfies $H=\btr{P_{\nu_M}}$.
			\item[(ii)] If $\partial E_t$ and $\partial E_t^+$ only agree on a set of measure zero, there is nothing to show. So we assume there exists an $y\in \partial E_t$, such that $E_t\cap B_R(y)\subset E_t^+$ for some $R>0$ and w.l.o.g. that $\partial E_t\cap B_R(y)$ is given as the graph over a $C^{1,\alpha}$ function $\omega$. Then similar to before, we can conclude that $\omega$ is a supersolution for $\mathcal{J}'_{\nu_M}$, i.e.,
			\[
			0\le \frac{\d}{\d \varepsilon}\mathcal{J}'(\omega+\varepsilon\eta)=\int\limits_K\frac{\nabla\eta\cdot\nabla \omega}{\sqrt{1+\btr{\nabla \omega}^2}}-\eta\btr{P_\nu},
			\]
			for all $\eta\in C^{\infty}_c(\partial E_t\cap B_R(y))$, such that $\eta\ge 0$. As $\partial E_t$ admits a weak mean curvature $H$, we can conclude that
			\[
			0\le \int\limits_K\eta\left(H-\btr{P_\nu}\right)
			\]
			for all $\eta\in C^{\infty}_c(\partial E_t\cap B_R(y))$, such that $\eta\ge 0$. This establishes the claim.
		\end{enumerate}
	\end{proof}
	Choosing appropriate initial data $E_0$, the properties of jump formation allow us to formulate conditions that will force any weak solution to jump. Therefore the existence of weak solutions Theorem \ref{thm_mainresult} yields a method of detecting generalized apparent horizons.
	\begin{kor}\label{kor_detecthorizon}\,
		\begin{enumerate}
			\item[(i)] If $E_0$ is not outward minimizing, then $E_0^+\not=E_0$,
			\item[(ii)] If $E_0$ satisfies $H_{\partial E_0}<\btr{\tr_{\partial E_0}K}$, then $\partial E_0^+$ is a $C^{2,\alpha}$ generalized apparent horizon disjoint from $\partial E_0$.
		\end{enumerate}
	\end{kor}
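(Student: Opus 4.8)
\emph{Overview of the plan.} Both assertions will be derived from the outward--optimization theory of Section~\ref{sec_jumpformation}, from Lemma~\ref{lemma_foliation}, and from the local structure of $\partial E_0^+$ recorded in the (unnamed) Proposition immediately preceding this Corollary. Existence of a weak solution $(U,\nu)$ with initial condition $E_0$ is guaranteed by Theorem~\ref{thm_mainresult}, and since $(M,g,K)$ is asymptotically flat with a single end, Corollary~\ref{lem_sublimit} ensures that the level sets of the projected solution $(u,\nu_M)$ are precompact and that $M$ has no compact component, so that Theorem~\ref{thm_outwardoptimziation} and the preceding Proposition apply.

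\emph{Part (i).} By Theorem~\ref{thm_outwardoptimziation} applied with $t=0$, the set $E_0^+=\{u\le 0\}$ is outward optimizing in $M$ with respect to $\nu_M$. As observed in the text preceding that theorem, every outward optimizing set is in particular outward minimizing; hence $E_0^+$ is outward minimizing. Were $E_0^+=E_0$, the set $E_0$ itself would be outward minimizing, contradicting the hypothesis. Therefore $E_0^+\ne E_0$.

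\emph{Part (ii).} First, Lemma~\ref{lemma_foliation} applied at $t=0$ to $\widetilde{\Sigma}_0^+=\partial\{U>0\}$, which is the vertical cylinder $\partial E_0^+\times\R$ (because $U(y,z)=u(y)$ is translation invariant), shows that $\partial E_0^+$ is a complete $C^{1,\alpha}$ hypersurface; combining the definition \eqref{eq_unitnormal} with the $C^{1,\alpha}$-convergence $\widetilde{\Sigma}_t\to\widetilde{\Sigma}_0^+$ from that lemma identifies $\nu_M$ along $\partial E_0^+$ with its outward unit normal. The crux is the claim $\partial E_0^+\cap\partial E_0=\emptyset$. Suppose $y\in\partial E_0^+\cap\partial E_0$. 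Since $E_0\subseteq E_0^+$, one region contains the other near $y$, so the $C^2$ surface $\partial E_0$ and the $C^{1,\alpha}$ surface $\partial E_0^+$ are tangent at $y$ with the same outward normal; hence that normal equals $\nu_M(y)$, and the same holds at every point of $\partial E_0^+\cap\partial E_0$, so there $\nu_M$ agrees with the geometric outward normal $\nu_0$ of $\partial E_0$. Under maximality $P_{\nu_0}=-K(\nu_0,\nu_0)=\tr_{\partial E_0}K$, so $\btr{P_{\nu_M}}=\btr{\tr_{\partial E_0}K}$ along $\partial E_0^+\cap\partial E_0$. The preceding Proposition then gives $H\ge\btr{P_{\nu_M}}$ in the weak sense on $\partial E_0^+\cap\partial E_0$; since $\partial E_0$ is $C^2$ its weak mean curvature is the classical continuous $H_{\partial E_0}$ and $\btr{\tr_{\partial E_0}K}$ is continuous, so this upgrades to $H_{\partial E_0}\ge\btr{\tr_{\partial E_0}K}$ $\mathcal H^n$-a.e.\ on $\partial E_0^+\cap\partial E_0$, contradicting the strict hypothesis unless $\mathcal H^n(\partial E_0^+\cap\partial E_0)=0$. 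An $\mathcal H^n$-null contact set does not disturb the weak equation $H=\btr{P_{\nu_M}}$ holding on $\partial E_0^+\setminus\partial E_0$ by part (i) of the preceding Proposition, so, arguing as in the proof of Theorem~\ref{thm_apparenthorizons} with $\nu_M$ the $C^{1,\alpha}$ normal of $\partial E_0^+$, Schauder theory promotes $\partial E_0^+$ to a $C^{2,\alpha}$ generalized apparent horizon everywhere; a surviving contact point $y$ would be a tangency of two $C^2$ hypersurfaces with $E_0\subseteq E_0^+$, forcing $H_{\partial E_0}(y)\ge H_{\partial E_0^+}(y)=\btr{\tr_{\partial E_0}K(y)}$ and again impossible. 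Hence $\partial E_0^+\cap\partial E_0=\emptyset$, so $\partial E_0^+=\partial E_0^+\setminus\partial E_0$ and part (i) of the preceding Proposition exhibits $\partial E_0^+$ as a $C^{2,\alpha}$ generalized apparent horizon disjoint from $\partial E_0$.

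\emph{Main obstacle.} The delicate step is the middle of part (ii): identifying the constructed measurable unit vector field $\nu_M$ with the classical outward normal $\nu_0$ along the contact set --- i.e.\ verifying that no transverse jump occurs there --- and then converting the weak inequality of the preceding Proposition into a genuine pointwise obstruction. The final passage from an $\mathcal H^n$-null contact set to an empty one is comparatively routine, relying on the interior $C^{2,\alpha}$-regularity of generalized apparent horizons together with the standard comparison of mean curvatures of tangent hypersurfaces.
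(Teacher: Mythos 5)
Part (i) is correct: you derive it from Theorem~\ref{thm_outwardoptimziation} (with $t=0$) and the observation that outward optimizing $\Rightarrow$ outward minimizing, while the paper's Remark points to the alternative route $E_0^{+,(\mathrm{IMCF})}\subseteq E_0^+$ via Corollary~\ref{lem_sublimit}. Both work.

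For part (ii) your overall plan --- use the unnamed Proposition's part (ii) to rule out positive-measure contact with $\partial E_0$, then the apparent-horizon identity from part (i) plus a tangency comparison to rule out residual contact --- is the right strategy, but the step you call ``comparatively routine'' is actually where the argument has a genuine gap. To run the comparison $H_{\partial E_0}(y)\ge H_{\partial E_0^+}(y)=\btr{\tr_{\partial E_0}K}(y)$ at a surviving contact point $y$ you need $\partial E_0^+$ to be $C^2$ at $y$ \emph{and} the equation $H_{\partial E_0^+}=\btr{P_{\nu_M}}$ to hold there, and neither is delivered by your appeal to Schauder theory ``as in Theorem~\ref{thm_apparenthorizons}.'' That argument requires the two-sided Euler--Lagrange equation on an open set; at a contact point the constraint $E_0\subseteq F$ turns the minimization into an obstacle problem, and one only gets the one-sided (supersolution) inequality, which Schauder theory cannot convert into $C^{2,\alpha}$ regularity. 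To close this you would need either obstacle-problem regularity ($C^{1,1}$ for a $C^{1,1}$ obstacle plus $W^{2,p}$ elliptic theory to see the equation holds a.e.\ when the contact set is null, followed by Schauder), or a strong maximum principle valid for $C^{1,\alpha}$ weak supersolutions of the quasilinear horizon equation tested against the strict $C^2$ subsolution $\partial E_0$; neither tool is among those cited in the paper, so the step should be supplied explicitly. A further minor point: Lemma~\ref{lemma_foliation} is stated only for $t>0$ and does not directly give the $C^{1,\alpha}$ regularity of $\partial E_0^+$; this should instead come from Theorem~\ref{thm_regularity} applied to the minimization property of $E_0^+$ in \eqref{boundary3} (Remark~\ref{bem_equivalence}), with $E_0$ as the $C^2$ obstacle.
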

	\begin{bem}
		The conclusion in (ii) is generally stronger than in (i), as we can not guarantee that $\partial E_0^+\cap E_0=\emptyset$ in the first case. However, the assumption in (i) is more flexible as it does not depend on the sign of the mean curvature and is completely unrelated to $K$. Note that (i) also follows from Corollary \ref{lem_sublimit}, as $E_0^{+,(IMCF)}\subseteq E_0^+$, so the solution has to jump immediately, if inverse mean curvature flow jumps at $t=0$.
		
		Regarding (ii), note that hypersurfaces satisfying $H<\btr{\tr_{\partial E_0}K}$ are often referred to as \emph{trapped surfaces} in the context of General Relativity, and weakly trapped if the inquality is not assumed to be strict. Assuming the existence of a weakly trapped surface, Eichmair \cite{eichmaier} can similarly provide the existence of a generalized apparent horizon using the Perron method. Moreover, Eichmair can in fact construct an outermost generalized apparent horizon. Our result suggests that this outermost generalized apparent horizon is outward minimizing with respect to area (at least if all conditions of Theorem \ref{thm_mainresult} are statisfied).
	\end{bem}
		
	\section{Asymptotic behavior of weak solutions}\label{sec_asymptoticbehavior}
		Finally, we discuss the asymptotic behavior of weak solutions $(U,\nu)$ of STIMCF as constructed in Section \ref{sec_mainresults} via the projection $(u,\nu_M)$ on $M$ as defined in Section \ref{sec_variationalformulation}. We use a blowdown argument similar to \cite[Section 7]{huiskenilmanen} to see that the level-sets of $u$ become asymptotically round. Moreover, we can use the notion of unit normal $\nu_M$ to show that the surfaces become in fact uniformly starshaped outside of a compact set if that set contains all jump regions.
		
		To begin, we establish the connectedness Lemma \cite[Lemma 4.2]{huiskenilmanen} in the case of STIMCF.
		\begin{lem}\label{lem_connectedness}\,
			\begin{itemize}
				\item[(i)] If $(u,\nu_M)$ is a weak solution of \eqref{eq_weaksolutionsJ}, then $u$ has no strict local maxima or minima.
				\item[(ii)] Suppose $M$ is connected and simply connected\,\footnote{Note that we may relax this assumption and allow curves that loop around $E_0$.} with a single, asymptotically flat end, and $(E_t)_{t>0}$ is a solution of \eqref{comprintset} for all $t>0$ with initial condition $E_0$. If $\partial E_0$ is connected, $\Sigma_t=\partial E_t$ is connected, as long as it remains compact.
			\end{itemize}
		\end{lem}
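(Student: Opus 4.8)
\emph{Proof strategy.}\quad The plan is to adapt the Connectedness Lemma \cite[Lemma 4.2]{huiskenilmanen}; throughout write $B\definedas\sqrt{\btr{\nabla u}^2+P_\nu^2}$ for the bulk term, and note that the only structural difference with \cite{huiskenilmanen} is that $B\ge 0$ is sign-definite, which forces one of the two cases in (i) to be run on Caccioppoli sets rather than on functions. For (i), suppose first that $u$ has a strict local maximum at $p$ with value $c$. For $\delta>0$ small the connected component $V$ of $\{u>c-\delta\}$ through $p$ is precompact in $M\setminus E_0$, and $u\equiv c-\delta$ on $\partial V$. I would use the competitor $v$ equal to $\min(u,c-\delta)$ on $V$ and to $u$ elsewhere: it is locally Lipschitz, $v\le u$, and $\{v\ne u\}\subset\subset M\setminus E_0$, so since $u$ is a weak solution $\mathcal{J}_{u,\nu}^A(u)\le\mathcal{J}_{u,\nu}^A(v)$. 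On $V$ one has $\nabla v=0$ and $v-u<0$, so both $\int_V(\btr{\nabla v}-\btr{\nabla u})=-\int_V\btr{\nabla u}$ and $\int_V(v-u)B$ are $\le 0$, whence $\mathcal{J}_{u,\nu}^A(v)\le\mathcal{J}_{u,\nu}^A(u)$. Equality then forces $\nabla u\equiv 0$ a.e.\ on $V$, so $u$ is constant on the connected set $V$, contradicting $u(p)=c\ne c-\delta=u|_{\partial V}$.

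For a strict local minimum at $p$ with value $c\ge 0$ the same truncation fails, because $B$ now enters with a favorable sign; instead I would pass to the set formulation of Lemma \ref{lem_equivalence}. For $\delta>0$ small the component $V_\delta$ of $\{u<c+\delta\}$ through $p$ is a precompact connected component of $E_{c+\delta}=\{u<c+\delta\}$, hence itself minimizes $\mathcal{J}_{u,\nu}$; comparing $V_\delta$ with the empty set gives $\btr{\partial^*V_\delta}\le\int_{V_\delta}B\le C\btr{V_\delta}$, where $C$ depends only on the local Lipschitz bound for $u$ (Corollary \ref{lem_sublimit}) and $\sup\btr{K}_g$. Since $p$ is a \emph{strict} minimum, $V_\delta\downarrow\{p\}$ and $\btr{V_\delta}\to 0$ as $\delta\to 0$, while the Euclidean isoperimetric inequality in a chart about $p$ gives $\btr{\partial^*V_\delta}\ge c(n,g)\btr{V_\delta}^{\frac{n}{n+1}}$; combining, $c(n,g)\le C\btr{V_\delta}^{\frac{1}{n+1}}\to 0$, a contradiction.

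For (ii) I would argue in three steps. First, whenever $E_t$ is precompact, $N_t\definedas M\setminus E_t$ is connected: if $\mathcal{C}$ were a bounded connected component of $N_t$, then $E_t\cup\mathcal{C}\supseteq E_t$ is an admissible competitor with $\btr{\partial^*(E_t\cup\mathcal{C})\cap A}=\btr{\partial^*E_t\cap A}-\btr{\partial^*\mathcal{C}\cap A}$, so the outward minimality of $E_t$ forces $\btr{\partial^*\mathcal{C}}+\int_{\mathcal{C}}B\le 0$, i.e.\ $\btr{\mathcal{C}}=0$; since $M$ has a single end, $N_t$ is connected. Second, $E_t$ is connected: $E_0$ is connected (a connected compact $2$-sided separating hypersurface in a simply connected manifold with one end bounds a connected precompact region), and since the sublevel sets $\{u<t\}$ only grow in $t$ no component can split; a \emph{new} component born at some time $t_*$ would have to appear either at a strict local minimum of $u$ --- excluded by (i) --- or as a component created by the jump $E_{t_*}\mapsto E_{t_*}^+$, which is excluded by the jump analysis of Section \ref{sec_jumpformation} (cf.\ Proposition \ref{lem_costfree} and Remark \ref{bem_costfree}: the jump region attaches onto an existing component and does not raise the number of components). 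Hence the number of components of $E_t$ is nonincreasing, equals $1$ at $t=0^+$, and stays $1$. Third, with $E_t$ and $N_t$ both connected, if $\Sigma_t=\partial E_t$ had a component $\Sigma'$ then $M\setminus\Sigma'$ would still be connected (glued along the remaining components of $\Sigma_t$), making $\Sigma'$ a nonseparating connected compact $2$-sided hypersurface --- impossible in a simply connected $M$ with one end, where $H_1(M;\mathbb{Z}_2)=0$. The qualification ``as long as it remains compact'' merely restricts attention to those $t$ for which $\Sigma_t$ has not yet escaped into the asymptotic end.

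The step I expect to be the real obstacle is the exclusion of new components in (ii): ruling out the birth of a component of a sublevel set at a \emph{non-strict} local minimum of $u$ requires identifying such a point as lying in (the closure of) a jump region and then invoking the structure of jump regions and outward optimizing hulls from Section \ref{sec_jumpformation} to conclude that it cannot constitute a component disjoint from the rest of $E_t$. The remaining ingredients --- the function-level argument for strict maxima, the isoperimetric argument for strict minima, the filling-in argument for $N_t$, and the final topological count --- are routine adaptations of \cite{huiskenilmanen}.
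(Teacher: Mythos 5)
Your proposal fills in the details that the paper's own proof delegates to Moore's Lemma 19 and Huisken--Ilmanen's Lemma 4.2, and in doing so you correctly locate the one place where a genuine modification is forced. For a strict local maximum the Huisken--Ilmanen truncation $v=\min(u,c-\delta)$ on the component $V$ of $\{u>c-\delta\}$ works verbatim, exactly as you write, since $(v-u)\sqrt{\btr{\nabla u}^2+P_\nu^2}\le 0$ there. For a strict local minimum, however, the analogous upward truncation $v=\max(u,c+\delta)$ only yields $\int_V\btr{\nabla u}\le\delta\int_V\sqrt{\btr{\nabla u}^2+P_\nu^2}$, which is no contradiction because the bulk term does not vanish where $\nabla u=0$ (unlike for inverse mean curvature flow, whose bulk term is $\btr{\nabla u}$ itself). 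Your switch to the set formulation via Lemma \ref{lem_equivalence} --- using that the precompact connected component $V_\delta$ of $E_{c+\delta}$ inherits the minimizing property, hence $\btr{\partial^*V_\delta}\le\int_{V_\delta}\sqrt{\btr{\nabla u}^2+P_\nu^2}\le C\btr{V_\delta}$, and then playing this against the isoperimetric inequality as $V_\delta\downarrow\{p\}$ --- is exactly the right fix, and is a clean way to present what the anisotropic bulk term actually costs. One small remark: you write that $V_\delta$ ``minimizes $\mathcal{J}_{u,\nu}$''; the cleanest justification is to compare $E_{c+\delta}$ with $E_{c+\delta}\setminus V_\delta$, which yields $\btr{\partial^*V_\delta}\le\int_{V_\delta}\sqrt{\btr{\nabla u}^2+P_\nu^2}$ directly without asserting that a component of a minimizer is itself a minimizer against all competitors.

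For (ii) your three-step structure (connectedness of $N_t$ via filling a bounded complementary component, connectedness of $E_t$, topological count) matches Huisken--Ilmanen's argument and is sound. The filling-in argument for $N_t$ is correct. The point you flag as the ``real obstacle'' --- a sublevel-set component born at a plateau, i.e.\ at a non-strict interior minimum --- is indeed where the work lies, and your pointer to Proposition \ref{lem_costfree} and Remark \ref{bem_costfree} is the right one: in the notation there, such a putative component $B$ would satisfy $u\equiv c$ on $B$, be disjoint from $E_c$, and would therefore be one of the ``cost-free'' sets in $(E_c)'_\Omega\setminus E_c^+$ rather than part of $E_c^+$, so it cannot enter $E_t$ through the jump. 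This step is left at sketch level, but since the paper's own proof consists only of the two literature references, you have reconstructed the intended argument and made explicit the one genuinely new technical point; I would only ask that the jump-exclusion step in (ii) be written out along the lines just indicated before regarding the proof as complete.
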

		\begin{proof}
			We obtain (i) be arguing in analogue to \cite[Lemma 19]{moore}. Once (i) is established, (ii) follows exactly as in the proof of \cite[Lemma 4.2]{huiskenilmanen}.
		\end{proof}
		Let $(U,\nu)$ be a weak solution of STIMCF on $(M,g,K)$ with initial condition $E_0\subseteq M$ and assume that $M$ has exactly one asymptotically flat end. We recall the set $\mathcal{O}_{R_0}\subseteq M$ as defined in Lemma \ref{lem_subsolution} and may assume without loss of generality that $E_0\subseteq M\setminus \mathcal{O}_{R_0}$. Thus we can regard $(U,\nu)$ (up to identification via the asymptotic chart $\Phi$) as a weak solution to STIMCF on $(\Omega,g,K)$ where $\Omega=\Phi(\mathcal{O}_{R_0})$ is an open subset of $\R^{n+1}$. For $\lambda>0$ we define the blowdown objects
		\begin{align*}
			\Omega^\lambda\definedas\lambda\cdot\Omega,\text{ } g^\lambda(y)\definedas \gspann{\frac{y}{\lambda}},\text{ }K^{\lambda}(y)\definedas\frac{1}{\lambda}K\left(\frac{y}{\lambda}\right),\text{ }u^\lambda(y)\definedas u\left(\frac{y}{\lambda}\right),\text{ }\nu_{\Omega}^\lambda(y)\definedas\nu_{\Omega}\left(\frac{y}{\lambda}\right), 
		\end{align*}
		where $\lambda\cdot A\definedas\{x\colon\frac{x}{\lambda}\in A\}$, and $E_t^\lambda\definedas\lambda \cdot E_t=\{u^\lambda<t\}$. Then $(u^\lambda,\nu_\Omega^\lambda)$ is a weak solution of \eqref{eq_weaksolutionsJ} on $(\Omega^\lambda, g^\lambda,K^\lambda)$.
		\begin{prop}\label{prop_blowdown}
			Let $(M,g,K)$ be a triple, such that $(M,g)$ is a Riemannian manifold, $K$ a symmetric $(0,2)$ tensor on $M$ with $\tr_MK=0$, s.t. $M$ has exactly one asymptotically flat end with
			\begin{align}\label{asympdecay2}
				\btr{g-\delta}=o(1),\text{ }\btr{\nabla g}=o\left(\frac{1}{\btr{y}}\right),\text{ }K=o\left(\frac{1}{\btr{y}}\right), \text{ }\btr{\nabla K}=o\left(\frac{1}{\btr{y}^2}\right).
			\end{align}
			Let $E_0$ be a compact $C^2$ domain in $M$ and $(U,\nu)$ a weak solution of STIMCF with initial condition $E_0$ constructed as in Theorem \ref{thm_mainresult}. Then there exist constants $c_\lambda\to\infty$ such that
			\[
				u^\lambda-c_\lambda\to v\definedas n\ln(\btr{y})
			\]
			locally uniformly on $R^{n+1}\setminus\{0\}$ as $\lambda\to 0$, where $v$ is the standard expanding sphere solution on of inverse mean curvature flow on $(R^{n+1}\setminus\{0\},\delta,0)$.
		\end{prop}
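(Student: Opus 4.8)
The plan is to transplant the blow-down argument of Huisken--Ilmanen \cite[Section~7]{huiskenilmanen} to our setting, the only genuinely new ingredient being that the rescaled second fundamental forms disappear in the limit so that the anisotropic problem degenerates to plain inverse mean curvature flow.

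\emph{Step 1 (scaling of the background data and a uniform gradient bound).} First I would record that, under the fall-off \eqref{asympdecay2} (or the stronger Definition \ref{defi_asymflat}) and the rescaling that defines $g^\lambda,K^\lambda$, one has $g^\lambda\to\delta$ in $C^1_{loc}(\R^{n+1}\setminus\{0\})$ and $K^\lambda\to 0$, $\nabla K^\lambda\to 0$ in $C^0_{loc}(\R^{n+1}\setminus\{0\})$ as $\lambda\to 0$, while the domains $\Omega^\lambda$, being exteriors of the balls of radius $\lambda R_0$, exhaust $\R^{n+1}\setminus\{0\}$. From Corollary \ref{lem_sublimit}(iv) together with the decay of $K$ one gets the growth bound $\btr{\nabla u}(x)\le C(n)/\btr{x}+o(1/\btr{x})$ for $\btr{x}$ large (the $K$-contributions being lower order), whence after rescaling, for every compact $A\subset\R^{n+1}\setminus\{0\}$,
\[
\sup_A\btr{\nabla u^\lambda}_{g^\lambda}\le \frac{C(n)}{\operatorname{dist}(A,\{0\})}+o(1)\qquad(\lambda\to0);
\]
here one uses that the geometric hypotheses of Theorem \ref{thm_interiorgradientestimate} hold on fixed balls away from the origin in $(\Omega^\lambda,g^\lambda)$ once $\lambda$ is small.

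\emph{Step 2 (compactness and identification of the variational limit).} Fix $y_0$ with $\btr{y_0}=1$ and set $c_\lambda\definedas u^\lambda(y_0)=u(y_0/\lambda)$. By Step 1 the functions $u^\lambda-c_\lambda$ are locally uniformly Lipschitz on $\R^{n+1}\setminus\{0\}$ and vanish at $y_0$, so along a subsequence they converge locally uniformly, by Arzel\`a--Ascoli and exhaustion, to a locally Lipschitz $w$ with $w(y_0)=0$ and $\btr{\nabla w}(y)\le C(n)/\btr{y}$. Passing $\nu_\Omega^\lambda$ to a further a.e.\ locally uniform limit $\nu^\infty$, I would invoke the Compactness Theorem \ref{thm_compactness2} together with Remark \ref{bem_compactness2} (which permits the metrics and tensors to vary, here $g^\lambda\to\delta$, $K^\lambda\to0$); the required $\mathcal{L}^1_{loc}$-convergence $\btr{\nabla u^\lambda}\to\btr{\nabla w}$ is obtained exactly as in Lemma \ref{lem_l1convergence}, and is in any case only the isotropic statement of \cite[Theorem~2.1]{huiskenilmanen} since $K^\lambda\to0$. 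Hence $(w,\nu^\infty)$ is a weak solution of \eqref{eq_weaksolutionsJ} on $\R^{n+1}\setminus\{0\}$ with respect to $(\delta,0)$, i.e.\ a weak solution of inverse mean curvature flow; its level sets $\{w<t\}$ are minimizing hulls, they are connected by Lemma \ref{lem_connectedness}, and from $\btr{\nabla w}\le C(n)/\btr{y}$ they shrink to the origin as $t\to-\infty$ and exhaust $\R^{n+1}$ as $t\to+\infty$ (the sets $\{w<t\}$ being the $\lambda\to0$ limits of the rescaled sublevel sets $\lambda\{u<c_\lambda+t\}$).

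\emph{Step 3 (conclusion).} Since $w$ is a proper weak solution of inverse mean curvature flow on $\R^{n+1}\setminus\{0\}$ whose connected, outward-minimizing level sets shrink to a point, the Blowdown Lemma of Huisken--Ilmanen \cite[Section~7]{huiskenilmanen}, whose proof classifies precisely such limits, gives $w=n\ln\btr{y}+c$; normalising with $w(y_0)=0$, $\btr{y_0}=1$, forces $c=0$, so $w=n\ln\btr{y}$. As every subsequential limit of $u^\lambda-c_\lambda$ equals this same function, the full family converges, $u^\lambda-c_\lambda\to n\ln\btr{y}$ locally uniformly on $\R^{n+1}\setminus\{0\}$; and $c_\lambda=u(y_0/\lambda)\to\infty$ by Corollary \ref{lem_sublimit}(iii) because $\btr{y_0/\lambda}\to\infty$. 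I expect the main obstacle to be the passage in Steps 2--3: verifying that the subsequential limit $w$ really is a \emph{bona fide} weak solution of inverse mean curvature flow, with connected minimizing-hull level sets issuing from a single point, so that the Huisken--Ilmanen classification of rescaling limits genuinely applies; the scaling of $K$ and the attendant gradient estimate in Step 1 are routine once the decay \eqref{asympdecay2} is in force.
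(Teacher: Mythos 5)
Your overall strategy coincides with the paper's: rescale by $\lambda$, obtain a uniform Lipschitz bound on annuli from Theorem~\ref{thm_interiorgradientestimate} and the decay of $K$, extract a subsequential limit $w$ by Arzel\`a--Ascoli, show $w$ is a weak solution of inverse mean curvature flow on $(\R^{n+1}\setminus\{0\},\delta,0)$, and then invoke the Huisken--Ilmanen classification of such proper blowdown limits. The choice of normalisation constant ($c_\lambda = u(y_0/\lambda)$ versus $\max_{\mathbb{S}_1(0)}|u^\lambda|$) is cosmetic.

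The gap is in the middle of Step~2, where you must pass to the limit in the comparison principle. Neither of the two routes you sketch holds up as written. The claim that the $\mathcal{L}^1_{loc}$-convergence of the gradients ``is obtained exactly as in Lemma~\ref{lem_l1convergence}'' is not correct, because the proof of Lemma~\ref{lem_l1convergence} works with the smooth translating graphs $\widetilde{\Sigma}^{\varepsilon_i}_t$ produced by elliptic regularisation; it differentiates $\int_{\widetilde\Sigma_t^{\varepsilon_i}}\Phi(H^2-P^2)^2$ along the smooth flow and integrates by parts, so it needs the approximants to be smooth solutions. The rescaled functions $u^\lambda$ are rescalings of the weak solution $u$ and are \emph{not} smooth, so Lemma~\ref{lem_l1convergence} cannot be applied directly to them. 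Your fallback --- that since $K^\lambda\to 0$ the statement ``is in any case only the isotropic statement of \cite[Theorem~2.1]{huiskenilmanen}'' --- is also not a direct application of that theorem: the $u^\lambda$ are weak solutions of the anisotropic problem with nonzero $K^\lambda$ and a possibly non-gradient vector field $\nu^\lambda$ on jump regions, not weak solutions of IMCF, so Theorem~2.1 of Huisken--Ilmanen does not literally apply. One would have to prove a stability version of that theorem allowing a small anisotropy and a decoupled unit vector field, and the bulk term $v\sqrt{|\nabla u^\lambda|^2+P_{\nu^\lambda}^2}$ is not handled by semi-lower-continuity alone when $v$ changes sign.

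The paper closes exactly this gap by a diagonal argument through the elliptic regularisation: instead of rescaling the weak solution $u$ directly, one returns to the smooth regularised solutions $u_{\varepsilon_n}$ on $F_{L_n}\setminus\overline{E_0}$, selects $\lambda_k,L_{n_k},\varepsilon_{n_k}$ with $\|u_{\varepsilon_{n_k}}-u\|_{C^0}$ controlled, so that the rescaled smooth functions $\widetilde u_k(y)=u_{\varepsilon_{n_k}}(y/\lambda_k)-c_{\lambda_k}$ still converge to $v$ locally uniformly and solve \eqref{ellreg} on $(\Omega_k,g^{\lambda_k},K^{\lambda_k})$. One then re-runs the arguments of Sections~\ref{sec_limitingbehavior} and~\ref{sec_mainresults} (in particular the monotonicity argument underlying Lemma~\ref{lem_l1convergence}) on this smooth diagonal sequence, and finishes with the Compactness Theorem~\ref{thm_compactness2} together with Remark~\ref{bem_compactness2}, which explicitly allows the varying background $g^{\lambda_k}\to\delta$, $K^{\lambda_k}\to0$. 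Without this diagonal construction, you do not have smooth approximants and cannot verify the $\mathcal{L}^1_{loc}$-convergence hypothesis of Theorem~\ref{thm_compactness2}; you would instead have to supply a new compactness argument tolerating weak approximants and small anisotropy. Steps~1 and~3 of your proposal are sound.
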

		\begin{proof}
			Let everything be as above. The asymptotic conditions \eqref{asympdecay2} and Remark \ref{bem_interiorgradientestimate} imply that there is an $R_1\ge R_0$, such that
			\begin{align*}
				r\ge c\btr{y},\text{ }dist(y,\partial E_0)\ge c\btr{y}
			\end{align*}
			for all $y\in \R^{n+1}\setminus B_{R_1}(0)$, where $r$ satisfies the assumptions of Theorem \ref{thm_interiorgradientestimate} for $y$. Arguing as in \cite[Lemma 7.1]{huiskenilmanen}, we can use Theorem \ref{thm_interiorgradientestimate} and the fact that there is a suitable subsolution $\left(B_{\exp(\alpha t)}(0)\right)_{t_1\le t<\infty}$ in the asymptotic region even under weaker decay assumptions, cf. Remark \ref{bem_subsolution}, to establish a gradient and eccentricity estimates that are preserved under the blowdown by scaling invariance. Defining the constants $c_\lambda\definedas\max\limits_{\mathbb{S}_1(0)}\btr{u^\lambda}$ for $\lambda$ sufficiently small we can use Arzel\`a--Ascoli to conclude that there exists as subsequence $(\lambda_{k_l})$, again denoted by $(\lambda_k)$, and local Lipschitz function $v\in C^{0,1}_{loc}(\R^{n+1}\setminus\{0\})$, such that
			\[
				\widetilde{u}^{\lambda_k}\definedas u^{\lambda_k}-c_{\lambda_k}\to v\text{ locally uniformly in }R^{n+1}\setminus\{0\},
			\]
			and there exists $y_0\in\mathbb{S}_1(0)$ with $v(y_0)=0$. Moreover the eccentricity estimates imply that the level-sets of $v$ are non-empty and compact for all $t\in\R$, cf. \cite[Lemma 7.1]{huiskenilmanen}. We are now left to show that $v$ is a weak solution of \eqref{eq_weaksolutionsJ} in $(\R^{n+1}\setminus\{0\},\delta,0)$, i.e a solution for the corresponding comparison principle for inverse mean curvature flow in $(R^{n+1}\setminus\{0\},\delta)$. Then by \cite[Proposition 7.2]{huiskenilmanen}, $v(y)=n\ln(\btr{y})$ is the standard expanding sphere solution. Since this is then in particular true for  a subsequence of any subsequence, it follows that the full sequence converges, proving the claim.
			
			However to conclude this, we need to confirm the stronger assumptions for the Compactness Theorem \ref{thm_compactness2}, in particular the local $\mathcal{L}^1$ convergence of the gradients. To this end, we consider the solutions of the ellptic regularisation $u_{\varepsilon_n}$ on $F_{L_n}\setminus \overline{E_0}$, where $L_n\to\infty$ s.t. $u_{\varepsilon_n}\to u$ locally uniformly on $\Omega_0$. Now choose a subsequence $(L_{n_k})$, s.t. $L_{n_k}\ge \alpha\ln(\lambda^{-2})-\alpha\ln(R_0)$ and a (possibly different) subsequence $(\varepsilon_{n_k})$, s.t. $\varepsilon_{n_k}<\varepsilon_0(L_{n_k})$ and
			\[
				\norm{u_{\varepsilon_{n_k}}-u}_{C^0(\overline{F_{L_{n_k}}\cap \mathcal{O}_{R_3}})}\le \lambda_k.
			\]
			Now we define the subsets $\Omega_k\definedas\lambda_k\cdot\left( F_{L_{n_k}}\cap \mathcal{O}_{R_3}\right)=B_{e^{\alpha}\lambda^{-1}}(0)\setminus\overline{B}_{R_3\lambda}(0)\to\R^{n+1}\setminus\{0\}$ and consider the functions $\widetilde{u}_k(y)\definedas u_{\varepsilon_{n_k}}\left(\frac{y}{\lambda_k}\right)-c_{\lambda_k}$. Then $\widetilde{u}_k\to v$ locally uniformly and $\widetilde{u}_k$ solves \eqref{ellreg} in $(\Omega_k,g^{\lambda_k},K^{\lambda_k})$. In particular $\widetilde{U}_k(y,z)\definedas\widetilde{u}_k(y)-\varepsilon z\to V(y,z)\definedas v(y)$ locally uniformly in $(\R^{n+1}\setminus\{0\})\times\R$ and $\widetilde{U}_k$ is a smooth solution to STIMCF on $(\widetilde{M}_k,g^{\lambda_k}+\d z^2,\widetilde{K}^k)$, where $\widetilde{M}_k\definedas\Omega_k\times\R$ and $\widetilde{K}^k_{ij}=K^{\lambda_k}_{ij}$, $\widetilde{K}^k_{iz}=\widetilde{K}^k_{zz}=0$. Since the local gradient estimate is also satisfied for $\widetilde{U}_k$ on $\widetilde{M}_k$, we are in the same situation as in Corollary \ref{lem_sublimit}. Then arguing as in Section \ref{sec_limitingbehavior} and Section \ref{sec_mainresults}, and applying a modified Compactness Theorem as in Remark \ref{bem_compactness2}, we conclude that there exists an a.e. locally uniformly continuous unit vector field $\eta$, such that $(V,\eta)$ is a weak solution of \eqref{eq_weaksolutionsJ} in $((\R^{n+1}\setminus\{0\})\times\R, \delta,0)$. By Lemma \ref{lem_equivalence}, $v$ solves \eqref{eq_weaksolutionsJ} on $(R^{n+1},\delta,0)$, concluding the proof. In particular, the convergence holds for any choice $(\lambda_k,L_{n_k},\varepsilon_{n_k})$ as above.
		\end{proof}
		\begin{kor}
			Away from jump regions, the level-sets $\Sigma_t=\{u=t\}$ become uniformly starshaped as $t\to\infty$. More precisely, if there exists $R_{reg}>0$ such that $u$ has no jumps on $\{\btr{x}\ge R_{reg}\}$ in the asymptotic chart, then for any $\delta<1$, there exists $R(\delta)\ge R_{reg}$, such that
			\begin{align}
				\spann{v(y),y}\ge (1-\delta)\btr{y} \text{ for all }\btr{y}\ge R(\delta),
			\end{align}
			in the asymptotic chart.
		\end{kor}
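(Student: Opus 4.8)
The plan is to deduce the statement directly from the blow-down convergence of Proposition \ref{prop_blowdown}, after upgrading the $C^0_{loc}$ convergence of $u^\lambda-c_\lambda$ to $v=n\ln|y|$ to $C^{1,\alpha}_{loc}$ convergence of the corresponding level-sets and hence of their outer unit normals $\nu_\Omega^\lambda$ (the quantity in the statement being the unit normal $\nu_\Omega$ to $\Sigma_t$, so that $\langle\nu_\Omega(y),y\rangle\ge(1-\delta)|y|$ is the asserted uniform starshapedness). First I would observe that, by hypothesis, $u$ has no jump regions in $\{|x|\ge R_{reg}\}$; hence under the rescaling $u^\lambda(y)=u(y/\lambda)$ every jump region of $u^\lambda$ is contained in the ball $\{|y|\le\lambda R_{reg}\}$, which shrinks to $\{0\}$ as $\lambda\to 0$. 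Consequently, on any fixed annulus $A_{r,\rho}:=\{r\le|y|\le\rho\}\subset\subset\R^{n+1}\setminus\{0\}$ the solutions $u^\lambda$ are free of jumps once $\lambda$ is small enough.

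Next I would invoke the scale-invariant interior estimate Theorem \ref{thm_interiorgradientestimate} on the blown-down data $(\Omega^\lambda,g^\lambda,K^\lambda)$ — its constants do not degenerate since $g^\lambda\to\delta$, $K^\lambda\to 0$ and $\nabla K^\lambda\to 0$ locally in $C^0$ under the decay \eqref{asympdecay2} — together with the Regularity Theorem \ref{thm_regularity}, exactly as in Section \ref{sec_limitingbehavior} and in the proof of Proposition \ref{prop_blowdown}. This yields uniform local $C^{1,\alpha}$ bounds for the level-sets $\{u^\lambda=t\}$ on $A_{r,\rho}$. Since the limit $v=n\ln|y|$ is smooth with $\nabla v(y)=n\,y/|y|^2\neq 0$ on all of $\R^{n+1}\setminus\{0\}$ and has no jump regions, the $C^0_{loc}$ convergence $u^\lambda-c_\lambda\to v$ then improves to $C^{1,\alpha}_{loc}$ convergence of the level-sets, and hence to locally uniform convergence $\nu_\Omega^\lambda\to\eta$ on $\R^{n+1}\setminus\{0\}$, where $\eta(y)=y/|y|$ is the outer unit normal to the level-sets of $v$; that the whole family, not merely a subsequence, converges follows as for $u^\lambda$ in Proposition \ref{prop_blowdown}, the limit $\eta$ being subsequence-independent. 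In particular, for every $\delta'>0$ there is $\lambda_0>0$ such that $|\nu_\Omega^\lambda(y)-y/|y||<\delta'$ whenever $|y|=1$ and $0<\lambda<\lambda_0$.

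Finally I would translate this back to $\Omega$ using the scaling relation $\nu_\Omega^\lambda(y)=\nu_\Omega(y/\lambda)$. Given $\delta<1$, pick $\delta'>0$ with $\tfrac12\delta'^2\le\delta$, let $\lambda_0$ be as above, and set $R(\delta):=\max(R_{reg},1/\lambda_0)$. For $|y|\ge R(\delta)$ put $\lambda:=1/|y|\le\lambda_0$, so that $\lambda y$ lies on the unit sphere and $\nu_\Omega(y)=\nu_\Omega^\lambda(\lambda y)$; hence $|\nu_\Omega(y)-y/|y||<\delta'$. Using $\langle a,b\rangle=1-\tfrac12|a-b|^2$ for unit vectors $a,b$, this gives $\langle\nu_\Omega(y),y/|y|\rangle\ge 1-\tfrac12\delta'^2\ge 1-\delta$, i.e. $\langle\nu_\Omega(y),y\rangle\ge(1-\delta)|y|$, which is the claimed uniform starshapedness for all $|y|\ge R(\delta)$.

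The main obstacle is the second step: promoting the purely $C^0$ blow-down convergence to convergence of the unit normals. This requires combining the scale-invariant gradient bound of Theorem \ref{thm_interiorgradientestimate}, the interior regularity of minimizers of the variational problem (Theorem \ref{thm_regularity}), and the absence of jump regions of the limit $v$ — in effect re-running the compactness arguments of Section \ref{sec_limitingbehavior} and Section \ref{sec_mainresults} in the blown-down picture, which is precisely what underlies the proof of Proposition \ref{prop_blowdown}. Once local uniform convergence of the normals is available, the concluding rescaling argument is elementary.
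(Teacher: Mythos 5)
Your proposal is correct and takes essentially the same approach as the paper: it deduces locally uniform convergence of the blow-down normals $\nu^\lambda_\Omega \to y/\btr{y}$ from Proposition \ref{prop_blowdown} combined with the no-jump hypothesis and the interior estimate of Theorem \ref{thm_interiorgradientestimate}, and then rescales. The only presentational difference is that the paper explicitly inserts the smooth approximating normals $\nu_{\varepsilon_{n_k}}$ as intermediaries — splitting the comparison into $\btr{\nu^{\lambda_k}-\nu^{\lambda_k}_{\varepsilon_{n_k}}}$ (controlled via the no-jump hypothesis ensuring uniform convergence of $\nu_\varepsilon\to\nu$ on $\Omega_k$) and $\btr{\nu^{\lambda_k}_{\varepsilon_{n_k}}-x/\btr{x}}$ (controlled via the blow-down) — and works along the discrete sequence $\lambda_k=2^{-k}$ with overlapping annuli $\{\lambda_k^{-1}\le\btr{x}\le 3\lambda_k^{-1}\}$, whereas you argue directly for $C^{1,\alpha}_{loc}$ convergence of the weak solution's level-sets and use the continuous rescaling $\lambda=1/\btr{y}$ on the unit sphere.
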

		\begin{proof}
			Let $A\definedas\{1\le \btr{x}\le 3\}$ and choose $\lambda_k=2^{-k}$. We choose $L_{n_k}\ge \alpha\ln(\lambda^{-2})-\alpha\ln(R_0)$, such that $A\subset \lambda_k\Omega_{k}$, where $\Omega_{k}$ is defined as in the proof of Proposition \ref{prop_blowdown}, but we replace the constant $R_3$ by $R_4\definedas\max\{R_3,R_{reg}\}$. Since $g$ satisfies the decay assumptions \eqref{asympdecay2}, $\delta$ and $g$ are equivalent on $\Omega_k$ with constants independent of $k$ and the locally uniform convergence of $\nu_\varepsilon$ implies, that we can choose a subsequence $(\varepsilon_{n_k})$ such that
			\[
			\norm{\nu_{\varepsilon_{n_k}}-\nu}_{(\Omega_{k},\delta)}\le \frac{\delta}{2}
			\]
			for all $k\ge k_1$, where we now have uniform convergence on all of $\Omega_k$, as it contains no jump regions. Further, Proposition \ref{prop_blowdown} implies that there exists a $k_2\in\N$ such that 
			\[
				\norm{\nu_{\varepsilon_{n_k}}^{\lambda_k}-\frac{x}{\btr{x}}}_A\le \frac{\delta}{2}
			\]
			for all $k\ge k_2$. Let $k_0\definedas\max(k_1,k_2)$, then for all $x\in A$, for all $k\ge k_0$, we have
			\begin{align*}
				\spann{\nu^{\lambda_k},\frac{x}{\btr{x}}}
				&=\spann{\nu^{\lambda_k}-\nu^{\lambda_k}_{\varepsilon_{n_k}}+\nu^{\lambda_k}_{\varepsilon_{n_k}}-\frac{x}{\btr{x}}+\frac{x}{\btr{x}},\frac{x}{\btr{x}}}\\
				&\ge 1-\btr{\nu^{\lambda_k}-\nu^{\lambda_k}_{\varepsilon_{n_k}}}-\btr{\nu^{\lambda_k}_{\varepsilon_{n_k}}-\frac{x}{\btr{x}}}\\
				&\ge 1-\norm{\nu_{\varepsilon_{n_k}-\nu}}_{(\lambda_k\Omega_{L_{n_k}},\delta)}-\norm{\nu_{\varepsilon_{n_k}}^{\lambda_k}-\frac{x}{\btr{x}}}_A\\
				&\ge 1-\frac{\delta}{2}-\frac{\delta}{2}=1-\delta.
			\end{align*}
			Hence 
			\[
				\spann{\nu^{\lambda_k},x}\ge (1-\delta)\btr{x}\text{ for all }x\in A,\text{ and all } k\ge k_0.
			\]
			Since $\nu^{\lambda_k}\left(x\right)=\nu\left(\frac{x}{\lambda_k}\right)$, a rescaling by $\lambda_k$ gives
			\[
				\spann{\nu(x),x}\ge (1-\delta)\btr{x}\text{ for }\lambda_k^{-1}\le \btr{x}\le 3\lambda_k^{-1},
			\]
			where $k\ge k_0$. Note that $\lambda_{k+1}^{-1}<3\lambda_k^{-1}$, so we conclude that
			\[
			\spann{\nu(x),x}\ge (1-\delta)\btr{x}\text{ for }\btr{x}\ge R(\delta),
			\]
			with $R(\delta)\definedas \lambda_{k_0}^{-1}$.
		\end{proof}
	\begin{bem}\,
		\begin{enumerate}
			\item[(i)] Similar as in \cite{huiskenilmanen}, one can see that in the case of $n=2$ $\newbtr{\newbtr{\accentset{\circ}{A}}}^2_{\mathcal{L}^2}\to 0$ as $t\to\infty$, so the level-sets approach coordinate spheres in $W^{2,2}$, cf. \cite{delellismueller}.
			\item[(ii)] We expect the solutions to be smooth outside some compact set based on arguments similar as in \cite{huiskenilmanen2}. In particular, all jump regions are contained in a compact set such that the level-sets are uniformly starshaped outside of that set.
			\item[(iii)] In fact, we expect the solution to be well adapted to a new concept of center-of-mass proposed by Cederbaum--Sakovich \cite{cederbaumsakovich} as $t\to\infty$, as we expect the level-sets of our solutions to asymptotically approach the foliation of STCMC surfaces constructed by them in the asymptotic region. The center-of-mass proposed by Cederbaum--Sakovich remedies some of the deficiencies of the center-of-mass formulation via surfaces of constant mean curvature first proposed by Huisken--Yau \cite{huiskenyau}. In particular, we expect our flow to exhibit better asymptotic behavior in non time-symmetric initial data sets than inverse mean curvature flow.
		\end{enumerate}
	\end{bem}
	\nocite{*}
\nopagebreak
\small

\end{document}